\newtheorem{theorem}{Theorem}
\newtheorem*{theorem*}{Theorem}
\newtheorem{lemma}{Lemma}
\newtheorem{corollary}{Corollary}
\newtheorem{proposition}{Proposition}
\theoremstyle{definition}
\newtheorem{definition}{Definition}
\theoremstyle{remark}
\newtheorem{remark}{Remark}
\newtheorem{example}{Example}
\def\Z{{\mathbb Z}}
\def\N{{\mathbb N}}
\def\R{{\mathbb R}}
\DeclareMathOperator{\id}{id}
\newcommand{\sgn}{\mathrm{sgn}}
\newcommand{\ptr}[1]{\raisebox{-0.25\height}{\includegraphics[width=0.7cm]{trait_type_#1.eps}}}
\newcommand{\skcrv}{\raisebox{-0.25\height}{\includegraphics[width=0.5cm]{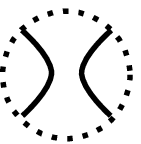}}}
\newcommand{\skcrh}{\raisebox{-0.25\height}{\includegraphics[width=0.5cm]{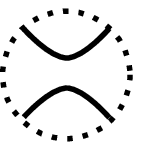}}}
\newcommand{\skcrr}{\raisebox{-0.25\height}{\includegraphics[width=0.5cm]{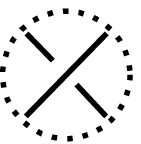}}}
\newcommand{\skcrl}{\raisebox{-0.25\height}{\includegraphics[width=0.5cm]{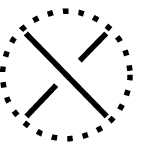}}}
\newcommand{\skcrvirt}{\raisebox{-0.25\height}{\includegraphics[width=0.5cm]{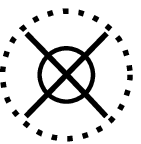}}}
\date{}
\title{Local transformations and functorial maps}
\author{Igor Nikonov}
\begin{document}

\maketitle

\begin{abstract}
Picture-valued invariants are the main achievement of parity theory by V.O. Manturov. In the paper we give a general description of such invariants which can be assigned to a parity (in general, a trait) on diagram crossings. We distinguish two types of picture-valued invariants: derivations (Turaev bracker, index polynomial etc.) and functorial maps (Kauffman bracket, parity bracket, parity projection etc.). We consider some examples of binary functorial maps.

Besides known cases of functorial maps, we present two new examples. The order functorial map is closely connected with (pre)orderings of surface groups and leads to the notion of sibling knots, i.e. knots such that any diagram of one knot can be transformed to a diagram of the other by crossing switching. The other is the lifting map which is inverse to forgetting of under-overcrossings information which turns virtual knots into flat knots. We give some examples of liftable flat knots and flattable virtual ones.

An appendix of the paper contains description of some smoothing skein modules. In particular, we show that $\Delta$-equivalence of tangles in a fixed surface is classified by the extended homotopy index polynomial.
\end{abstract}

Keywords: tangle, picture-valued invariant, index, $\tau$-derivation, functorial map, order, discretely ordered group, sibling knots, lifting problem, flattable virtual knot, binary trait, extended homotopy index invariant

\section{Introduction}

One of the major achievements of parity theory proposed by V.O. Manturov in~\cite{M3} are picture-valued invariants of knots. Parity brackets and functorial map allow one to establish minimality of knot diagrams and to construct (counter) examples~\cite{BR,Ch,M7}.

Another source of picture-valued invariants is the theory of chord (crossing) indices of knots. Examples of such invariants are Goldman's bracket~\cite{G} and Turaev's cobracket~\cite{T}. A. Henrich defined a smoothing and a glueing invariants of virtual knots; the latter was proved to be the universal Vassiliev invariant of degree one for virtual knots. Following V.~Turaev, P.~Cahn proposed an elaboration of the glueing invariant~\cite{C}, and Z. Cheng, H. Gao and M. Xu considered a picture-valued invariant based on unoriented smoothing~\cite{CGX}.
A. Gill, M. Ivanov, M. Prabhakar and A. Vesnin used smoothings to construct multi-parameter series of virtual knot invariants~\cite{GIPV}.

Besides, after V. Turaev~\cite{Tsk} and J. Przytycki~\cite{P}, the polynomial knot invariants can be thought of as diagram-valued in the correspondent skein modules.

The goal of the present paper is to provide a unified description of such picture-valued invariants.

\begin{figure}[h]
\centering\includegraphics[width=0.3\textwidth]{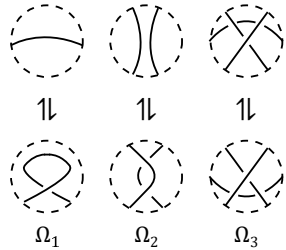}
\caption{Reidemeister moves}\label{fig:reidemeister_moves}
\end{figure}

From the combinatorial viewpoint, a {\em knot} is an equivalence class of diagrams modulo isotopies and Reidemeister moves (Fig.~\ref{fig:reidemeister_moves}).  Given a diagram, we can construct a new diagram by replacing the crossings of the diagram by some small diagrams (tangles) according to some transformation rule. We will call such a transformation \emph{local} because it applies to the smallest parts of the diagram --- the crossings. The result of the transformation will be considered as a diagram in some knot theory which may be exotic (i.e. be theory with moves other than the Reidemeister ones).

\begin{equation*}
\xymatrix{
*+[F]{\txt{source \\ knot theory}} \ar@{-}[r] & *+[F-:<12pt>]{\txt{local\\ transformation\\ rule}} \ar[r] & *+[F]{\txt{destination\\ knot theory}}
}
\end{equation*}

The question is to find an appropriate transformation rule and a destination knot theory so that the result would not depend (up to moves in the destination knot theory) on the choice of the initial diagram of the knot.

Let us proceed with several examples.

\begin{example}[Kauffman bracket~\cite{K0}]\label{exa:kauffman_bracket}

The Kauffman bracket can be considered as a map from framed links to some skein module $\mathscr S$. For a link diagram $D$, its bracket $\langle D\rangle$ is obtained by applying the smoothing rule in Fig.~\ref{fig:kauffman_smoothing} to each crossing of the diagram.

\begin{figure}[h]
\centering\includegraphics[width=0.4\textwidth]{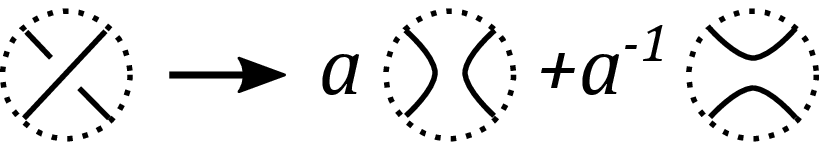}
\caption{Kauffman smoothing rule}\label{fig:kauffman_smoothing}
\end{figure}

The skein module $\mathscr S$ consists of linear combinations of diagrams without crossings (i.e. sets of disjoint embedded circles) over the ring $A=\Z[a,a^{-1}]$ modulo the circle reduction move $O_\delta$ where $\delta=-a^2-a^{-2}$. Then $\mathscr S$ is freely generated over $A$ by the empty diagram. Hence, the bracket $\langle D\rangle$ is a product of an element of $A$ (i.e. a Laurent polynomial in $a$) and the empty diagram. This polynomial is the conventional value of the Kauffman bracket of the link.

\begin{figure}[h]
\centering\includegraphics[width=0.25\textwidth]{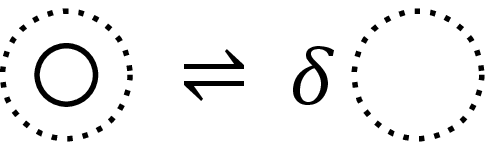}
\caption{Trivial circle reduction move $O_\delta$}\label{fig:circle_remove}
\end{figure}

On the other hand, we can consider the skein module $\mathscr S'$ which consists of linear combinations of link diagrams over $\Z[a,a^{-1}]$ modulo the Kauffman smoothing rule and the move $O_\delta$. Consider the identity smoothing rule which sends a diagram $D$ to the same diagram but it is considered as an element of $\mathscr S'$. Since $\mathscr S'$ is a free module over $\Z[a,a^{-1}]$ generated by the empty diagram, the identity smoothing rule yields another description of Kauffman bracket.
\end{example}

\begin{example}[Turaev cobracket~\cite{Tsk}]\label{exa:turaev_cobracket}
Let $F$ be a compact oriented two-dimensional surface. Consider an oriented flat knot $K$ in $F$, i.e. a free homotopy type of a closed loop in $F$. Combinatorially, a flat knot can be defined as an equivalence class of generically immersed curves (i.e. immersed curves whose self-intersections are double points) modulo isotopies and flat Reidemeister moves (Fig.~\ref{fig:reidemeister_moves_flat_oriented}).

\begin{figure}[h]
\centering\includegraphics[width=0.3\textwidth]{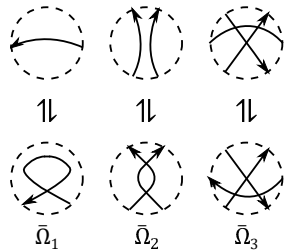}
\caption{Flat Reidemeister moves}\label{fig:reidemeister_moves_flat_oriented}
\end{figure}

The Turaev's delta map is the sum $\Delta(K)=\sum_{c} \Delta_c(K)$ of two-component links with numbered components obtained by applying the cobracket rule (Fig.~\ref{fig:turaev_cobracket}) at the given crossing.

\begin{figure}[h]
\centering\includegraphics[width=0.4\textwidth]{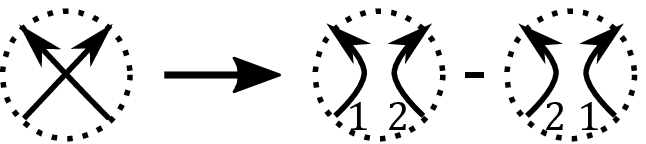}
\caption{Turaev cobracket rule}\label{fig:turaev_cobracket}
\end{figure}

The skein module consists of integral linear combinations of oriented two-component flat diagrams modulo the flat Reidemeister moves and the move $O_0$.
\end{example}

Note that Turaev's cobracket represents another type of picture-valued invariant than Kauffman bracket. For Kauffman bracket, we applied the transformation rule to all crossings simultaneously whereas here we apply the transformation rule only once in each summand. Below we will call  invariants like Kauffman bracket {\em functorial maps} and call latter invariants {\em derivations}.

\begin{example}[Goldman bracket~\cite{G}]\label{exa:goldman_bracket}

Goldman bracket maps oriented two-components flat links with numbered components in the given compact oriented surface $F$ to the free $\Z$-module of flat knots in $F$:
\[
m(L)=\sum_c m_c(L)
\]
where $m_c(L)$ is determined by the bracket rule at the crossing $c$ (Fig.~\ref{fig:goldman_bracket}).

\begin{figure}[h]
\centering\includegraphics[width=0.3\textwidth]{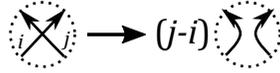}
\caption{Goldman bracket rule}\label{fig:goldman_bracket}
\end{figure}

Goldman bracket demonstrates a new feature: the transformation rule acts differently on different types of crossings. Namely, self-crossings give zero summands to the bracket, and the sign of the summand for a mixed crossing depends on the position of the link components at the crossing.
\end{example}

\begin{example}[Parity bracket~\cite{M3}]\label{exa:parity_bracket}

The parity bracket acts on free knots, i.e. virtual diagrams modulo Reidemeister moves (Fig.~\ref{fig:reidemeister_moves_virtual}), crossing change and flanking move (Fig.~\ref{fig:flanking_move}).

\begin{figure}[h]
\centering\includegraphics[width=0.4\textwidth]{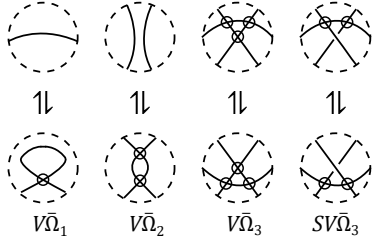}
\caption{Virtual Reidemeister moves}\label{fig:reidemeister_moves_virtual}
\end{figure}

\begin{figure}[h]
\centering\includegraphics[width=0.2\textwidth]{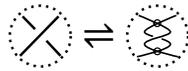}
\caption{Flanking move $Fl$}\label{fig:flanking_move}
\end{figure}

Assume there is a some $\Z_2$-labelling of crossings of a virtual knot diagram. The bracket is the result of smoothing of the diagram in all crossings according to the bracket rule in Fig.~\ref{fig:parity_bracket}.

\begin{figure}[h!]
\centering\includegraphics[width=0.4\textwidth]{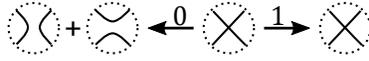}
\caption{Parity bracket rule}\label{fig:parity_bracket}
\end{figure}

The result belongs to the skein module that consists of $\Z_2$-combinations of virtual diagrams modulo the move $\Omega_2$, virtual Reidemeister moves, crossing change, flanking move and the move $O_0$. The bracket is an invariant of free knots when the $\Z_2$-labelling is a parity.

A \emph{parity} is a rule to assign numbers $0$ and $1$ to the (classical) crossings of diagrams of a knot in a way compatible with Reidemeister moves: the parity of a crossing does not change if it does not involved in the Reidemeister move, and the parities of the crossings participating in the move satisfy the parity axioms in Fig.~\ref{fig:parity_axioms}.

\begin{figure}[h]
\centering\includegraphics[width=0.6\textwidth]{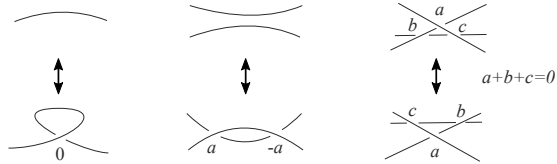}
\caption{Parity axioms}\label{fig:parity_axioms}
\end{figure}
\end{example}

\begin{example}[Parity projection~\cite{M3}]\label{exa:functorial_map}

Let $K$ be a virtual knot. Assume there is some $\Z_2$-labelling of crossings of diagrams of $K$. Virtualise the diagram crossings according to the projection rule in Fig.~\ref{fig:functorial_map_rule}.

\begin{figure}[h]
\centering\includegraphics[width=0.4\textwidth]{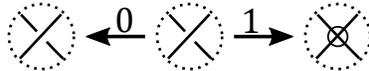}
\caption{Parity projection rule}\label{fig:functorial_map_rule}
\end{figure}

If the labelling is a \emph{weak parity} then the result is an invariant with values in virtual knots.

The definition of a weak parity is obtained from the definition of parity by replacing the condition $a+b+c=0$ for third Reidemeister moves by a weaker condition that if two labels among $a,b,c$ are equal to zero then the third label is zero too.
\end{example}

As the last two examples show, different transformation rules impose different conditions on the crossing labelling.



The paper is organized as follows. Section~\ref{sec:definitions} contains combinatorial descriptions of knot theories and the definition of crossings traits and their specifications (indices and parities). We finish the section with the description of the universal trait for knots and tangles in a fixed surface. In Section~\ref{sec:local_transformations} we define local transformations on knot theories and consider two types of transformations: derivations and functorial maps. We formulate sufficient invariance conditions for derivations (Section~\ref{subsec:derivations}) and functorial maps (Section~\ref{subsec:functorial_maps}). Section~\ref{sec:functorial_maps_examples} contains some examples of functorial maps. We consider unary (Section~\ref{subsec:unary_functorial_maps}) and binary functorial maps (Section~\ref{subsec:binary_functorial_maps}). In Section~\ref{subsect:cc_map_orders} we consider functorial maps which switches diagram crossings. We show how these functorial maps relate to (pre)orderings on the surface group. Here we also introduce the notion of sibling knots. Section~\ref{subsect:lifting} is devoted to the problem of lifting of flat knots to virtual ones. We define liftable flat knots and flattable virtual knots and present some examples of them.

The paper contains three appendices. Section~\ref{app:binary_trait_types} contains classification of binary traits based on their compatibility with the Reidemeister moves. Section~\ref{app:skein_modules} includes description of some skein modules. The first part of the section is devoted to smoothing skein modules. In the second part we show that $\Delta$-skein module can be described by the extended homotopy index polynomial.
Section~\ref{app:functorial_map_table} contains a table of the cases of binary functorial maps discussed in the paper.

\section{Crossing traits in a knot theory}\label{sec:definitions}

\subsection{Knot theories}\label{subsec:knot_theory}




\begin{definition}\label{def:tangle_diagram}
Let $F$ be an oriented compact connected surface. A \emph{tangle diagram} $D$ is an embedded finite graph with vertices of valences $1$ and $4$ such that the set $\partial D$ of vertices of valence $1$ coincides with $D\cap\partial F$ and the vertices of valence $4$ carry additional structure (see Fig.~\ref{fig:crossing_types}).

\begin{figure}[h]
\centering\includegraphics[width=0.4\textwidth]{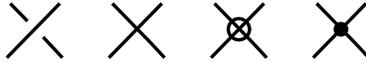}
\caption{Types of crossings: classical, flat, virtual, singular}\label{fig:crossing_types}
\end{figure}

Diagrams are considered up to isotopy fixed on the boundary.

\begin{figure}[h]
\centering\includegraphics[width=0.35\textwidth]{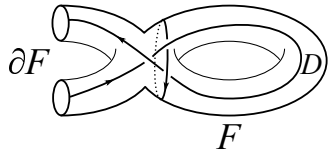}
\caption{An oriented tangle diagram with a long and a closed components}
\end{figure}
\end{definition}

%

Edges incident to a $4$-valent vertex of a tangle diagram split naturally in two pairs of \emph{opposite} edges. Correspondence between opposite edges induces an equivalence relation on the set of edges of the diagram. Equivalence classes of this relation are called \emph{(unicursal) components} of the diagram. A component is \emph{long} if it contains vertices of valency $1$ (in this case the edges of the component form a path in the diagram), otherwise the component is called \emph{closed} (in this case the edges of the component form a cycle).

We say that a diagram is \emph{oriented} if all its components are oriented.

A diagram without long components is a \emph{link} diagram, a link diagram with one closed component is a \emph{knot} diagram. A diagram with one component which is long, is called a \emph{long knot} diagram.

Let us denote some sets of diagrams:
\begin{itemize}
\item $\mathscr D(F,X)$ denotes the set of isotopy classes of unoriented classical diagrams $D\subset F$ with the boundary $\partial D=X\subset \partial D$;
\item $\mathscr D(F)=\mathscr D(F,\emptyset)$;
\item $\mathscr D_+(F,X)$ denotes the set of isotopy classes of oriented classical diagrams;
\item $\bar{\mathscr D}(F,X)$ denotes the set of isotopy classes of diagrams with flat crossings;
\item $\mathscr D^0(F,X)$ denotes the set of isotopy classes of diagrams without crossings.
\end{itemize}

\begin{definition}\label{def:n_tangle}
An \emph{$n$-tangle} is a diagram $D$ in the standard disk $\mathbb{D}^2$ such that $\partial D=X$ where $X\subset \mathbb{D}^2$ is a fixed counterclockwise enumerated set with $2n$ elements. The set of $n$-tangles is denoted by $\mathcal T_n$, and $\mathcal T_n^+$ is the set of oriented $n$-tangles. An $n$-tangle may have crossings of any type.

\begin{figure}[h!]
\centering\includegraphics[width=0.12\textwidth]{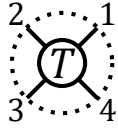}
\caption{A 2-tangle}\label{fig:2tangle}
\end{figure}
\end{definition}


\begin{figure}[h]
\centering\includegraphics[width=0.45\textwidth]{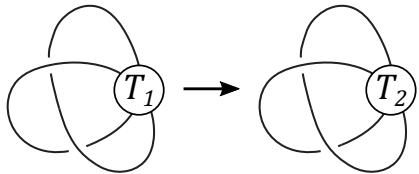}
\caption{Application of a local move to a diagram}\label{fig:local_transformation}
\end{figure}

\begin{definition}\label{def:local_move}
A \emph{local move} is a pair $M=(T_1,T_2)$ of $n$-tangles such that $\partial T_1=\partial T_2$.
\end{definition}

Given a move $M=(T_1,T_2)$, a diagram $D$ and a disk $B\subset F$ such that for $T=D\cap B$  the pair $(B,T)$ is homeomorphic to $(\mathbb{D}^2, T_1)$ (the homeomorphism preserves the orientations of the surfaces), one gets a new diagram $R_M(D,T)$ by replacing the subtangle $T$ with the subtangle homeomorphic to $T_2$ (see Fig.~\ref{fig:local_transformation}).

\afterpage{
\begin{figure}[p]
\centering\includegraphics[width=0.5\textwidth]{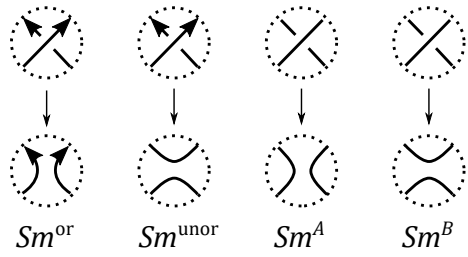}
\caption{Smoothings}\label{fig:smoothing_moves}
\end{figure}

\begin{figure}[p]
\centering\includegraphics[width=0.3\textwidth]{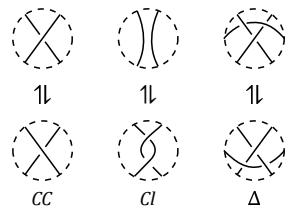}
\caption{Crossing change, clasp move and $\Delta$-move}\label{fig:crossing_change_moves}
\end{figure}

\begin{figure}[p]
\centering\includegraphics[width=0.2\textwidth]{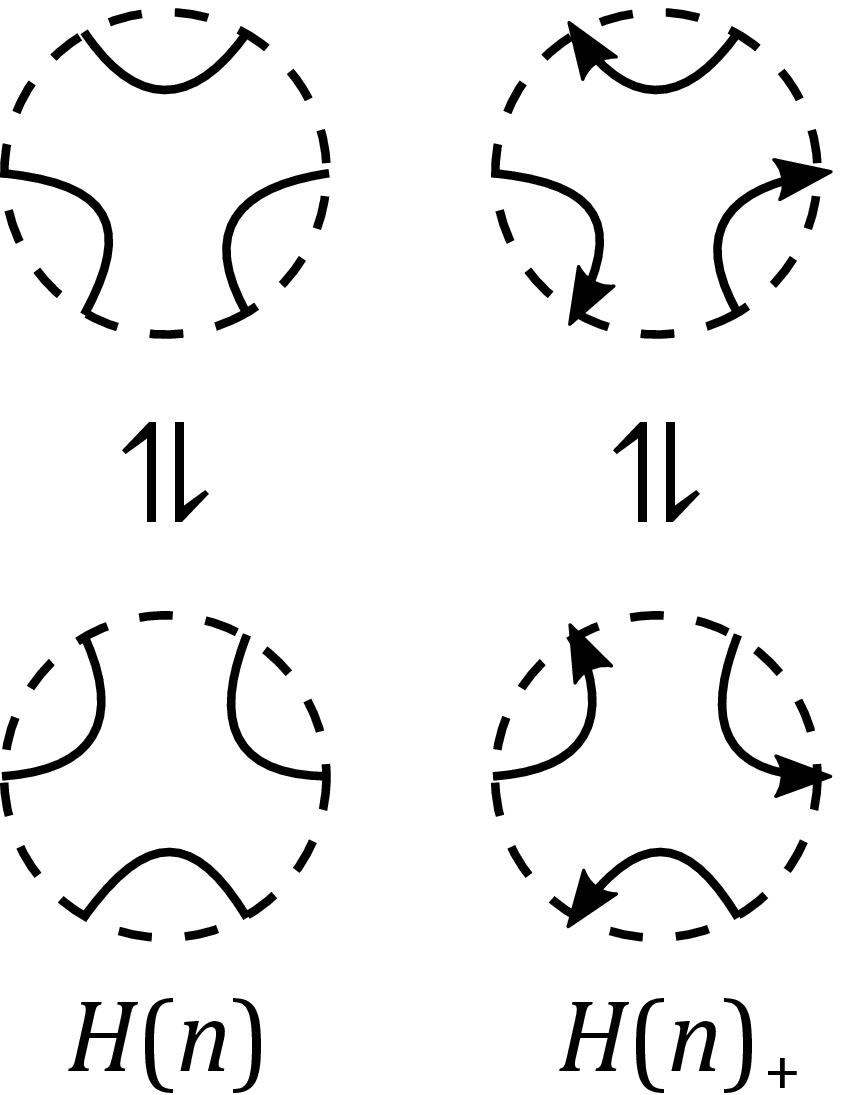}\qquad
\includegraphics[width=0.3\textwidth]{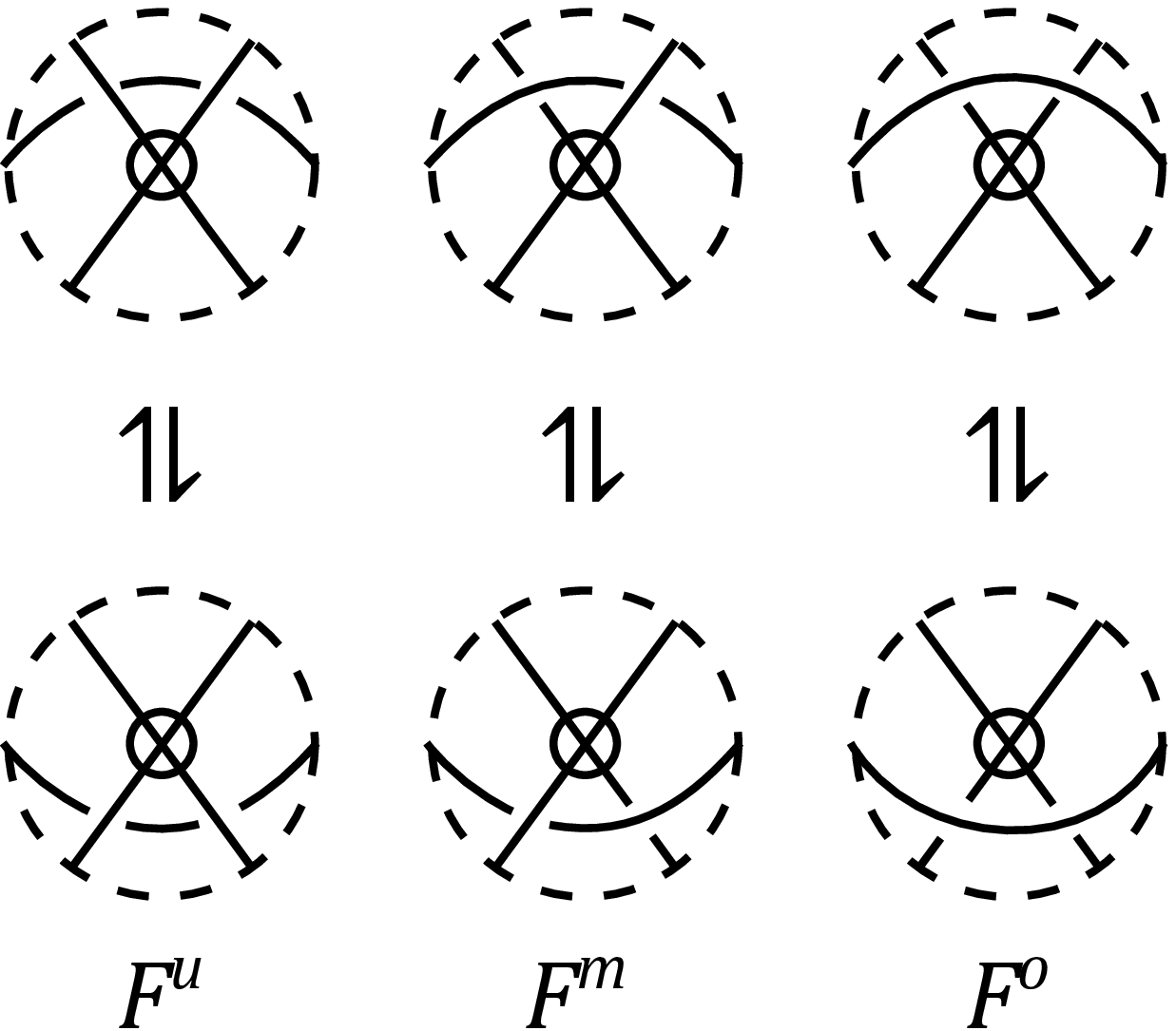}
\caption{$H(n)$-moves and forbidden virtual moves}\label{fig:Hn_forbidden_moves}
\end{figure}
\clearpage
}

\begin{remark}
We will also consider local moves $M=(T_1,T_2)$ with $T_2\in A[\mathcal T_n]$ where $A$ is a coefficient ring. In this case the result of the local move applied to a diagram is a linear combination of diagrams with coefficients in $A$.
\end{remark}

Examples of local moves are:
\begin{itemize}
\item (unoriented) Reidemeister moves $\Omega_1$, $\Omega_2$ , $\Omega_3$ (Fig.~\ref{fig:reidemeister_moves}). Each Reidemeister move actually presents a pair of moves which are inverse to each other;
\item (oriented) flat Reidemeister moves $\bar\Omega_1$, $\bar\Omega_2$, $\bar\Omega_3$ (Fig.~\ref{fig:reidemeister_moves_flat_oriented});
\item virtual Reidemeister moves $V\Omega_1$, $V\Omega_2$ , $V\Omega_3$, $SV\Omega_3$ (Fig.~\ref{fig:reidemeister_moves_virtual});
\item the flanking move $Fl$ (Fig.~\ref{fig:flanking_move});
\item the trivial circle reduction move $O_\delta$ (Fig.~\ref{fig:circle_remove});
\item smoothing moves $Sm^{or}$, $Sm^{unor}$, $Sm^{A}$, $Sm^{B}$ (Fig.~\ref{fig:smoothing_moves});
\item crossing change $CC$, clasp move $Cl$ and $\Delta$-move~\cite{Hab,Mat,MN} (Fig.~\ref{fig:crossing_change_moves});
\item $H(n)$-moves~\cite{HNT} (Fig.~\ref{fig:Hn_forbidden_moves} left);
\item forbidden virtual moves $F^u$, $F^m$, $F^o$~\cite{GPV} (Fig.~\ref{fig:Hn_forbidden_moves} right).
\end{itemize}

\begin{definition}\label{def:knot_theory}
We say that a set of local moves $\mathcal M$ is \emph{finer} than another set of local moves $\mathcal M'$ if any local move $M\in\mathcal M$ can be expressed by a sequence moves from $\mathcal M'$ or their inverses applied to some diagram. Two sets of local moves $\mathcal M$ and $\mathcal M'$ are \emph{equivalent} if $\mathcal M$ is finer than $\mathcal M'$ and $\mathcal M'$ is finer than $\mathcal M$.
A \emph{knot theory} is a set of local moves considered up to equivalence.
\end{definition}

\begin{example}
Reidemeister knot theory and its generalizations can be defined as follows:
\begin{itemize}
\item $\mathcal M_{class}=\{ \Omega_1, \Omega_2, \Omega_3 \}$ is the classical knot theory;
\item $\mathcal M_{virt}=\mathcal M_{class}\cup\{ V\Omega_1, V\Omega_2, V\Omega_3, SV\Omega_3 \}$ is the virtual knot theory~\cite{K1};
\item $\mathcal M_{flat}=\mathcal M_{virt}\cup\{CC\}$ is the flat knot theory~\cite{KK};
\item $\mathcal M_{free}=\mathcal M_{flat}\cup\{Fl\}$ is the free knot theory~\cite{T1};
\item $\mathcal M_{class}^{reg}=\{\Omega_2, \Omega_3 \}$ is the regular classical knot theory;
\item $\mathcal M_{class}^{+}=\{ \Omega_{1a}, \Omega_{1b}, \Omega_{2a}, \Omega_{3a} \}\sim \{ \Omega_{1a}, \Omega_{1b}, \Omega_{2c}, \Omega_{2d}, \Omega_{3b} \}$ is the oriented classical knot theory~\cite{P};
\item $\mathcal M_{class}^{reg+}=\{ \Omega_{2a}, \Omega_{2b}, \Omega_{2c}, \Omega_{2d}, \Omega_{3b} \}$ is the oriented regular classical knot theory.
\end{itemize}

Oriented virtual knot theory $\mathcal M_{virt}^+$ and oriented regular virtual knot theory $\mathcal M_{virt}^{reg+}$ can be defined analogously.
\end{example}

\begin{figure}[h]
\centering\includegraphics[width=0.33\textwidth]{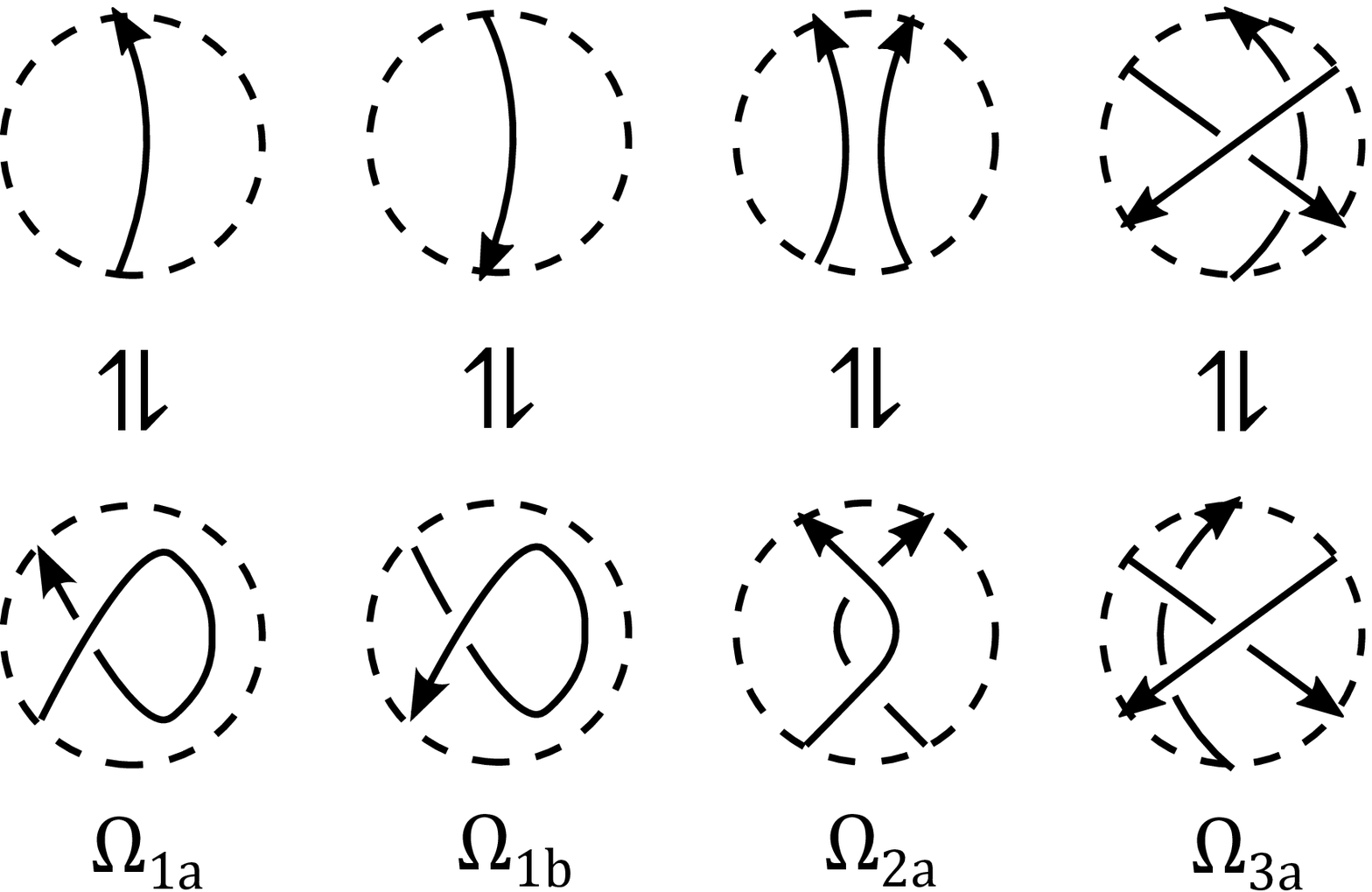}\qquad\qquad
\includegraphics[width=0.45\textwidth]{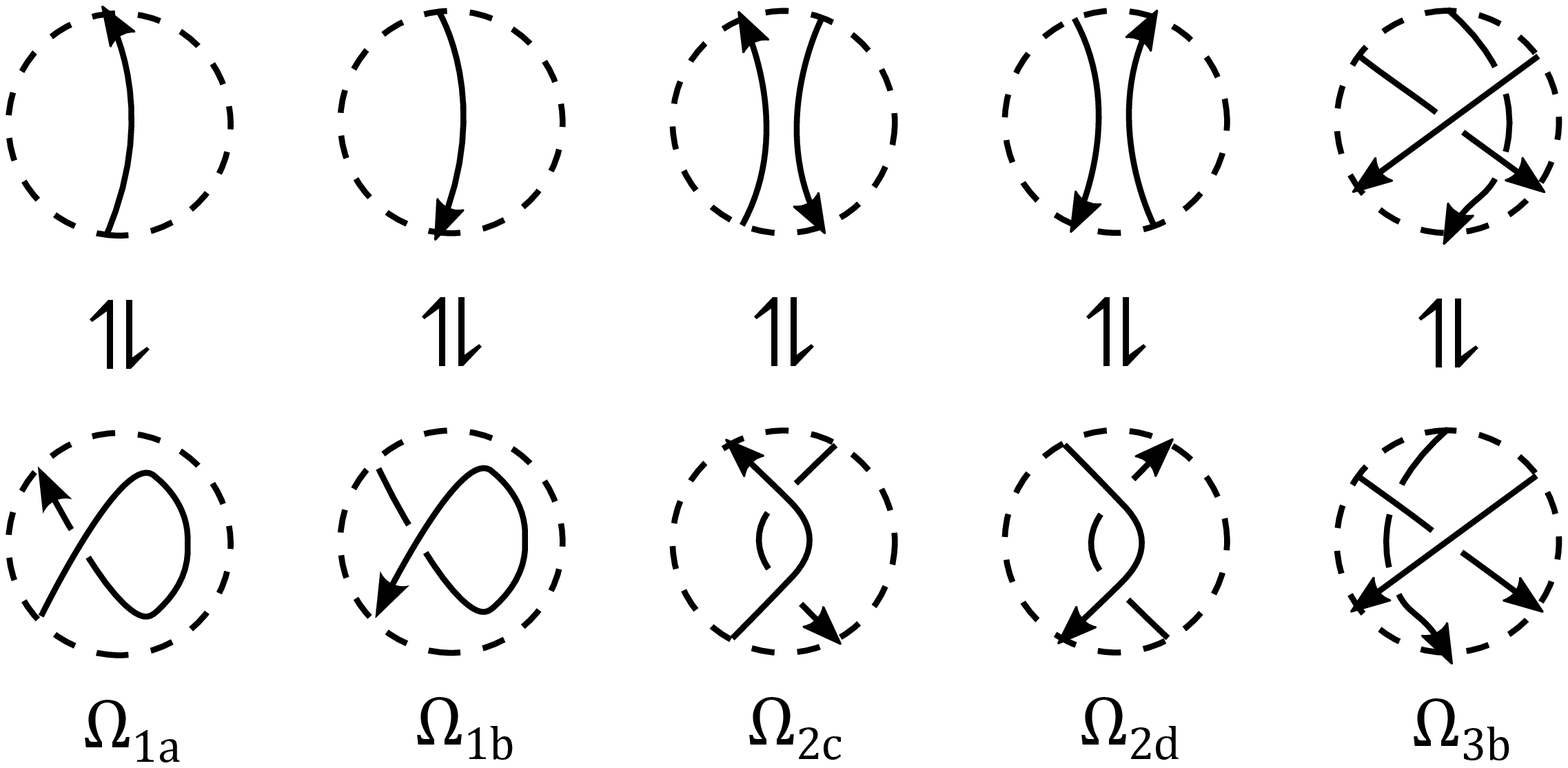}
\caption{Two sets of moves for the oriented classical knot theory}\label{fig:reidemeister_moves_or}
\end{figure}

\begin{figure}[h]
\centering\includegraphics[width=0.45\textwidth]{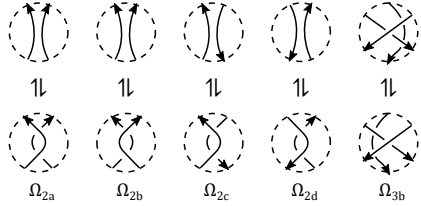}
\caption{The set of moves for the oriented regular classical knot theory}\label{fig:reidemeister_moves_reg_or}
\end{figure}

\begin{definition}\label{def:diagram_category}
Let $A$ be a ring, $\mathcal M$ a knot theory, $F$ a compact oriented surface and $\mathscr D$ a set of diagrams on $F$. Then we can construct three objects:
\begin{itemize}
  \item the \emph{set of knots} $\mathscr K(\mathscr D|\mathcal M)$ which is the set of the equivalence classes of diagrams $D\in\mathscr D$ modulo the local moves $M\in\mathcal M$;
  \item the \emph{skein module} $\mathscr S(\mathscr D|\mathcal M)=A[\mathscr D]/\left<D-R_M(D,T) \mid D\in \mathscr D, M\in\mathcal M\right>$;
  \item the \emph{diagram category} $\mathfrak K(\mathscr D|\mathcal M)$ whose objects are diagrams in $\mathscr D$ and the morphisms are local moves in $\mathcal M$ and their formal compositions.
\end{itemize}
\end{definition}

When $\mathscr D$ is a natural choice of the set of diagrams in $F$ (i.e. $\mathscr D=\mathscr D(F)$ for $\mathcal M=\mathcal M_{class}$)
 we will write $\mathscr K(F|\mathcal M)$, $\mathscr S(F|\mathcal M)$ etc.





\subsection{Traits of crossings}\label{subsec:traits}

Let us recall some notation from~\cite{Nct,Npi}.

Let $\mathfrak K=\mathscr K(\mathscr D|\mathcal M)$ be a classical-type diagram category 
(for example, the diagram category of classical, virtual, flat or free knots). The \emph{crossing functor} is the correspondence $D\mapsto\mathcal C(D)$, $D\in Ob(\mathfrak K)$, where $\mathcal C(D)$ is the set of classical crossings. Any morphism $f\colon D\to D'$ defines a partial bijection $f_*\colon \mathcal C(D)\to \mathcal C(D')$ between the sets of classical crossings.

\begin{definition}[\cite{Npi}]\label{def:trait}
A \emph{trait} with coefficients in a set $\Theta$ is a set of maps $\theta_D\colon\mathcal C(D)\to \Theta$ such that for any Reidemeister move $f\colon D\to D'$ and any crossing $c\in\mathcal C(D)$ one has $\theta_D(c)=\theta_{D'}(f_*(c))$ (if $f_*(c)$ exists).
\end{definition}


\begin{definition}\label{def:trait_types}
Particular cases of traits are the following.
\begin{itemize}
\item An \emph{index} is a trait $\iota$ such that $\iota(c_1)=\iota(c_2)$ for any crossings $c_1,\,c_2$ to which a decreasing second Reidemeister move can be applied.

\item A \emph{signed index} with coefficients in a set $I$ with an involution $\ast\colon I\to I$ is a trait $\sigma$ such that for any crossings $c_1,\,c_2$ to which a decreasing second Reidemeister move can be applied, one has $\sigma(c_1)=\sigma(c_2)^\ast$.
\item A \emph{parity} with coefficients in an abelian group $A$ is a trait $p$ such that for any Reidemeister move the sum of parities of the crossings that take part in the move is equal to zero (Fig.~\ref{fig:parity_axioms}).
\item A \emph{weak parity} is a trait with coefficients in $\Z_2$ such that for any Reidemeister move, the number of odd crossings among the ones which take part in the move can not be equal to $1$.
\end{itemize}
\end{definition}

Any parity is a signed index (the involution here is $x^\ast=-x$, $x\in A$), and a weak parity is an index.

\begin{definition}\label{def:universal_trait}
Let $\mathfrak K$ be a diagram category. A trait $\theta^u$ with coefficients in a set $\Theta^u$ is called the \emph{universal trait} on $\mathfrak K$ if for any trait $\theta$ on $\mathfrak K$ with coefficients in a set $\Theta$ there is a unique map $\psi\colon \Theta^u\to \Theta$ such that $\theta=\psi\circ\theta^u$.
\end{definition}

\begin{remark}
The universal coefficient set $\Theta^u$ can be described as the set of equivalence classes of pairs $(D,c)$, $D\in Ob(\mathfrak K)$, $c\in \mathcal C(D)$, modulo moves which do not eliminate the crossing $c$.
\end{remark}

\begin{theorem}[\cite{Npi}]\label{thm:universal_trait_index}
Let $\theta^u$ be the universal trait on a diagram category $\mathfrak K$. Then $\theta^u$ is the universal signed index on $\mathfrak K$.
\end{theorem}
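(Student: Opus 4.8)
The plan is to endow the universal coefficient set $\Theta^u$ with an involution $\ast$ for which $\theta^u$ becomes a signed index, and then to check that this signed index is terminal among all signed indices. Recall from the remark preceding the statement that $\Theta^u$ is the set of classes $[(D,c)]$ of diagrams with a marked crossing, taken modulo moves not eliminating $c$, and that $\theta^u_D(c)=[(D,c)]$. Since a signed index is in particular a trait, the universal property of $\theta^u$ as a trait will supply, for free, the unique comparison maps and their uniqueness; the entire content of the statement is therefore (i) that $\Theta^u$ carries a canonical involution making $\theta^u$ a signed index, and (ii) that these comparison maps are automatically involution-equivariant.

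First I would construct the involution. Given a class $\xi=[(D,c)]$, I would represent it by a diagram in which $c$ is one of the two crossings of a second Reidemeister bigon, and then set $\xi^\ast:=[(D,c')]$, where $c'$ is the partner crossing of $c$ in that bigon. That every class admits such a representative is itself a point to be verified (realizability): beginning from an arbitrary $(D,c)$ one creates an auxiliary bigon next to $c$ by an increasing $\Omega_2$ and rearranges it by $\Omega_2$ and $\Omega_3$ moves until $c$ becomes one of its crossings, all of these moves preserving $c$. Because the two crossings of a bigon play symmetric roles, the prescription $\xi\mapsto\xi^\ast$ is an involution as soon as it is well defined, and by construction it satisfies $\theta^u(c_1)=\theta^u(c_2)^\ast$ for every pair $c_1,c_2$ cancelled by a decreasing $\Omega_2$; this is precisely the signed-index axiom.

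The main obstacle is the well-definedness of $\ast$: the class $[(D,c')]$ must not depend on the chosen representative of $\xi$, nor on the auxiliary bigon used to exhibit $c$ as a bigon crossing. I would prove this by a local movie analysis. It suffices to show that if $(D,c)$ and $(E,\bar c)$ are related by a single generating move $g$ of the theory $\mathcal M$ that does not eliminate $c$, then a partner of $c$ and a partner of $\bar c$ represent the same class. Moves supported away from the bigon carry one partner to the other at once; the essential cases are those in which $g$ meets the bigon or one of its two strands --- an $\Omega_2$ or $\Omega_3$ performed at the bigon --- and there one checks directly that the two resulting partner crossings are joined by crossing-preserving moves. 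Independence of the auxiliary bigon, and hence realizability up to equivalence, follows from the same enumeration, since any two bigons exhibiting $c$ differ by such moves. This step I expect to be the delicate one, as it requires a complete, if routine, inspection of how a bigon interacts with each move in the generating set.

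Finally I would verify universality, which is formal once the previous steps are in place. Let $\sigma$ be any signed index with coefficients in an involutive set $(I,\ast)$. Regarding $\sigma$ as a trait and invoking the universal property of $\theta^u$, there is a unique map $\psi\colon\Theta^u\to I$ with $\sigma=\psi\circ\theta^u$. To see that $\psi$ intertwines the involutions, evaluate on a bigon pair $c_1,c_2$ and write $\xi=\theta^u(c_1)$, so that $\xi^\ast=\theta^u(c_2)$: then $\psi(\xi^\ast)=\sigma(c_2)=\sigma(c_1)^\ast=\psi(\xi)^\ast$, the middle equality being the signed-index axiom for $\sigma$. Since every element of $\Theta^u$ is a bigon class by realizability, this gives $\psi(\xi^\ast)=\psi(\xi)^\ast$ for all $\xi$, so $\psi$ is a morphism of signed indices; its uniqueness is inherited from its uniqueness as a trait morphism. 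Hence $\theta^u$, together with the involution constructed above, is the universal signed index.
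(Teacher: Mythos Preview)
The paper does not prove this theorem; it is quoted from \cite{Npi} and used only via its consequence, Proposition~\ref{prop:trait_dual_crossing}. So there is no in-paper proof to compare against. That said, your outline is the natural argument and is essentially correct.

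Two remarks. First, realizability is simpler than you suggest: given $(D,c)$, a single increasing $\Omega_2$ performed on the two outgoing arcs on one side of $c$ (pushing the under-arc over the over-arc and back) already produces a partner $c'$ forming a bigon with $c$; no $\Omega_3$ rearrangement is needed. Second, for well-definedness you can avoid a full movie analysis by reducing to a single local check: if $c$ lies in two bigons with partners $c'$ and $c''$ in the same diagram, then a short sequence of $\Omega_2$/$\Omega_3$ moves not touching $c'$ carries $c''$ into a bigon with $c'$, giving $[(D,c')]=[(D,c'')]$; the general case (different representatives of $\xi$) then follows since the standard one-move partner just constructed persists under any $c$-preserving move. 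Your equivariance and uniqueness arguments in the last paragraph are clean and correct as stated.
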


We will use the following consequence of the theorem.

\begin{proposition}\label{prop:trait_dual_crossing}
Let $\tau$ be a trait on a diagram category $\mathfrak K=\mathscr K(\mathscr D|\mathcal M)$, $D\in \mathscr D$ and $c\in\mathcal C(D)$ a crossing of the diagram $D$. Let $f_i\colon D\to D_i$, $i=1,2$, be morphisms which do not eliminate the crossing $c$, $c_i=(f_i)_*(c)\in\mathcal C(D_i)$ the corresponding crossings, and $c'_i\in\mathcal C(D_i)$, $i=1,2$, be crossings such that a second Reidemeister move can be applied to $c_i$ and $c'_i$. Then $\tau_{D_1}(c'_1)=\tau_{D_2}(c'_2)$.
\end{proposition}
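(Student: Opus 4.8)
The plan is to route everything through the universal trait $\theta^u$ and to exploit Theorem~\ref{thm:universal_trait_index}, which asserts that $\theta^u$ is a signed index. The key observation is that a general trait $\tau$ imposes, by itself, no relation whatsoever between the values on the two crossings that participate in a second Reidemeister move; the only handle on such a cancelling pair is the signed-index relation enjoyed by $\theta^u$. So I would begin by invoking the universal property from Definition~\ref{def:universal_trait}: there is a map $\psi\colon\Theta^u\to\Theta$ with $\tau=\psi\circ\theta^u$. It then suffices to prove the equality one level up, namely $\theta^u_{D_1}(c'_1)=\theta^u_{D_2}(c'_2)$, and to push it forward through $\psi$.

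Next I would record two facts about $\theta^u$. First, since $\theta^u$ is itself a trait and the morphisms $f_i$ do not eliminate $c$, the invariance in Definition~\ref{def:trait}, applied stepwise along the composition $f_i$ (at each step the crossing survives, so the chain of equalities is valid), gives $\theta^u_D(c)=\theta^u_{D_i}(c_i)$ for $i=1,2$, whence $\theta^u_{D_1}(c_1)=\theta^u_{D_2}(c_2)$. Second, because $\theta^u$ is a signed index with some involution $\ast$, and a decreasing second Reidemeister move can be applied to the pair $c_i,c'_i$, the defining relation yields $\theta^u_{D_i}(c_i)=\theta^u_{D_i}(c'_i)^{\ast}$, equivalently $\theta^u_{D_i}(c'_i)=\theta^u_{D_i}(c_i)^{\ast}$ since $\ast$ is an involution.

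Combining the two, I would compute
\[
\theta^u_{D_1}(c'_1)=\theta^u_{D_1}(c_1)^{\ast}=\theta^u_{D_2}(c_2)^{\ast}=\theta^u_{D_2}(c'_2),
\]
and then apply $\psi$ to conclude $\tau_{D_1}(c'_1)=\psi(\theta^u_{D_1}(c'_1))=\psi(\theta^u_{D_2}(c'_2))=\tau_{D_2}(c'_2)$, as desired. Note that $\psi$ need not respect the involution: I only transport an equality of universal values and then apply $\psi$, so this causes no difficulty.

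The main obstacle here is conceptual rather than computational, namely recognizing that the statement cannot be extracted from the bare trait axiom and that the correct leverage is the signed-index structure of the universal trait supplied by Theorem~\ref{thm:universal_trait_index}. Once that theorem is brought in, the argument collapses to the short chain of equalities above. The one point I would check carefully is that the hypothesis ``a second Reidemeister move can be applied to $c_i$ and $c'_i$'' is precisely the condition under which the signed-index relation holds, i.e. that $c_i$ and $c'_i$ form the cancelling pair of a decreasing $\Omega_2$-move; this is immediate from Definition~\ref{def:trait_types}.
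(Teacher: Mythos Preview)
Your proof is correct and follows essentially the same route as the paper's own argument: both invoke the universal property to factor $\tau=\psi\circ\theta^u$, use Theorem~\ref{thm:universal_trait_index} to obtain $\theta^u(c'_i)=\theta^u(c_i)^\ast=\theta^u(c)^\ast$, and then push the resulting equality $\theta^u(c'_1)=\theta^u(c'_2)$ forward through $\psi$. Your write-up is simply more explicit about the intermediate steps (trait invariance along the morphisms $f_i$, the involution being genuinely involutive), but the underlying strategy is identical.
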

\begin{proof}
By Theorem~\ref{thm:universal_trait_index} $\theta^u(c'_i)=\theta^u(c_i)^\ast=\theta^u(c)^\ast$. Since $\theta^u$ is universal there exists a map $\psi$ between the coefficient sets such that $\tau=\psi\circ \theta^u$. Then $\tau(c'_1)=\psi(\theta^u(c'_1))=\psi(\theta^u(c'_2))=\tau(c'_2)$.
\end{proof}

Let $F$ be an oriented connected compact surface with the boundary $\partial F$. Consider diagrams of oriented tangles with numbered components. Classical Reidemeister moves on these diagrams define the theory of oriented tangles with numbered components in the surface $F$. Let us describe the universal trait for this theory.


Let $D=D_1\cup\cdots\cup D_n$ be an oriented tangle diagram and $v$ a crossing in $D$. Let $v$ be an intersection of components $D_i$ and $D_j$, and $D_i$ be the overcrossing at $v$. Then the \emph{component index} of $v$ is $\tau(v)=(i,j)\in \{1,\dots,n\}^2$.

\begin{figure}[h]
\centering\includegraphics[width=0.12\textwidth]{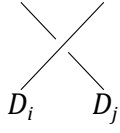}
\caption{A crossing with the component index $\tau(v)=(i,j)$}\label{fig:crossing_type}
\end{figure}

If $v$ is a self-intersection of a long component one defines the \emph{order index} $o(v)\in\Z_2$ depending on whether $v$ is early under- or overcrossing (Fig.~\ref{fig:order_index}).

\begin{figure}[h]
\centering\includegraphics[width=0.4\textwidth]{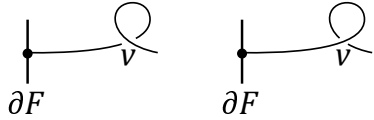}
\caption{Early undercrossing ($o(v)=-1$) and early overcrossing ($o(v)=1$) }\label{fig:order_index}
\end{figure}

Choose a non-crossing point $z_i$ on each component $D_i$ of the diagram. If the component $D_i$ is long then we suppose that $z_i$ is the initial point of the component. For $1\le i,j\le n$ denote the set of homotopy classes of paths from $z_i$ to $z_j$ by $\pi(F,z_i,z_j)$.

The tangle diagram $D$ in $F$ can be considered as the image of a $1$-dimensional manifold $T$ embedded in the thickening $F\times [0,1]$ of the surface $F$ under the natural projection $p\colon F\times [0,1]\to F$. Let $y_i=p^{-1}(z_i)\cap T$, $1\le i\le n$. Any diffeomorphism $\Phi$ on $F\times [0,1]$ such that $\Phi$ is isotopic to the identity, $\Phi(T)=T$ and $\Phi(y_i)=y_i$, induces automorphisms $\Phi_{ij}$ on the sets $\pi(F,z_i,z_j)$. Denote the groups of such automorphisms by $IM_{ij}\subset Aut(\pi(F,z_i,z_j))$.
Let $\hat\pi(F,z_i,z_j)=\pi(F,z_i,z_j)/IM_{ij}$ be the space of orbits under this action.

Let $v$ be a crossing of $D$. If $v$ is a self-crossing of a component $D_i$ then its \emph{homotopy index} is the homotopy class $h(v)=[D_v]\in \hat\pi(F,z_i,z_i)$ of the loop formed by the diagram at the crossing $v$. If $D_i$ is a closed component we assume the loop goes from the undercrossing to the overcrossing (Fig.~\ref{fig:homotopy_index} middle). If $v$ is a mixed crossing of components $D_i$ and $D_j$ and $\tau(v)=(i,j)$ then the homotopy index is the class of the path which goes in the diagram from $z_i$ by the crossing $v$ to $z_j$: $h(v)=[\gamma_{z_i,v}\gamma_{z_j,v}^{-1}]\in\hat\pi(F,z_i,z_j)$.

\begin{figure}[h]
\centering\includegraphics[width=0.25\textwidth]{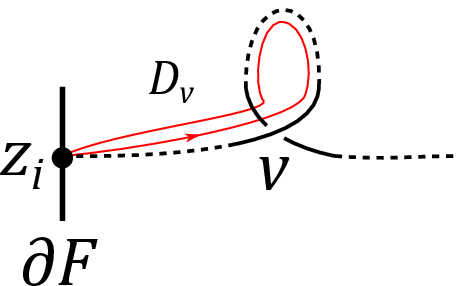}\quad \includegraphics[width=0.22\textwidth]{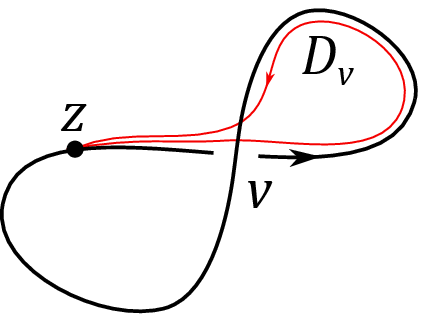}
\quad \includegraphics[width=0.25\textwidth]{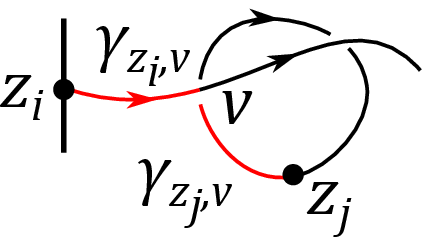}
\caption{Homotopy index for long, closed and mixed case}\label{fig:homotopy_index}
\end{figure}

\begin{theorem}[\cite{Nct}]\label{thm:universal_index}
The universal index $\iota^u$ on the diagram category of tangles in the surface $F$ is composed by the component, order and homotopy indices: $\iota^u=(\tau,o,h)$. The universal signed index is $\sigma^u=(sgn, \tau,o,h)$.
\end{theorem}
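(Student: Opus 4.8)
The plan is to reduce the statement to the already-established identification of the universal trait with the universal signed index (Theorem~\ref{thm:universal_trait_index}), together with the explicit description of the universal coefficient set $\Theta^u$ as the set of equivalence classes of pairs $(D,c)$ modulo moves that do not eliminate the crossing $c$. Concretely, I would first check that $\sigma=(\sgn,\tau,o,h)$ is a signed index, then prove that it is \emph{complete}, i.e. that it separates these equivalence classes; the universal index $\iota^u=(\tau,o,h)$ then drops out by discarding the only sign-sensitive coordinate.

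First I would verify that each of $\tau$, $o$ and $h$ is a trait in the sense of Definition~\ref{def:trait}, i.e. is unchanged by any move that does not eliminate the crossing. For $\tau$ and $o$ this is immediate from the definitions, since the components carrying the two strands and the early/late over/under alternative are visibly preserved. The delicate point is the homotopy index $h$: its invariance under the first and third Reidemeister moves and under isotopy of $F$ is exactly what the passage to the quotient $\hat\pi(F,z_i,z_j)=\pi(F,z_i,z_j)/IM_{ij}$ is designed to guarantee, because a diffeomorphism of $F\times[0,1]$ isotopic to the identity and preserving the tangle acts on the based path classes precisely through $IM_{ij}$. I would then analyse a decreasing second Reidemeister move: at its two crossings the same strand is the overpass, so $\tau$, $o$ and $h$ agree while the writhe signs are opposite. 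Hence $(\tau,o,h)$ is an index and $\sigma=(\sgn,\tau,o,h)$ is a signed index whose involution is simply $\sgn\mapsto-\sgn$; Proposition~\ref{prop:trait_dual_crossing} records the matching compatibility of $\tau,o,h$ with the dual crossing of an $\Omega_2$.

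Second, since $\sigma$ is a trait it factors as $\sigma=\psi\circ\theta^u$ through the universal trait, and by Theorem~\ref{thm:universal_trait_index} it suffices to show that $\psi$ is injective, i.e. that two marked crossings with the same values of $\sgn,\tau,o,h$ represent the same class in $\Theta^u$. This completeness step is the heart of the argument: given $(D_1,c_1)$ and $(D_2,c_2)$ with equal invariants, I would realize the abstract equality of homotopy classes recorded by $h$ as an honest free homotopy of the loop (self-crossing case) or path (mixed case) determined by the marked crossing, and then convert that homotopy into a finite sequence of Reidemeister moves none of which touches the marked crossing. The equality of $\tau$ fixes which components meet and which strand is the overpass, the equality of $o$ fixes the early/late alternative on a long component, and the equality of $\sgn$ fixes the local orientation picture, so the two local configurations at the crossing match; the $IM_{ij}$-quotient absorbs exactly the indeterminacy coming from ambient isotopies of the tangle in $F\times[0,1]$, reconciling based and free homotopy classes.

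Finally, for the universal index: an index is a trait for which the two crossings of a decreasing $\Omega_2$ receive equal values, which by the computation above means the universal index is the quotient of the universal signed index by the involution $\sgn\mapsto-\sgn$. Forgetting the sign coordinate turns $\sigma=(\sgn,\tau,o,h)$ into $(\tau,o,h)$ and identifies it with this quotient, giving $\iota^u=(\tau,o,h)$. I expect the genuine obstacle to be the completeness step, and within it the passage from a homotopy of curves to a concrete sequence of moves fixing the marked crossing, together with a clean treatment of the $IM_{ij}$-action.
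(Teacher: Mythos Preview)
The paper does not prove this theorem: it is stated with a citation to \cite{Nct} and no proof is given here. There is therefore nothing in the present paper to compare your proposal against.

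That said, your outline is a reasonable strategy and matches the shape of the argument one would expect from \cite{Nct}: verify that $(\sgn,\tau,o,h)$ is a signed index, then show it separates the equivalence classes of marked crossings $(D,c)$ modulo moves not eliminating $c$. You correctly flag the completeness step as the real work. One caution: your reduction relies on Theorem~\ref{thm:universal_trait_index}, which in this paper is itself quoted from \cite{Npi} without proof, so you are not gaining any logical economy by invoking it; in \cite{Nct} the completeness of $(\sgn,\tau,o,h)$ is established directly by constructing an explicit sequence of moves connecting two marked crossings with equal invariants, and the analysis of the $IM_{ij}$-action (handling the basepoint ambiguity on closed components) is exactly the delicate part you anticipate.
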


An analogous result holds for flat tangles.

Let $D=D_1\cup\cdots\cup D_n$ be a diagram of a flat tangle $T$ and $v$ a crossing of $D$. Then $v$ is an intersection point of some components $D_i$ and $D_j$. We order the component according to the orientation as shown in Fig.~\ref{fig:refined_flat_component_type}. The \emph{flat component signed index} of the crossing $v$ is the ordered pair $\tau^f(v)=(i,j)$.

\begin{figure}[h]
\centering
  \includegraphics[width=0.12\textwidth]{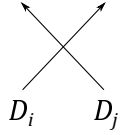}
  \caption{A crossing with flat component index $(i,j)$}\label{fig:refined_flat_component_type}
\end{figure}

Let $v$ be a self-crossing of a long component $D_i$. The crossing $v$ split the component $D_i$ into two halves one of which is closed and the other is long.
We define the \emph{flat order signed index} $o^f(v)$ of the crossing $v$ to be equal to $+1$ if the closed half is left and to be equal to $-1$ if it is right (Fig.~\ref{fig:refined_flat_order_type})

\begin{figure}[h]
\centering
  \includegraphics[width=0.4\textwidth]{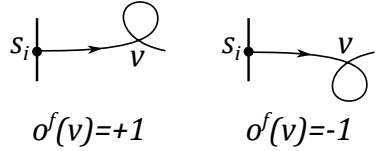}
  \caption{Flat order signed index}\label{fig:refined_flat_order_type}
\end{figure}

The \emph{flat homotopy signed index} $h^f(v)$ of the crossing $c$ is defined as in the non flat case when $v$ is a mixed crossing or a self-crossing of a long component: $h^f(v)=h(v)$. When $v$ is a self-crossing of a closed component $D_i$, we set $h^f(c)=[D^l_v]\in\hat\pi(F,z_i,z_i)$ where $D^l_v$ is the based left half of the component at the crossing $v$ (Fig.~\ref{fig:based_left_half}) and $\hat\pi(F,z_i,z_i)=\pi(F,z_i,z_i)/IM_{ii}(T)$,  where $IM_{ii}(T)\subset Aut(\pi(F,z_i,z_i))$ is the group of automorphisms induced by the symmetries of the tangle $T$.

\begin{figure}[h]
\centering
  \includegraphics[width=0.3\textwidth]{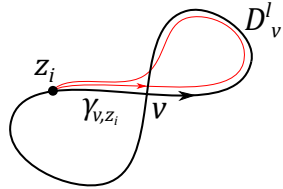}
  \caption{Based left half $\hat D^l_{v,z_i}$}\label{fig:based_left_half}
\end{figure}

We consider the following involutions on the sets which the signed indices take values in:
\[
(i,j)^*=(j,i), \quad 1\le i,j\le n,
\]
for the flat component signed index,
\[
o^*=-o,\quad o\in\{-1,+1\},
\]
for the flat order signed index, and
\[
\bar x^*=\overline{\kappa_{i}x^{-1}},\quad \bar x\in\hat\pi_D(F,z_i,z_i)
\]
for the flat homotopy signed index of self-crossings of a closed component. Here $\kappa_i=[D_i]\in\hat\pi_D(F,z_i,z_i)$ is the homotopy class of the component.

We define the involution to be the identity for the flat homotopy signed index of self-crossings of long components and the inverse map  $\hat\pi(F,z_i,z_j)\to\hat\pi(F,z_j,z_i)$ for mixed crossings.

\begin{theorem}[\cite{Nct}]\label{thm:universal_flat_index}
The universal signed index $\bar\sigma^u$ on the diagram category of flat tangles in the surface $F$ is composed by the flat component, order and homotopy signed indices: $\bar\sigma^u=(\tau^f,o^f,h^f)$.
\end{theorem}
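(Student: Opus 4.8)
The plan is to follow the same two-step scheme used for the classical case in Theorem~\ref{thm:universal_index}. By Theorem~\ref{thm:universal_trait_index} the universal trait and the universal signed index coincide, so it suffices to show that the triple $\bar\sigma^u=(\tau^f,o^f,h^f)$ is (i) a signed index and (ii) universal among signed indices on flat tangles. Since the flat theory is obtained from the oriented tangle theory by adjoining the crossing change $CC$ (the move that forgets the over/undercrossing data), I would in fact try to deduce the statement from Theorem~\ref{thm:universal_index} by passing to the quotient by $CC$: the $sgn$ component is killed, the over/under convention built into $(\tau,o,h)$ is no longer available and must be replaced by the orientation-based conventions of Figures~\ref{fig:refined_flat_component_type}, \ref{fig:refined_flat_order_type} and \ref{fig:based_left_half}, and the resulting quotient index is exactly $(\tau^f,o^f,h^f)$. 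The direct route, verifying universality from scratch via the description of the universal coefficient set in the Remark following Definition~\ref{def:universal_trait}, is also available and is what I describe below.

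For step (i) I would check, crossing type by crossing type (mixed, self-crossing of a long component, self-crossing of a closed component), that each of $\tau^f$, $o^f$, $h^f$ is invariant under the flat Reidemeister moves (the trait axiom) and that it intertwines the involutions declared before the statement with the pairing of crossings produced by a second flat Reidemeister move. Concretely, an $\bar\Omega_2$ move creates two crossings $c_1,c_2$ whose ordered components are interchanged, giving $\tau^f(c_1)=\tau^f(c_2)^*=(j,i)$; for a long self-crossing the two halves swap their left/right position, giving $o^f(c_1)=-o^f(c_2)$; and for a closed self-crossing the based left half of one crossing is homotopic to $\kappa_i$ times the inverse of the based left half of the other, which is precisely the relation $\bar x^*=\overline{\kappa_i x^{-1}}$. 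Invariance under $\bar\Omega_1$ and $\bar\Omega_3$ is immediate for $\tau^f$ and $o^f$ and follows for $h^f$ because the loop (resp. path) defining the homotopy index is changed only by a null-homotopic detour or by a reparametrisation lying in the symmetry group $IM_{ij}$, by which $\pi(F,z_i,z_j)$ has already been quotiented. Proposition~\ref{prop:trait_dual_crossing} can be invoked to streamline the consistency of these values across different reductions of the same crossing.

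For step (ii), universality, I would use the Remark's description of the universal coefficient set as equivalence classes of pairs $(D,c)$ modulo moves not eliminating $c$, and show that $(\tau^f,o^f,h^f)$ separates these classes. The idea is to put $(D,c)$ into a normal form by flat Reidemeister moves that never touch $c$: using that a flat tangle is a free-homotopy class of an immersed $1$-manifold in $F$, one contracts or pushes away every crossing other than $c$ and shrinks each of the two local branches at $c$ onto a standard loop (or path) representing its homotopy class, so that the reduced diagram depends only on the data recorded by $\tau^f$, $o^f$ and $h^f$. Two pairs with equal indices then reduce to the same normal form and are therefore equivalent. The main obstacle is exactly this realisation argument: one must show that whenever the homotopy indices in $\hat\pi(F,z_i,z_j)$ agree, a free homotopy of the branches realising this equality can be carried out by a sequence of flat Reidemeister moves preserving $c$, and one must match the ambiguity of that homotopy precisely to the group $IM_{ij}$ of symmetries of the tangle, which is what forces the homotopy index to take values in the orbit space $\hat\pi$ rather than in $\pi$. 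The closed self-crossing case is the delicate one, since the left-half convention and the appearance of the full component class $\kappa_i$ in the involution must be tracked carefully to see that $h^f$ is simultaneously well defined and complete there.
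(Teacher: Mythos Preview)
The paper does not prove this theorem: it is quoted from~\cite{Nct} and stated without proof, so there is no in-paper argument to compare your proposal against. Your two-step scheme (checking that $(\tau^f,o^f,h^f)$ is a signed index, then showing it separates equivalence classes of marked diagrams via a normal-form argument) is the natural approach and is in the spirit of the classical case, and the reduction-from-Theorem~\ref{thm:universal_index}-modulo-$CC$ idea you mention is likewise reasonable; a full verification would have to be checked against the source~\cite{Nct}.
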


\section{Local transformations}\label{sec:local_transformations}

\subsection{Local transformation rule}\label{subsec:tangle_traits}


\begin{definition}
Let  $\mathfrak K=\mathscr K(\mathscr D|\mathcal M)$ be a diagram category and $A$ a commutative ring. A \emph{local transformation rule} is a trait $\tau$ with values 
in the free module $A[\mathcal T_2]$.
\end{definition}

\begin{figure}[h]
\centering\includegraphics[width=0.7\textwidth]{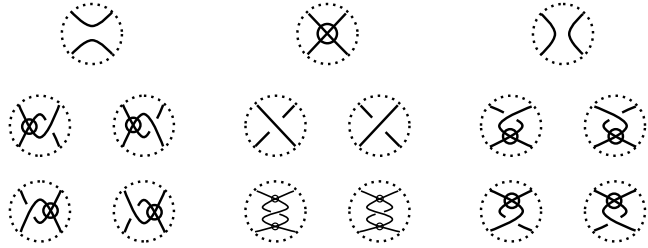}
\caption{Simplest $2$-tangles 
}\label{pic:tangles2}
\end{figure}

Given a diagram $D\in\mathscr D$ and a subset of crossings $C\subset \mathcal C(D)$, we denote the result of application of the local moves $c\mapsto \tau_D(c)$ at all the crossings $c\in C$ by $R_\tau(D,C)$. Denote $R_\tau(D)=R_\tau(D,\mathcal C(D))$ and $R_\tau(D,c)=R_\tau(D,\{c\})$ for $c\in\mathcal C(D)$.

\begin{definition}
Let $\mathfrak K=\mathscr K(\mathscr D|\mathcal M)$ be a diagram category and $\tau$ a local transformation rule.
The \emph{$\tau$-derivation} of a diagram $D\in\mathscr D$ is the sum
\[
d_\tau(D)=\sum_{c\in\mathcal C(D)}R_{\tau}(D,c),
\]
and the \emph{$\tau$-functorial map} of $D$ is $f_\tau(D)=R_\tau(D)$.

Let $\mathscr D'$ be a set of diagrams and $A$ a ring such that for any $D\in\mathscr D$ the sum $d_\tau(D)\in A[\mathscr D']$. Let $\mathcal M'$ be a knot theory on $\mathscr D'$. We say that $d_\tau$ is \emph{invariant} if for any diagrams $D_1,D_2\in\mathscr D$ such that $D_1\sim_{\mathcal M}D_2$ we have $d_\tau(D_1)=d_\tau(D_2)\in\mathscr S(\mathscr D'|\mathcal M')$. The invariance of $f_\tau$ is defined analogously.
\end{definition}

\begin{remark}
If for all crossings the values $\tau(c)$ are 1-term combinations of tangles then $f_\tau(D)$ defines a map to $\mathscr K(\mathscr D'|\mathcal M')$ (or $\mathscr K(\mathscr D'|\mathcal M')\cup\{0\}$).
\end{remark}

\subsection{Derivations}\label{subsec:derivations}


\begin{definition}
Let $\mathcal M$ be a knot theory. A $2$-tangle $T\in A[\mathcal T_2]$ is called \emph{$\Omega_3^+$-compatible} if the tangle diagrams in Fig.~\ref{fig:R3+compatibility} are $\mathcal M$-equivalent. The set of $\Omega_3^+$-compatible $2$-tangles is denoted by $\mathcal T_2^{\Omega_3^+}(\mathcal M)$.

The $\Omega_3^-$-compatibility condition is obtained by switching all the crossing outside the tangle $T$ in Fig.~\ref{fig:R3+compatibility}.

\begin{figure}[h]
\centering\includegraphics[width=0.3\textwidth]{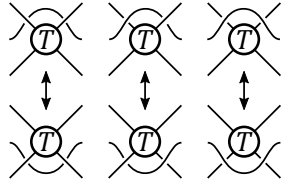}
\caption{$\Omega_3^+$-compatibility condition}\label{fig:R3+compatibility}
\end{figure}
\end{definition}

Consider the maps $i_\pm\colon\mathcal T_2^{\Omega_3^\pm}(\mathcal M)\to \mathcal T_2^{\Omega_3^\mp}(\mathcal M)$ defined in Fig.~\ref{fig:dual_tangles}. If $\Omega_2\in\mathcal M$ then the maps $i_+$ and $i_-$ are inverse to each other.

\begin{figure}[h]
\centering\includegraphics[width=0.7\textwidth]{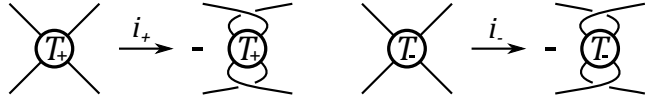}
\caption{Involution on $\Omega_3^\pm$-compatible tangles}\label{fig:dual_tangles}
\end{figure}

Let $\tau$ be a trait. Denote the trait values of the loops of type $l_\pm$ and $r_\pm$ (Fig.~\ref{fig:loop_types}) by $\tau^{l\pm}$ and $\tau^{r\pm}$.

\begin{figure}[h]
\centering\includegraphics[width=0.3\textwidth]{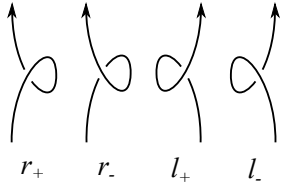}
\caption{Types of loops}\label{fig:loop_types}
\end{figure}

Let $Ann^*(\mathcal M)=\{T\in A[\mathcal T_2]\mid Cl^*(T)=0\in\mathscr S(\mathbb D^2|\mathcal M)\}$, $*=l,r$, where $Cl^r(T)$ and $CL^r(T)$  are the closures of the $2$-tangle $T$ (Fig.~\ref{fig:tangle_closures}).

\begin{figure}[h]
\centering\includegraphics[width=0.5\textwidth]{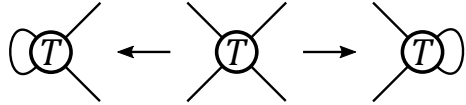}
\caption{A tangle $T$ and its left closure $Cl^l(T)$ and right closure $Cl^r(T)$}\label{fig:tangle_closures}
\end{figure}

\begin{theorem}\label{thm:derivation_invariance}
Let $\mathfrak K=\mathfrak(\mathscr D|\mathcal M)$ be a diagram category with $\mathcal M=\mathcal M_{class}^+$ or $\mathcal M_{virt}^+$, $\tau$ a local transformation rule on it and $\mathcal M'$ a knot theory such that $\mathcal M\subset \mathcal M'$. If
\begin{enumerate}
\item $\tau_D(c)\in\mathcal T^{\Omega_3^+}_2(\mathcal M')$ for any positive crossing $c\in\mathcal C(D)$, $D\in\mathscr D$;
\item for any crossings $c_1,c_2\in\mathcal C(D)$ participating in a second Reidemeister move, $i_{sgn(c_1)}(\tau(c_1))=\tau(c_2)$;
\item $\tau^{r+}\in Ann^{r}(\mathcal M')$ and $\tau^{l+}\in Ann^l(\mathcal M')$;
\end{enumerate}
then the derivation $d_\tau$ is invariant.
\end{theorem}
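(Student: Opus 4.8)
The plan is to verify invariance one generating move at a time. Since $\mathcal{M}$ is generated (up to equivalence) by the oriented first, second and third Reidemeister moves --- together with the virtual moves in the case $\mathcal{M}=\mathcal{M}_{virt}^+$ --- and since $d_\tau$ takes values in $\mathscr{S}(\mathscr{D}'|\mathcal{M}')$ with $\mathcal{M}\subseteq\mathcal{M}'$, it suffices to show that $d_\tau(D_1)=d_\tau(D_2)$ in this skein module whenever $D_1$ and $D_2$ differ by a single such move. In each case I would split the sum $d_\tau=\sum_c R_\tau(\cdot,c)$ into the crossings not touched by the move and the crossings participating in it. For an untouched crossing $c$, the diagrams $R_\tau(D_1,c)$ and $R_\tau(D_2,c)$ differ only by the local move performed away from the inserted tangle $\tau(c)$; since that move lies in $\mathcal{M}\subseteq\mathcal{M}'$, the two are equal in $\mathscr{S}(\mathscr{D}'|\mathcal{M}')$. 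Thus the problem reduces to the contributions of the participating crossings. The virtual moves involve no classical crossings and are available in $\mathcal{M}'$, so they are immediate; and throughout I would use Proposition~\ref{prop:trait_dual_crossing} together with the involution $i_\pm$ to transport every statement proved for positive crossings to the negative ones, which is why the hypotheses need only be imposed on positive crossings and loops.

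For the first Reidemeister moves $\Omega_{1a},\Omega_{1b}$ the move creates a single new crossing $c_0$, a self-crossing closing up one of the loops $l_\pm,r_\pm$ of Fig.~\ref{fig:loop_types}. Its contribution $R_\tau(\cdot,c_0)$ is obtained by inserting $\tau(c_0)\in\{\tau^{l\pm},\tau^{r\pm}\}$ into the kink and closing it off, i.e.\ it is precisely a left or right closure $Cl^l$ or $Cl^r$ (Fig.~\ref{fig:tangle_closures}) of the corresponding loop tangle. By hypothesis~(3) this closure vanishes in $\mathscr{S}(\mathbb{D}^2|\mathcal{M}')$ for the positive loops $\tau^{l+},\tau^{r+}$, and the negative loops reduce to these via $i_\pm$ (a closure of $i_{+}(T)$ equals, after one $\Omega_2$, a closure of $T$). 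Hence $R_\tau(\cdot,c_0)=0$, and combined with the reduction of the untouched crossings above, $d_\tau$ is unchanged by $\Omega_1$.

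For the third Reidemeister move $\Omega_{3a}$ the three participating crossings are preserved by the move and matched by the trait, so I only need each single-crossing contribution to be unchanged. Fixing one of the three crossings $c$ and keeping the other two honest, the local picture of $R_\tau(\cdot,c)$ is exactly the configuration of Fig.~\ref{fig:R3+compatibility} with the tangle $T=\tau(c)$ in place of $c$; hypothesis~(1) ($\Omega_3^+$-compatibility, together with its $\Omega_3^-$ counterpart for negative $c$ obtained from~(2)) says precisely that the two outer strands may be slid across $T$, which realises the $\Omega_3$ move on the modified diagram and identifies the contributions before and after in $\mathscr{S}(\mathscr{D}'|\mathcal{M}')$. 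Summing over the three crossings gives invariance under $\Omega_3$. For the second Reidemeister move $\Omega_{2a}$ the move creates a bigon with two crossings $c_1,c_2$ of opposite signs. Here hypothesis~(2), $i_{\sgn(c_1)}(\tau(c_1))=\tau(c_2)$, is engineered so that inserting $\tau(c_1)$ at $c_1$ while keeping $c_2$ honest presents, after absorbing the surviving crossing according to the definition of $i_{\sgn(c_1)}$ in Fig.~\ref{fig:dual_tangles}, the negative of the term obtained by inserting $\tau(c_2)$ at $c_2$ and keeping $c_1$; the two participating contributions therefore cancel and $d_\tau$ survives $\Omega_2$.

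The step I expect to be the most delicate is the $\Omega_2$ cancellation: one must check that the geometric operation encoded by $i_\pm$ (absorbing the surviving crossing of the bigon into the inserted tangle) produces exactly an opposite sign, so that $R_\tau(\cdot,c_1)+R_\tau(\cdot,c_2)=0$ rather than twice a common term. This is where the precise conventions of Fig.~\ref{fig:dual_tangles} and the orientations at the bigon must be tracked carefully, and it is the same mechanism --- via Proposition~\ref{prop:trait_dual_crossing} and Theorem~\ref{thm:universal_trait_index} --- that underlies the reduction of all negative-crossing and negative-loop cases to the positive ones stated in the hypotheses. The remaining verifications ($\Omega_1$, $\Omega_3$, untouched crossings, virtual moves) are then routine applications of the three hypotheses.
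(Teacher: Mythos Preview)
Your approach is essentially the same as the paper's: split into untouched and participating crossings, then handle $\Omega_1$ via condition~(3), $\Omega_3$ via condition~(1), and $\Omega_2$ via condition~(2). One point worth sharpening concerns the $\Omega_2$ cancellation. You suggest that condition~(2) alone, by ``absorbing the surviving crossing of the bigon into the inserted tangle'', identifies $R_\tau(D,c_1)$ with $-R_\tau(D,c_2)$. The paper's argument at this step (Fig.~\ref{fig:dual_tangle_R2_second_term}) in fact invokes $\Omega_3^+$-compatibility (condition~(1)) twice, followed by an $\Omega_1$, to transform the local picture of $R_\tau(D,c_2)$ into that of $R_\tau(D,c_1)$ up to the sign built into $i_\pm$; the surviving bigon crossing sits on only one side of the inserted tangle, not wrapped around it as in the definition of $i_\pm$, so condition~(2) alone does not immediately give the identification. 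You correctly flag this step as the delicate one, but the delicacy lies in the interplay of conditions~(1) and~(2), not solely in the sign convention of Fig.~\ref{fig:dual_tangles}. (The paper also works with the generating set $\{\Omega_{1a},\Omega_{1b},\Omega_{2c},\Omega_{2d},\Omega_{3b}\}$ rather than $\{\Omega_{1a},\Omega_{1b},\Omega_{2a},\Omega_{3a}\}$, but this is inconsequential.)
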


\begin{proof}
We need to check that for any diagrams $D$ and $D'$ connected by a Reidemeister move the sums $d_\tau(D)$ and $d_\tau(D')$ coincide in the skein module.

The terms in the sums $d_\tau(D)$ and $d_\tau(D')$ for a crossing $c\in\mathcal C(D)$ which does not participate in the Reidemeister move, and the correspondent crossing $c'\in\mathcal C(D')$, are equal because the knot theory $\mathcal M'$ includes Reidemeister moves.  Let us look at the terms for the crossings participating in the move.

1. If the move is a decreasing first Reidemeister move which eliminates a crossing $c\in\mathcal C(D)$ then $d_\tau(D)$ contains an additional term $R_\tau(D,c)$. The crossing $c$ is a loop crossing, and the diagram $R_\tau(D,c)$ contains a closure $Cl^{l/r}(\tau(c))$ as a subtangle. Then  $R_\tau(D,c)=0$ by the third condition of the theorem.

2. Let the move be a second Reidemeister move $\Omega_{2c}$ and crossings $c_1, c_2\in\mathcal C(D)$ participate in the move. Then  $d_\tau(D)-d_\tau(D')=R_\tau(D,c_1)+R_\tau(D,c_2)$ (Fig.~\ref{fig:dual_tangle_R2_eq}).

\begin{figure}[h]
\centering\includegraphics[width=0.7\textwidth]{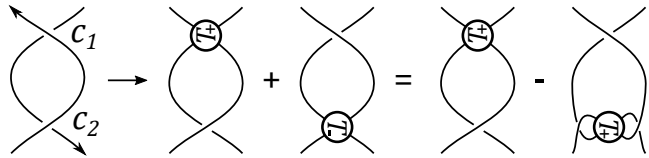}
\caption{Additional terms in the move $\Omega_{2c}$}\label{fig:dual_tangle_R2_eq}
\end{figure}

The second tangle is equivalent to the first one (Fig.~\ref{fig:dual_tangle_R2_second_term}), hence, the terms annihilate and $d_\tau(D)=d_\tau(D')$.

\begin{figure}[h]
\centering\includegraphics[width=0.7\textwidth]{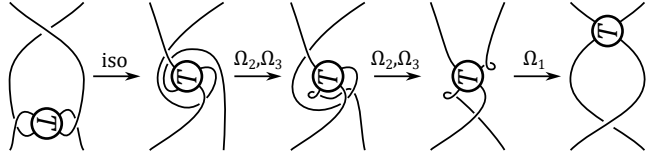}
\caption{Tangle equivalence. The second and the third equivalences rely on $\Omega_3^+$-compatibility}\label{fig:dual_tangle_R2_second_term}
\end{figure}

The move $\Omega_{2d}$ is considered analogously.

3. If the move is a third Reidemeister move $\Omega_{3c}$, $c_1,c_2,c_3\in\mathcal C(D)$ are the crossings taking part in the move, and $c'_1,c'_2,c'_3\in\mathcal C(D')$ are the correspondent crossings in $D'$ then the additional terms are equal: $R_\tau(D,c_i)=R_\tau(D',c'_i)$ by $\Omega_3^+$-compatibility. Hence, $d_\tau(D)=d_\tau(D')$.
\end{proof}

For derivations on regular knots we have an analogous statement.

\begin{theorem}\label{thm:derivation_invariance_regular}
Let $\mathfrak K=\mathfrak(\mathscr D|\mathcal M)$ be a regular classical or virtual diagram category ($\mathcal M=\mathcal M_{class}^{reg+}$ or $\mathcal M_{virt}^{reg+}$), $\tau$ a local transformation rule on it and $\mathcal M$ a knot theory such that $\mathcal M\subset \mathcal M'$. If
\begin{enumerate}
\item $\tau_D(c)\in\mathcal T^{\Omega_3^+}_2(\mathcal M')$ for any positive crossing $c\in\mathcal C(D)$, $D\in\mathscr D$;
\item for any crossings $c_1,c_2\in\mathcal C(D)$ participating in a second Reidemeister move, $i_{sgn(c_1)}\tau(c_1)=\tau(c_2)$;
\item $\tau_D(c)$ is $\Omega_1^+$-compatible (Fig.~\ref{fig:R1+compatibility}) for any positive crossing $c\in\mathcal C(D)$, $D\in\mathscr D$,
\end{enumerate}
then the derivation $d_\tau$ is invariant.
\begin{figure}[h]
\centering\includegraphics[width=0.3\textwidth]{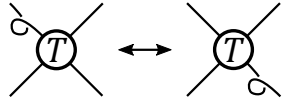}
\caption{$\Omega_1^+$-compatibility condition}\label{fig:R1+compatibility}
\end{figure}
\end{theorem}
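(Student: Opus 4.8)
The plan is to follow the structure of the proof of Theorem~\ref{thm:derivation_invariance}, verifying that $d_\tau(D)=d_\tau(D')$ in $\mathscr S(\mathscr D'|\mathcal M')$ whenever $D$ and $D'$ differ by a generating move of $\mathcal M=\mathcal M_{class}^{reg+}$ (resp.\ $\mathcal M_{virt}^{reg+}$), that is, by one of $\Omega_{2a},\Omega_{2b},\Omega_{2c},\Omega_{2d},\Omega_{3b}$ together with the virtual moves, which involve no classical crossings and hence leave the sum unchanged. As before, any crossing $c$ not participating in the move contributes equal terms to $d_\tau(D)$ and $d_\tau(D')$ because $\mathcal M\subset\mathcal M'$ and $R_\tau$ is compatible with Reidemeister moves, so only the crossings created or destroyed by the move need be examined. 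The decisive structural difference from Theorem~\ref{thm:derivation_invariance} is that the regular theory contains no first Reidemeister move: there is no decreasing-$\Omega_1$ term to annihilate, which is exactly why the annihilator hypothesis is dropped. In exchange the theory now contains the parallel moves $\Omega_{2a},\Omega_{2b}$ in addition to the antiparallel moves $\Omega_{2c},\Omega_{2d}$, and all of these must be checked directly.

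The antiparallel moves $\Omega_{2c}$, $\Omega_{2d}$ and the third move $\Omega_{3b}$ are treated exactly as in Theorem~\ref{thm:derivation_invariance}. For $\Omega_{2c}$ the two new crossings $c_1,c_2$ carry opposite signs and satisfy $i_{\sgn(c_1)}\tau(c_1)=\tau(c_2)$ by hypothesis~(2); unfolding the definition of $i_\pm$ (Fig.~\ref{fig:dual_tangles}) and applying the $\Omega_3^+$-compatibility of hypothesis~(1) as in Figs.~\ref{fig:dual_tangle_R2_eq}--\ref{fig:dual_tangle_R2_second_term} gives $R_\tau(D,c_1)+R_\tau(D,c_2)=0$, so the two extra terms cancel; $\Omega_{2d}$ is symmetric. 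For $\Omega_{3b}$, hypothesis~(1) yields $R_\tau(D,c_i)=R_\tau(D',c'_i)$ for the three participating crossings, so the corresponding triples of terms agree.

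The genuinely new cases are the parallel second moves $\Omega_{2a},\Omega_{2b}$, and here hypothesis~(3) enters. Once again the two new crossings $c_1,c_2$ carry opposite signs, and $\tau(c_1),\tau(c_2)$ are related by $i_\pm$ through hypothesis~(2); but because the two strands are now cooriented, positioning the inserted tangles so that the $i_\pm$-cancellation of the antiparallel case applies requires rotating one insertion by a full turn relative to the other. Such a rotation differs from the antiparallel model precisely by a curl on one strand of the tangle, that is, by the configuration of the $\Omega_1^+$-compatibility move (Fig.~\ref{fig:R1+compatibility}). Hypothesis~(3) states that this curl may be absorbed without changing the $\mathcal M'$-class of $\tau_D(c)$, so after absorbing it the parallel case reduces to the antiparallel computation of the previous paragraph and the two terms cancel.

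I expect this last paragraph to be the main obstacle: one must pin down exactly which curl---its sign and which of the loop types $\tau^{l\pm},\tau^{r\pm}$ of Fig.~\ref{fig:loop_types} it produces---arises when passing from the antiparallel to the parallel model, and confirm that it is the one licensed by $\Omega_1^+$-compatibility rather than its mirror, while checking that hypothesis~(2) supplies $\tau$ on the correct pair of loops. Once the parallel, antiparallel, and triple-point cases are all settled, every generating move of $\mathcal M$ preserves $d_\tau$, and since the virtual moves act trivially on the classical crossings, the derivation $d_\tau$ is invariant.
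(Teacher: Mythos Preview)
Your proposal contains a genuine gap: you claim that the antiparallel moves $\Omega_{2c}$, $\Omega_{2d}$ can be ``treated exactly as in Theorem~\ref{thm:derivation_invariance}'', but this is precisely where the argument fails in the regular setting. Look again at the chain of equivalences in Fig.~\ref{fig:dual_tangle_R2_second_term}: the caption tells you that the second and third equivalences use $\Omega_3^+$-compatibility, but the \emph{final} equivalence removes a loop, and it does so by invoking the move $\Omega_1$. In Theorem~\ref{thm:derivation_invariance} this is legitimate because $\mathcal M'\supset\mathcal M_{class}^+$ contains $\Omega_1$; here $\mathcal M'\supset\mathcal M_{class}^{reg+}$ need not. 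The paper's proof makes exactly this point: for $\Omega_{2c}$ one must replace that last $\Omega_1$ step by the $\Omega_1^+$-compatibility of hypothesis~(3) together with the Whitney trick (a pair of opposite curls can be cancelled using only $\Omega_2$ and $\Omega_3$).

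So you have inverted the role of hypothesis~(3). It is not introduced to handle the ``genuinely new'' parallel generators $\Omega_{2a},\Omega_{2b}$ via a rotation-and-curl reduction to the antiparallel case; it is introduced to repair the antiparallel argument itself, which in the non-regular proof secretly relied on $\Omega_1$. Your treatment of $\Omega_{3b}$ and your remark that virtual moves need no checking are fine, but the core of the argument---where and why $\Omega_1^+$-compatibility is required---is misplaced.
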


\begin{proof}
We can use the proof of Theorem~\ref{thm:derivation_invariance} for second and third Reidemeister moves but in the case of the move $\Omega_{2c}$ when we prove that the two additional terms in the sum coincide up to the sign, we should use $\Omega_1^+$-compatibility (and Whitney trick removing a pair of loops with moves $\Omega_2$ and $\Omega_3$) instead of move $\Omega_1$ in the last equivalence in Fig.~\ref{fig:dual_tangle_R2_second_term}.
\end{proof}


\begin{example}[Glueing invariant (A. Henrich~\cite{Henrich}, P. Cahn~\cite{C})]

Consider a virtual diagram set $\mathscr D_{virt}$ and the virtual knot theory $\mathcal M_{virt}$ on it. The glueing rule $\tau_{glue}$ in Fig.~\ref{fig:singular_rule} defines a local transformation to virtual diagrams with one singular crossing $\mathscr D_{sing}$. The second term in the rule ensures the invariance under first Reidemeister moves.
\begin{figure}[h]
\centering\includegraphics[width=0.3\textwidth]{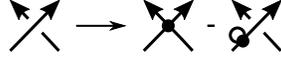}
\caption{Glueing local transformation}\label{fig:singular_rule}
\end{figure}

Let $\mathcal M'$ includes $\mathcal M_{virt}$, the crossing change $CC$ and Reidemeister moves for the singular crossing (Fig.~\ref{fig:singular_reidemeister}).
Then $d_{\tau_{sing}}$ is an invariant derivation with values in flat knots with one singular crossing.

\begin{figure}[h]
\centering\includegraphics[width=0.5\textwidth]{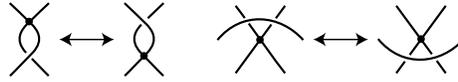}
\caption{Singular Reidemeister moves}\label{fig:singular_reidemeister}
\end{figure}
\end{example}

\begin{example}[Smoothing invariants (A. Henrich~\cite{Henrich}, Z. Cheng, H. Gao, M. Xu~\cite{CGX})]

Consider a virtual diagram set $\mathscr D_{virt}$ and the virtual knot theory $\mathcal M_{virt}$ on it. The smoothing local transformations $\tau_{or}$ and $\tau_{unor}$ (Fig.~\ref{fig:smoothing_rules}) define invariants with respect to the flat knot theory $\mathcal M_{flat}=\mathcal M_{virt}\cup\{CC\}$.
\begin{figure}[h]
\centering\includegraphics[width=0.3\textwidth]{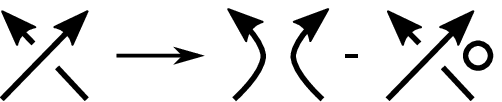}\qquad
\includegraphics[width=0.3\textwidth]{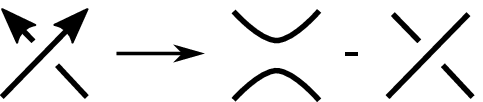}
\caption{Oriented (left) and unoriented (right) smoothing local transformations}\label{fig:smoothing_rules}
\end{figure}
%
\end{example}

A more flexible series of invariant derivations can be constructed by means of crossing indices.

\begin{proposition}\label{prop:index_to_derivation}
Let $\mathcal M$ be an oriented classical or virtual knot theory on a diagram set $\mathscr D$, $A$ a ring and $\mathcal M'\supset \mathcal M$ another knot theory. Let $\{T_1,\dots,T_n\}\subset\mathcal T_2^{\Omega_3^+}(\mathcal M')$ and $\iota_1,\dots, \iota_n$ be indices on $\mathscr D$ with values in $A$ such that
for any $i=1,\dots, n$ either the loop values vanish $\iota_i^{r\pm}=\iota_i^{l\pm}=0$ or $T_i\in Ann^l(\mathcal M')\cap Ann^r(\mathcal M')$. Consider the trait defined by the formula
\[
\tau(c) = \sum_{i=1}^n \iota_i(c)\cdot T_i^{sgn(c)}
\]
where $T_i^{+1}=T_1$ and $T_i^{-1}$ is the dual tangle (Fig.~\ref{fig:dual_tangles} left).
Then the derivation $d_\tau$ is invariant with respect to the knot theory $\mathcal M'$.
\end{proposition}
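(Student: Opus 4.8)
The plan is to reduce Proposition~\ref{prop:index_to_derivation} to Theorem~\ref{thm:derivation_invariance} by verifying that the trait $\tau(c)=\sum_{i=1}^n \iota_i(c)\cdot T_i^{\,\sgn(c)}$ satisfies its three hypotheses. First I would observe that $\tau$ is a legitimate local transformation rule: it is $\Theta$-valued in $A[\mathcal T_2]$ and it is indeed a trait, since each $\iota_i$ is a trait (being an index) and the tangle factors $T_i^{\,\sgn(c)}$ depend only on the sign of $c$, which is itself trait data; hence $\tau$ is invariant under the reconnection bijections $f_*$ coming from Reidemeister moves that do not eliminate $c$. The condition $\mathcal M=\mathcal M_{class}^+$ or $\mathcal M_{virt}^+$ required by Theorem~\ref{thm:derivation_invariance} is exactly the oriented classical or virtual setting assumed here, so the ambient framework matches.

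Next I would check hypothesis (1), $\Omega_3^+$-compatibility of $\tau_D(c)$ for positive crossings. For a positive crossing $c$ we have $\tau_D(c)=\sum_i \iota_i(c)\,T_i$, an $A$-linear combination of the tangles $T_i\in\mathcal T_2^{\Omega_3^+}(\mathcal M')$. Since the $\Omega_3^+$-compatibility condition of Fig.~\ref{fig:R3+compatibility} is an $\mathcal M'$-equivalence of two diagrams built linearly from the inserted tangle, it is preserved under $A$-linear combinations; thus $\mathcal T_2^{\Omega_3^+}(\mathcal M')$ is closed under the module operations and $\tau_D(c)$ lands in it. For hypothesis (2) I would use the behaviour of $\tau$ on the two crossings $c_1,c_2$ of a second Reidemeister move. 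These have opposite signs, so on one of them $\tau$ uses $T_i$ and on the other $T_i^{-1}$, the dual tangle; since each $\iota_i$ is an index, $\iota_i(c_1)=\iota_i(c_2)$, and the map $i_{\sgn(c_1)}$ is precisely the involution $T_i\leftrightarrow T_i^{-1}$ of Fig.~\ref{fig:dual_tangles}, so $i_{\sgn(c_1)}(\tau(c_1))=\tau(c_2)$ term by term.

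Hypothesis (3), the annihilation condition $\tau^{r+},\tau^{l+}\in Ann^{r/l}(\mathcal M')$, is where the dichotomy in the statement enters, and this is the step I expect to require the most care. Here $\tau^{l+}$ and $\tau^{r+}$ are the values of $\tau$ on the loop crossings $l_+,r_+$, namely $\sum_i \iota_i^{l+}\,T_i$ and $\sum_i \iota_i^{r+}\,T_i$. For each index $i$ the assumption gives one of two cases: if $\iota_i^{r\pm}=\iota_i^{l\pm}=0$ then the $i$-th summand contributes nothing to either loop value; if instead $T_i\in Ann^l(\mathcal M')\cap Ann^r(\mathcal M')$ then $Cl^{l/r}(T_i)=0$ in the skein module, so the $i$-th summand contributes a term whose closure vanishes. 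I would then invoke the fact that taking closures $Cl^{l}$ and $Cl^{r}$ is $A$-linear, so $Cl^{l/r}(\tau^{l+/r+})=\sum_i \iota_i^{l+/r+}\,Cl^{l/r}(T_i)=0$ in $\mathscr S(\mathbb D^2|\mathcal M')$ in either case; hence $\tau^{l+}\in Ann^l(\mathcal M')$ and $\tau^{r+}\in Ann^r(\mathcal M')$ as needed.

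With all three hypotheses of Theorem~\ref{thm:derivation_invariance} verified, I would conclude directly that $d_\tau$ is invariant with respect to $\mathcal M'$, which is the assertion of the proposition. The only genuinely delicate point is bookkeeping in hypothesis (3): one must split the sum index-by-index according to which of the two alternative conditions holds and use linearity of the closure maps to combine them, rather than expecting a single uniform argument; the remaining two hypotheses follow from the index property and the definition of the dual tangle essentially by inspection.
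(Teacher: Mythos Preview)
Your proof is correct and aligns with the paper's approach: the paper simply says ``The proof is a direct check of invariance under Reidemeister moves,'' and your reduction to Theorem~\ref{thm:derivation_invariance} is exactly the organized form of that direct check. The verification of the three hypotheses is accurate, including the linearity arguments for $\Omega_3^+$-compatibility and for the closure maps, and the use of the index condition $\iota_i(c_1)=\iota_i(c_2)$ together with $i_\pm(T_i^{\pm 1})=T_i^{\mp 1}$ for hypothesis~(2).
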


The proof is a direct check of invariance under Reidemeister moves.

\begin{remark}
1. The previous two examples of derivations correspond to the case $n=1$ and $\iota_1\equiv 1$.

2. Recurrent $F$-polynomials of virtual knots in~\cite{GIPV} can be viewed as consecutive compositions of invariant derivations corresponding to the case $T_i=\skcrv$ or $\skcrh$ and a polynomial invariant of flat knots.
\end{remark}


\subsection{Functorial maps}\label{subsec:functorial_maps}

Let us formulate a sufficient invariance condition for functorial maps.


\begin{definition}
Let $\mathcal M$ be a knot theory and $A$ a commutative ring. Define the sets of ($A$-linear combinations of) $2$-tangles $\Omega_{1r}(\mathcal M), \Omega_{1l}(\mathcal M)\subset \mathcal T_2$, $\Omega_{2}(\mathcal M)\subset \mathcal T_2\times\mathcal T_2$ and $\Omega_{3}(\mathcal M)\subset \mathcal T_2\times\mathcal T_2\times\mathcal T_2$ which obey the equivalence relations in Fig.~\ref{fig:reidemeister_tuples} with respect to $\mathcal M$.
\end{definition}

\begin{figure}[h]
\centering\includegraphics[width=0.6\textwidth]{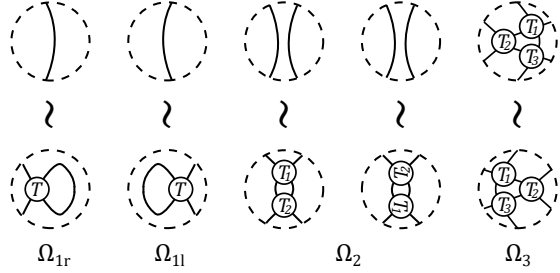}
\caption{Equivalence relations for the Reidemeister tuples}\label{fig:reidemeister_tuples}
\end{figure}

\begin{theorem}\label{thm:functorial_map_invariance}
Let $\mathcal M$ be the oriented classical knot theory on a diagram set $\mathscr D$, $\tau$ a local transformation rule on $\mathscr D$, and $\mathcal M'$ another knot theory. If
\begin{enumerate}
\item $\tau^{r\pm}\in\Omega_{1r}(\mathcal M')$, $\tau^{l\pm}\in\Omega_{1l}(\mathcal M')$;
\item $(\tau(c_1),\tau(c_2))\in\Omega_{2}(\mathcal M')$ for any crossings $c_1,c_2$ to which a second Reidemeister move can be applied;
\item $(\tau(c_1),\tau(c_2),\tau(c_3))\in\Omega_{3}(\mathcal M')$ for any crossings $c_1,c_2, c_3$ to which a third Reidemeister move $\Omega_{3}$ can be applied, 
\end{enumerate}
then the functorial map $f_\tau$ is invariant with respect to the knot theory $\mathcal M'$.
\end{theorem}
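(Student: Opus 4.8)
The plan is to exploit that the source theory $\mathcal M=\mathcal M_{class}^+$ is, by definition, generated (up to equivalence of move sets) by the finite family of oriented Reidemeister moves $\{\Omega_{1a},\Omega_{1b},\Omega_{2a},\Omega_{3a}\}$. Since equality in the target skein module $\mathscr S(\mathscr D'|\mathcal M')$ is transitive, it suffices to prove that $f_\tau(D_1)=f_\tau(D_2)$ whenever $D_1$ and $D_2$ differ by a single one of these elementary moves; invariance under arbitrary $\mathcal M$-equivalence then follows by composing the moves in a defining sequence.

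First I would isolate the disk $B$ in which the move takes place, so that $D_1$ and $D_2$ coincide outside $B$ and carry the two sides $T_1,T_2$ of the Reidemeister move inside $B$. Because the functorial map replaces every classical crossing by its $\tau$-value, and the crossings outside $B$ are literally identified by the move (so their $\tau$-values agree), $f_\tau(D_1)$ and $f_\tau(D_2)$ are obtained from one another by substituting, inside $B$, the local linear combination $f_\tau^{\mathrm{loc}}(T_1)\in A[\mathcal T_2]$ (the result of $\tau$-replacing the crossings of $T_1$) for $f_\tau^{\mathrm{loc}}(T_2)$. The locality of the relations defining $\mathscr S(\mathscr D'|\mathcal M')$, together with $A$-linearity, reduces the claim to the local tangle identities $f_\tau^{\mathrm{loc}}(T_1)=f_\tau^{\mathrm{loc}}(T_2)$ in the corresponding tangle skein module on $\mathbb D^2$.

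It then remains to verify these local identities move by move, and this is exactly what the three hypotheses supply. For a first Reidemeister move, $f_\tau^{\mathrm{loc}}(T_1)$ is one of the loop values $\tau^{l\pm},\tau^{r\pm}$, which by hypothesis~(1) lies in $\Omega_{1l}(\mathcal M')$ or $\Omega_{1r}(\mathcal M')$, i.e.\ is $\mathcal M'$-equivalent to the trivial strand $f_\tau^{\mathrm{loc}}(T_2)$. For a second Reidemeister move, $f_\tau^{\mathrm{loc}}(T_1)$ is the composite of $\tau(c_1)$ and $\tau(c_2)$ for the two participating crossings, and hypothesis~(2), $(\tau(c_1),\tau(c_2))\in\Omega_2(\mathcal M')$, says precisely that this composite is equivalent to the identity $2$-tangle $f_\tau^{\mathrm{loc}}(T_2)$. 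For a third Reidemeister move, the trait property of $\tau$ (Definition~\ref{def:trait}) guarantees that the three surviving crossings carry the same $\tau$-values on the two sides, so that $f_\tau^{\mathrm{loc}}(T_1)$ and $f_\tau^{\mathrm{loc}}(T_2)$ are the two composites of the triple $(\tau(c_1),\tau(c_2),\tau(c_3))$, whose equivalence is asserted by hypothesis~(3). One finally checks that ranging over \emph{all} crossings to which the given move applies in~(2)--(3), and over all four loop types in~(1), covers in particular the sign and orientation variants occurring among the generators, so that all pictures match Fig.~\ref{fig:reidemeister_tuples}.

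The step I expect to be the main obstacle is the second paragraph: making precise that a purely local equivalence of tangle linear combinations propagates to the global skein module. Concretely, one must check that the operation ``glue a $2$-tangle into the fixed exterior obtained by removing $B$ from $D$'' (orientation-preserving, matching the boundary $\partial B$) descends to a well-defined $A$-linear map from the tangle skein module into $\mathscr S(\mathscr D'|\mathcal M')$ carrying defining relations to defining relations; only then can the local identities of the third paragraph be transported to the desired equality $f_\tau(D_1)=f_\tau(D_2)$. The remaining verifications are the routine matching of the replaced tangles against the equivalence relations of Fig.~\ref{fig:reidemeister_tuples}.
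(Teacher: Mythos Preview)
Your proposal is correct and follows the same approach as the paper; indeed, the paper's own proof is the single sentence ``The invariance under Reidemeister moves directly follows from the definitions of the sets $\Omega_{i}(\mathcal M')$, $i=1,2,3$,'' and you have simply unpacked what that sentence means. Your explicit identification of the local-to-global gluing step as the only point requiring care is accurate, though the paper treats it as routine and does not comment on it.
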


\begin{proof}
The invariance under Reidemeister moves directly follows from the definitions of the sets $\Omega_{i}(\mathcal M')$, $i=1,2,3$.
\end{proof}

\begin{remark}
Theorem~\ref{thm:functorial_map_invariance} is also valid when $\mathcal M=\mathcal M_{virt}^+$ and $\mathcal M'$ includes virtual Reidemeister moves and semivirtual third Reidemeister moves with a crossing of each type which appear in the tangle values $\tau(c)$.
\end{remark}

\begin{remark}
The condition of Theorem~\ref{thm:functorial_map_invariance} are necessary for the \emph{local invariance} of the functorial map when the equivalence of diagrams under a Reidemeister move must be established locally, in the disk where the Reidemeister move occurs. Note that diagrams can be equivalent by other reasons, for example, they can be unknot diagrams for some reason (e.g. take $\mathcal M'=\mathcal M_{virt}\cup\{F^u, F^o\}$).
\end{remark}

\begin{remark}
Derivations can be considered as particular case of functorial maps. Let  $\mathfrak K=\mathscr K(\mathscr D|\mathcal M)$ be a diagram category, $A$ a commutative ring and $\tau$ a local transformation rule valued in $A[\mathcal T_2]$. Consider the dual number ring $\tilde A=A[\epsilon]/(\epsilon^2)$, and define a new transformation rule $\tilde\tau$ with values in $\tilde A$ by the formula $\tilde\tau=id+\epsilon\cdot\tau$. Then $f_{\tilde\tau}(D)=D+\epsilon\cdot d_\tau(D)$. If the destination knot theory $\mathcal M'\supset \mathcal M$ then invariance of the functorial map $f_{\tilde\tau}$ is equivalent to invariance of the derivation $d_\tau$.
\end{remark}

The version of Theorem~\ref{thm:functorial_map_invariance} for regular knots formulates as follows.

\begin{theorem}\label{thm:functorial_map_invariance_regular}
Let $\mathcal M=\mathcal M_{class}^{reg+}$ be the oriented regular classical knot theory on a diagram set $\mathscr D$, $\tau$ a local transformation rule on $\mathscr D$, and $\mathcal M'$ another knot theory. If
\begin{enumerate}
\item $(\tau(c_1),\tau(c_2))\in\Omega_{2}(\mathcal M')$ for any crossings $c_1,c_2$ to which a second Reidemeister move can be applied;
\item $(\tau(c_1),\tau(c_2),\tau(c_3))\in\Omega_{3}(\mathcal M')$ for any crossings $c_1,c_2, c_3$ to which a third Reidemeister move $\Omega_{3}$ can be applied, 
\end{enumerate}
then the functorial map $f_\tau$ is invariant with respect to the knot theory $\mathcal M'$.
\end{theorem}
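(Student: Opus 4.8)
The plan is to mirror the one-line argument behind Theorem~\ref{thm:functorial_map_invariance}, taking advantage of the fact that the regular oriented theory $\mathcal M_{class}^{reg+}=\{\Omega_{2a},\Omega_{2b},\Omega_{2c},\Omega_{2d},\Omega_{3b}\}$ contains \emph{no} first Reidemeister move. Since the equivalence $\sim_{\mathcal M}$ is by definition generated by the moves listed in $\mathcal M$, it suffices to prove that $f_\tau(D)=f_\tau(D')\in\mathscr S(\mathscr D'|\mathcal M')$ whenever $D$ and $D'$ differ by a single application of one of these generators. In particular there is nothing to check for $\Omega_1$, which is exactly why this version drops the first-move hypothesis of Theorem~\ref{thm:functorial_map_invariance}.

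First I would record the locality of the functorial map. If the move is performed inside a disk $B\subset F$, then $D$ and $D'$ coincide outside $B$, and $R_\tau$ replaces every crossing lying outside $B$ by the same linear combination of $2$-tangles on both sides; hence $f_\tau(D)$ and $f_\tau(D')$ agree everywhere except within $B$. The comparison therefore reduces to an equality, with the boundary data $\partial$ fixed, of the two inserted linear combinations of tangles inside the disk, computed in $\mathscr S(\mathbb D^2|\mathcal M')$.

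Next I would dispatch the two remaining cases. For a second Reidemeister move applied to crossings $c_1,c_2$, the $D$-side inserts $\tau(c_1)$ and $\tau(c_2)$ into the two crossings while the $D'$-side carries the crossingless (trivial) $2$-tangle; condition~1, $(\tau(c_1),\tau(c_2))\in\Omega_2(\mathcal M')$, is by the definition of $\Omega_2(\mathcal M')$ in Fig.~\ref{fig:reidemeister_tuples} precisely the statement that these two agree in $\mathscr S(\mathbb D^2|\mathcal M')$. For the move $\Omega_{3b}$ applied to $c_1,c_2,c_3$, substituting the $\tau$-values on each side yields two inserted tangle combinations whose equality in the skein module is exactly condition~2, $(\tau(c_1),\tau(c_2),\tau(c_3))\in\Omega_3(\mathcal M')$.

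The argument carries essentially no computation: all the content is packaged into the definitions of the sets $\Omega_2(\mathcal M')$ and $\Omega_3(\mathcal M')$. The only point that warrants care — and the closest thing to an obstacle — is the bookkeeping needed to confirm that the single membership condition in each case genuinely covers every oriented variant of the corresponding generator present in $\mathcal M_{class}^{reg+}$, i.e. all four second moves $\Omega_{2a},\dots,\Omega_{2d}$ and the chosen third move $\Omega_{3b}$, so that no generator of $\sim_{\mathcal M}$ escapes the verification. Once this is settled, the conclusion follows exactly as in Theorem~\ref{thm:functorial_map_invariance}, with the $\Omega_1$ step simply absent.
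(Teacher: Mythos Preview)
Your proposal is correct and follows the same approach as the paper, which for Theorem~\ref{thm:functorial_map_invariance} simply states that invariance follows directly from the definitions of the sets $\Omega_i(\mathcal M')$, and for this regular version offers no separate proof at all. Your write-up just makes explicit the locality argument and the observation that dropping $\Omega_1$ from the generating moves is precisely what allows dropping the first hypothesis.
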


\section{Examples of functorial maps}\label{sec:functorial_maps_examples}

Below we consider simplest schemes of functorial maps on oriented classical or virtual knots and formulate invariance conditions for them. We also calculate the result of functorial maps on knots in a fixed surface.

\subsection{Unary functorial maps}\label{subsec:unary_functorial_maps}


\begin{definition}\label{def:unary_functorial_map}
Let $\{T_+, T_-\}\in\mathcal T_2$ (or $A[\mathcal T_2]$). The local transformation rule $\tau_D(c)=T_{sgn(c)}$ is called \emph{unary}.
\end{definition}

\begin{example}\label{ex:unary_maps}
\begin{enumerate}
\item $\tau=id$ is a unary local transformation rule which is invariant with respect to $\mathcal M_{class}$. The image of the functorial map is the knot it is applied to: $f_\tau(K)=K$;

    A more elaborated example appears if one adds the skein relations corresponding to a polynomial invariant (Conway, Jones, HOMFLY-PT etc.)  to the destination knot theory $\mathcal M'=\mathcal M_{class}$. Then the skein module can be identified with a Laurent polynomial ring, and the image of the functorial map is the Conway (Jones, HOMFLY-PT etc.) polynomial of the knot.

\item $\tau=CC$ is a unary local transformation rule which is invariant with respect to $\mathcal M_{class}$. The image of the functorial map is the mirror knot $f_\tau(K)=\bar K$;
\item The virtualizing map which replaces each classical crossing with the virtual one, is a unary local transformation rule which is invariant with respect to the pure virtual Reidemeister moves $\{V\Omega_1, V\Omega_2, V\Omega_3\}$. The image of the functorial map is the trivial knot (or trivial link).
\end{enumerate}
\end{example}

\subsubsection{Oriented smoothing}\label{subsect:unary_or_smoothing}

Let $T_+=T_-=\skcrv$. The invariance conditions for the corresponding local transformation rule $\tau$ are shown in Fig.~\ref{fig:unary_or_smoothing}.

\begin{figure}[h]
\centering\includegraphics[width=0.5\textwidth]{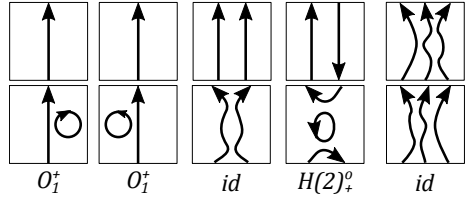}
\caption{Invariance conditions for the oriented smoothing}\label{fig:unary_or_smoothing}
\end{figure}
Thus, we can take $\mathcal M'=\{O^+_1, H(2)^o_+\}\sim\{H(2)_+\}$ for the destination knot theory.

For regular knots invariance of the functorial map will take place in the knot theory $\mathcal M'=\{H(2)^o_+\}$.

Let $F$ be an oriented compact surface. We have the following description of the oriented smoothing functorial map on links in the surface $F$.
\begin{proposition}\label{prop:unary_or_smoothing}
Let $D\in\mathscr D_+(F)$ be an oriented classical link diagram. Then
\begin{enumerate}
\item  $f_\tau(D)=[D]\in H_1(F,\Z)$ when $\mathcal M'=\{H(2)_+\}$;
\item  $f_\tau(D)=([D],rot(D))\in H_1(F,\Z)\times\Z_{\bar\chi(F)}$ in the regular case $\mathcal M'=\{H(2)^o_+\}$.
\end{enumerate}
Here $rot(D)$ is the rotation number of the diagram (see Definition~\ref{def:rotation_number}) and $\bar\chi(F)=\chi(F)$ if $\partial F=\emptyset$ and $\bar\chi(F)=0$ if $\partial F\ne\emptyset$.
\end{proposition}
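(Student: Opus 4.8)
The statement concerns the oriented smoothing functorial map $f_\tau$ with $T_+=T_-=\skcrv$, and we must identify its value in the skein module of the destination theory. The plan is to analyze what geometric object $f_\tau(D)$ actually is, and then match it against the invariants claimed. When we smooth every crossing of an oriented link diagram $D$ according to the oriented smoothing rule, the result is a crossingless diagram, i.e. a disjoint union of embedded oriented circles in $F$. Since the oriented smoothing respects orientations, each such circle carries a homology class, and the total class in $H_1(F,\Z)$ of the resulting system of circles equals $[D]$, because oriented smoothing does not change the underlying $1$-cycle: near each crossing the two local strands are reconnected in the orientation-preserving way, which is a cobordism rel the chain and hence preserves the homology class. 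So the first task is to verify that $\sum_i[C_i]=[D]\in H_1(F,\Z)$ where $C_i$ are the smoothing circles, and that in the skein module $\mathscr S(F\mid\{H(2)_+\})$ the class of a crossingless diagram is determined precisely by its total homology class.

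For part (1), I would argue that the move $H(2)_+$ (together with $O^+_1$, which we absorbed into it) is exactly the relation that allows one to merge, split and simplify the crossingless circles while preserving their aggregate homology class, and nothing more. Concretely, the skein module $\mathscr S^0(F\mid\{H(2)_+\})$ of crossingless diagrams modulo $H(2)$-moves should be shown to be isomorphic to (the monoid ring on, or just the set) $H_1(F,\Z)$: an $H(2)$-move on two parallel oppositely/equally oriented strands realizes band-summing of circles, which adds their homology classes, so two crossingless systems are $H(2)$-equivalent if and only if they represent the same element of $H_1(F,\Z)$. Once this identification of the target skein module is established, $f_\tau(D)=[D]$ follows immediately from the homology-class computation above. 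The invariance of $f_\tau$ itself is already guaranteed by Theorem~\ref{thm:functorial_map_invariance} together with the invariance conditions recorded in Fig.~\ref{fig:unary_or_smoothing}, so no separate invariance check is needed here.

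For part (2), the regular case, we drop the first Reidemeister move (equivalently $O^+_1$ is no longer available), so the functorial map retains the extra data that $\Omega_1$ would have killed, namely the rotation (Whitney) number. The plan is to show that in $\mathscr S(F\mid\{H(2)^o_+\})$ a crossingless oriented diagram is classified by the pair consisting of its homology class and its total rotation number, the latter taken in $\Z_{\bar\chi(F)}$. The modular reduction arises because a disk bounding a contractible circle contributes $\chi$ to the rotation count when $\partial F=\emptyset$ (Poincaré–Hopf / Gauss–Bonnet for the tangent framing on a closed surface), while for $\partial F\ne\emptyset$ the surface is homotopy-equivalent to a wedge of circles and the rotation number is unconstrained, hence lives in $\Z=\Z_0$. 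I would verify that $rot(D)$ is additive under the oriented smoothing in the same way $[D]$ is, and that $H(2)^o_+$ preserves the pair $([D],rot(D))$ while allowing all crossingless systems with equal invariants to be identified.

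\textbf{Main obstacle.} The crux is the precise determination of the two skein modules of crossingless diagrams: proving that $\mathscr S^0(F\mid\{H(2)_+\})\cong H_1(F,\Z)$ and $\mathscr S^0(F\mid\{H(2)^o_+\})\cong H_1(F,\Z)\times\Z_{\bar\chi(F)}$. The forward direction (these quantities are invariant under the moves) is a routine check, but the reverse direction — that the invariants are \emph{complete}, so that any two crossingless systems with matching homology class and rotation number are actually connected by a sequence of $H(2)$-moves — requires a genuine normal-form argument on systems of disjoint circles in $F$, controlling both the genus-reduction of the ambient picture and the careful bookkeeping of the rotation number modulo $\bar\chi(F)$. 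This is where I expect the real work to lie.
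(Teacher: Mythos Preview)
Your approach is essentially the same as the paper's: the paper simply cites Proposition~\ref{prop:crossingless_skein_modules} (proved in the appendix) for the identifications $\mathscr K^0_+(F\mid H(2)_+)\simeq H_1(F,\Z)$ and $\mathscr K^0_+(F\mid H(2)^o_+)\simeq H_1(F,\Z)\times\Z_{\bar\chi(F)}$, and then observes that $[f_\tau(D)]=[D]$ and $rot(f_\tau(D))=rot(D)$ because oriented smoothing preserves both (Proposition~\ref{prop:rot_properties}). The ``main obstacle'' you identify is exactly the content of that appendix proposition, whose proof uses a normal-form reduction (Lemma~\ref{lem:H20_reduction}) along the lines you anticipate.
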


\begin{proof}
By Proposition~\ref{prop:crossingless_skein_modules}, the element $f_\tau(D)\in\mathscr K^0(F|H(2)^o_+)$ is described by the pair
\[([f_\tau(D)],rot(f_\tau(D)))\in H_1(F,\Z)\times\Z_{\bar\chi(F)}.
\]
But $[D]=[f_\tau(D)]$ and $rot(D)=rot(f_\tau(D))$.
\end{proof}

\subsubsection{Non-oriented Smoothing}

Let $T_+=T_-=\skcrh$. The invariance conditions for the corresponding local transformation rule $\tau$ are shown in Fig.~\ref{fig:unary_unor_smoothing}.

\begin{figure}[h]
\centering\includegraphics[width=0.5\textwidth]{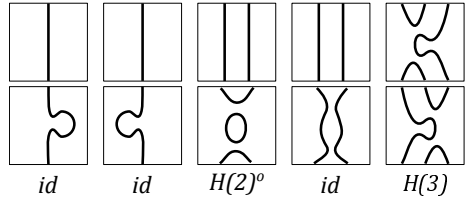}
\caption{Invariance conditions for the non-oriented smoothing}\label{fig:unary_unor_smoothing}
\end{figure}
Thus, we can take $\mathcal M'=\{H(2)^o, H(3)\}\sim\{H(2)^o\}$ (Lemma~\ref{lem:H20=H3}) for the destination knot theory.
For regular knots the knot theory $\mathcal M'$ fits too.

Let $F$ be an oriented compact surface. We have the following description of the unoriented smoothing functorial map on links in the surface $F$.
\begin{proposition}\label{prop:unary_unor_smoothing}
Let $D\in\mathscr D_+(F)$ be an oriented classical link diagram. Then
\[
f_\tau(D)=([D],\rho(D)+P'_D(1)+n(D))\in H_1(F,\Z_2)\times\Z_2
\]
when $\mathcal M'=\{H(2)^o\}$. Here $\rho(D)$ is the offset of the diagram, $P_D(t)$ is the based index polynomial (see Section~\ref{app:skein_modules}) and $n(D)$ is the number of crossings in $D$.
\end{proposition}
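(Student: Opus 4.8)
The plan is to mirror the argument for oriented smoothing in Proposition~\ref{prop:unary_or_smoothing}, replacing the integral homology class by its $\Z_2$-reduction and the rotation number by its mod-$2$ value. First I would invoke the unoriented version of Proposition~\ref{prop:crossingless_skein_modules}, which identifies $\mathscr K^0(F|H(2)^o)$ with $H_1(F,\Z_2)\times\Z_2$: the first coordinate of a crossingless diagram is its $\Z_2$-homology class and the second is the $\Z_2$-invariant supplied by that proposition, which I expect to be the mod-$2$ rotation number computed after fixing any auxiliary orientation of the components (well defined because reversing a component's orientation negates its rotation number, invisibly mod $2$). Thus $f_\tau(D)$ is pinned down by a pair $([f_\tau(D)],w(f_\tau(D)))$, and it remains to compute both coordinates from $D$.

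The first coordinate is immediate. Since the horizontal smoothing $\skcrh$ merely reconnects the four local arcs at a crossing and orientation is irrelevant over $\Z_2$, the underlying $\Z_2$-fundamental cycle is unchanged, so $[f_\tau(D)]=[D]\in H_1(F,\Z_2)$, exactly as in the oriented case.

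The substance of the proof is the second coordinate, where the oriented argument breaks down: unlike $\skcrv$, the smoothing $\skcrh$ is incompatible with the orientation of $D$, so the components of $f_\tau(D)$ inherit no orientation from $D$ and $w(f_\tau(D))$ is not simply $rot(D)\bmod 2$. I would fix an auxiliary orientation on $f_\tau(D)$ and compute its mod-$2$ rotation number by a local, crossing-by-crossing comparison with the oriented smoothing, whose total turning equals $rot(D)$ by Proposition~\ref{prop:unary_or_smoothing}. At each crossing the two smoothings differ by the reconnection type; replacing $\skcrv$ by $\skcrh$ inserts a half-turn on each resulting arc and so contributes $1$ to the turning, which accounts for the term $n(D)$. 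The failure of the auxiliary orientation to match the strand directions of $D$ produces at a crossing $c$ a defect weighted by the index of $c$, and summing these over all crossings assembles into the index-weighted signed count $P'_D(1)$ coming from the based index polynomial of Section~\ref{app:skein_modules}. Finally the global obstruction to orienting the components of $f_\tau(D)$ coherently with $D$ is recorded by the offset $\rho(D)$, and collecting the three contributions gives $w(f_\tau(D))=\rho(D)+P'_D(1)+n(D)\bmod 2$.

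The hard part will be the mod-$2$ turning bookkeeping in the last step: one must verify at the level of the elementary local pictures that each unoriented smoothing changes the turning by exactly the claimed amount, and that the per-crossing index defects, the crossing count, and the global orientation offset combine additively without double counting. I would organize this by first treating a diagram all of whose smoothing components are null-homologous, so that only the $\Z_2$ factor is active, checking the formula on the local models, and then propagating to a general diagram using additivity of $rot$, of $P'_D(1)$, and of $\rho(D)$ under the relevant moves, together with Lemma~\ref{lem:H20=H3} to reduce the destination theory to $\{H(2)^o\}$.
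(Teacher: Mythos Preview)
Your overall structure is right --- identify $\mathscr K^0(F\mid H(2)^o)$ with $H_1(F,\Z_2)\times\Z_2$, observe $[f_\tau(D)]=[D]$, and compute the second coordinate --- but the identification of that second coordinate is wrong, and this undermines the rest of the argument. The $\Z_2$-invariant furnished by Proposition~\ref{prop:crossingless_skein_unor_modules} is not the mod-$2$ rotation number; it is the offset
\[
\rho_0(D)=\frac{s([D])\cdot\tilde D}{2}+m(D)\bmod 2,
\]
built from an intersection number and the component count (Definition~\ref{def:rho0_invariant}). These two invariants happen to agree for crossingless diagrams in $S^2$ or $\mathbb D^2$, but not on a general surface: a single homologically essential simple closed curve on the torus has rotation number $0$ with respect to a nonvanishing field yet offset $\rho_0=1$. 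So a turning-number bookkeeping argument, however carefully carried out, computes the wrong quantity.

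This also explains why your three ``contributions'' look ad hoc: you are treating $\rho(D)$ as one correction term in a rotation computation, when in fact $\rho$ \emph{is} the invariant, extended from crossingless diagrams to all diagrams by $\rho(D)=\rho_0(D_A)+n_{odd}(D)$ (Definition~\ref{def:based_index_unor}). The statement to be proved is simply $\rho_0(f_\tau(D))=\rho(D)+P'_D(1)+n(D)\bmod 2$. The paper establishes this not by local turning analysis but by Corollary~\ref{cor:unor_invariants_after_smoothing}: one uses $H(2)^o_+$ moves to peel off each crossing of $D$ as an $\infty^\epsilon_k$-component (Fig.~\ref{fig:crossing_to_infinity}), invokes the $\{H(2)^o,\Omega_2\}$-invariance of $\rho$ (Proposition~\ref{prop:sm_unoriented_invariants}), and then reads off the answer from the precomputed values $\rho(\infty^\epsilon_k)=k\bmod 2$ together with additivity. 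Your local-comparison idea could in principle be salvaged, but only after reformulating it in terms of $\rho_0$ rather than rotation --- at which point you would essentially be reproving Proposition~\ref{prop:sm_unoriented_invariants} and Corollary~\ref{cor:unor_invariants_after_smoothing}.
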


\begin{proof}
By Proposition~\ref{prop:crossingless_skein_unor_modules}, the element $f_\tau(D)\in\mathscr K^0(F|H(2)^o)$ is described by the pair
\[
([f_\tau(D)],\rho(f_\tau(D)))\in H_1(F,\Z_2)\times\Z_{2}.
\]
We have $[D]=[f_\tau(D)]$. By Corollary~\ref{cor:unor_invariants_after_smoothing}, $\rho(f_\tau(D))=\rho(D)+P'_D(1)+n(D)$.
\end{proof}

\subsubsection{$A$-smoothing}\label{subsect:A-smoothing}
Let $T_+=\skcrv,\ T_-=\skcrh$. The invariance conditions for the corresponding local transformation rule $\tau$ are shown in Fig.~\ref{fig:unary_A_smoothing}.

\begin{figure}[h]
\centering\includegraphics[width=0.5\textwidth]{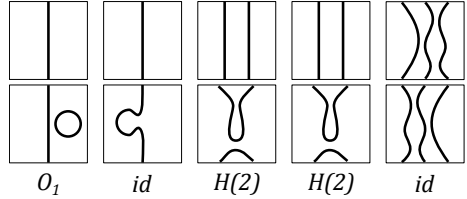}
\caption{Invariance conditions for $A$-smoothing}\label{fig:unary_A_smoothing}
\end{figure}
Thus, we can take $\mathcal M'=\{O_1, H(2)\}\sim\{H(2)\}$ for the destination knot theory both in classical and regular classical case.

Let $F$ be an oriented compact surface.
\begin{proposition}\label{prop:unary_A_smoothing}
Let $D\in\mathscr D_+(F)$ be an oriented classical link diagram. Then
$f_\tau(D)=[D]\in H_1(F,\Z_2)$ when $\mathcal M'=\{H(2)\}$.
\end{proposition}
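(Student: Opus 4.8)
The plan is to mimic the structure of the proof of Proposition~\ref{prop:unary_or_smoothing}, which treats the oriented smoothing case, since the $A$-smoothing differs only in that positive and negative crossings receive different smoothings ($\skcrv$ versus $\skcrh$). First I would invoke the classification of the relevant crossingless skein module. The destination theory here is $\mathcal M'=\{H(2)\}$, and I expect there to be a companion result to Proposition~\ref{prop:crossingless_skein_modules} (call it the unoriented $H(2)$ version, analogous to Proposition~\ref{prop:crossingless_skein_unor_modules}) which identifies $\mathscr K^0(F|H(2))$ with $H_1(F,\Z_2)$. The key point is that once both smoothing types are allowed at the level of the destination moves, the orientation data collapses and only the mod-$2$ homology class survives.

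The central observation is that the $H(2)$-move, being the fusion/band move that permits both oriented and unoriented smoothings to be reconnected, is exactly what kills the refined invariants (the rotation number mod $\bar\chi(F)$ in the oriented case and the offset together with the index-polynomial correction in the unoriented case). So after passing to $\mathscr K^0(F|H(2))$, the only surviving invariant of a crossingless diagram is its $\Z_2$-homology class. I would state this as the input lemma/proposition and cite it.

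Next I would argue that the homology class is preserved under the $A$-smoothing. This is the routine geometric step: at a crossing we replace a small tangle by either $\skcrv$ or $\skcrh$; in either case the resulting curve, viewed as a $1$-cycle mod $2$, is homologous to the original diagram, because the local modification changes the chain only by the boundary of a small disk (the two smoothings of a crossing represent the same relative $\Z_2$-class in a neighborhood of the crossing). Hence $[f_\tau(D)]=[D]\in H_1(F,\Z_2)$. Combining this with the skein-module identification gives $f_\tau(D)=[D]$, completing the proof.

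The main obstacle I anticipate is not any single hard estimate but rather making sure the cited crossingless skein-module computation for $\mathcal M'=\{H(2)\}$ is exactly the $\Z_2$-homology functor with no residual refined term; in particular one must verify that, unlike the oriented case $\{H(2)^o_+\}$ and the unoriented case $\{H(2)^o\}$ where a $\Z_{\bar\chi(F)}$ or $\Z_2$ factor persists, the full $H(2)$ theory leaves no extra factor. Granting the expected companion proposition, the proof is then a two-line application exactly parallel to Proposition~\ref{prop:unary_or_smoothing}, so I would keep it short.
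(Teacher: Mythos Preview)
Your proposal is correct and follows essentially the same approach as the paper: invoke the identification $\mathscr K^0(F|H(2))\simeq H_1(F,\Z_2)$ (which is exactly the second part of Proposition~\ref{prop:crossingless_skein_unor_modules}) and observe that $[f_\tau(D)]=[D]$ in $H_1(F,\Z_2)$. The paper's proof is the one-liner you anticipated, so your only task is to cite Proposition~\ref{prop:crossingless_skein_unor_modules} rather than speculate about a ``companion result.''
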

\begin{proof}
The element $[D]=[f_\tau(D)]$ represents the diagram $f_\tau(D)$ in $\mathscr K^0(F|H(2))\simeq H_1(F,\Z_2)$.
\end{proof}

\subsubsection{Kauffman smoothing}

Let $T_+=a\skcrv+b\skcrh$, $T_-=a'\skcrh+b'\skcrv$. Following the reasonings for the usual Kauffman bracket~\cite{K0} one gets the relations $a'=b^{-1}$ and $b'=a^{-1}$. Then the local transformation rule $\tau$ determines a two-parametric invariant map from regular link diagrams to the skein module of diagrams without crossings modulo the move $O_\delta$, $\delta=-(\frac ab+\frac ba)$.

For regular classical knot (i.e. knots in the disk $\mathbb D^2$ or the sphere $S^2$), we get a bracket invariant in two variables
\[
\langle D\rangle=\sum_{s}a^{\alpha_+(s)-\beta_-(s)}b^{\beta_+(s)-\alpha_-(s)}\left(-\frac ab-\frac ba\right)^{\gamma(s)}
\]
where $s\in\{0,1\}^{\mathcal C(D)}$ is a state of the diagram $D$, $\alpha_\epsilon(s)=|s^{-1}(0)\cap\mathcal C_\epsilon(D)|$ is the number of crossings with the sign $\epsilon$ smoothed by type $0$ in $s$, $\beta_\epsilon(s)=|s^{-1}(1)\cap\mathcal C_\epsilon(D)|$, $\epsilon=\pm$, and $\gamma(s)$ is the number of components in the smoothed diagram $D_s$.

The bracket after a normalization becomes an invariant.
\begin{proposition}
The polynomial
\[
X(D)=\left(-\frac{a^2}b\right)^{-wr(D)}\langle D\rangle
\]
where $wr(D)$ is the writhe number of the diagram $D$, is an invariant of oriented classical links.
\end{proposition}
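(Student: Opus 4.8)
The plan is to follow the classical Kauffman normalization argument, isolating the one feature through which the writhe enters --- the first Reidemeister move. First I would record that the unnormalized bracket $\langle D\rangle$ is already invariant under $\Omega_2$ and $\Omega_3$; this is exactly the content of the preceding paragraph, where $\langle\cdot\rangle$ was produced as the functorial map of the two-parameter smoothing rule on \emph{regular} link diagrams. The writhe $wr(D)$ is likewise a regular isotopy invariant, and the normalizing factor $\left(-\frac{a^2}{b}\right)^{-wr(D)}$ depends on the diagram only through $wr(D)$. Hence $X(D)$ is automatically unchanged by $\Omega_2$ and $\Omega_3$, and it suffices to analyze the behaviour of $X$ under the oriented first Reidemeister moves.

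Next I would compute the effect of a single kink on $\langle\cdot\rangle$. Let $D'$ be obtained from $D$ by inserting a positive kink. The new crossing is positive, so the rule replaces it by $a\,\skcrv+b\,\skcrh$. The essential geometric observation is that the oriented (Seifert) smoothing $\skcrv$ of a curl splits off a disjoint trivial circle, whereas $\skcrh$ merely reconnects the strand; therefore $\langle D'\rangle=\left(a\delta+b\right)\langle D\rangle$, and substituting $\delta=-\frac ab-\frac ba$ gives $a\delta+b=-\frac{a^2}{b}$. The same computation for a negative kink uses $T_-=b^{-1}\skcrh+a^{-1}\skcrv$; again the oriented smoothing $\skcrv$ produces the circle, so $\langle D'\rangle=\left(a^{-1}\delta+b^{-1}\right)\langle D\rangle=-\frac{b}{a^2}\langle D\rangle=\left(-\frac{a^2}{b}\right)^{-1}\langle D\rangle$.

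Finally I would combine these with the change in the writhe. A positive kink raises $wr$ by $1$ and multiplies $\langle\cdot\rangle$ by $-\frac{a^2}{b}$, so the two contributions cancel in $X(D')=\left(-\frac{a^2}{b}\right)^{-wr(D)-1}\left(-\frac{a^2}{b}\right)\langle D\rangle=X(D)$; a negative kink lowers $wr$ by $1$ and multiplies $\langle\cdot\rangle$ by $\left(-\frac{a^2}{b}\right)^{-1}$, and once more the contributions cancel. Since any two diagrams of the same oriented classical link are joined by a finite sequence of moves $\Omega_1$, $\Omega_2$, $\Omega_3$ and planar isotopies, invariance under each of these shows that $X$ is an invariant of oriented classical links.

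The step I expect to be the main obstacle is the geometric bookkeeping in the second paragraph: one must verify that for every oriented first Reidemeister move (the moves $\Omega_{1a}$, $\Omega_{1b}$ together with their orientation variants) it is the oriented smoothing $\skcrv$, and not $\skcrh$, that carries off the extra circle. This matters because the two a priori possible kink factors, $-\frac{a^2}{b}$ and $-\frac{b^2}{a}$, are genuinely distinct (they differ unless $a^3=b^3$), so only the correct correlation between the sign of the kink and the circle-producing smoothing makes the normalizing exponent $-wr(D)$ do its job. The clean resolution is the Seifert-algorithm observation that the orientation-respecting smoothing of any curl always separates a trivial circle, which pins down $\skcrv$ uniformly and yields the factors computed above.
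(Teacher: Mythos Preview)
Your argument is correct and is exactly the standard Kauffman normalization argument that the paper has in mind. The paper does not give a separate proof of this proposition; it simply asserts it after noting that ``following the reasonings for the usual Kauffman bracket'' yields the relations $a'=b^{-1}$, $b'=a^{-1}$ and regular invariance of $\langle\cdot\rangle$, so your write-up is precisely the omitted verification.
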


Note that after the variable change $\tilde a=\sqrt{\frac ab}$ the polynomial $X(D)$ becomes the usual Jones polynomial in the variable $\tilde a$.

\subsection{Binary functorial maps}\label{subsec:binary_functorial_maps}


\begin{definition}\label{def:binary_functorial_map}
A local transformation rule $\tau$ is called \emph{binary} if there are tangle sets $\mathcal T_0=\{T_{0-}, T_{0+}\}$ and $\mathcal T_1=\{T_{1-}, T_{1+}\}$ such that for any crossing $c$ $\tau(c)=T_{k,\sgn(c)}$ for some $k=0,1$. The set $\{\mathcal T_0,\mathcal T_1\}$ is the \emph{scheme} of the local transformation rule $\tau$.
\end{definition}

\begin{remark}\label{rem:binary_functorial_map}
With some abuse of notation we will write $\tau(c)=k$, $k=0,1$, instead of $\tau(c)=T_{k,\sgn(c)}$. Thus, for a fixed scheme $\{\mathcal T_0, \mathcal T_1\}$, the local transformation rule can be viewed as a trait with values in $\Z_2$.
\end{remark}

Let us consider several examples of binary functorial maps. We will assume that the local transformation rule $\tau$ is not constant (otherwise $\tau$ is a unary transformation rule).

\subsubsection{Scheme $(Sm^{or}, Sm^{unor})$}

Let $\mathcal T_0=Sm^{or}=\{\skcrv,\skcrv\},\ \mathcal T_1=Sm^{unor}=\{\skcrh,\skcrh\}$. The invariance conditions for the corresponding local transformation rule $\tau$ are shown in Fig.~\ref{fig:or_unor_smoothing}.

\begin{figure}[p]
\centering\includegraphics[width=0.35\textwidth]{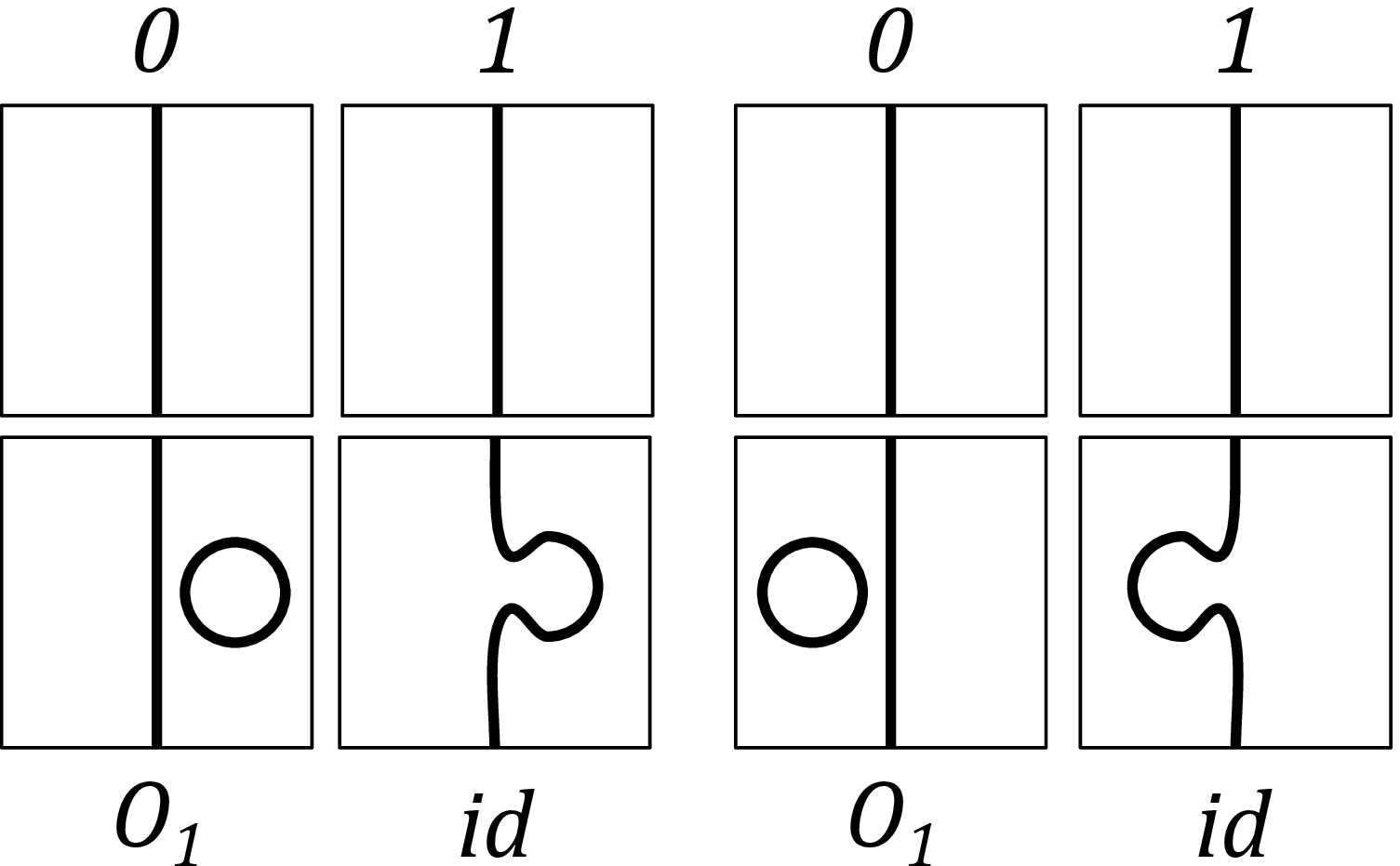}\\ \medskip
 \includegraphics[width=0.7\textwidth]{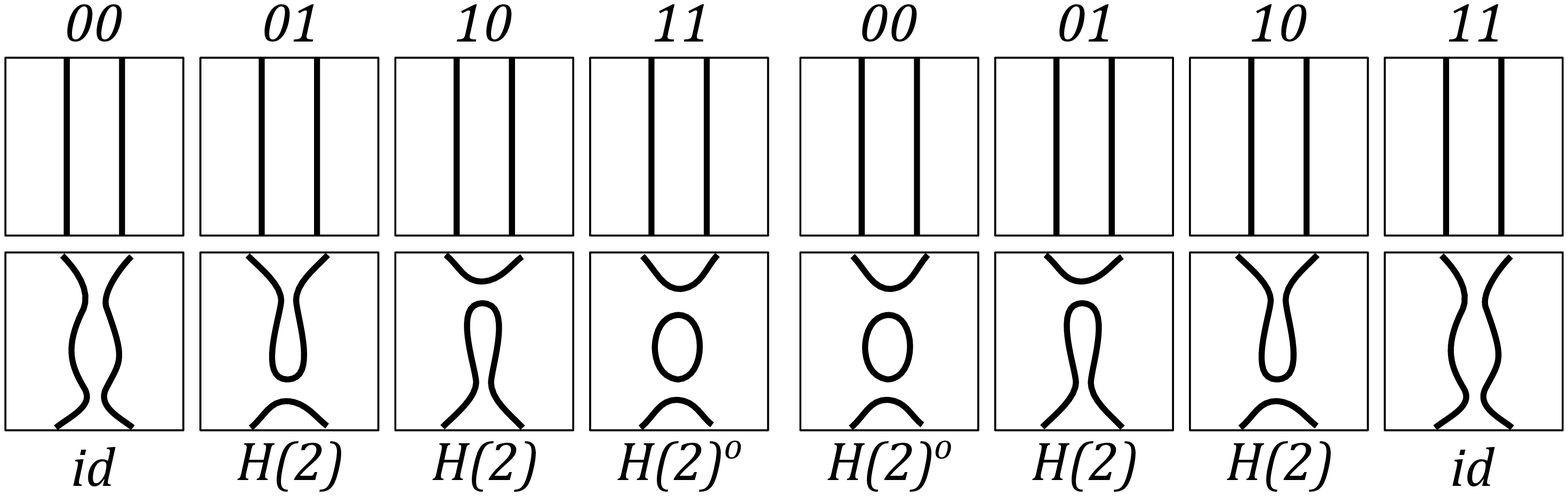}\\ \medskip
\includegraphics[width=0.7\textwidth]{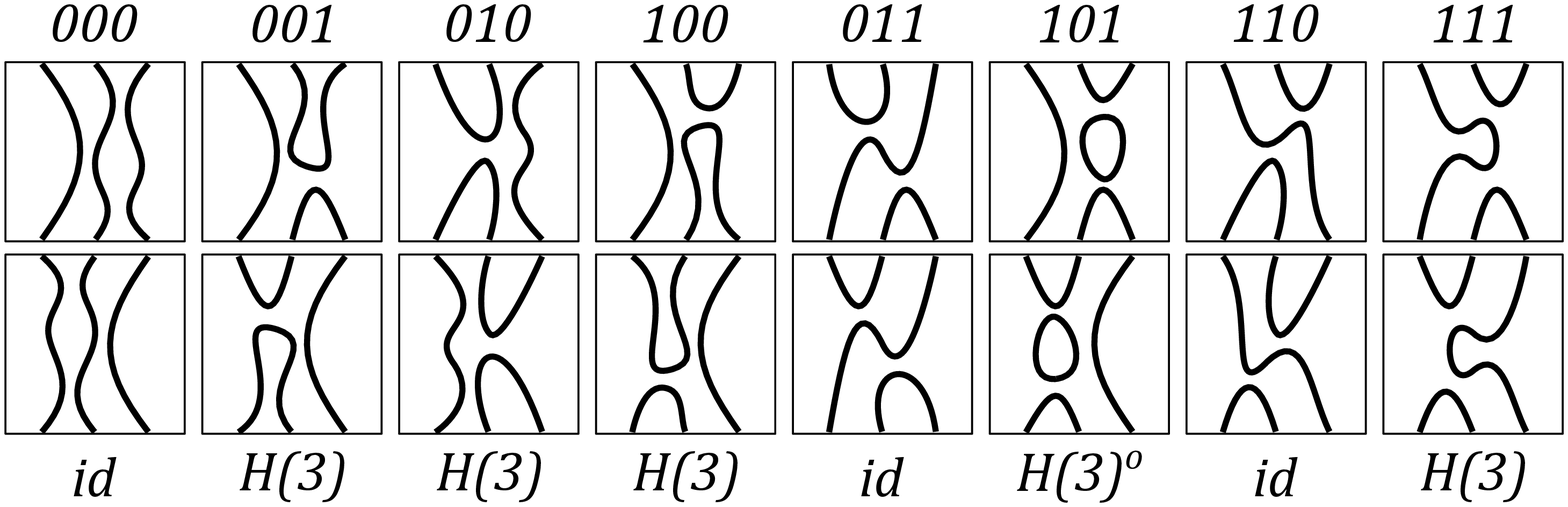}
\caption{Invariance conditions for the scheme $(Sm^{or}, Sm^{unor})$}\label{fig:or_unor_smoothing}
\end{figure}

Let us analyze relations between properties of the local transformation rule and the possible destination knot theory.

If $\tau$ is not an index then a second Reidemeister move can occur with crossings that have different trait values. Then we get the move $H(2)$. The move $H(2)$ produces the moves $H(2)^o$, $H(3)$ and $H(3)^o$. Hence, the map $f_\tau$ is invariant in the destination knot theory $\mathcal M'=\{H(2)\}$.

Let $\tau$ be an index. Since $\tau$ is not constant, there is a crossing $c$ such that $\tau(c)=1$. Then we can modify the diagram so that the crossing $c$ participates in a Reidemeister move $\Omega_{2a}$. Thus, we obtain the move $H(2)^o$ to be included in the destination knot theory $\mathcal M'$.

If there is no loop values $\tau^{l\pm}$, $\tau^{r\pm}$ equal to $0$ then the move $H(2)^o$ insures invariance of the functorial maps under first and second Reidemeister moves. Since the move $H(2)^o$ generates the move $H(3)$ and the move $H(3)^o$ (the latter is a composition of two $H(2)^o$ moves), the functorial map will be invariant when $H(2)^o\in\mathcal M'$.

If there is an loop $c$ with $\tau(c)=0$ then we get the move $O_1$. The moves $O_1$ and $H(2)^o$ generate the move $H(2)$. Hence, $H(2)\in\mathcal M'$. Since the other transformations in the invariance conditions of the scheme are generated by $H(2)$, the move $H(2)$ ensures invariance of the functorial map $f_\tau$.

For the functorial map on regular oriented classical knots, we have $\mathcal M'=\{H(2)^o\}$ when $\tau$ is an index, and $\mathcal M'=\{H(2)\}$ otherwise.

We can summarize the reasonings above in the following table.

\begin{center}
\begin{tabular}{|c|c|c|}
\hline
$\mathcal M$ & $\tau$ & $\mathcal M'$\\
\hline
 & trait & $H(2)$\\
$\mathcal M_{class}^+$ & index, $\forall\tau^{r/l\pm}=1$ & $H(2)^o$\\
 & index, $\exists\tau^{r/l\pm}=0$ & $H(2)$\\
\hline
$\mathcal M_{class}^{reg+}$ & trait & $H(2)$\\
 & index & $H(2)^o$\\
\hline
\end{tabular}
\end{center}

Let $F$ be an oriented compact surface.

\begin{proposition}\label{prop:or_unor_smoothing}
Let $\tau$ be a local transformation rule with the scheme $(Sm^{or}, Sm^{unor})$ on the diagram category $\mathfrak K_+(F)=\mathfrak K(\mathscr D_+(F)|\mathcal M_{class}^+)$. If $\tau$ is an index with the loop values $\tau^{r/l\pm}$ all equal to $1$ then the functorial map $f_\tau$ in invariant in the knot theory $\mathcal M'=\{H(2)^o\}$ and
\[
f_\tau(D)=([D],\rho(D)+P_D'(1)+n^\tau(D))\in H_1(F,\Z_2)\times\Z_2
\]
where $\rho(D)$ is the offset, $P_D(t)$ the based index polynomial and $n^\tau(D)=|\tau^{-1}_D(1)|$ the number of odd crossings in $D$. Otherwise, the functorial map is invariant in the knot theory $\mathcal M'=\{H(2)\}$ and
$
f_\tau(D)=[D]\in H_1(F,\Z_2).
$
\end{proposition}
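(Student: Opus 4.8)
The invariance claims (the destination knot theory is $\{H(2)^o\}$ when $\tau$ is an index with all loop values equal to $1$, and $\{H(2)\}$ otherwise) are precisely the conclusions of the case analysis and the table preceding the statement, so the plan is to take those as given and to concentrate on evaluating $f_\tau(D)$ inside the crossingless skein module. Since $f_\tau(D)$ is a crossingless diagram, I would identify it through its complete invariants: by Proposition~\ref{prop:crossingless_skein_unor_modules} an element of $\mathscr K^0(F|H(2)^o)$ is determined by the pair $([\cdot],\rho(\cdot))\in H_1(F,\Z_2)\times\Z_2$, while by Proposition~\ref{prop:unary_A_smoothing} an element of $\mathscr K^0(F|H(2))$ is determined by its class in $H_1(F,\Z_2)$ alone. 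Thus the problem splits into computing the homology class in both cases and the offset $\rho$ in the index case.

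First I would check that $[f_\tau(D)]=[D]\in H_1(F,\Z_2)$, regardless of which smoothing is applied at each crossing. Over $\Z_2$ the diagram is a $1$-cycle recorded by its underlying set of edges, and either smoothing merely reconnects the four local strands at a $4$-valent vertex into two arcs without altering this set of edges; hence the $\Z_2$-homology class is unchanged at every crossing. This already settles the non-index case: the destination theory is $\{H(2)\}$, where the offset is not defined, so $f_\tau(D)=[D]\in H_1(F,\Z_2)$ as claimed.

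For the index case the remaining task is the offset formula. Here I would reduce to the pure unoriented smoothing already treated in Corollary~\ref{cor:unor_invariants_after_smoothing}, which gives $\rho(f_{\tau_{unor}}(D))=\rho(D)+P'_D(1)+n(D)$ for the rule $\tau_{unor}$ that smooths every crossing unorientedly. The mixed rule $\tau$ differs from $\tau_{unor}$ only at the crossings $c$ with $\tau(c)=0$, where it applies an oriented rather than an unoriented smoothing. The key local lemma to establish is that switching the smoothing type at a single crossing changes the offset of the resulting crossingless diagram by exactly $1\in\Z_2$: the two smoothings of one crossing differ by one in the parity of the number of components, and $\rho$ detects precisely this parity. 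Summing this increment over the $|\tau_D^{-1}(0)|=n(D)-n^\tau(D)$ orientedly smoothed crossings and reducing modulo $2$ converts $n(D)$ into $n^\tau(D)$ while leaving $\rho(D)$ and $P'_D(1)$ (which are invariants of $D$ itself) untouched, yielding $\rho(f_\tau(D))=\rho(D)+P'_D(1)+n^\tau(D)$.

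The main obstacle is exactly this local lemma: one must verify that the per-crossing change in the offset is genuinely local, i.e. independent both of the global topology of $F$ and of the way the remaining crossings are resolved, and that it equals $1$ for every crossing rather than depending on the crossing's index or sign. I would prove it by comparing the two smoothings in a disk neighbourhood of the crossing, tracking the change in component count and its effect on $\rho$ as defined for $\mathscr K^0(F|H(2)^o)$, and then invoking the additivity of $\rho$ under disjoint local modifications so that the increments at distinct crossings do not interfere. Some care is also needed to confirm that the normalization of the based index polynomial $P_D$ is the same one appearing in Corollary~\ref{cor:unor_invariants_after_smoothing}, so that the term $P'_D(1)$ transfers unchanged to the mixed setting.
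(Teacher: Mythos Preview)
Your approach is correct and essentially mirrors the paper's: both compute $[f_\tau(D)]=[D]$ trivially and then reduce the offset computation to Corollary~\ref{cor:unor_invariants_after_smoothing} plus a count of how many crossings have their smoothing type switched. The only difference is the reference point --- the paper compares $f_\tau(D)$ to $Sm^{or}(D)$ (all oriented smoothings), writing $\rho(f_\tau(D))=\rho(Sm^{or}(D))+n^\tau(D)$ and then invoking part~1 of the corollary, whereas you compare to $Sm^{unor}(D)$ and invoke part~2. Modulo~$2$ the two computations are identical.

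One small correction: the ``local lemma'' you flag as the main obstacle is not new work --- it is exactly Proposition~\ref{prop:rho0_properties}(3), which states that the move $H(2)$ (and switching one smoothing from oriented to unoriented is precisely an $H(2)$ move on the crossingless diagram) changes $\rho_0$ by~$1$. Your heuristic justification that ``$\rho$ detects precisely the parity of the number of components'' is not quite right, since $\rho_0(D)=\frac{s([D])\cdot\tilde D}{2}+m(D)\bmod 2$ also has the intersection term; but the proposition already handles this, so you should simply cite it rather than re-derive it locally.
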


\begin{proof}
Consider the case $\tau$ is an index with the loop values $\tau^{r/l\pm}$ all equal to $1$. Then the class of the diagram $f_\tau(D)$ in $\mathscr K^0(F|H(2)^o)$ is determined by the pair $([f_\tau(D)],\rho(f_\tau(D)))\in H_1(F,\Z_2)\times\Z_2$. But $[f_\tau(D)]=[D]$. By Corollary~\ref{cor:unor_invariants_after_smoothing} $\rho(f_\tau(D))=\rho(Sm^{or}(D))+n^\tau(D)=\rho(D)+P_D'(1)+n^\tau(D)$ where $Sm^{or}(D)$ is the diagram obtained by oriented smoothings of all crossings.
\end{proof}

\subsubsection{Scheme $(Sm^{or}, id)$}

Let $\mathcal T_0=Sm^{or}=\{\skcrv,\skcrv\},\ \mathcal T_1=id=\{\skcrr,\skcrl\}$. The invariance conditions for the corresponding local transformation rule $\tau$ are shown in Fig.~\ref{fig:id_or_smoothing}.

\begin{figure}[p]
\centering\includegraphics[width=0.35\textwidth]{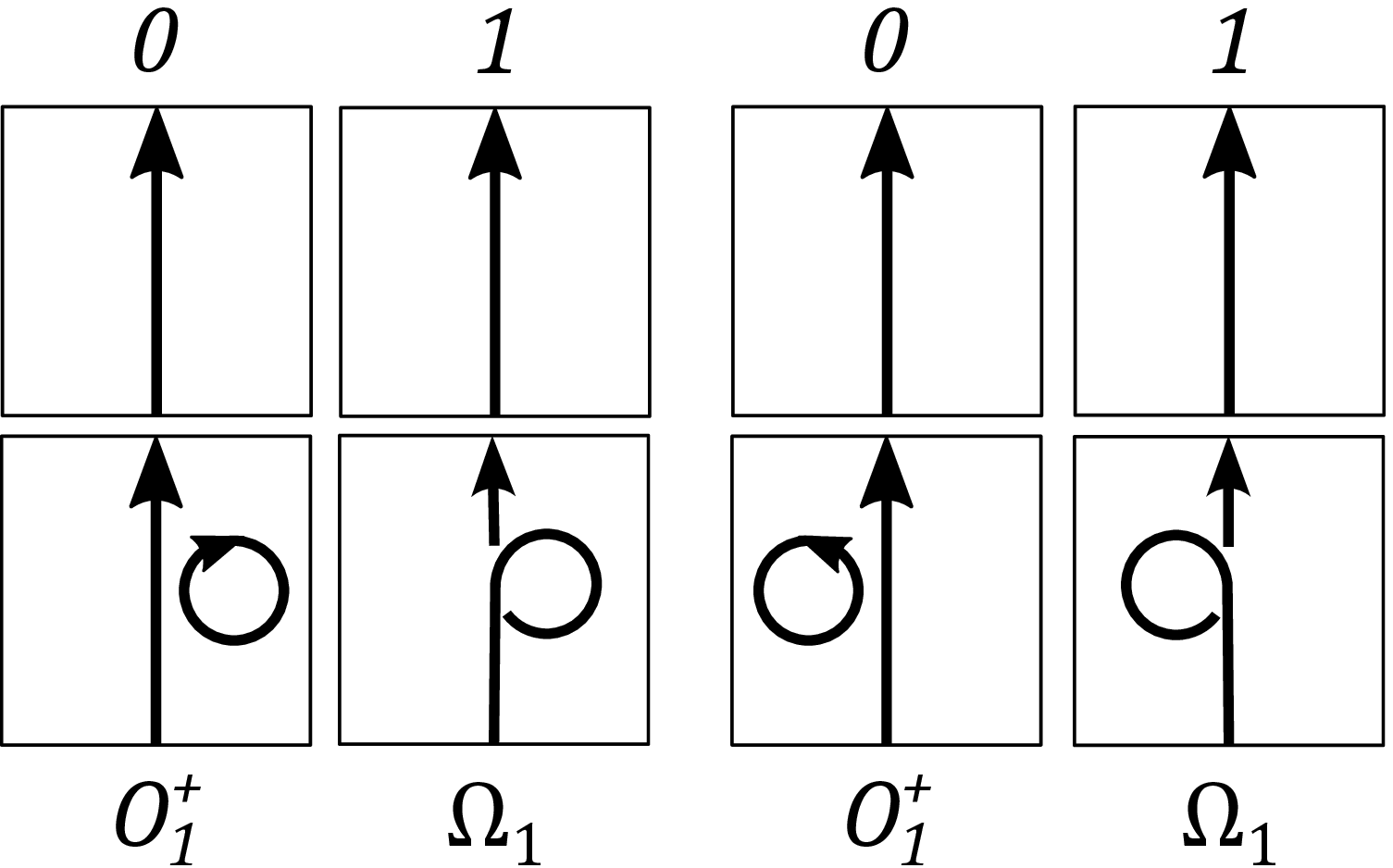}\\ \medskip
 \includegraphics[width=0.7\textwidth]{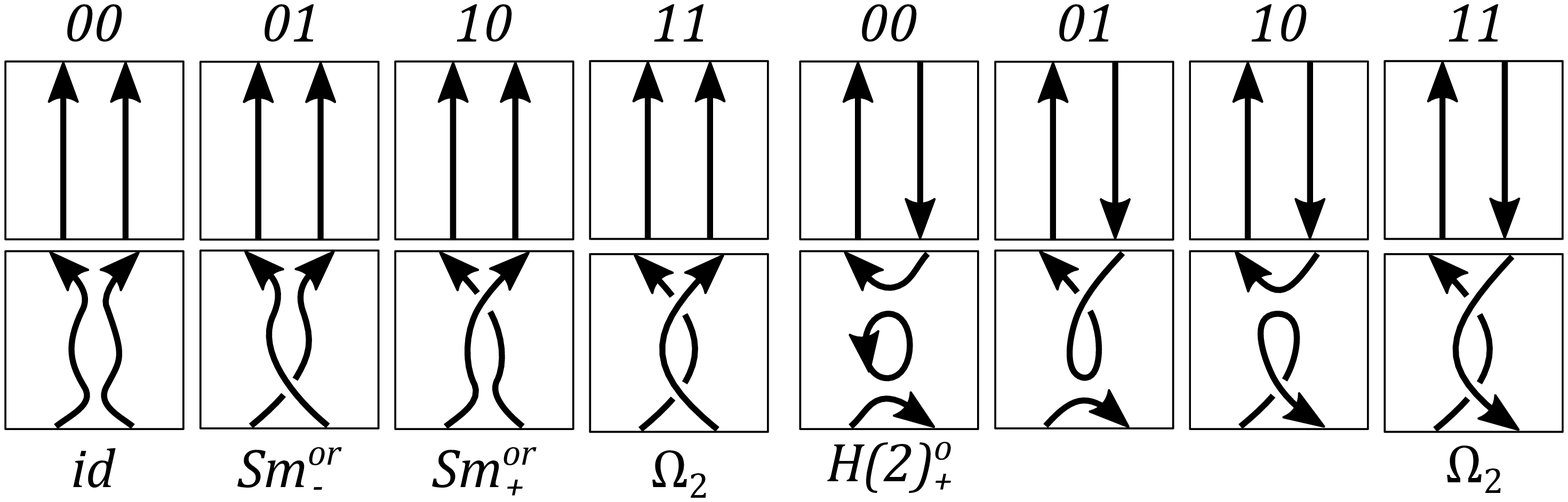}\\ \medskip
\includegraphics[width=0.7\textwidth]{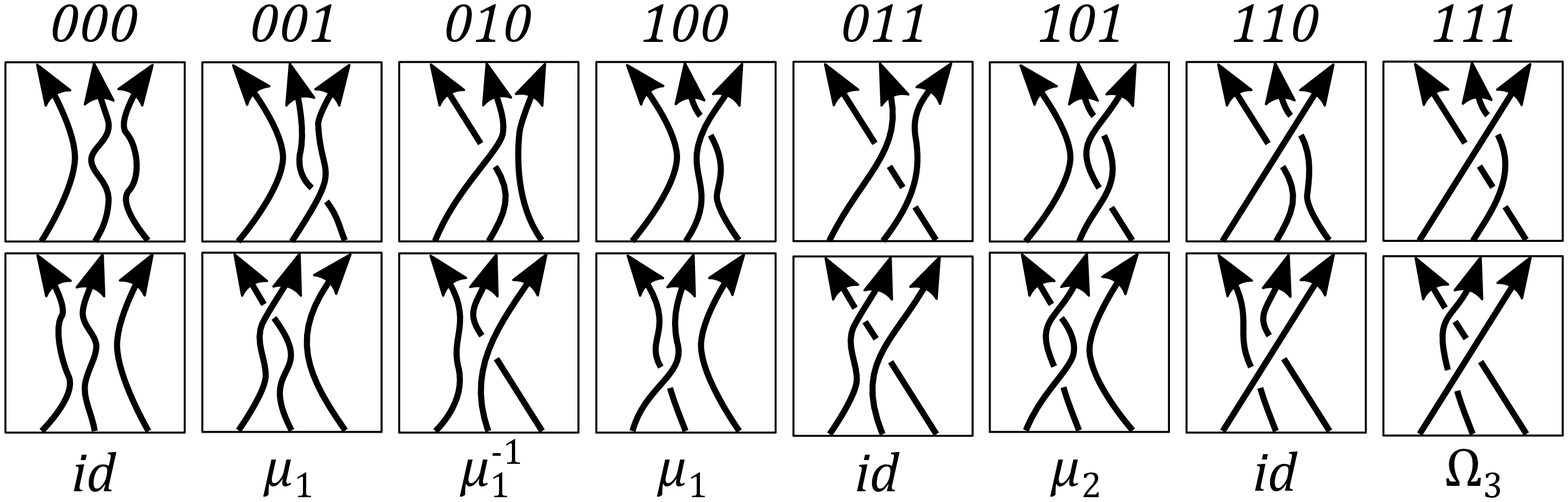}
\caption{Invariance conditions for the scheme $(Sm^{or}, id)$}\label{fig:id_or_smoothing}
\end{figure}

If $\tau$ is not an index then there is a second Reidemeister move which induces the oriented smoothing $Sm^{or}_\pm$ of a positive or a negative crossing. Assume it is the oriented smoothing $Sm^{or}_+$. Then we can apply this move to all positive crossings in $f_\tau(D)$, hence, we can think that $\tau(c)=0$ for all the positive crossings. Let $c\in \mathcal C(D)$ be a negative crossing such that $\tau(c)=1$. Modify the diagram $D$ so that $c$ can take part in a move $\Omega_{2a}$ or $\Omega_{2b}$ with another positive crossing $c'$. Then the invariance condition for this move gives the move $Sm^{or}_-$. Hence, all the negative crossings can be smoothed. Thus, this case is reduced to the unary functorial map $f_{Sm^{or}}$.

If $\tau$ is an index then the invariance for second Reidemeister moves yields the moves $\Omega_2$ and $H(2)_+^o$.

Consider the invariance condition for the third Reidemeister move. If $\tau$ is an index but not a parity then we get either the move $\mu_1$ (the cases $001$ and $100$) or the move $\mu_1^{-1}$ (the case $010$) or the move $\Omega_{3b}$ (the case $111$). The move $\mu_1$ (applied thrice) produces the move $\Omega_{3b}$, on the other hand, the moves $H(2)_+^o$, $\Omega_2$ and $\Omega_{3b}$ generate the move $\mu_1$. Thus, invariance for the third Reidemeister move is ensured by the move $\Omega_{3b}$ (together with the moves $H(2)_+^o$ and $\Omega_2$).

Assume the rule $\tau$ is a parity. Since $\tau$ is not constant, for diagrams in a connected surface, we can create a diagram to which a move $\Omega_{3b}$ of type $101$ can be applied. Then invariance for the third Reidemeister move yields the move $\mu_2$ which is equivalent (modulo the moves $H(2)_+^o$ and $\Omega_2$) to the move $2\Omega_\infty$ (Fig.~\ref{fig:2R_infinity_move}).

\begin{figure}[h]
\centering\includegraphics[width=0.25\textwidth]{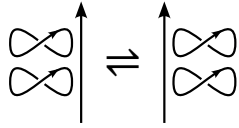}
\caption{The move $2\Omega_\infty$}\label{fig:2R_infinity_move}
\end{figure}

Consider the invariance condition for the first Reidemeister moves. We assume that $\tau$ is an index. If all the loop values are odd: $\tau^{r\pm}=\tau^{l\pm}=1$ then the invariance for first Reidemeister moves yields the move $\Omega_1$.

If $\tau^{r+}(=\tau^{l-})=0$ and $\tau^{r-}(=\tau^{l+})=1$ then we get the moves $O^+_1$ (elimination of a trivial circle oriented counterclockwise) and $\Omega_{1a}$. The moves $O^+_1$ and $H(2)_+^o$ generate the move $H(2)_+$, and the moves $\Omega_{1a}$, $H(2)_+^o$ and $\Omega_2$ generate other variants of first Reidemeister move.

If all the loop values are even: $\tau^{r\pm}=\tau^{l\pm}=0$ (for example, when $\tau$ is a parity) then we get the move $O_1$ and hence the move $H(2)_+$.

We can summarize the reasoning above in the following table.

\begin{center}
\begin{tabular}{|c|c|c|}
\hline
$\mathcal m$ & $\tau$ & $\mathcal M'$\\
\hline
 & trait & $Sm^{or}_\pm, H(2)_+$\\
 & index, $\tau^{r+}=\tau^{r-}=0$ & $H(2)_+,\Omega_2,\Omega_3$\\
$\mathcal M_{class}^+$ & index, $\tau^{r+}=\tau^{r-}=1$ & $H(2)^o_+,\Omega_1,\Omega_2,\Omega_3$\\
 & index, $\tau^{r+}\ne\tau^{r-}$ & $H(2)_+,\Omega_1,\Omega_2,\Omega_3$\\
 & parity & $H(2)_+,\Omega_2,2\Omega_\infty$\\
\hline
  & trait & $Sm^{or}_\pm, H(2)_+$\\
$\mathcal M_{class}^{reg+}$  & index & $H(2)_+^o,\Omega_2,\Omega_3$\\
& parity  & $H(2)^o_+,\Omega_2,2\Omega_\infty$\\
\hline
\end{tabular}
\end{center}

Let $F$ be an oriented compact surface.

\begin{proposition}\label{prop:binary_or_id}
Let $\tau$ a local transformation rule with the scheme $(Sm^{or}, id)$ on the diagram set $\mathscr D_+(F)$.
\begin{itemize}
\item If $\tau$ is an index and not a parity, with the loop values $\tau^{r+}=\tau^{r-}=0$ then $f_\tau(D)=([D],wr^{\tau}(D))\in H_1(F,\Z)\times\Z$ where $wr^\tau(D)=\sum_{v\colon \tau(v)=1}sgn(v)$ is the $\tau$-odd writhe of the diagram;
\item If $\tau$ is an index with the loop values $\tau^{r+}=\tau^{r-}=1$ then $f_\tau(D)=([D],rot(D)+wr^\tau(D))\in H_1(F,\Z)\times\Z_2$ where $rot(D)$ is the rotation number of the diagram;
\item If $\tau$ is an index with the loop values $\tau^{r+}\ne\tau^{r-}$ then $f_\tau(D)=[D]\in H_1(F,\Z)$;
\item If $\tau$ is a parity then  $f_\tau(D)=([D], (P_D(t)-P^{\tau}_D(t))\bmod 2, \lfloor \frac{wr^\tau(D)}2\rfloor)$  where $P_D(t)$ is the based index polynomial of $D$ and
\[
P^{\tau}_D(t)=\sum_{v\colon \tau(v)=0}sgn(v)t^{ind_z(v)}
\]
is the $\tau$-even based index polynomial considered as an element of the quotient of the set $\Z_2[t,t^{-1}]/(t^{\mu(D)}-1)$ modulo the action $g(t)\mapsto tg(t)$, and $\lfloor \frac{wr^\tau(D)}2\rfloor\in\Z$.
\end{itemize}
\end{proposition}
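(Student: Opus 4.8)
The plan is to treat the four bullets separately after a common reduction. In every case the transformed diagram $f_\tau(D)$ is obtained by oriented-smoothing the crossings with $\tau(v)=0$ and leaving the crossings with $\tau(v)=1$ intact; since $Sm^{or}$ respects orientations and the identity changes nothing, the underlying oriented $1$-cycle is unchanged up to homology, so $[f_\tau(D)]=[D]$. This accounts for the $H_1(F,\cdot)$-coordinate in every formula. The strategy for the remaining coordinates mirrors the proofs of Propositions~\ref{prop:unary_or_smoothing} and~\ref{prop:or_unor_smoothing}: invoke the classification of the destination theory $\mathscr K(\mathscr D'|\mathcal M')$ (the crossingless skein computations of Section~\ref{app:skein_modules}, together with the bookkeeping of the surviving $\tau=1$ crossings), and then compute the residual invariants directly from $D$.

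When $\tau$ is an index but not a parity (the first three bullets) the theory $\mathcal M'$ contains $\Omega_2$ and $\Omega_3$, so the kept crossings slide freely, and the move $H(2)_+$ (respectively $H(2)^o_+$) resolves the smoothed strands; thus $f_\tau(D)$ reduces to a crossingless diagram carrying only the signed count of the surviving crossings, i.e. the $\tau$-odd writhe $wr^\tau(D)=\sum_{\tau(v)=1}sgn(v)$. The three sub-cases differ only in the modulus. If $\tau^{r\pm}=0$ the loop crossings are smoothed, so no first Reidemeister move acts on a kept crossing and $\Omega_1\notin\mathcal M'$; hence $wr^\tau(D)$ is a genuine $\Z$-invariant and the non-regular move $H(2)_+$ leaves no rotation datum. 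If $\tau^{r\pm}=1$ the kinks are kept and $\Omega_1\in\mathcal M'$; a curl changes each of $wr^\tau$ and $rot$ by $\pm1$, hence their sum by an even number, so $rot(D)+wr^\tau(D)$ is preserved modulo $2$ while neither term survives alone (here $\bar\chi(F)$ is even for oriented $F$, so the $H(2)^o_+$-invariance of $rot$ modulo $\bar\chi(F)$ collapses, after combination with $\Omega_1$, to a $\Z_2$-statement). If $\tau^{r+}\ne\tau^{r-}$ then one loop type is kept and the other smoothed, so a single unit of $wr^\tau$ can be created at will; the writhe becomes free and, $H(2)_+$ killing the rotation, only $[D]$ remains.

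The delicate case is the last one, where $\tau$ is a parity and $\mathcal M'=\{H(2)_+,\Omega_2,2\Omega_\infty\}$ lacks $\Omega_3$, so the kept crossings retain far more information. Here I would use the classification of this knot theory, in the spirit of the $\Delta$-type skein computations of Section~\ref{app:skein_modules}: the index $ind_z(v)$ of each kept crossing survives $\Omega_2$ and $2\Omega_\infty$, crossings with opposite contribution cancel through $\Omega_2$, and $2\Omega_\infty$ removes crossings only two at a time. This yields two residual invariants beyond homology, namely the class of $\sum_{\tau(v)=1}sgn(v)\,t^{ind_z(v)}=P_D(t)-P^\tau_D(t)$ reduced modulo $2$ in $\Z_2[t,t^{-1}]/(t^{\mu(D)}-1)$ and modulo the shift action (recording, per index value, the parity of the surviving signed crossings), together with the integer $\lfloor wr^\tau(D)/2\rfloor$ left over after the pairwise cancellations. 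The main obstacle is precisely the completeness of this invariant: one must show that these data separate diagrams in $\{H(2)_+,\Omega_2,2\Omega_\infty\}$ and that no further relation collapses the $\Z$-factor into the polynomial factor. This is where the parity hypothesis is essential, since it guarantees that the crossings created or annihilated by $\Omega_2$ and $2\Omega_\infty$ come in $\tau$-matched pairs, so that the splitting into a mod-$2$ polynomial part and a halved-writhe part is well defined.
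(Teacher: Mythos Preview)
Your overall strategy is the same as the paper's: identify the destination theory $\mathcal M'$ from the table preceding the proposition, invoke the classification of the corresponding skein set from Appendix~\ref{app:skein_modules}, and translate the invariants of $f_\tau(D)$ back to invariants of $D$ using $[f_\tau(D)]=[D]$, $wr(f_\tau(D))=wr^\tau(D)$, $rot(f_\tau(D))=rot(D)$ and $P_{f_\tau(D)}(t)=P_D(t)-P^\tau_D(t)$ (the last two by Corollary~\ref{cor:or_invariant_after_smoothing}). The paper's proof is literally a two-line application of Corollary~\ref{cor:smoothing_skein modules_or}, whose items (2), (3), (4) and (6) match your four bullets exactly.

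The one place your write-up drifts is the parity case. You describe the completeness of the invariant $(P_D(t)-P^\tau_D(t)\bmod 2,\ \lfloor wr^\tau(D)/2\rfloor)$ as ``the main obstacle'' and point to ``the $\Delta$-type skein computations''. In fact this completeness is precisely item (6) of Corollary~\ref{cor:smoothing_skein modules_or} (the smoothing-skein subsection, not the $\Delta$-skein subsection), which gives the bijection $\mathscr K_+(F\mid H(2)_+,\Omega_2,2\Omega_\infty)\simeq\coprod_\alpha (R_\alpha^{\Z_2})_{cycl}\times\Z$. So there is no obstacle left: once you cite that corollary, the parity case is no harder than the others. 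Your heuristic explanation of why the mod-$2$ polynomial and the halved writhe survive is fine as intuition, but for the actual proof you should replace it (and the appeal to the parity hypothesis in your last sentence, which is irrelevant at this stage since invariance has already been established in the table) by a direct citation of that corollary.
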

\begin{proof}
Let $\tau$ be an index and not a parity, with the loop values $\tau^{r+}=\tau^{r-}=0$. By Corollary~\ref{cor:smoothing_skein modules_or}, $f_\tau(D)$ is determined by the homology class $[f_\tau(D)]$ and the writhe $wr(f_\tau(D))$. But $[f_\tau(D)]=[D]$ and $wr(f_\tau(D))=wr^\tau(D)$.

The other cases are proved analogously.
\end{proof}

\subsubsection{Scheme $(Sm^{unor}, id)$}

Let $\mathcal T_0=Sm^{unor}=\{\skcrh,\skcrh\},\ \mathcal T_1=id=\{\skcrr,\skcrl\}$. The invariance conditions for the corresponding local transformation rule $\tau$ are shown in Fig.~\ref{fig:id_unor_smoothing}.

\begin{figure}[p]
\centering\includegraphics[width=0.35\textwidth]{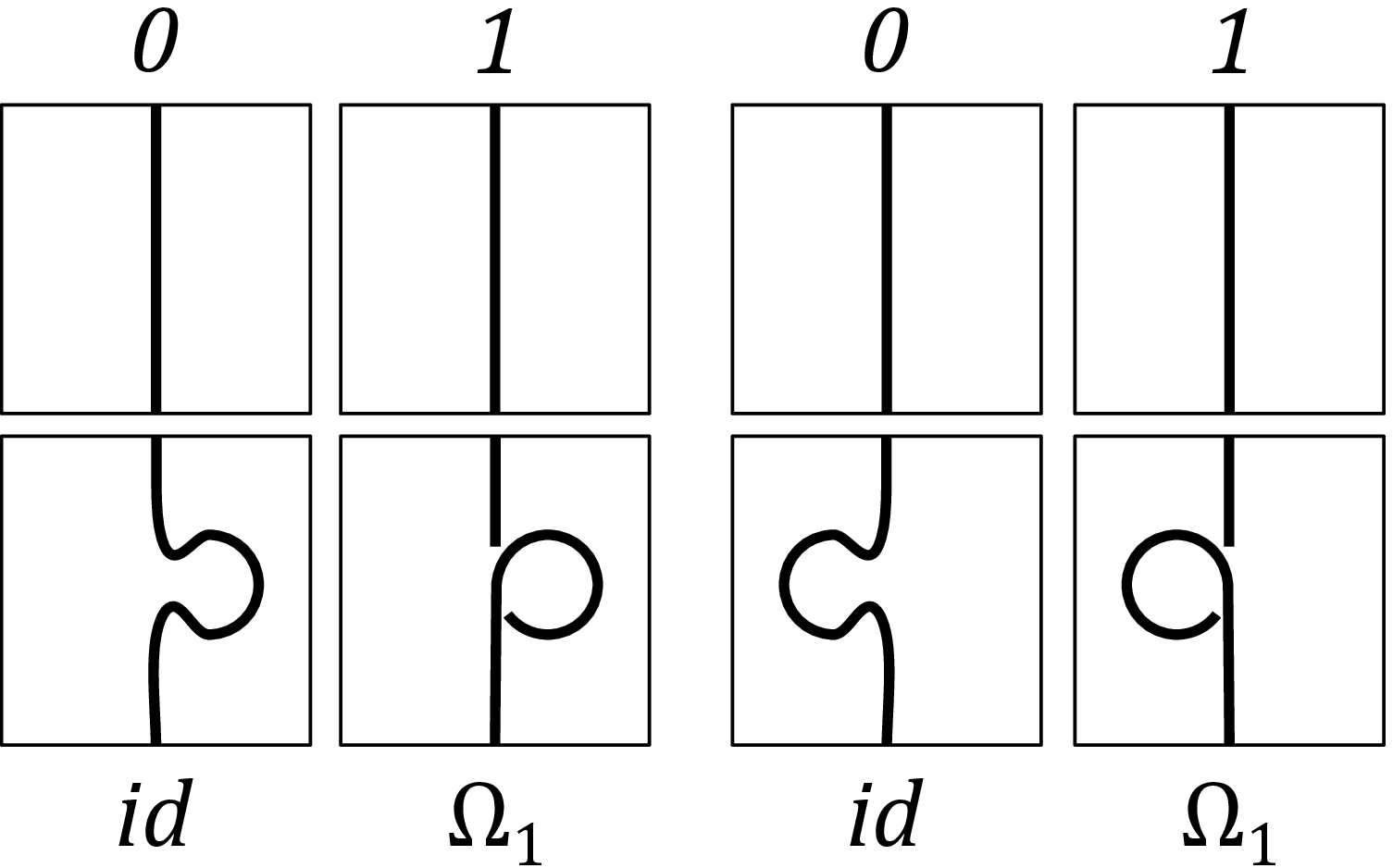}\\ \medskip
 \includegraphics[width=0.7\textwidth]{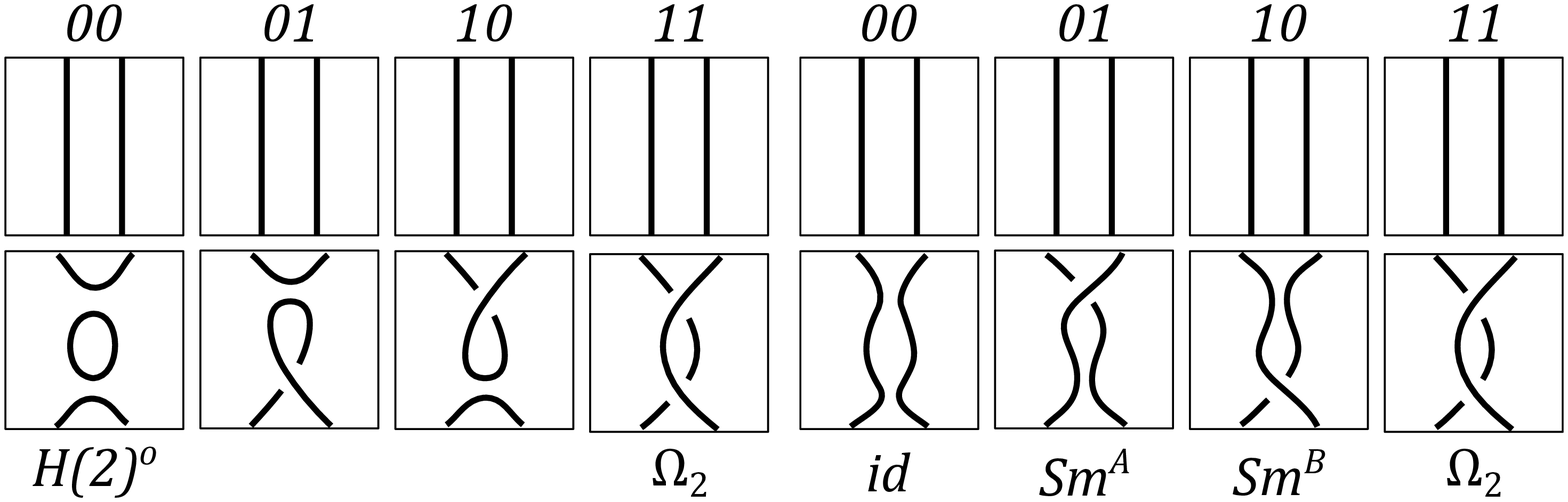}\\ \medskip
\includegraphics[width=0.7\textwidth]{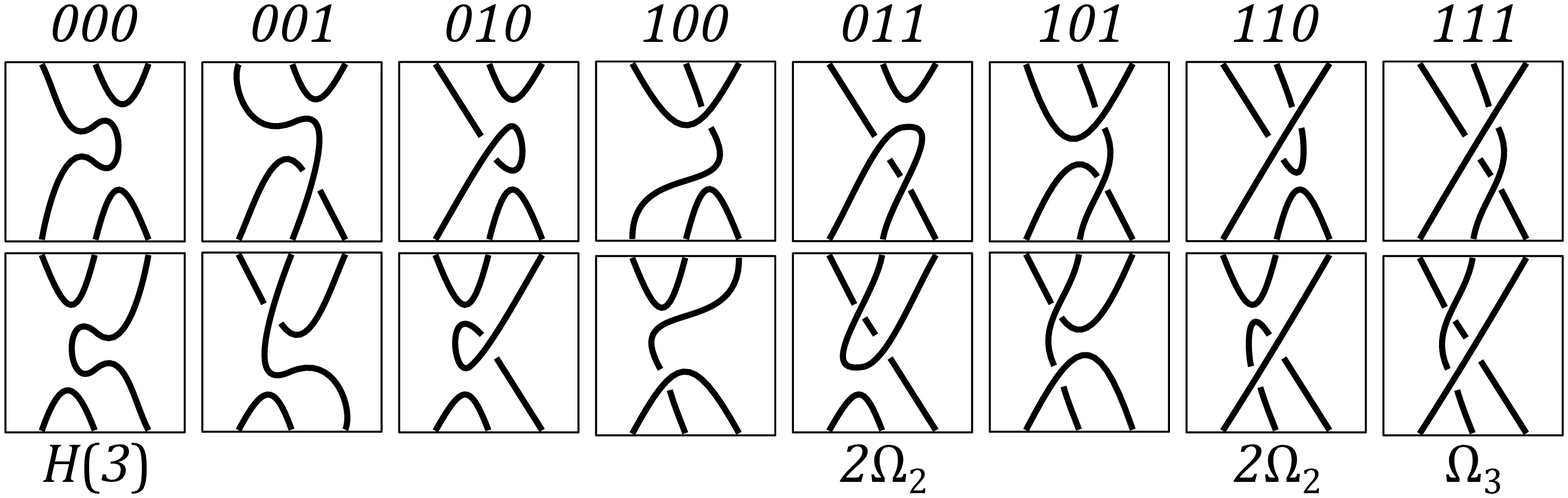}
\caption{Invariance conditions for the scheme $(Sm^{unor}, id)$}\label{fig:id_unor_smoothing}
\end{figure}

The possible invariance cases for the functorial map are analogous to those in the scheme $(Sm^{unor}, id)$ with the difference that here the diagrams $f_\tau(D)$ are not oriented.

Let $\tau$ be not an index, for example, we have a second Reidemeister move of type $01$. Then we get the move $Sm^A$.
Then we can smooth all the crossings in $f_\tau(D)$ and reduce the $f_\tau$ to a binary functorial map with the scheme  $(Sm^{or}, Sm^{unor})$.

If $\tau$ is an index then the invariance for second Reidemeister moves yields the moves $H(2)^o$ and $\Omega_2$.

The invariance under third Reidemeister moves gives the moves which are equivalent (modulo $H(2)^o$ and $\Omega_2$) either to the move  $\Omega_3$ (cases $100$, $010$, $001$, $111$) or to the move $2\Omega_\infty$ (the case $101$). The case $000$ of third Reidemeister move is generated by $H(2)^o$. Thus, if $\tau$ is not a parity then the invariance for $\Omega_3$ is ensured by the move $\Omega_3$. When $\tau$ is a parity, one needs the move $2\Omega_\infty$ for the invariance.

The invariance under first Reidemeister moves yields the move $\Omega_1$ if there is an odd loop value of $\tau$, and requires no additional moves when all the loop values are even.

Thus, we have the following table for the scheme $(Sm^{unor}, id)$.

\begin{center}
\begin{tabular}{|c|c|c|}
\hline
$\mathcal M$ & $\tau$ & $\mathcal M'$\\
\hline
 & trait & $Sm^{A}$ or $Sm^B$, $H(2)^o$\\
$\mathcal M_{class}^{+}$ & index, $\exists\tau^{r\pm}=1$ & $H(2)^o,\Omega_1,\Omega_2,\Omega_3$\\
 & index, $\tau^{r+}=\tau^{r-}=0$ & $H(2)^o,\Omega_2,\Omega_3$\\
 & parity & $H(2)^o,\Omega_2,2\Omega_\infty$\\
\hline
 & trait & $Sm^{A}$ or $Sm^B$, $H(2)^o$\\
$\mathcal M_{class}^{reg+}$ & index & $H(2)^o,\Omega_2,\Omega_3$\\
 & parity & $H(2)^o,\Omega_2,2\Omega_\infty$\\
\hline
\end{tabular}
\end{center}

\begin{proposition}\label{prop:binary_unor_id}
Let $F$ be a surface and $\tau$ a local transformation rule with the scheme $(Sm^{unor}, id)$ on the diagram set $\mathscr D(F)$.
\begin{itemize}
\item If $\tau$ is an index and not a parity, with the loop values $\tau^{r+}=\tau^{r-}=0$ then
\[
f_\tau(D)=([D],2\rho(D)+2(P_D^\tau)'(1)+2n^\tau(D)+wr_{odd}(D)-P^\tau_D(-1))\in H_1(F,\Z_2)\times\Z_4;
\]
\item If $\tau$ is an index and $\exists\tau^{r\pm}=1$ then $f_\tau(D)=[D]\in H_1(F,\Z_2)$;
\item If $\tau$ is a parity then
\[
f_\tau(D)=([D],\rho(D)-(P_D^\tau)'(1)+n^\tau(D),wr_{odd}(D)-P^\tau_D(-1))
\]
when $[D]=0$, and
\[
f_\tau(D)=([D],2\rho(D)+2(P_D^\tau)'(1)+2n^\tau(D)+wr_{odd}(D)-P^\tau_D(-1))
\]
when $[D]\ne 0$.
\end{itemize}
Here $P_D^\tau(t)=\sum_{v\colon \tau(v)=0}sgn(v)t^{ind_z(v)}$ is the $\tau$-even based index polynomial, and $n^\tau(D)$ is the number of crossings $v$ in $D$ such that $\tau(v)=0$.
\end{proposition}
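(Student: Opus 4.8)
The plan is to imitate the proof of Proposition~\ref{prop:binary_or_id}: for each row of the table above I would first fix the destination knot theory $\mathcal M'$, then invoke the appendix classification of the relevant (unoriented) smoothing skein module, and finally read off each coordinate of $f_\tau(D)$ by evaluating it on the explicit diagram coming from $D$. Here $f_\tau(D)$ is the diagram obtained from $D$ by replacing every crossing $v$ with $\tau(v)=0$ by the unoriented smoothing $Sm^{unor}$ and keeping every crossing with $\tau(v)=1$; in $\mathcal M'$ the retained classical crossings are then simplified by the available moves. Since unoriented smoothings and classical crossings do not affect the underlying $1$-cycle modulo $2$, the homology coordinate is handled uniformly by $[f_\tau(D)]=[D]\in H_1(F,\Z_2)$, which disposes of the first entry in every case.

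The substance of the proof is the secondary invariant. In the index, non-parity case with $\tau^{r+}=\tau^{r-}=0$ the theory is $\mathcal M'=\{H(2)^o,\Omega_2,\Omega_3\}$ and, by the unoriented analogue of Corollary~\ref{cor:smoothing_skein modules_or} (building on Proposition~\ref{prop:crossingless_skein_unor_modules}), a diagram is classified by its mod $2$ homology class together with a single $\Z_4$-valued offset/framing invariant. I would compute this invariant on $f_\tau(D)$ by separating the contribution of the smoothed ($\tau$-even) part from that of the retained ($\tau$-odd) crossings: Corollary~\ref{cor:unor_invariants_after_smoothing} rewrites the offset produced by the $\tau$-even smoothings as $2\rho(D)+2(P^\tau_D)'(1)+2n^\tau(D)$, while the signs of the retained crossings, combined with the index data of the smoothed ones, yield the residual term $wr_{odd}(D)-P^\tau_D(-1)$; summing gives the stated $\Z_4$ value. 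The case $\exists\,\tau^{r\pm}=1$ only adds $\Omega_1$ to $\mathcal M'$; this move annihilates the framing invariant, so nothing survives beyond $[D]$.

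For the parity case the destination theory $\{H(2)^o,\Omega_2,2\Omega_\infty\}$ is finer (it contains neither $\Omega_1$ nor $\Omega_3$, only the weaker $2\Omega_\infty$), so by the appendix its crossingless module is classified by homology, a separate $\Z_2$-offset and a $\Z$-valued invariant — with the crucial caveat that a nontrivial homology class forces the $\Z$-part to collapse into the $\Z_4$ of the previous case. I would therefore split on whether $[D]=0$: when $[D]=0$ the two secondary invariants are recorded independently as $\rho(D)-(P^\tau_D)'(1)+n^\tau(D)\in\Z_2$ and $wr_{odd}(D)-P^\tau_D(-1)\in\Z$; when $[D]\ne0$ they amalgamate, and the identity $2x\equiv-2x\pmod 4$ shows the amalgamated value equals the same $\Z_4$ expression as in the first case.

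The main obstacle is exactly this bookkeeping of the secondary invariant. One has to verify precisely how the unoriented smoothing of each $\tau$-even crossing changes the offset (which is where the based index polynomial $P^\tau_D$ and the systematic factor of $2$ enter, via Corollary~\ref{cor:unor_invariants_after_smoothing}), how each retained crossing contributes its sign to the writhe term, and — most delicately — why the move $2\Omega_\infty$ together with a nontrivial cycle produces the $\Z\to\Z_4$ collapse when $[D]\ne0$. Checking that the displayed formulas are genuinely well defined modulo the relations of each $\mathcal M'$, rather than merely computing convenient representatives, is where the real work lies.
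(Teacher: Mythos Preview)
Your approach matches the paper's: cite Corollary~\ref{cor:smoothing_skein modules_unor} for the classification of each destination skein module, then use Corollary~\ref{cor:unor_invariants_after_smoothing} with $V=\tau^{-1}(0)$ to compute $\rho(f_\tau(D))=\rho(D)-(P^\tau_D)'(1)+n^\tau(D)$ and $wr_{odd}(f_\tau(D))=wr_{odd}(D)-P^\tau_D(-1)$, and plug into the classifying maps. Two small corrections to your sketch: the factor of $2$ in the $\Z_4$ formula comes from the skein-module map $D'\mapsto 2\rho(D')+wr_{odd}(D')$ rather than from the offset computation itself, and in the parity case with $[D]=0$ the target is $\Z_2\times\Z_4$ (via $Q^{\Z_4}$ in Corollary~\ref{cor:smoothing_skein modules_unor}), not $\Z_2\times\Z$.
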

\begin{proof}
By Corollary~\ref{cor:smoothing_skein modules_unor}, the image of $f_\tau(D)$ in the skein module is determined by $\rho(f_\tau(D))$ and $wr_{odd}(f_\tau(D))$. By Corollary~\ref{cor:unor_invariants_after_smoothing}, $\rho(f_\tau(D))=\rho(D)-(P_D^\tau)'(1)+n^\tau(D)$ and $wr_{odd}(f_\tau(D))=wr_{odd}(D)-P^\tau_D(-1)$.
Then Corollary~\ref{cor:smoothing_skein modules_unor} implies the statements of the proposition.
\end{proof}

\subsubsection{Scheme $(Sm^A, id)$}
Let $\mathcal T_0=Sm^{A}=\{\skcrv,\skcrh\},\ \mathcal T_1=id=\{\skcrr,\skcrl\}$. The invariance conditions for the corresponding local transformation rule $\tau$ are shown in Fig.~\ref{fig:id_A_smoothing}.

\begin{figure}[p]
\centering\includegraphics[width=0.35\textwidth]{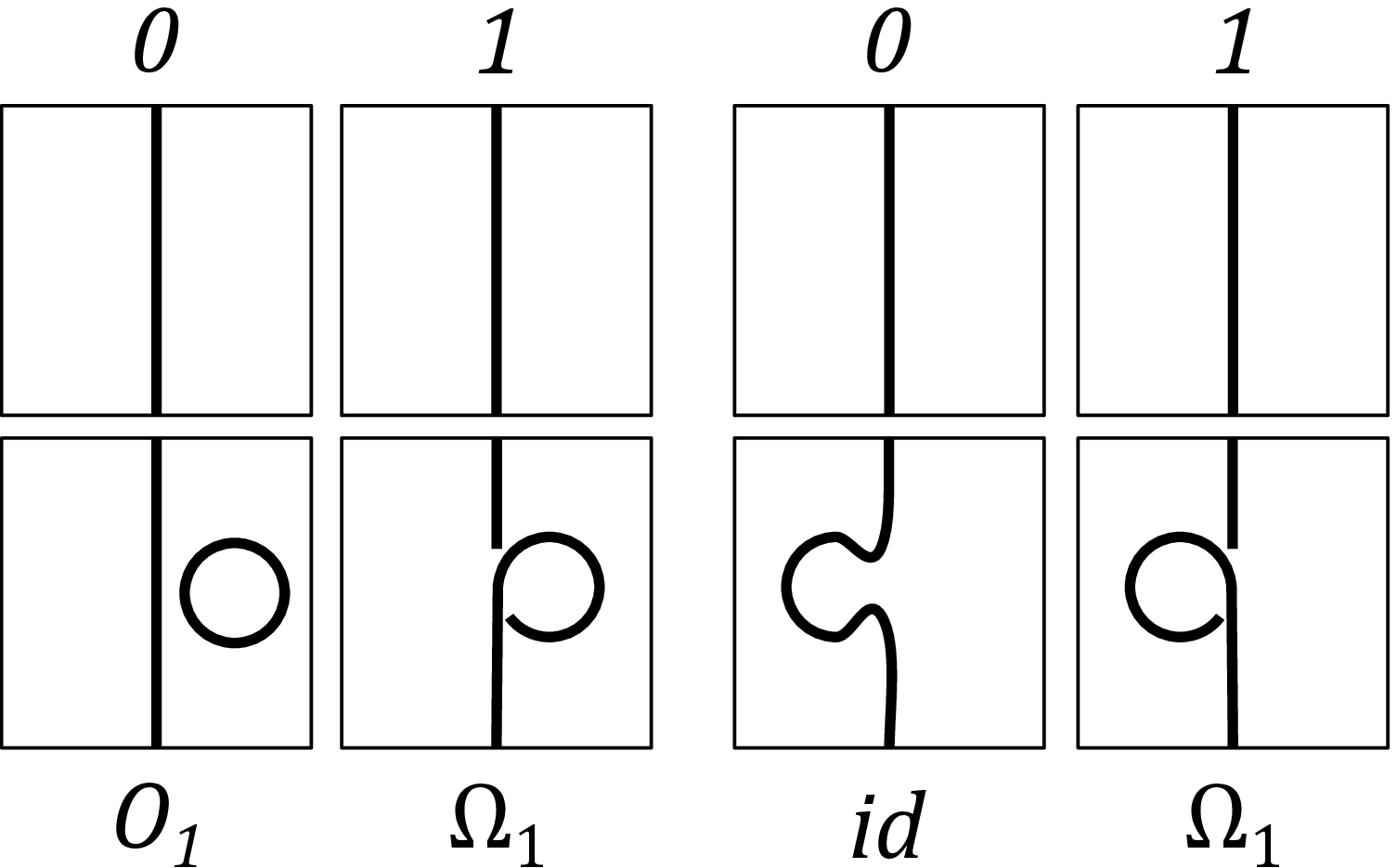}\\ \medskip
 \includegraphics[width=0.7\textwidth]{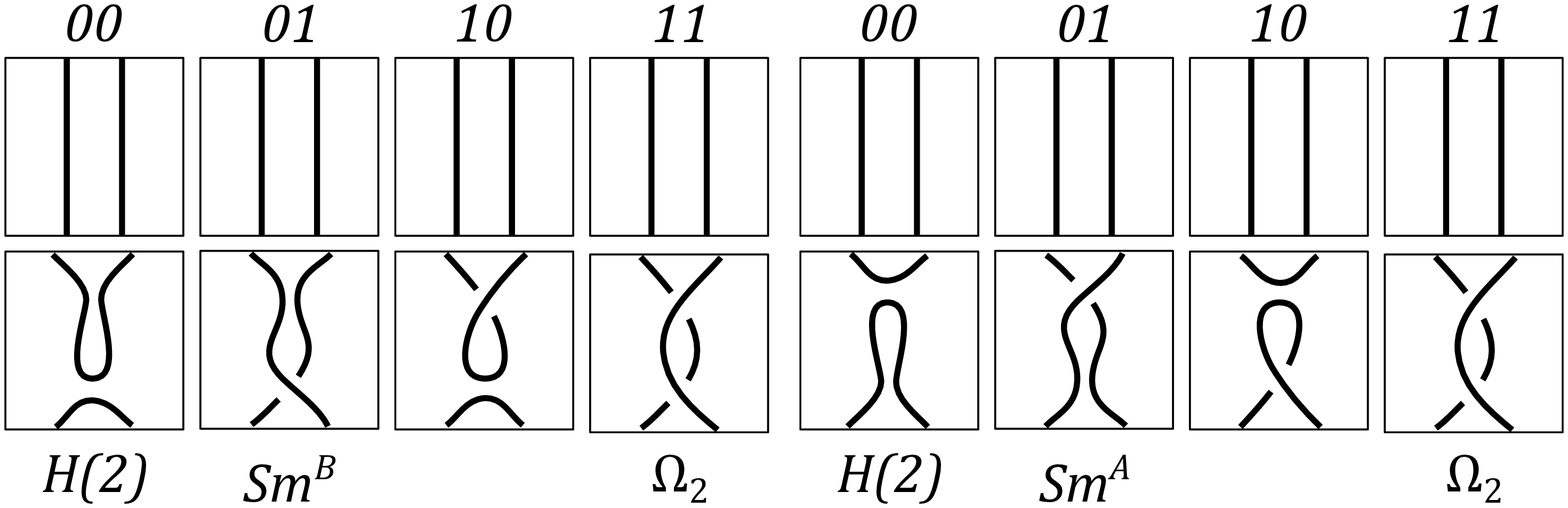}\\ \medskip
\includegraphics[width=0.7\textwidth]{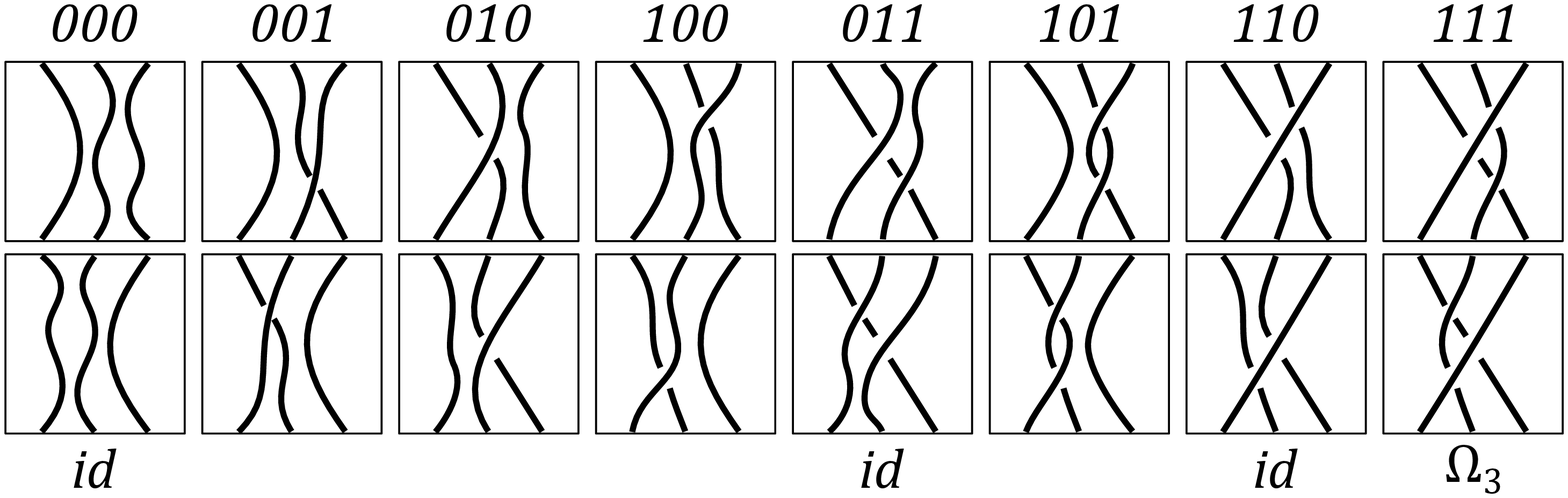}
\caption{Invariance conditions for the scheme $(Sm^A, id)$}\label{fig:id_A_smoothing}
\end{figure}

By analogy with the schemes $(Sm^{or}, id)$ and $(Sm^{unor}, id)$ we have the following table of invariant cases for the scheme $(Sm^A, id)$.

\begin{center}
\begin{tabular}{|c|c|c|}
\hline
$\mathcal M$ & $\tau$ & $\mathcal M'$\\
\hline
 & trait & $Sm^{A}$ or $Sm^B$, $H(2)^o$\\
$\mathcal M_{class}^{+}$ & index, $\exists\tau^{r\pm}=1$ & $H(2),\Omega_1,\Omega_2,\Omega_3$\\
 & index, $\tau^{r+}=\tau^{r-}=0$ & $H(2),\Omega_2,\Omega_3$\\
 & parity & $H(2),\Omega_2,2\Omega_\infty$\\
\hline
 & trait & $Sm^{A}$ or $Sm^B$, $H(2)^o$\\
$\mathcal M_{class}^{reg+}$ & index & $H(2),\Omega_2,\Omega_3$\\
 & parity & $H(2),\Omega_2,2\Omega_\infty$\\
\hline
\end{tabular}
\end{center}

The following statement is analogous to Propositions~\ref{prop:binary_or_id} and~\ref{prop:binary_unor_id}.
\begin{proposition}\label{prop:binary_A_id}
Let $F$ be a surface and $\tau$ a local transformation rule with the scheme $(Sm^{unor}, id)$ on the diagram set $\mathscr D(F)$.
\begin{itemize}
\item If $\tau$ is an index and not a parity, with the loop values $\tau^{r+}=\tau^{r-}=0$ then $f_\tau(D)=([D],n(D)-n^\tau(D) \bmod 2)\in H_1(F,\Z_2)\times\Z_2$;
\item If $\tau$ is an index and $\exists\tau^{r\pm}=1$ then $f_\tau(D)=[D]\in H_1(F,\Z_2)$;
\item If $\tau$ is a parity then  $f_\tau(D)=([D],wr^\tau_{odd}(D))$ where 
\begin{multline*}
wr^\tau_{odd}(D)=|\{v\in\mathcal C(D)\mid \tau(v)=1, ind^{un}_z(v)=0\}|\\
-|\{v\in\mathcal C(D)\mid \tau(v)=1, ind^{un}_z(v)=1\}|
\end{multline*}
is considered as of element in $\Z_4/(x=-x)$ if $[D]=0$,  and $f_\tau(D)=([D],n(D)-n^\tau(D) \bmod 2)$ if $[D]\ne 0$.
\end{itemize}
\end{proposition}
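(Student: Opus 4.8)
The plan is to dispatch the three stated cases one at a time, in each case first reading off the destination knot theory $\mathcal M'$ from the invariance table established just above, and then pinning down $f_\tau(D)$ inside the crossingless skein module $\mathscr K^0(F\mid\mathcal M')$ by exhibiting its complete set of invariants, exactly as was done for the schemes $(Sm^{or},id)$ and $(Sm^{unor},id)$ in the proofs of Propositions~\ref{prop:binary_or_id} and~\ref{prop:binary_unor_id}. Concretely, the table tells us that $\mathcal M'$ is $\{H(2),\Omega_2,\Omega_3\}$ in the first bullet, $\{H(2),\Omega_1,\Omega_2,\Omega_3\}$ in the second, and $\{H(2),\Omega_2,2\Omega_\infty\}$ in the parity case. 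The first step is to invoke the classification of these $A$-smoothing skein modules --- the $Sm^A$-analogue of Corollaries~\ref{cor:smoothing_skein modules_or} and~\ref{cor:smoothing_skein modules_unor} --- which states that a class is determined by its $\Z_2$-homology class in $H_1(F,\Z_2)$ together with a single secondary invariant, valued in $\Z_2$ in the index cases and in $\Z_4/(x=-x)$ in the parity case over the null-homologous locus.

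Next I would verify the two ingredients that make $f_\tau(D)$ computable. Since the $A$-smoothing replaces a positive crossing by an oriented smoothing and a negative crossing by an unoriented one, and since both smoothings as well as every move in $\mathcal M'$ preserve the mod~$2$ fundamental class, one gets $[f_\tau(D)]=[D]\in H_1(F,\Z_2)$; this disposes of the homology coordinate uniformly across all cases. It then remains to identify the secondary invariant. The crossings with $\tau(v)=0$ are smoothed and contribute through the offset and odd-writhe change formulas of Corollary~\ref{cor:unor_invariants_after_smoothing}, while the crossings with $\tau(v)=1$ are retained; in the first and second bullets the retained crossings simply contribute their count $n(D)-n^\tau(D)$ modulo $2$ (which survives as an invariant precisely when $\Omega_1\notin\mathcal M'$, explaining why the $\Z_2$-coordinate is present in the first case but collapses to $[D]$ alone in the second), and in the parity case they contribute the signed $\tau$-odd writhe $wr^\tau_{odd}(D)$ read off from the unoriented indices $ind^{un}_z(v)$. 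Matching these contributions against the secondary invariant supplied by the skein-module classification yields the stated formulas for the two index cases directly.

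The main obstacle is the parity case together with its bifurcation according to whether $[D]=0$. For a null-homologous diagram the theory $\{H(2),\Omega_2,2\Omega_\infty\}$ is rigid enough that the $\tau$-odd crossings assemble into a genuine $\Z_4/(x=-x)$-valued invariant $wr^\tau_{odd}(D)$, whereas once $[D]\neq 0$ an essential homology class lets one play the move $2\Omega_\infty$ against homologically nontrivial loops and collapse this refinement to the mod~$2$ count $n(D)-n^\tau(D)$ (which is exactly the mod~$2$ reduction of $wr^\tau_{odd}$). Pinning down where this degeneration occurs --- that is, proving the $\Z_4/(x=-x)$ invariant is well defined on the null-homologous locus and degenerates to $\Z_2$ off it --- is the delicate point; I would not re-derive the relations among $H(2)$, $\Omega_2$ and $2\Omega_\infty$ by hand but instead reduce the whole statement to the $Sm^A$-analogue of Corollary~\ref{cor:smoothing_skein modules_unor} proved in the appendix, after which the proposition follows by the same invariant-matching argument as above.
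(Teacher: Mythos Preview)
Your proposal is correct and matches the paper's approach exactly: the paper gives no explicit proof of this proposition, stating only that it ``is analogous to Propositions~\ref{prop:binary_or_id} and~\ref{prop:binary_unor_id}'', which is precisely the template you follow. One small clarification: the ``$Sm^A$-analogue'' of the skein-module classification you invoke is not a separate result to be proved but is already contained in Corollary~\ref{cor:smoothing_skein modules_unor} (items 2, 3, and 5 handle exactly the three destination theories $\{H(2),\Omega_2,\Omega_3\}$, $\{H(2),\Omega_1,\Omega_2,\Omega_3\}$, and $\{H(2),\Omega_2,2\Omega_\infty\}$ you need), so the matching step reduces to computing $n(f_\tau(D))=n(D)-n^\tau(D)$ and $wr_{odd}(f_\tau(D))=wr^\tau_{odd}(D)$ directly.
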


\subsubsection{Scheme $(Sm^{Kauffman}, id)$}

Let $\mathcal T_0=Sm^{Kauffman}=\{a\skcrv+a^{-1}\skcrh,a^{-1}\skcrv+a\skcrh\},\ \mathcal T_1=id=\{\skcrr,\skcrl\}$. Invariance conditions for the corresponding local transformation rule $\tau$ are shown in Fig.~\ref{fig:id_kauffman_smoothing}. We omit bulky invariance conditions for the third Reidemeister moves of types $000, 001, 010, 100$.

Assume that the functorial map is invariant in the case when all crossing are even. Then the destination knot theory should include the move $O_\delta$ where $\delta=-a^2-a^{-2}$.

We restrict ourself to the case when $\tau$ is a weak parity. Then invariance under the second and third Reidemeister moves is ensured by the moves $O_\delta$, $\Omega_2$, $CC$ and $\Omega_3$. If $\tau$ is a parity then it is enough to have the moves $O_\delta$, $\Omega_2$ and $CC$.

Since the loop values of a non constant weak parity are even, then first Reidemeister moves lead to multiplications by $(-a^3)^{\pm 1}$ like in the classical case. One can compensate these moves by normalization.

\begin{figure}[p]
\centering\includegraphics[width=0.35\textwidth]{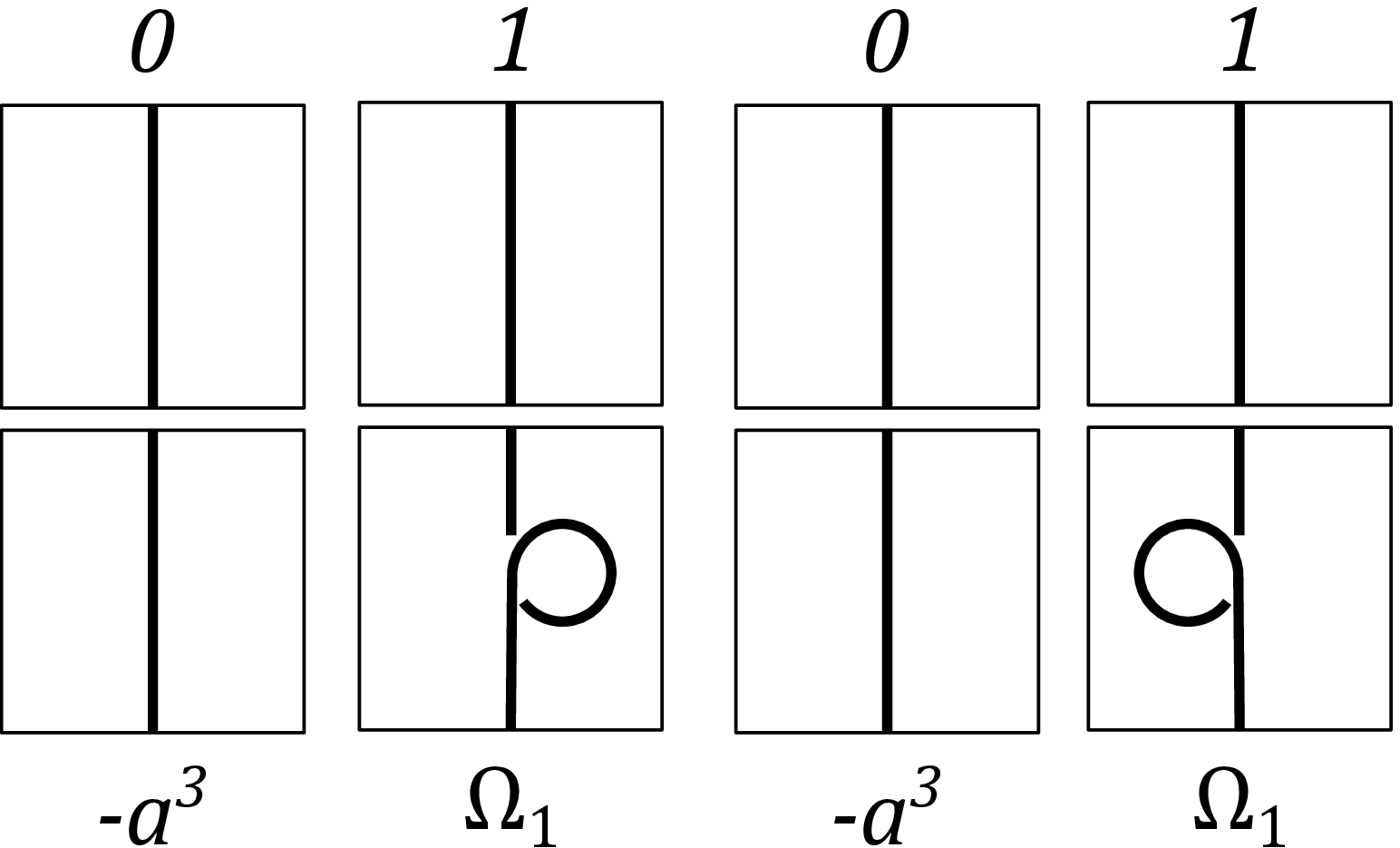}\\ \medskip
 \includegraphics[width=0.7\textwidth]{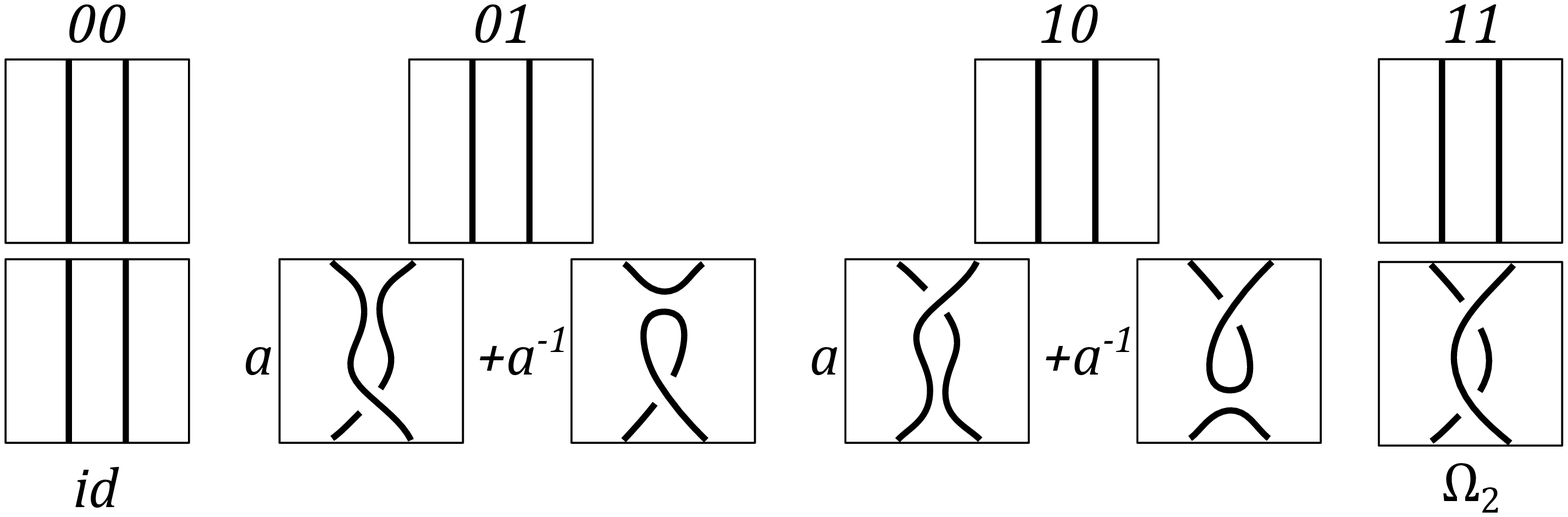}\\ \medskip
\includegraphics[width=0.8\textwidth]{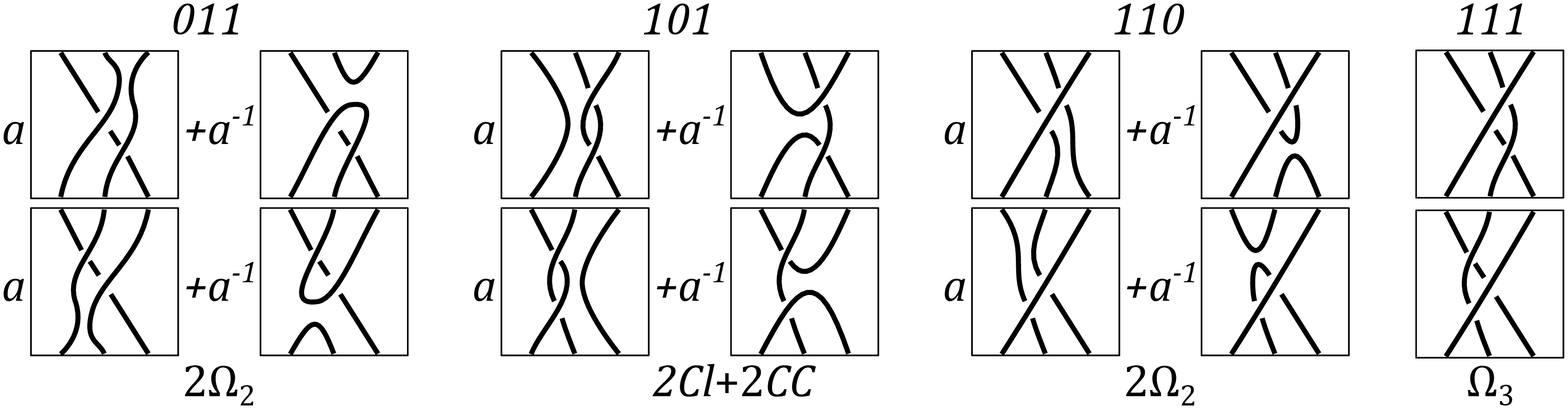}
\caption{Invariance conditions for the scheme $(Sm^{Kauffman}, id)$}\label{fig:id_kauffman_smoothing}
\end{figure}

\begin{center}
\begin{tabular}{|c|c|c|}
\hline
$\mathcal M$ & $\tau$ & $\mathcal M'$\\
\hline
$\mathcal M_{class}^{reg+}$ & weak parity & $O_\delta, CC, \Omega_2, \Omega_3$\\
& parity & $O_\delta, CC, \Omega_2$\\
\hline
\end{tabular}
\end{center}

Thus we have the following statement.

\begin{proposition}\label{prop:binary_kauffman_smoothing}
Let $\tau$ a trait with values in $\{0,1\}$ on classical knot theory in a surface $F$.
\begin{itemize}
\item If $\tau$ is a parity then the functorial map $f_\tau$ in the scheme $(Sm^{Kauffman}, id)$ is an invariant of regular knots with values in the regular doodles without trivial components (i.e. the knot theory $\{O_\delta, CC, \Omega_2\}$), and the normalized functorial map $\bar f_\tau(D)=(-a)^{-3wr(D)}f_\tau(D)$ is a knot invariant. Here $wr(D)$ is the writhe number of the diagram $D$;
\item If $\tau$ is a weak parity then the functorial map $f_\tau$ in the scheme $(Sm^{Kauffman}, id)$ is an invariant of regular knots with values in the regular flat knots without trivial components (i.e. the knot theory $\{O_\delta, CC, \Omega_2, \Omega_3\}$), and the normalized functorial map $\bar f_\tau(D)=(-a)^{-3wr(D)}f_\tau(D)$ is a knot invariant.
\end{itemize}
\end{proposition}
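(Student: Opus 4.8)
The plan is to derive both bullets from the regular functorial-map criterion, Theorem~\ref{thm:functorial_map_invariance_regular}, since the scheme $(Sm^{Kauffman}, id)$ is considered on the oriented regular classical theory $\mathcal M_{class}^{reg+}$. Thus I only need to verify the two conditions $(\tau(c_1),\tau(c_2))\in\Omega_2(\mathcal M')$ and $(\tau(c_1),\tau(c_2),\tau(c_3))\in\Omega_3(\mathcal M')$ for the claimed destination theory $\mathcal M'$, exploiting that the parity (resp.\ weak parity) axioms sharply restrict which label combinations $\tau(c_i)\in\{0,1\}$ can actually occur in a given Reidemeister move. Concretely, the reductions are the ones recorded in the table preceding the proposition, read off from the tangle equivalences of Fig.~\ref{fig:id_kauffman_smoothing}.

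For the second Reidemeister move only two crossings participate, and both the parity condition (labels sum to $0$ in $\Z_2$) and the weak-parity condition (the number of odd crossings is never $1$) force the two labels to coincide. If both labels are $0$ the crossings are Kauffman-smoothed and the invariance is the usual Kauffman $\Omega_2$ relation, which fixes $\delta=-a^2-a^{-2}$ and requires $O_\delta$; if both labels are $1$ the crossings stay classical and the condition is just the oriented move $\Omega_2$. For the third Reidemeister move three crossings participate; a parity allows only the triples $000,110,101,011$, while a weak parity additionally allows $111$. The triple $000$ is the classical Kauffman invariance, already implied by $O_\delta$ and $\Omega_2$; the all-classical triple $111$, available only for a weak parity, is a genuine $\Omega_3$ and forces $\Omega_3\in\mathcal M'$; and the mixed triples $110,101,011$ reduce, after Kauffman-smoothing the unique even crossing, to equivalences of the two remaining classical crossings realized by $O_\delta$, $\Omega_2$ and the crossing change $CC$ (the over/under information becomes invisible once the third strand is smoothed). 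This yields $\mathcal M'=\{O_\delta, CC, \Omega_2\}$ for a parity and $\mathcal M'=\{O_\delta, CC, \Omega_2, \Omega_3\}$ for a weak parity, i.e.\ values in regular doodles, resp.\ regular flat knots, without trivial components.

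It then remains to treat the normalization and upgrade from regular knots to ordinary knots. The crossing of a first Reidemeister move is the only crossing taking part in that move, so the parity/weak-parity axiom (Fig.~\ref{fig:parity_axioms}) forbids it from being odd; hence every loop value $\tau^{r\pm}=\tau^{l\pm}$ equals $0$ and every kink is Kauffman-smoothed. Consequently a kink contributes exactly the classical factor $(-a)^{\pm 3}$, computed from $a\,\skcrv+a^{-1}\skcrh$ together with $O_\delta$, while changing $wr(D)$ by $\pm1$; the moves $\Omega_2$ and $\Omega_3$ leave $wr(D)$ unchanged. Therefore $\bar f_\tau(D)=(-a)^{-3wr(D)}f_\tau(D)$ is unaffected by all Reidemeister moves and is a genuine knot invariant, which establishes the final clause of both bullets.

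I expect the main obstacle to be the mixed third-Reidemeister triples $110,101,011$: one must check, from the bulky conditions that Fig.~\ref{fig:id_kauffman_smoothing} deliberately omits, that after Kauffman-smoothing the unique even crossing the two remaining classical crossings can be matched between the two sides of $\Omega_3$ using only $O_\delta$, $\Omega_2$ and $CC$, so that no genuine $\Omega_3$ is needed in the parity case. Verifying that the crossing change $CC$ (rather than some stronger relation) already suffices here, and that the smoothed strand truly decouples the triple-point constraint, is the delicate point; everything else is routine book-keeping of signs and of the factor $(-a)^{\pm 3}$.
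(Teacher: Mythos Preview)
Your proposal is correct and follows essentially the same route as the paper: the proposition is stated after a discussion that amounts to exactly your argument --- restrict the $\Omega_2$ and $\Omega_3$ label patterns via the parity/weak-parity axioms, read off from Fig.~\ref{fig:id_kauffman_smoothing} that the mixed triples $110,101,011$ are handled by $O_\delta,\Omega_2,CC$ while $111$ forces $\Omega_3$, and then observe that the even loop values make $\Omega_1$ contribute the classical factor $(-a^3)^{\pm1}$ absorbed by the normalization. One small correction: the cases the figure \emph{omits} are $000,001,010,100$, not the mixed ones you flagged; the triples $110,101,011,111$ are precisely what is displayed, so the verification you call ``the main obstacle'' is already carried out there rather than left to the reader.
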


\begin{corollary}\label{cor:flat_binary_kauffman_smoothing}
Let $a^2=1$ and $\tau$ a weak parity on flat knots. Then the scheme $(Sm^{Kauffman}, id)$ defines a map to regular flat knot.
\end{corollary}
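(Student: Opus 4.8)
The plan is to deduce the corollary from Proposition~\ref{prop:binary_kauffman_smoothing} by enlarging the source from regular classical knots to regular flat knots. First I would observe that a weak parity on flat knots pulls back to a weak parity on classical knots: the value of a classical crossing is the value of its image flat crossing, and since every classical Reidemeister move projects to a flat one, the weak-parity conditions survive. Hence Proposition~\ref{prop:binary_kauffman_smoothing} already applies and tells us that, for such $\tau$, the functorial map $f_\tau$ in the scheme $(Sm^{Kauffman},id)$ is invariant under the regular moves $\Omega_2,\Omega_3$ and takes values in the regular flat knot skein module $\{O_\delta, CC,\Omega_2,\Omega_3\}$. Since a regular flat knot is a regular classical knot considered up to the crossing change $CC$, the only thing left to verify is that $f_\tau$ is invariant under $CC$ applied in the \emph{source} diagram.

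Next I would fix a diagram $D$ and a crossing $c$, and compare $f_\tau(D)$ with $f_\tau(D')$, where $D'$ is obtained by switching $c$. Because $\tau$ is a trait on flat knots and $CC$ is a flat move that does not eliminate $c$, the value $\tau(c)$ is unchanged by the switch; moreover every crossing other than $c$ contributes the same tangle, so it suffices to compare the local contributions at $c$. There are two cases. If $\tau(c)=1$, the scheme keeps the crossing, so switching replaces the one-term tangle $\skcrr$ by $\skcrl$, and these are identified in the target by the move $CC\in\{O_\delta, CC,\Omega_2,\Omega_3\}$. If $\tau(c)=0$, the scheme applies the Kauffman smoothing, so switching replaces $a\skcrv+a^{-1}\skcrh$ by $a^{-1}\skcrv+a\skcrh$; these combinations coincide exactly when $a=a^{-1}$, i.e. when $a^{2}=1$.

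The substantive (and only) step is the second case: it is precisely the relation $a^{2}=1$ that forces the two Kauffman smoothings of a switched even crossing to agree, which is why the hypothesis is indispensable. Having checked $CC$-invariance at a single crossing, I would conclude that $f_\tau$ descends to a well-defined map on regular flat knots with values in the regular flat knot skein module, as claimed. Finally I would note that the normalization is also compatible: a single switch changes $wr(D)$ by $\pm 2$, so the factor $(-a)^{-3wr(D)}$ is multiplied by $(-a)^{\mp 6}=\left((-a)^{2}\right)^{\mp 3}=1$ when $a^{2}=1$, whence the normalized map of Proposition~\ref{prop:binary_kauffman_smoothing} descends to regular flat knots as well.
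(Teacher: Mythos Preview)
Your argument is correct and is precisely the natural deduction the paper leaves implicit (the corollary is stated without proof). You correctly identify the one new ingredient beyond Proposition~\ref{prop:binary_kauffman_smoothing}: invariance under the source move $CC$, which for odd crossings is absorbed by $CC$ in the target and for even crossings reduces to the identity $a\skcrv+a^{-1}\skcrh=a^{-1}\skcrv+a\skcrh$, i.e.\ $a^2=1$; the observation that $\tau(c)$ is unchanged by $CC$ because $\tau$ is a trait on the flat diagram category is exactly the point needed to make the case split well-defined.
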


\begin{remark}
1. A diagram valued bracket for a parity was defined in~\cite{M3}.

2. For weak parities, a Kauffman bracket with values in flat knots was considered in~\cite{Cheng2}.

3. For traits $\tau$ which are indices but not weak parities, a Kauffman bracket polynomial was considered in~\cite{IPS}. Invariance conditions for third Reidemeister moves generate the relation $a^{-6}-a^{-2}-1+a^4=0$ in the ring $\Z[a,a^{-1}]$, so the bracket takes values in the quotient ring $\Z[a,a^{-1}]/(a^{-6}-a^{-2}-1+a^4)$.
\end{remark}

\subsubsection{Scheme $(id, V)$}

We assume here that the original knot theory $\mathcal M$ is the theory of oriented virtual knots $\mathcal M_{virt}^{+}$ or regular oriented virtual knots $\mathcal M_{virt}^{reg+}$, and the destination knot theory $\mathcal M'$ includes virtual Reidemeister moves $V\Omega_1, V\Omega_2, V\Omega_3, SV\Omega_3$.

Let $\mathcal T_0=id=\{\skcrr,\skcrl\},\ \mathcal T_1=V=\{\skcrvirt,\skcrvirt\}$. The invariance conditions for the corresponding local transformation rule $\tau$ are shown in Fig.~\ref{fig:id_virtualization}.

\begin{figure}[p]
\centering\includegraphics[width=0.35\textwidth]{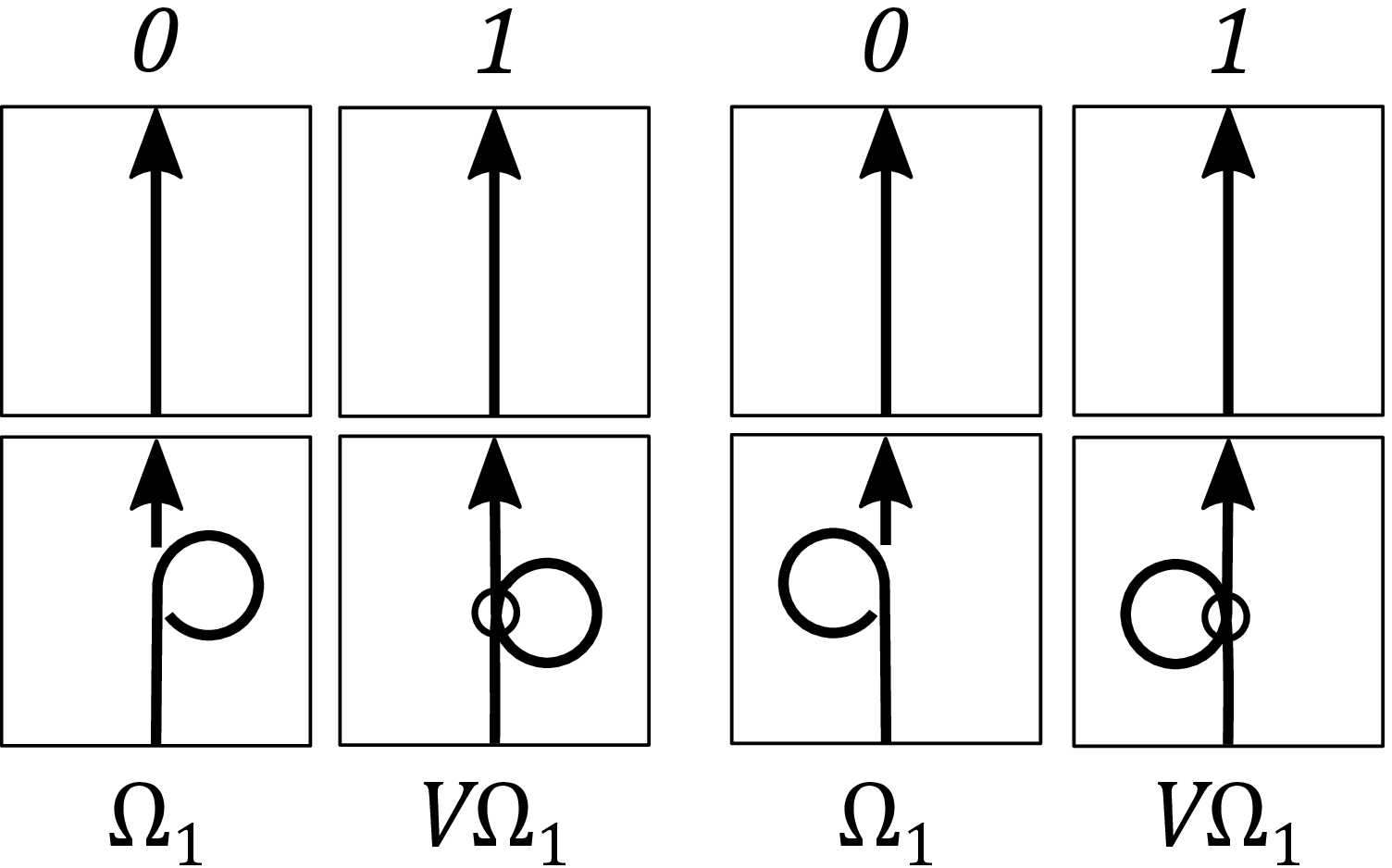}\\ \medskip
 \includegraphics[width=0.7\textwidth]{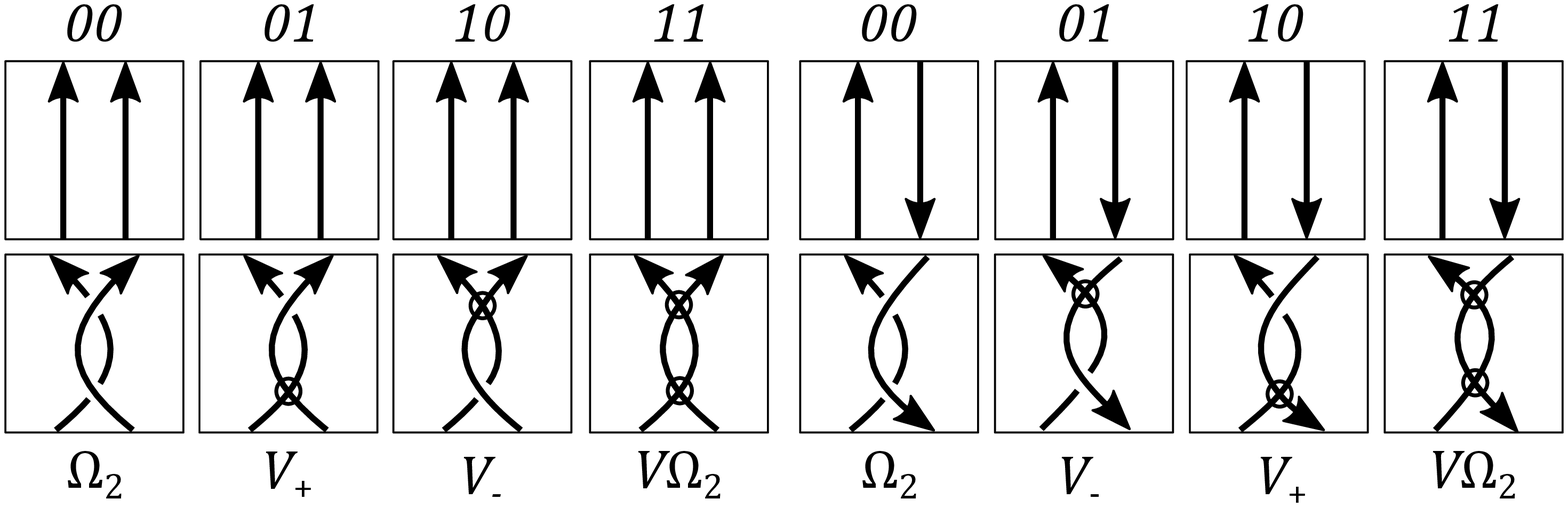}\\ \medskip
\includegraphics[width=0.7\textwidth]{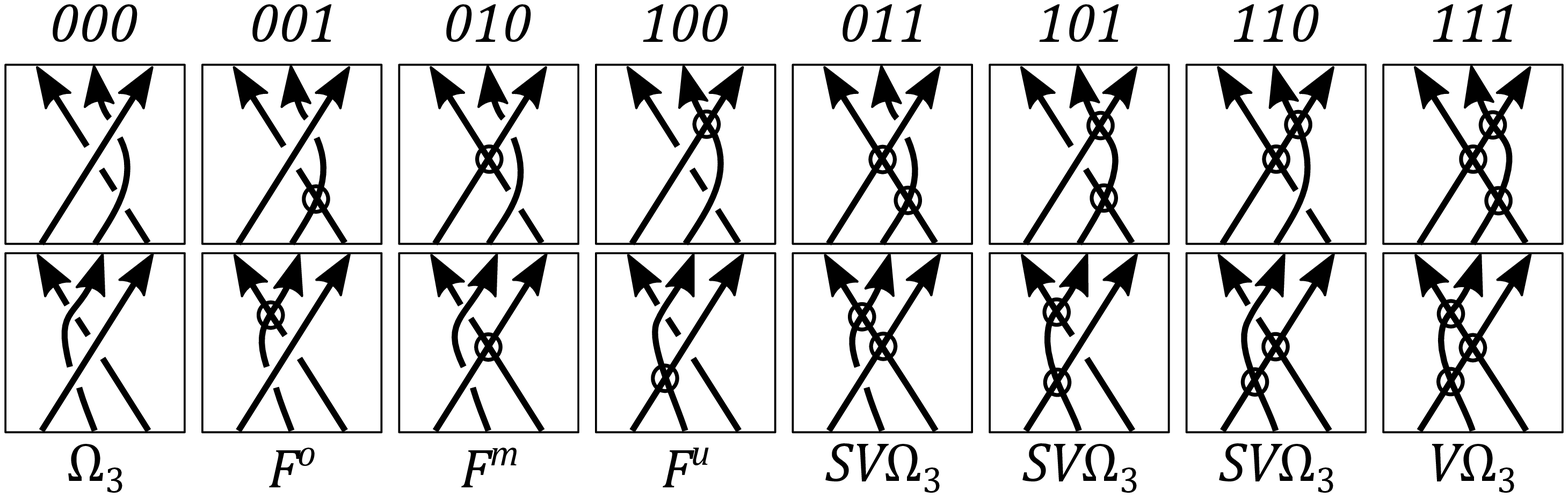}
\caption{Invariance conditions for the scheme $(id, V)$}\label{fig:id_virtualization}
\end{figure}

Let $\tau$ be not an index. Then, for example, we have a second Reidemeister move with an even positive crossing and an odd negative crossing. The invariance condition yields the move $V_+$. Hence, we can virtualize all positive crossings in the diagram $f_\tau(D)$. If there is an even negative crossing then we get the move $V_-$ and virtualize all negative crossings. Thus, the functorial map is reduced to the unary functorial map $V$.

Let $\tau$ be an index. The set of moves required by the invariance for third Reidemeister moves depends on the type of binary trait $\tau$ (Section~\ref{app:binary_trait_types}). By assumption $\mathcal M'$ includes virtual Reidemeister moves, hence, we need to check if third Reidemeister moves of types $000$, $001$, $010$, $100$ can occur. Since any three moves among the moves $\Omega_3, F^o, F^u, F^m$ generate the fourth move (see the proof of~\cite[Theorem 1]{Nel}), then $\mathcal M'$ may include one (types $e,i$), two (types $c,d,j,o,p,q$) or four moves (other types) of the set $\Omega_3, F^o, F^u, F^m$. In the first case we get the virtual knot theory, in the second, the welded knot theory or the theory of what we call \emph{fused doodles}. The last case yields the theory of fused knots.

According to Appendix~\ref{app:binary_trait_types}, the trait types $e,i,c,d,o,p$ have even loop values. Then the invariance for the first Reidemeister move gives the move $\Omega_1$. The loop values for the trait types $j,q$ are all odd that leads to the move $V\Omega_1$ which belongs to $\mathcal M'$  by the assumption.

We can summarize the reasoning above in the following table.

\begin{center}
\begin{tabular}{|c|c|c|}
\hline
$\mathcal M$ & $\tau$ & $\mathcal M'$\\
\hline
 & trait & $V,V\Omega_1, V\Omega_2, V\Omega_3$\\
 & index, $\exists\tau^{r/l\pm}=0$ & $\mathcal M_{fused}^{+}$\\
 & index, $\forall\tau^{r/l\pm}=1$ & $\mathcal M_{fused}^{reg+}$\\
$\mathcal M_{virt}^{+}$ & index of types $e$, $i$ & $\mathcal M_{virt}^{+}$\\
 & index of types $c$, $d$, $o$, $p$ & $\mathcal M_{welded}^{+}$\\
 & index of types $j$, $q$ & $\mathcal M_{fd}^{reg+}$ \\
\hline
 & trait & $V,V\Omega_1, V\Omega_2, V\Omega_3$\\
 & index & $\mathcal M_{fused}^{reg+}$\\
$\mathcal M_{virt}^{reg+}$ & index of types $e$, $i$ & $\mathcal M_{virt}^{reg+}$\\
 & index of types $c$, $d$, $o$, $p$ & $\mathcal M_{welded}^{reg+}$\\
 & index of types $j$, $q$ & $\mathcal M_{fd}^{reg+}$ \\
\hline
\end{tabular}
\end{center}

Here
\begin{itemize}
\item $\mathcal M_{virt}^{reg+}=\mathcal M_{class}^{reg+}\cup\{V\Omega_1, V\Omega_2, V\Omega_3, SV\Omega_3\}$ is the theory of regular virtual knots,
\item $\mathcal M_{welded}^{+}=\mathcal M_{virt}^{+}\cup\{F^o\}$ or $\mathcal M_{virt}^{+}\cup\{F^u\}$ is the theory of oriented welded knots,
\item $\mathcal M_{welded}^{reg+}=\mathcal M_{virt}^{reg+}\cup\{F^o\}$ or $\mathcal M_{virt}^{reg+}\cup\{F^u\}$ is the theory of regular welded knots,
\item $\mathcal M_{fused}^{reg+}=\mathcal M_{virt}^{reg+}\cup\{F^o, F^u\}$ is the theory of regular fused knots,
\item $\mathcal M_{fused}^{+}=\mathcal \mathcal M_{virt}^{+}\cup\{F^o, F^u\}$ is the theory of fused knots,
\item $\mathcal M_{fd}^{reg+}=\{\Omega_2,V\Omega_1, V\Omega_2, V\Omega_3, SV\Omega_3, F^o, F^u\}$ is the theory of oriented regular fused doodles.
\end{itemize}

Thus, the following statement holds.
\begin{proposition}\label{prop:binary_virtualizing}
1. Let $\tau$ be a binary trait with the scheme $(id, V)$ on oriented virtual links.
\begin{itemize}
  \item If $\tau$ is an index then it defines a functorial map to the oriented fused links.
  \item If $\tau$ is a week parity then it defines a functorial map to the oriented virtual links.
  \item If $\tau$ is an index of type $c,d,o$ or $p$ then it defines a functorial map to the oriented welded links.
  \item If $\tau$ is an index of type $j$ or $q$ then it defines a functorial map to the oriented regular fused doodles.
\end{itemize}

2. Let $\tau$ be a binary trait with the scheme $(id, V)$ on oriented regular virtual links.
\begin{itemize}
  \item If $\tau$ is an index then it defines a functorial map to the oriented regular fused links.
  \item If $\tau$ is a week parity then it defines a functorial map to the oriented regular virtual links.
  \item If $\tau$ is an index of type $c,d,o$ or $p$ then it defines a functorial map to the oriented regular welded links.
  \item If $\tau$ is an index of type $j$ or $q$ then it defines a functorial map to the oriented regular fused doodles.
\end{itemize}
\end{proposition}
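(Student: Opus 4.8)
The plan is to deduce the proposition from the functorial-map invariance criterion of Theorem~\ref{thm:functorial_map_invariance}, applied in the virtual setting as permitted by the Remark following that theorem, so that $V\Omega_1,V\Omega_2,V\Omega_3,SV\Omega_3$ are already available in $\mathcal M'$. Under the scheme $(id,V)$ the local transformation rule is a $\Z_2$-valued trait (Remark~\ref{rem:binary_functorial_map}), with label $0$ keeping a crossing classical and label $1$ virtualizing it. The verification then reduces to checking, for each Reidemeister move and each admissible labelling of its crossings, that the ``before'' and ``after'' diagrams of Fig.~\ref{fig:id_virtualization} are identified in $\mathcal M'$. First I would dispose of the non-index case: if $\tau$ is not an index, a second Reidemeister move can carry crossings with distinct labels, which forces the single-crossing virtualization $V_\pm$; iterating $V_+$ and $V_-$ virtualizes every crossing and collapses $f_\tau$ to the unary map $V$. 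Hence one may assume $\tau$ is an index, and this is exactly the regime the four subcases address.

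For second Reidemeister moves, the index axiom (Definition~\ref{def:trait_types}) forces the two participating crossings to share a label. The configuration $00$ keeps both classical and is handled by $\Omega_2\in\mathcal M'$, while $11$ virtualizes both and is handled by $V\Omega_2\in\mathcal M'$; thus no new move is required. For first Reidemeister moves the outcome is governed by the loop values $\tau^{r\pm},\tau^{l\pm}$: an even loop stays classical and requires $\Omega_1$, whereas an odd loop is virtualized and is absorbed by $V\Omega_1\in\mathcal M'$. This already separates the index subcases with even loops (which need $\Omega_1$) from the types $j,q$, whose loop values are all odd and which are therefore automatically regular.

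The heart of the argument, and the main obstacle, is the third Reidemeister move. Here the admissible label triples $(a,b,c)\in\{0,1\}^3$ are far from arbitrary: by Proposition~\ref{prop:trait_dual_crossing} the trait values of the three crossings are linked, and precisely which triples can occur is dictated by the type of the binary trait in the classification of Appendix~\ref{app:binary_trait_types}. For each admissible triple, virtualizing the crossings labelled $1$ in Fig.~\ref{fig:id_virtualization} yields, after applying the virtual and semivirtual moves already present in $\mathcal M'$, either an identity valid in $\mathcal M'$ or one of $\Omega_3,F^o,F^u,F^m$. Invoking the fact that any three of $\{\Omega_3,F^o,F^u,F^m\}$ generate the fourth (the proof of~\cite[Theorem~1]{Nel}), one reads off the destination theory: types $e,i$ force only $\Omega_3$ and give virtual theory (weak parities fall here); types $c,d,o,p$ force one forbidden move alongside $\Omega_3$ and give welded theory; types $j,q$ force two forbidden moves without $\Omega_3$ and give the regular fused-doodle theory; a general index forces all four and gives fused theory. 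The genuine labour of this step is the bookkeeping that matches each trait type of the appendix to its precise set of forced moves.

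Finally, the regular statements follow by the regular invariance criterion, Theorem~\ref{thm:functorial_map_invariance_regular} in its virtual form: one simply deletes the first Reidemeister move from the analysis, leaving the second- and third-move bookkeeping untouched, which replaces each destination theory by its regular counterpart. Assembling the subcases reproduces the two tables preceding the proposition and hence both parts of the statement.
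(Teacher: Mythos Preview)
Your proposal is correct and follows essentially the same route as the paper: the paper's argument, given in the paragraphs immediately preceding the proposition, likewise reduces to the invariance criterion of Theorem~\ref{thm:functorial_map_invariance} (in its virtual form), disposes of the non-index case via $V_\pm$, handles $\Omega_2$ by the index axiom, and then reads off from the $\Pi_{3b}$-table of Appendix~\ref{app:binary_trait_types} which of $\Omega_3,F^o,F^u,F^m$ are forced, invoking the same observation from~\cite{Nel} that any three of these generate the fourth; the loop-value discussion for $\Omega_1$ and the passage to the regular case are handled identically.
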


A fused link is characterized by its linking matrix~\cite{FK}.

\begin{corollary}
1. Let $\tau$ be an index with the scheme $(id, V)$ on oriented virtual links with $n$ components and $f_\tau$ the correspondent functorial map with values in the set $\mathscr K^+(\R^2\mid \mathcal M_{fused}^{+})_n\simeq\Z^{n(n-1)}$ of $n$-component fused links. Then for any link diagram $D=D_1\cup\cdots\cup D_n$  we have $f_\tau(D)=(lk_{ij}^{ev}(D))_{1\le i\ne j\le n}$ where
\[
lk_{ij}^{ev}(D)=\sum_{c\colon D_i\mbox{ \scriptsize over } D_j,\ \tau(c)=0}sgn(c).
\]

2. Let $\tau$ be an index with the scheme $(id, V)$ on oriented regular virtual links with $n$ components and $f_\tau$ the correspondent functorial map with values in the set $\mathscr K^{reg+}(\R^2\mid \mathcal M_{fused}^{reg+})_n\simeq\Z^{n^2}$ of $n$-component regular fused links. Then for any link diagram $D=D_1\cup\cdots\cup D_n$  we have $f_\tau(D)=(lk_{ij}^{ev}(D))_{1\le i, j\le n}$.
\end{corollary}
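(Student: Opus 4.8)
The plan is to combine Proposition~\ref{prop:binary_virtualizing} with the Fish--Keyman classification of fused links~\cite{FK}, and then to read off the linking matrix of $f_\tau(D)$ by a direct signed count of crossings.

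First I would invoke Proposition~\ref{prop:binary_virtualizing}: because $\tau$ is an index, the functorial map $f_\tau$ takes values in the (regular) fused links, so $f_\tau(D)$ is a well-defined fused link and the identifications $\mathscr K^+(\R^2\mid\mathcal M_{fused}^+)_n\simeq\Z^{n(n-1)}$ and $\mathscr K^{reg+}(\R^2\mid\mathcal M_{fused}^{reg+})_n\simeq\Z^{n^2}$ are available. Under these identifications a fused link is sent to its linking matrix, whose entries are the over-linking numbers $lk_{ij}=\sum_{c\colon i\text{ over }j}sgn(c)$. In the non-regular theory only the off-diagonal entries ($i\ne j$) survive, the diagonal self-writhes being destroyed by $\Omega_1$; in the regular theory the diagonal entries persist as well, giving the extra $n$ coordinates. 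Thus it suffices to evaluate these over-linking numbers on the diagram $f_\tau(D)$.

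The key observation is that $f_\tau$ acts locally and crossing-by-crossing: a crossing $c$ with $\tau(c)=1$ is replaced by a virtual crossing, while a crossing with $\tau(c)=0$ is left untouched. Since virtual crossings do not contribute to linking numbers and virtualization does not disturb the over/under data of the remaining crossings, the classical crossings of $f_\tau(D)$ at which $D_i$ passes over $D_j$ are exactly those crossings $c$ of $D$ with $D_i$ over $D_j$ and $\tau(c)=0$, carrying their original signs. Summing signs yields
\[
lk_{ij}(f_\tau(D))=\sum_{c\colon D_i\text{ over }D_j,\ \tau(c)=0}sgn(c)=lk_{ij}^{ev}(D),
\]
which proves part~1 for the off-diagonal entries; applying the same count to self-crossings ($i=j$) produces the diagonal entries $lk_{ii}^{ev}(D)$ needed for part~2, and these are genuine invariants precisely because the regular theory retains self-writhes.

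The step I expect to require the most care is the matching of conventions with~\cite{FK}: I must confirm that the complete invariant used there is exactly the collection of over-linking numbers (rather than a symmetrized or under-crossing variant) and that the linking number is truly additive as a signed crossing count, so that virtualizing a crossing simply deletes its contribution. Once this bookkeeping is pinned down, the remaining verifications are routine --- namely that a first Reidemeister move shifts a diagonal self-writhe by $\pm1$ while fixing all off-diagonal entries, which is what accounts for the passage from $n^2$ coordinates in the regular case to $n(n-1)$ in the non-regular one.
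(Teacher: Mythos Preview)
Your proposal is correct and matches the paper's approach exactly: the paper states this corollary immediately after citing the Fish--Keyman classification of fused links and gives no further proof, leaving implicit precisely the argument you spell out---apply Proposition~\ref{prop:binary_virtualizing} to land in fused links, invoke~\cite{FK} to identify a fused link with its linking matrix, and then observe that virtualized crossings drop out of the signed count so that only the $\tau$-even crossings contribute.
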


\subsubsection{Scheme $(id, CC)$}

We will assume that the destination knot theory $\mathcal M'$ contains the classical Reidemeister moves: $\mathcal M_{class}^+\subset\mathcal M'$.

Let $\mathcal T_0=id=\{\skcrr,\skcrl\},\ \mathcal T_1=CC=\{\skcrl,\skcrr\}$. The invariance conditions for the corresponding local transformation rule $\tau$ are shown in Fig.~\ref{fig:id_CC}.

\begin{figure}[p]
\centering\includegraphics[width=0.35\textwidth]{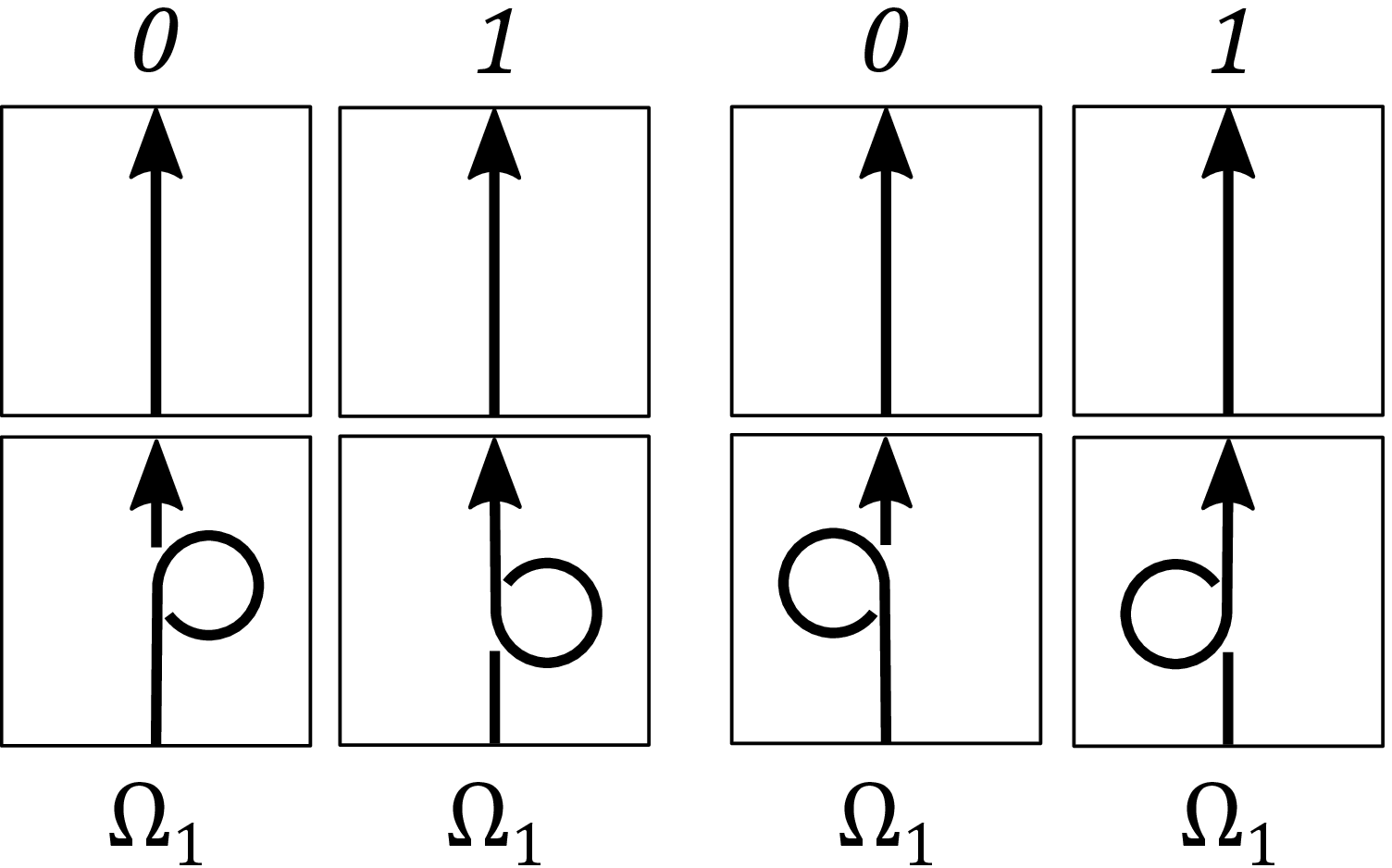}\\ \medskip
 \includegraphics[width=0.7\textwidth]{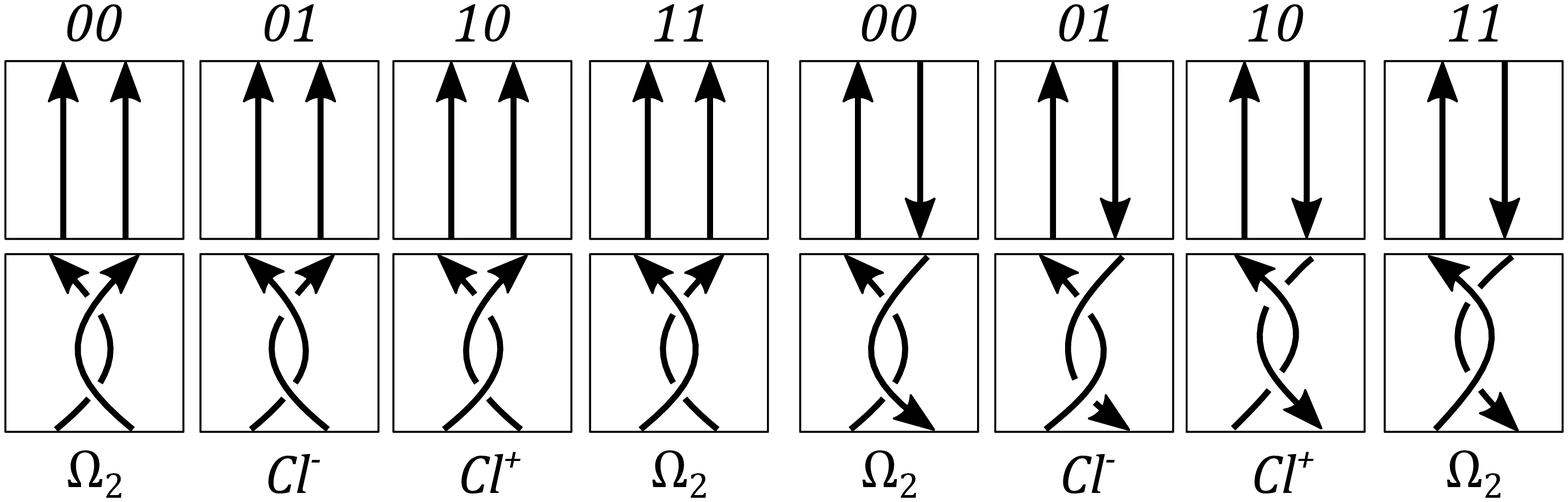}\\ \medskip
\includegraphics[width=0.7\textwidth]{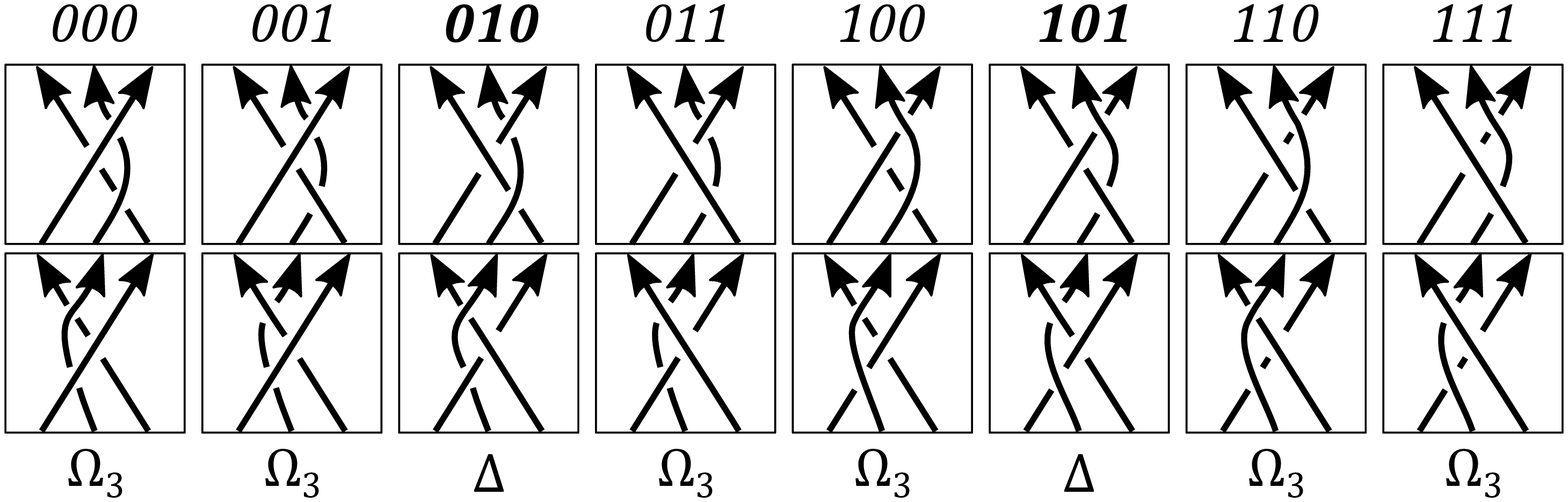}
\caption{Invariance conditions for the scheme $(id, CC)$}\label{fig:id_CC}
\end{figure}

\begin{figure}[p]
\centering\includegraphics[width=0.35\textwidth]{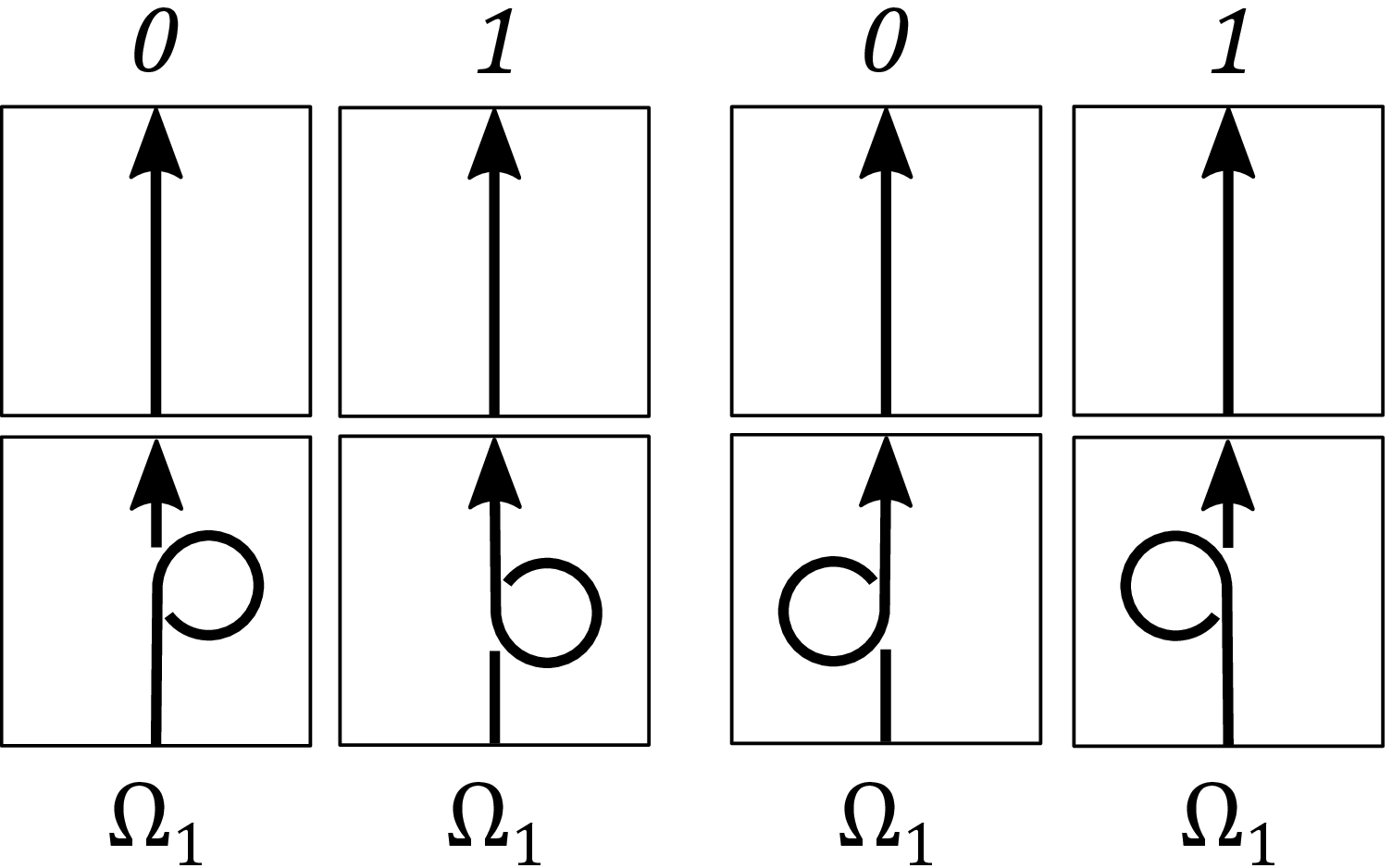}\\ \medskip
 \includegraphics[width=0.7\textwidth]{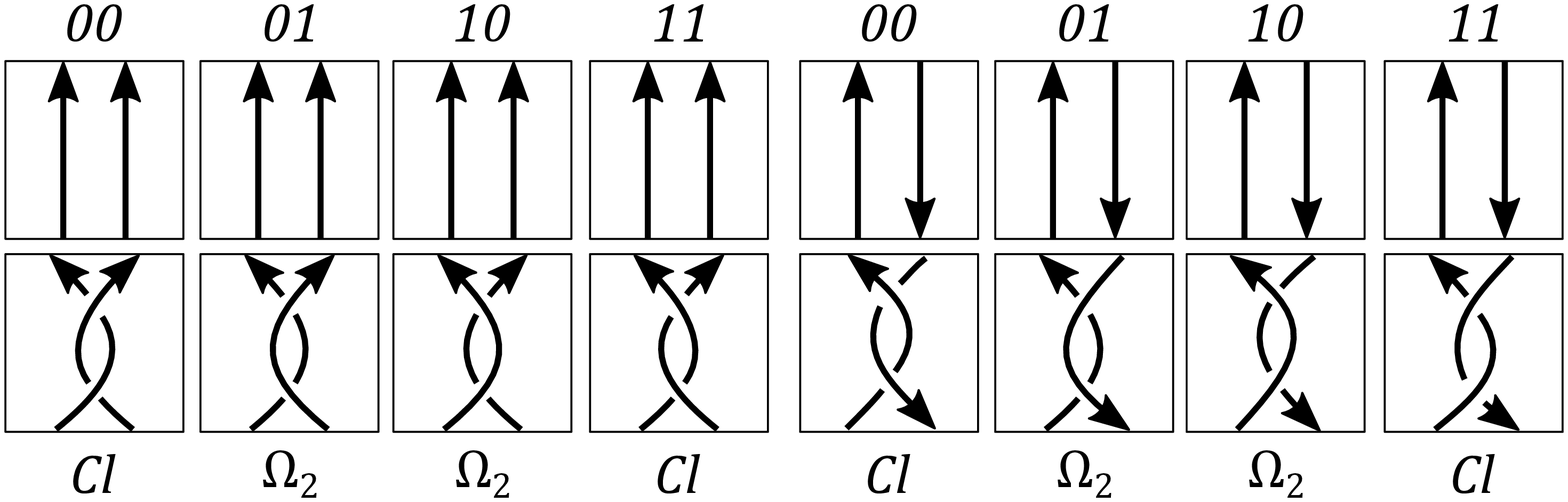}\\ \medskip
\includegraphics[width=0.7\textwidth]{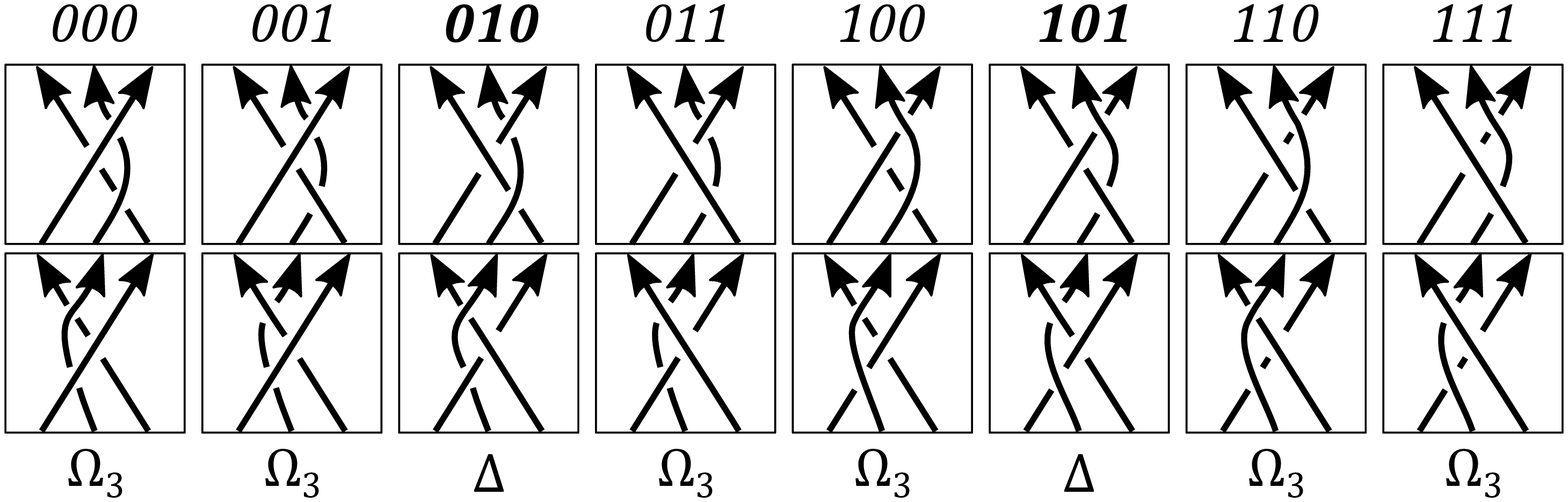}
\caption{Invariance conditions for the lifting}\label{fig:lifting_conditions}
\end{figure}

If $\tau$ is not an index then the invariance for second Reidemeister move requires the clasp move $Cl^\pm$. Together with the second Reidemeister move, this move generates the crossing change move $CC$. Then the functorial map $f_\tau$ is reduced to the projection from the classical knots to the flat knots.

Let $\tau$ be an index.

\begin{definition}\label{def:order}
An index $\tau$ with values in $\{0,1\}$ is called an \emph{order} if for any vertices $c_1,c_2,c_3$ participating in a move $\Omega_{3b}$ the combination $\tau(c_1)=\tau(c_3)\ne\tau(c_2)$ can not occur.
\end{definition}

If $\tau$ is not an order then the invariance for the third Reidemeister move gives the $\Delta$-move. If $\tau$ is an order the Reidemeister moves ensure the invariance.

The invariance for the first Reidemeister move yield requires nothing beyond the Reidemeister moves.

Thus, we get the following table.

\begin{center}
\begin{tabular}{|c|c|c|}

\hline
$\mathcal M$ & $\tau$ & $\mathcal M'$\\
\hline
 & trait & $CC,\Omega_1,\Omega_2,\Omega_3$\\
$\mathcal M_{class}^{+}$  & index & $\Delta,\Omega_1,\Omega_2,\Omega_3$\\
 & order & $\Omega_1,\Omega_2,\Omega_3$\\
\hline
 & trait & $CC,\Omega_2,\Omega_3$\\
$\mathcal M_{class}^{reg+}$  & index & $\Delta,\Omega_2,\Omega_3$\\
 & order & $\Omega_2,\Omega_3$\\
\hline
\end{tabular}
\end{center}

\begin{proposition}\label{prop:binary_crossing_change}
Let $F$ be a surface and $\tau$ a local transformation rule with the scheme $(id, CC)$ on the diagram set $\mathscr D(F)$.
\begin{itemize}
\item If $\tau$ is not an index then $f_\tau(D)$ is the shadow (flat diagram) of the diagram $D$;
\item If $\tau$ is an index then $f_\tau(D)$ can be identified with the extended homotopy index polynomial $LK(D)$ of $D$ (see Section~\ref{app:skein_modules});
\item If $\tau$ is an order then the functorial map $f_\tau$ in the tangles in the surface $F$.
\end{itemize}
\end{proposition}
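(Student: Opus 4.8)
My plan is to treat the three items uniformly: in each case I would first pin down a destination knot theory $\mathcal M'$ in which $f_\tau$ is invariant --- exactly the one recorded in the table preceding the statement, whose entries are justified by the invariance analysis of the scheme together with Theorem~\ref{thm:functorial_map_invariance} --- and then read off what $f_\tau(D)$ becomes from the structure of $\mathscr K(\mathscr D(F)\mid\mathcal M')$. Since the scheme $(id,CC)$ consists of single $2$-tangles rather than genuine linear combinations, each $f_\tau(D)$ is an honest diagram, namely the diagram obtained from $D$ by switching precisely the crossings $c$ with $\tau(c)=1$; I would record this at the outset so that all three conclusions become statements about one concrete diagram.

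For the first item I would argue that when $\tau$ is not an index a decreasing second Reidemeister move can be applied to two crossings with different $\tau$-values, which as in the preceding discussion forces the clasp move $Cl^\pm$ into $\mathcal M'$; combined with $\Omega_2$ this produces the crossing change $CC$, so $\mathcal M'\supseteq\{CC,\Omega_1,\Omega_2,\Omega_3\}$, the flat (shadow) theory on $F$. In this theory over/undercrossing data is immaterial, so no matter which crossings $f_\tau$ has switched, $f_\tau(D)$ represents the same class as the flat diagram underlying $D$; hence $f_\tau(D)$ is the shadow of $D$.

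The second item is where the real work lies. When $\tau$ is an index the $\Omega_{3b}$-analysis yields the $\Delta$-move but no crossing change, so (in the worst case) $\mathcal M'=\{\Delta,\Omega_1,\Omega_2,\Omega_3\}$; by the classification of Section~\ref{app:skein_modules}, $\Delta$-equivalence classes of tangles in $F$ are exactly distinguished by the extended homotopy index polynomial, so the class of $f_\tau(D)$ is determined by $LK(f_\tau(D))$. The plan is then to prove $LK(f_\tau(D))=LK(D)$. The key observation is that switching a crossing $c$ replaces its universal signed index $\sigma^u(c)$ by the starred value $\sigma^u(c)^\ast$ of Theorems~\ref{thm:universal_index} and~\ref{thm:universal_flat_index}, i.e.\ it simultaneously flips $\sgn(c)$ and sends the homotopy class to its involute; since $LK$ is assembled from the crossing contributions antisymmetrically with respect to this involution, each switched crossing contributes the same term before and after, giving $LK(f_\tau(D))=LK(D)$ and identifying $f_\tau(D)$ with $LK(D)$. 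I expect this invariance of $LK$ under crossing switches --- as opposed to the more familiar invariance under $\Delta$-moves --- to be the main obstacle, since it hinges on getting the antisymmetrization in the definition of $LK$ exactly right for self-crossings of closed components, where the involution involves conjugation by the class $\kappa_i$ of the component.

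For the third item I would use Definition~\ref{def:order}: an order never realizes the pattern $\tau(c_1)=\tau(c_3)\neq\tau(c_2)$ in a move $\Omega_{3b}$, which is precisely the configuration that forced the $\Delta$-move in the index case; consequently the invariance conditions for all three Reidemeister moves are met using Reidemeister moves alone, so $\mathcal M'=\{\Omega_1,\Omega_2,\Omega_3\}$ and the destination is the theory of classical tangles in $F$. As $f_\tau(D)$ is a single diagram, $f_\tau$ takes values in these tangles, which refines the second item since the $\Delta$-class of this tangle is again $LK(D)$. The only routine point left to check here is that the order condition is genuinely what removes every move beyond the Reidemeister ones in the table, and this is immediate from the case analysis of $\Omega_{3b}$.
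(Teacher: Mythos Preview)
Your treatment of the first and third items is fine and matches the paper's reasoning: the preceding case analysis plus Theorem~\ref{thm:functorial_map_invariance} gives the destination theories in the table, and then the conclusions follow immediately.

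The second item, however, contains a genuine error. Your ``key observation'' that switching a crossing $c$ sends its universal signed index $\sigma^u(c)=(\sgn,\tau,o,h)$ to $\sigma^u(c)^\ast$ is false. The involution $\ast$ of Theorem~\ref{thm:universal_trait_index} relates the two crossings of an $\Omega_2$-pair; for classical (non-flat) tangles those two crossings have the \emph{same} component index $(i,j)$, the same order index, and the same homotopy index, so $\ast$ only flips the sign. A crossing change does something quite different: it swaps over and under, turning $(i,j)$ into $(j,i)$, flipping the order index, and replacing $h(c)$ by the class of the other half. (Your citation of Theorem~\ref{thm:universal_flat_index} is a symptom of the confusion: that theorem is about \emph{flat} tangles, where the $\ast$ involution does exchange $(i,j)$ and $(j,i)$, but it is not the relevant one here.) Consequently your target identity $LK(f_\tau(D))=LK(D)$ is simply wrong: already for a classical two-component link, switching a single mixed crossing changes both $\widetilde{LK}_{12}$ and $\widetilde{LK}_{21}$ by $\pm1$, so $f_\tau(D)$ and $D$ lie in different $\Delta$-classes by the Murakami--Nakanishi corollary.

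What the paper actually needs (and what its discussion provides) is much less: once the table gives $\mathcal M'=\{\Delta,\Omega_1,\Omega_2,\Omega_3\}$, Theorem~\ref{thm:Delta_skein_module} identifies $\mathscr K(F\mid\mathcal M')$ with the set of $LK$-values, so the class of $f_\tau(D)$ there is precisely the datum $LK(f_\tau(D))$. That is the sense in which $f_\tau(D)$ ``can be identified with'' an extended homotopy index polynomial computed from $D$; no equality with $LK(D)$ is asserted or required. Drop the attempted identity and the antisymmetry argument, and the item follows in one line from the table and Theorem~\ref{thm:Delta_skein_module}.
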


\begin{remark}
A analogue of Proposition  is valid for virtual knots, links and tangles. In partucular, for an order $\tau$ on diagrams of a virtual knot (link, tangle) the correspondent functorial map takes values in diagrams of another virtual knot (link, tangle).
\end{remark}

In the next section we consider orders on diagrams in more detail.

\subsection{Crossing change maps and orders}\label{subsect:cc_map_orders}

We start the section with examples of orders.

\begin{example}\label{ex:orders}
\begin{enumerate}
\item $\tau\equiv 0$ is an order which corresponds to the identity $f_\tau=\id$.
\item $\tau\equiv 1$ is an order which corresponds to the mirror map of a knot $f_\tau=CC$.
\item Let $L=K_1\cup\cdots\cup K_n$ be a link, $\mathscr C=\{C_1,C_2\}$ a splitting on the component set, i.e. $C_1\cup C_2=\{1,\dots,n\}$ and $C_1\cap C_2=\emptyset$, and $\epsilon_1,\epsilon_2\in\{0,1\}$. Define an index $\tau^{\mathscr C}$ as follows. For any crossing $c$ with the component index $(i,j)$ we set
\[
\tau^{\mathscr C}(c)=\left\{\begin{array}{cl}
                              \epsilon_k, & i,j\in C_k, k=1,2, \\
                              0, & i\in C_1, j\in C_2, \\
                              1, & i\in C_2, j\in C_1.
                            \end{array}\right.
\]
Then $\tau^{\mathscr C}$ is an order. Informally speaking, we lift the components in $C_1$ over the components in $C_2$ and mirror the part $C_1$ or $C_2$ of the link diagram if necessary.
\item Let $K$ be a long knot. Then the order index $o$ is an order.
\item If $\tau$ is an order then $\bar\tau=1-\tau$ is an order called the \emph{mirror order} of $\tau$.
\item If $\tau_1$ and $\tau_2$ are orders then their sum $\tau_1+\tau_2$ (valued in $\Z_2$) is an order too. The corresponding functorial map is the composition of the functorial maps of the orders $\tau_1$ and $\tau_2$: $f_{\tau_1+\tau_2}=f_{\tau_1}\circ f_{\tau_2}$. Thus, the set of orders possesses a $\Z_2$-module structure.

\end{enumerate}
\end{example}

\subsubsection{Orders on knots in a fixed surface}

Let us describe orders on tangle diagrams in a fixed oriented compact surface $F$.
Let $\tau$ be a binary trait and $D=D_1\cup\cdots\cup D_l$ a tangle diagram in $F$. Let $H^{t}_{ij}\subset\pi_1(F,z_i,z_j)$ be the set of homotopy indices of crossings $c$ with the component index $(i,j)$ such that $\tau(c)=t\in\{0,1\}$. Analogously, for a long component $i$ we define subsets ${}^u\!H_{ii}^t$ for the early undercrossings and ${}^o\!H_{ii}^t$ for the early overcrossings.

\begin{proposition}\label{prop:order_homotopy_index}
Let $\tau$ be a binary index. Then $\tau$ is an order if and only if it satisfies the following conditions.
\begin{enumerate}
\item For any $t=0,1$ and $i,j,k$ one has $H^t_{ij}H^t_{jk}\subset H^t_{ik}$ provided among $i,j,k$ there are no coinciding indices of a long component.
\item For any closed component $i$ and $t=0,1$ one has $\kappa_i^{-1}H^t_{ii}H^t_{ii}\subset H^t_{ii}$ 
where $\kappa_i\in\pi_1(F,z_i)$ is the homotopy class of the component.
\item For any long component $i$, $({}^\alpha\!H^t_{ii})^\alpha({}^\beta\!H^t_{ii})^\beta\subset({}^\gamma\!H^t_{ii})^\gamma$  for any $t=0,1$ and for any combination of $\alpha,\beta,\gamma\in\{u,o\}$ except cases $\alpha=\beta\ne\gamma$. Here we denote $X^u=X$ and $X^o=X^{-1}$.
\item For any long component $i$ and another component $j\ne i$ we have
\[
({}^\alpha\!H^t_{ii})^\alpha\cdot H^t_{ij}\subset H^t_{ij},\quad H^t_{ji}({}^\alpha\!H^t_{ii})^\alpha\subset H^t_{ji}, \quad H^t_{ij} H^t_{ji}\subset({}^\alpha\!H^t_{ii})^\alpha
\]
for any $t=0,1$ and any $\alpha\in\{u,o\}$.
\end{enumerate}
\end{proposition}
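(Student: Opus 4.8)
The plan is to reduce the order condition to a purely combinatorial statement about colourings of universal indices, using Theorem~\ref{thm:universal_index}. Since $\tau$ is an index, its value $\tau(c)$ at a crossing depends only on the universal index $\iota^u(c)$, which consists of the component, order and homotopy indices of $c$. Hence a colouring of crossings by $\{0,1\}$ is the same datum as a partition of universal-index data, and the sets $H^t_{ij}$ (together with their order-decorated variants ${}^\alpha\!H^t_{ii}$) record exactly which data receive the colour $t$. By Definition~\ref{def:order}, $\tau$ is an order if and only if no $\Omega_{3b}$ move has its three crossings coloured $\tau(c_1)=\tau(c_3)\ne\tau(c_2)$. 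So the whole statement reduces to (a) determining which triples of universal indices arise as the crossings of an $\Omega_{3b}$ move, and (b) rewriting ``no forbidden colouring of such a triple'' as the four inclusions.

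First I would analyse the generic triangle of an $\Omega_{3b}$ move on three arcs belonging to distinct components $i,j,k$, none of them a long-component coincidence. Here $c_2$ is the crossing between the two outer strands, with component index $(i,k)$, while $c_1,c_3$ chain through the middle strand with indices $(i,j)$ and $(j,k)$. Composing basepaths around the triangle, which bounds a contractible disk so that the short arcs cut out of it are negligible, gives the relation $h(c_2)=h(c_1)\,h(c_3)$. The prohibition of $\tau(c_1)=\tau(c_3)\ne\tau(c_2)$ then reads: if $h(c_1)\in H^t_{ij}$ and $h(c_3)\in H^t_{jk}$, their product $h(c_1)h(c_3)=h(c_2)$ must lie in $H^t_{ik}$ --- precisely condition 1.

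Next I would run the same basepath bookkeeping through the degenerate configurations, where some of $i,j,k$ coincide. When all three arcs lie on a single closed component $i$, each crossing is a self-crossing whose homotopy index is the based loop from the under- to the overcrossing; closing the triangle now introduces one extra full traversal of the component, i.e.\ the factor $\kappa_i$, so that $\kappa_i^{-1}H^t_{ii}H^t_{ii}\subset H^t_{ii}$, which is condition 2. When arcs lie on a single long component, the order index enters: each self-crossing carries a label $\alpha\in\{u,o\}$ recording an early under- or overcrossing (Fig.~\ref{fig:order_index}), and its homotopy contribution is inverted in the $o$-case, encoded by $X^u=X$, $X^o=X^{-1}$. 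Carrying the composition through while tracking which half of the component is based yields condition 3; the excluded pattern $\alpha=\beta\ne\gamma$ is exactly the order-index combination that no $\Omega_{3b}$ triangle on a long component realises, so it imposes no constraint. The mixed cases of one long component and a second component are handled identically and give the three inclusions of condition 4.

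Both directions then close uniformly. Sufficiency holds because every $\Omega_{3b}$ triangle is one of the enumerated composition patterns, so the inclusions forbid the pattern $\tau(c_1)=\tau(c_3)\ne\tau(c_2)$; necessity holds because, $F$ being connected, every composition-related triple is realisable by an actual $\Omega_{3b}$ move, so an order is forced to satisfy each inclusion. The main obstacle will be the precise bookkeeping in the long- and closed-component cases (conditions 2--4): pinning down the $\kappa_i$-factor and the $\{u,o\}$-exponent rules exactly, and deciding which triangles are geometrically realisable, demands careful attention to basepoint placement, to the orientation conventions fixing the based halves, and to the passage from $\pi_1(F,z_i,z_j)$ to its quotient by the symmetry group $IM_{ij}$.
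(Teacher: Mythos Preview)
Your proposal is correct and follows essentially the same approach as the paper: reduce the order condition to the combinatorics of universal indices via Theorem~\ref{thm:universal_index}, enumerate the geometric configurations of an $\Omega_{3b}$ triangle according to how the three arcs distribute among components (three distinct components, two on one closed component, all three on one closed or long component, mixed long/other), compute the homotopy-index relation in each case, and then argue both directions by realisability. The paper's proof differs from your sketch only in explicitness: it works through the configurations with concrete figures and writes out the homotopy indices $h(u),h(v),h(w)$ in each case (e.g.\ $h(u)=\gamma$, $h(v)=\gamma\beta\gamma^{-1}$, $h(w)=\gamma\beta$ for one closed-component case), verifying directly that $h(v)h(u)=h(w)$ or $\kappa_i^{-1}h(v)h(u)=h(w)$, and then invokes the ability to create such triangles with prescribed $h(u),h(v)$ by $\Omega_2,\Omega_3$ moves to get necessity.
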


\begin{proof}

1) Assume first that $\tau$ is an order.

Let $D_i$ be a closed component of the tangle. Let self-crossings $u,v,w$ of the component $D_i$ form a triangle of a Reidemeister move $\Omega_{3b}$. There are two possible cases of such a configuration (Fig.~\ref{fig:R3combination1}).

\begin{figure}[h]
\centering\includegraphics[width=0.4\textwidth]{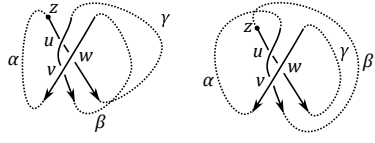}
\caption{Move $\Omega_{3b}$ on self-crossings of a closed component}\label{fig:R3combination1}
\end{figure}

In the left case in Fig.~\ref{fig:R3combination1} we have the following homotopy indices: $h(u)=\gamma$, $h(v)=\gamma\beta\gamma^{-1}$, $h(w)=\gamma\beta$. Hence, $h(v)h(u)=h(w)$. In the right case, $h(u)=\gamma\alpha$, $h(v)=\gamma\alpha\beta\gamma\alpha^{-1}\gamma^{-1}$, $h(w)=\gamma$. Then $\kappa_i^{-1}h(v)h(u)=h(w)$ where $\kappa_i=\gamma\alpha\beta\in\pi_1(F,z)$ is the homotopy type of the component.
Given $t=0$ or $1$, for any $x,y\in H^t_{ii}$ using moves $\Omega_2,\Omega_3$, we can create triangles for $\Omega_{3b}$ such that $h(u)=x$ and $h(v)=y$. Then $h(w)=xy$ in the left case and $h(w)=\kappa_i^{-1}xy$ in the right case. Since $\tau(u)=\tau(v)=t$, the definition of order implies $\tau(w)=t$. Then $xy,\kappa_i^{-1}xy\in H^t_{ii}$. Thus, $H^t_{ii}H^t_{ii}\subset H^t_{ii}$ and $\kappa_i^{-1}H^t_{ii}H^t_{ii}\subset H^t_{ii}$.

Next, consider a triangle $uvw$ for a move $\Omega_{3b}$ formed by two components (Fig.~\ref{fig:R3combination2}). In the first case, $h(u)=\gamma\in\pi_1(F,z_2)$, $h(v)=\gamma^{-1}\in\pi_1(F,z_1,z_2)$ and $h(w)=1\in\pi_1(F,z_1,z_2)$. Here we ignore small arcs in the neighbourhood of the triangle $uvw$. Hence, $h(v)h(u)=h(w)$.

\begin{figure}[h]
\centering\includegraphics[width=0.6\textwidth]{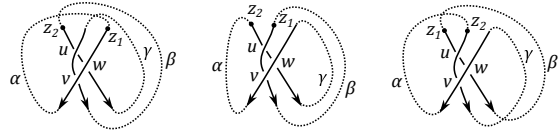}
\caption{Move $\Omega_{3b}$ on crossings of two components}\label{fig:R3combination2}
\end{figure}

In the second case, $h(u)=1\in\pi_1(F,z_1,z_2)$, $h(v)=\gamma\in\pi_1(F,z_2,z_1)$, $h(w)=\gamma\in\pi_1(F,z_2)$ and $h(v)h(u)=h(w)$. In the third case, $h(u)=1\in\pi_1(F,z_2,z_1)$, $h(v)=\gamma\in\pi_1(F,z_2,z_2)$, $h(w)=\gamma\in\pi_1(F,z_2,z_1)$ and $h(v)h(u)=h(w)$.

Using reasonings analogous to the one-component case, we get the inclusions $H^t_{ij}H^t_{jj}\subset H^t_{ij}$, $H^t_{ij}H^t_{ji}\subset H^t_{ii}$ and $H^t_{ii}H^t_{ij}\subset H^t_{ij}$.

When a triangle $uvw$ for a move $\Omega_{3b}$ is formed by three components (Fig.~\ref{fig:R3combination3}) we have $h(u)=1\in\pi_1(F,z_2,z_3)$, $h(v)=1\in\pi_1(F,z_1,z_2)$, $h(w)=1\in\pi_1(F,z_1,z_3)$ and $h(v)h(u)=h(w)$. Hence, $H^t_{ij}H^t_{jk}\subset H^t_{ik}$.

\begin{figure}[h]
\centering\includegraphics[width=0.15\textwidth]{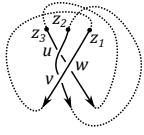}
\caption{Move $\Omega_{3b}$ on three components}\label{fig:R3combination3}
\end{figure}

Let $D_i$ be a long component of the tangle and self-crossings $u,v,w$ of the component form a triangle for $\Omega_{3b}$. There are several possible cases of such a configuration (Fig.~\ref{fig:R3combination1l}).

\begin{figure}[h]
\centering\includegraphics[width=0.6\textwidth]{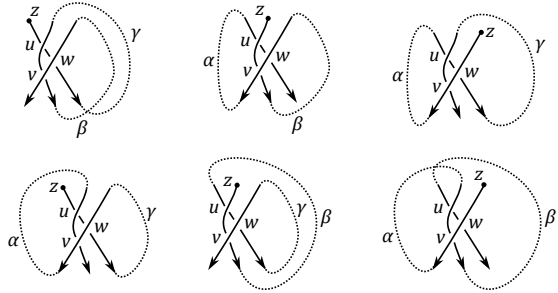}
\caption{Move $\Omega_{3b}$ on a long component}\label{fig:R3combination1l}
\end{figure}

In the first (top left) case we have the order indices $o(u)=o(v)=o(w)=-1$ and the homotopy indices $h(u)=\gamma\in\pi_1(F,z)$, $h(v)=\gamma\beta\gamma^{-1}$, $h(w)=\gamma\beta$. Then $h(v)h(u)=h(w)$, hence ${}^u\!H_{ii}^t {}^u\!H_{ii}^t\subset {}^u\!H_{ii}^t$.

In the second (top middle) case $o(u)=o(w)=1$, $o(v)=-1$ and $h(u)=\beta\alpha$, $h(v)=\beta$, $h(w)=\beta\alpha\beta^{-1}$. Then $h(u)=h(w)h(v)$, i.e. $h(v)h(u)^{-1}=h(w)^{-1}$. Thus, ${}^u\!H^t_{ii}({}^o\!H^t_{ii})^{-1}\subset({}^o\!H^t_{ii})^{-1}$. The other cases are considered analogously.

2) Let $\tau$ be a binary index which satisfies the conditions of the proposition. The conditions ensures that configurations of the move $\Omega_{3b}$ which are forbidden for an order, cannot occur. Thus, $\tau$ is an order.
\end{proof}

\begin{example}[Rough orders]\label{ex:surface_orders}
Let $T=K_1\cup\dots\cup K_n$ be a tangle in the surface $F$ and $\mathscr D_T(F)$ be the set of its diagram. Let us consider orders $\tau$ on $\mathscr D_T(F)$ such that for any $i,j\in\{1,\dots,n\}$ the set $H_{ij}^0$ or $H_{ij}^1$ is empty (hence, the other set coincides with $\pi_1(F,z_i,z_j)$). Thus, the value of the order depends only on the component index of the crossing. Such orders will be called \emph{rough orders}.

Consider a splitting $\mathscr C=\{C_1,\dots, C_m\}$ of the set of components $\{1,\dots,n\}$ into $m$ subsets, $m\le n$. Consider a vector $\mathbf\epsilon=(\epsilon_1,\dots,\epsilon_m)\in\{0,1\}^m$ and a vector $\mathbf\omega=(\omega_i)_{i\in\Lambda}$ where $\Lambda\subset\{1,\dots,n\}$ is the set of long components of the tangle. We require that for any long component $i$ such that $\omega_i=1$ the subset $C_k$ which contains $i$ is one-element, i.e. $C_k=\{i\}$.

Define an index $\tau^{\mathscr C,\mathbf\epsilon, \mathbf\omega}$ on  $\mathscr D_T(F)$ by the formula
\[
\tau^{\mathscr C, \mathbf\epsilon, \mathbf\omega}(c)=\left\{\begin{array}{cl}
                              \epsilon_k+\omega_i\cdot o(c), & i,j\in C_k, 1\le k\le m, \\
                              0, & i\in C_k, j\in C_l, k>l, \\
                              1, & i\in C_k, j\in C_l, k<l.
                            \end{array}\right.
\]
where $(i,j)$ is the component index of the crossing $c$ and $o(c)$ is the order index of the crossing. If $c$ is not a self-crossing of a long component then we suppose $o(c)=0$.

In other words, $H_{ij}^0=\emptyset$ if $i,j\in C_k$ and $\epsilon_k=1$ or $i\in C_k$, $j\in C_l$ and $k<l$. $H_{ij}^1=\emptyset$ if $i,j\in C_k$ and $\epsilon_k=0$ or $i\in C_k$, $j\in C_l$ and $k>l$. Then $\tau^{\mathscr C,\mathbf\epsilon, \mathbf\omega}$ is a rough order.

Note that $\tau^{\mathscr C,\mathbf\epsilon, \mathbf\omega}$ generalizes the order $\tau^{\mathscr C}$ in Example~\ref{ex:orders}. On the other hand, the functorial map $f_{\tau^{\mathscr C,\mathbf\epsilon, \mathbf\omega}}$ is a composition of functorial maps $f_{\tau^{\mathscr C}}$ for some splittings $\mathscr C$.

\begin{proposition}
Any rough order $\tau$ coincides with $\tau^{\mathscr C,\mathbf\epsilon, \mathbf\omega}$ for some $\mathscr C$,  $\mathbf\epsilon$ and $\mathbf\omega$.
\end{proposition}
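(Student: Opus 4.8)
The plan is to show that a rough order $\tau$ is completely determined by the combinatorial data $(\mathscr C, \mathbf\epsilon, \mathbf\omega)$, so the task is to extract this data from $\tau$ and then verify that the explicit formula for $\tau^{\mathscr C, \mathbf\epsilon, \mathbf\omega}$ reproduces $\tau$. Recall that a rough order is defined by the property that for each pair $(i,j)$ one of the sets $H^0_{ij}$, $H^1_{ij}$ is empty, so $\tau(c)$ depends only on the component index $(i,j)$ of the crossing $c$ (and, for self-crossings of long components, possibly on the order index). Thus I first define a relation on the index set $\{1,\dots,n\}$: declare $i\sim j$ when the ``mixed crossing'' value of $\tau$ between components $i$ and $j$ is symmetric in the appropriate sense, i.e. when crossings of type $(i,j)$ and $(j,i)$ receive values consistent with $i,j$ lying in a common block. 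The candidate blocks $C_1,\dots,C_m$ will be the equivalence classes of this relation.

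First I would check that $\sim$ is genuinely an equivalence relation; reflexivity and symmetry are immediate, and transitivity is where the order axioms enter. Using Proposition~\ref{prop:order_homotopy_index}, condition~(1), the inclusion $H^t_{ij}H^t_{jk}\subset H^t_{ik}$ for a rough order forces compatibility of the emptiness patterns across three components, which is exactly what is needed to propagate $i\sim j$ and $j\sim k$ to $i\sim k$. Next I would impose a linear order on the blocks: for $i\in C_k$, $j\in C_l$ with $k\ne l$, the value $\tau(c)$ on a mixed crossing of component index $(i,j)$ is either always $0$ or always $1$ (by roughness), and condition~(1) again guarantees this choice is consistent, i.e. it descends to a well-defined antisymmetric, transitive relation on blocks. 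Antisymmetry and totality give a linear ordering, and after reindexing the blocks I may assume $\tau(c)=0$ when $k>l$ and $\tau(c)=1$ when $k<l$, matching the two off-diagonal cases in the formula.

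With $\mathscr C$ fixed, I would read off $\mathbf\epsilon$ and $\mathbf\omega$ from the within-block behaviour. For a block $C_k$ and a self- or intra-block crossing that is not a self-crossing of a long component, roughness makes $\tau$ constant on $C_k$; set $\epsilon_k$ to that constant. For a long component $i$, the contribution of the order index $o(c)$ to $\tau$ is detected by comparing the values of $\tau$ on early under- versus early overcrossings of $i$; define $\omega_i=1$ if these differ and $\omega_i=0$ otherwise, so that $\tau(c)=\epsilon_k+\omega_i\cdot o(c)$ on such crossings. The required constraint that $\omega_i=1$ forces $C_k=\{i\}$ follows from condition~(4): if a long component $i$ shared a block with another component, the mixed inclusions $({}^\alpha\!H^t_{ii})^\alpha H^t_{ij}\subset H^t_{ij}$ would be incompatible with $\tau$ genuinely depending on $o(c)$, so $\omega_i=1$ is impossible unless $C_k$ is a singleton.

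Finally I would verify that $\tau^{\mathscr C,\mathbf\epsilon,\mathbf\omega}$ as defined by the three-case formula agrees with $\tau$ on every crossing: the off-diagonal cases hold by the construction of the block ordering, and the diagonal case holds by the definitions of $\epsilon_k$ and $\omega_i$, using that $o(c)=0$ for crossings that are not self-crossings of long components. The main obstacle I expect is transitivity of the block relation and the consistency of the induced ordering: I must rule out ``cyclic'' or ``conflicting'' emptiness patterns among triples of components, and the key leverage is the homotopy-index inclusions of Proposition~\ref{prop:order_homotopy_index}, specialized to the rough case where each $H^t_{ij}$ is either empty or all of $\pi_1(F,z_i,z_j)$. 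Once those inclusions are read as constraints purely on which sets are empty, transitivity and well-definedness of the linear order on blocks become a finite combinatorial check.
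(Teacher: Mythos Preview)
Your plan is correct and leads to a valid proof, but the route differs from the paper's.  The paper encodes $\tau$ as a digraph $G$ on $\{1,\dots,n\}$ with an edge $ij$ precisely when $H^1_{ij}\ne\emptyset$ (i.e.\ $\tau(i,j)=1$), and declares $i\sim j$ only when \emph{both} $ij$ and $ji$ are edges.  The quotient $\bar G=G/\!\sim$ is then a DAG, and the blocks $C_k$ are recovered as successive layers of minimal vertices of $\bar G$, with $\epsilon_k$ determined by whether the layer comes from a single nontrivial $\sim$-class or from several trivial ones.  Your relation, by contrast, is $i\sim j\Leftrightarrow\tau(i,j)=\tau(j,i)$ for either common value, so the blocks are the $\sim$-classes directly, and the between-block values supply the linear order without an iterative peeling argument.

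Both arguments rest on the same engine, namely the inclusions $H^t_{ij}H^t_{jk}\subset H^t_{ik}$ of Proposition~\ref{prop:order_homotopy_index}, specialized to the rough case where each $H^t_{ij}$ is empty or full.  Your symmetric treatment of the values $0$ and $1$ makes the extraction of $\epsilon_k$ cleaner: for a singleton block $C_k=\{i\}$ you simply take $\epsilon_k=\tau(i,i)$, whereas the paper's layer recipe must separately track loops in $G$.  One point worth making explicit in your write-up is the ``Case~C'' observation you implicitly rely on: if $\tau(i,j)=\tau(j,i)=0$ and $\tau(j,k)=\tau(k,j)=1$ then $H^0_{ji}H^0_{ij}\subset H^0_{jj}$ and $H^1_{jk}H^1_{kj}\subset H^1_{jj}$ force $\tau(j,j)$ to be both $0$ and $1$, so within a single $\sim$-class the common value is well defined and $\epsilon_k$ is unambiguous.
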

\begin{proof}
Let $\tau$ be a rough order. Consider the digraph $G$ with the set of vertices $V(G)=\{1,\dots,n\}$ and the set of edges $E(G)=\{ij\mid H^1_{ij}\ne\emptyset
\}$. By Proposition~\ref{prop:order_homotopy_index} the graph $G$ possesses the following properties:
\begin{enumerate}
\item if $ij,jk\in E(G)$ then $ik\in E(G)$;
\item if $ij\in E(G)$ then for any $k$ either $ik\in E(G)$ or $kj\in E(G)$.
\end{enumerate}

Call two vertices $i$ and $j$ \emph{equivalent} ($i\sim j$) if $ij,ji\in E(G)$. Then $\sim$ is an equivalence relation.

Let $\bar G=G/\sim$. The graph $\bar G$ is a directed acyclic graph and $V(\bar G)$ is a poset. Let $\bar C_1$ be the set of minimal vertices in $V(\bar G)$ (i.e. the set of vertices without incoming edges). We show that for any $x\in\bar C_1$ and $y\not\in\bar C_1$ one has $ij\in E(\bar G)$.

Indeed, consider a minimal vertex $z\in V(\bar G)\setminus \bar C_1$. Then there is an edge $xz\in E(\bar G)$ where $x\in\bar C_1$. By property 2) applied to the edge $xz$, for any $y\in\bar C_1$ $yz\in E(\bar G)$, and for any $y\not\in\bar C_1$ $xy\in E(\bar G)$. We can replace $x$ with any $x'\in \bar C_1$ and get the condition that for any $y\not\in\bar C_1$ $x'y\in E(\bar G)$.

Repeating the reasonings above to the full subgraph of $\bar G$ on the vertex set $V(\bar G)\setminus \bar C_1$, we get a vertex set $\bar C_2$ etc. Finally, we get a splitting $V(\bar G)=\bar C_1\sqcup\cdots\sqcup\bar C_m$. Let $C_k\subset V(G)$, $k=1,\dots,m$, be the correspondent sets of vertices of $G$. Denote $\mathscr C=\{C_k\}_{k=1}^m$. Then for any $i\in C_k$, $j\in C_l$, $k<l$, we have $ij\in E(G)$ and $ji\not\in E(G)$.

If a set $C_k$ contains only trivial equivalence classes, then there is a bijection between $C_k$ and $\bar C_k$. By construction, there is no edges between the vertices of $\bar C_k$. Then the same holds for the vertices of $C_k$. In this case we set $\epsilon_k=0$.

Let a set $C_k$ contain a nontrivial equivalence class, i.e. $i\sim j\in C_k$. By property 1), $ii\in E(G)$, hence by property 2), for any $i'\in V(G)$ either $ii'\in V(G)$ or $i'i\in V(G)$. On the other hand, there is no edges between the vertices of $\bar C_k$. Thus, the set $\bar C_k$ has only one element, and $C_k$ is the equivalence class of the vertex $i$. In this case we set $\epsilon_k=1$.

Let $i$ be a long component and $C_k$ the set of the splitting which includes it. We set $\epsilon_i=\tau(c)$ where $c$ is an early overcrossing of $i$, and set $\omega_i=0$ if the values of $\tau$ on early overcrossings and early undercrossings of the component $i$ coincide, and $\omega_i=1$ if the values are different. Assume that $\omega_i=1$. Then ${}^u\!H_{ii}^t={}^o\!H_{ii}^{1-t}$ for some $t=0,1$. By Proposition~\ref{prop:order_homotopy_index}, for any $j\ne i$ exactly one of the edges $ij$ and $ji$ belong to $E(G)$. Hence, there is no components equivalent to $i$, and $C_k=\{i\}$.

Since the digraph $G$ determines the order $\tau$, we have $\tau=\tau^{\mathscr C,\mathbf\epsilon, \mathbf\omega}$.
\end{proof}
\end{example}

\subsubsection{Preordering}\label{subsect:preordering}

For a knot, relations like those in Proposition~\ref{prop:order_homotopy_index} appear on (pre)ordered groups.

\begin{definition}[\cite{DR}]\label{def:preordering}
Let $G$ be a group. A \emph{left-invariant preordering} on $G$ is a reflexive, transitive and complete relation $\preceq$ on $G$ such that $g\preceq g'$ implies $hg\preceq hg'$ for any $h\in G$.
\end{definition}

\begin{proposition}[\cite{DR}]\label{prop:preordering_subsets}
Let $H$ be a subset of $G$ such that $H\cup H^{-1}=G$, and $HH\subset H$. Then $g\preceq h \Leftrightarrow g^{-1}h\in H$ defines a left-invariant preordering on $G$.
\end{proposition}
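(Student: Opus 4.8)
The plan is to verify directly the four defining properties of a left-invariant preordering --- reflexivity, transitivity, completeness, and left-invariance --- by translating each into a statement about membership in $H$, and then invoking one of the two hypotheses $H\cup H^{-1}=G$ and $HH\subset H$. Throughout I would write $x=g^{-1}h$ for a pair $g,h\in G$ and use repeatedly that $(g^{-1}h)^{-1}=h^{-1}g$, so that the relation $g\preceq h$ and its reverse $h\preceq g$ correspond to $x\in H$ and $x^{-1}\in H$ respectively.

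For reflexivity I need $e\in H$, since $g\preceq g$ amounts to $g^{-1}g=e\in H$. This is the one step that uses the covering hypothesis in a slightly indirect way: from $H\cup H^{-1}=G$ we have $e\in H$ or $e\in H^{-1}$, and since $e^{-1}=e$ both alternatives yield $e\in H$. Left-invariance is then immediate by cancellation: if $g\preceq g'$, i.e. $g^{-1}g'\in H$, then for any $h\in G$ one computes $(hg)^{-1}(hg')=g^{-1}h^{-1}hg'=g^{-1}g'\in H$, so $hg\preceq hg'$. (Note that the analogous computation for right multiplication does not simplify, which is why only the left-invariant version is claimed.)

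Transitivity follows from the multiplicative hypothesis: if $g\preceq h$ and $h\preceq k$ then $g^{-1}h\in H$ and $h^{-1}k\in H$, whence $g^{-1}k=(g^{-1}h)(h^{-1}k)\in HH\subset H$, giving $g\preceq k$. Completeness follows from the covering hypothesis: for arbitrary $g,h$, setting $x=g^{-1}h$ we have $x\in G=H\cup H^{-1}$, so either $x\in H$, i.e. $g\preceq h$, or $x\in H^{-1}$, i.e. $x^{-1}=h^{-1}g\in H$, i.e. $h\preceq g$.

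I do not expect a genuine obstacle here: the statement is essentially a dictionary between the two set-theoretic conditions on $H$ and two of the order axioms, with reflexivity and left-invariance coming almost for free. The only point demanding a moment's care is the derivation of $e\in H$ for reflexivity, where one must not simply assert $e\in H$ but deduce it from $e=e^{-1}$ together with $H\cup H^{-1}=G$; everything else is a one-line verification.
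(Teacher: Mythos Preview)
Your proof is correct. The paper itself does not prove this proposition; it is stated with a citation to~\cite{DR} and used without argument. Your direct verification of the four axioms is exactly the standard proof, and each step is sound --- in particular your derivation of $e\in H$ from $e=e^{-1}$ and $H\cup H^{-1}=G$ is the right way to handle reflexivity.
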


\begin{definition}[cf.~\cite{LRR}]\label{def:discrete_preordering}
Let $\preceq$ be a preordering. Denote $g\sim g'$ if $g\preceq g'$ and $g'\preceq g$, and $g\prec g'$ if $g\preceq g'$ and $g\nsim g'$.

A preordering is \emph{discrete} if there exists an element $a\in G$ such that $1\prec a$ and there is no element $b$ such that $1\prec b\prec a$. The element $a$ is called a \emph{least positive element}.
\end{definition}

Let us remind some basic facts on preordered groups.
\begin{lemma}\label{lem:preorder_basics}
Let $G$ be a group with a left-invariant preordering $\preceq$. Then
\begin{enumerate}
\item $1\prec g \Leftrightarrow g^{-1}\prec 1$,
\item $g\sim 1 \Leftrightarrow g^{-1}\sim 1$,
\item $1\preceq g, 1\prec h \Rightarrow 1\prec gh, 1\prec hg$,
\item $g\preceq 1, h\prec 1 \Rightarrow gh\prec 1, hg\prec 1$,
\item $g\sim 1, h\sim 1 \Rightarrow gh\sim 1$.
\end{enumerate}
\end{lemma}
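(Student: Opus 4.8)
The plan is to reduce the entire lemma to a single observation about left-invariance and then derive each item by a short chain of $\preceq$-relations. The key point is that the defining implication $g\preceq g'\Rightarrow hg\preceq hg'$ is in fact an equivalence: applying it with $h^{-1}$ in place of $h$ gives the converse, so $a\preceq b\iff ha\preceq hb$ for every $h\in G$. Consequently left multiplication also preserves the derived relations, $a\sim b\iff ha\sim hb$ and $a\prec b\iff ha\prec hb$, since each of $\sim$ and $\prec$ is built from $\preceq$ and its negation. This bidirectional cancellation is the only real tool; everything else is reflexivity and transitivity.

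With this, parts (1) and (2) are immediate by left multiplying by $g^{-1}$. For (1), $1\prec g$ unfolds as $1\preceq g$ together with $g\not\preceq 1$; cancelling $g$ on the left turns these into $g^{-1}\preceq 1$ and $1\not\preceq g^{-1}$, which is exactly $g^{-1}\prec 1$, and the steps reverse. Part (2) is the same computation with both inequalities kept, giving $g\sim 1\iff g^{-1}\sim 1$.

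Parts (3), (4), (5) are the positivity/additivity statements, and I would prove each by inserting an intermediate term. To get $1\preceq gh$ in (3), I would left-multiply $1\preceq h$ by $g$ to obtain $g\preceq gh$ and chain it with $1\preceq g$; the form $1\preceq hg$ is obtained symmetrically by left-multiplying $1\preceq g$ by $h$ to get $h\preceq hg$ and chaining with $1\preceq h$. For the strictness in (3), assuming the reverse relation and cancelling yields a contradiction with $1\prec h$: e.g. if $hg\preceq 1$, then $h\preceq hg\preceq 1$ forces $h\preceq 1$, impossible; the case $gh\preceq 1$ is handled by cancelling $g$ on the left. Part (5) is just the non-strict version, so both chains above give $gh\sim 1$; and part (4) is the mirror image of (3), which I would either prove by the same insertion technique or deduce formally from (1)-(3) by replacing $g,h$ with $g^{-1},h^{-1}$.

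I do not expect a genuine obstacle here; the whole lemma is routine once the bidirectional cancellation is isolated. The one point requiring care is that the preordering is left-invariant only, so the products $gh$ and $hg$ are genuinely different cases and must be treated by multiplying on the \emph{correct} side (by $g$, $h$, $g^{-1}$, or $h^{-1}$ as appropriate); mixing these up is the only way the argument can go wrong. A secondary caution is bookkeeping of the strict parts, i.e. propagating the negated relations ``$\,g'\not\preceq g\,$'' correctly through left cancellation rather than accidentally treating $\prec$ as if it were $\preceq$.
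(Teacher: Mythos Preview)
Your proposal is correct and follows essentially the same approach as the paper: both reduce everything to left-multiplication by suitable group elements and transitivity, chaining $1\preceq g\prec gh$ and $1\prec h\preceq hg$ for item (3) and treating the remaining items analogously. Your explicit isolation of the bidirectional cancellation $a\preceq b\iff ha\preceq hb$ and your contradiction argument for strictness are slightly more detailed than the paper's terse ``proved analogously,'' but the underlying reasoning is the same.
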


\begin{proof}
1) Let $1\prec g$. Then $g^{-1}\prec g^{-1}g=1$.

2) Let $1\preceq g, 1\prec h$. Then $1\preceq g\prec gh$ and $1\prec h\preceq hg$, hence, $1\prec gh, 1\prec hg$.

The other statements are proved analogously.
\end{proof}

\begin{lemma}\label{lem:discrete_preorder_basics}
Let $G$ be a group with a discrete left-invariant preordering $\preceq$ and $a$ is a least positive element. Then
\begin{enumerate}
\item $1\preceq g, 1\preceq h \Rightarrow 1\prec agh$,
\item $g\prec 1, h\prec 1 \Rightarrow agh\prec 1$,
\item $1\prec g$ (resp. $g\sim 1$, $g\prec 1$) $\Rightarrow 1\prec a^{\pm 1}ga^{\mp 1}$ (resp. $a^{\pm 1}ga^{\mp 1}\sim 1$, $a^{\pm 1}ga^{\mp 1}\prec 1$)
\end{enumerate}
\end{lemma}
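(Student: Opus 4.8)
The plan is to reduce all three parts to two ingredients: the elementary preorder arithmetic already recorded in Lemma~\ref{lem:preorder_basics}, and one consequence of discreteness that I will isolate at the outset. Reformulating the definition of a least positive element, completeness gives $1\prec z\Rightarrow a\preceq z$ for every $z$ (otherwise $z\prec a$, so $1\prec z\prec a$ contradicts minimality of $a$). From this I extract the key sublemma: \emph{if $z\prec 1$ then $az\preceq 1$ and $za\preceq 1$}. The second inequality is immediate, since $z\prec 1$ gives $1\prec z^{-1}$, hence $a\preceq z^{-1}$, and left multiplication by $z$ yields $za\preceq 1$. For the first I would argue by contradiction: if $1\prec az$, then left multiplication by $a^{-1}$ gives $a^{-1}\prec z$, so $a^{-1}\prec z\prec 1$, and left multiplication by $a$ produces $1\prec az\prec a$, an element strictly between $1$ and $a$, contradicting discreteness. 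Equivalently, this says $a^{-1}$ is the largest element $\prec 1$.

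\textbf{Parts 1 and 2.} With the sublemma in hand these are short. For Part~1, left multiplying $1\preceq h$ by $g$ and using transitivity gives $1\preceq gh$; since $1\prec a$, Lemma~\ref{lem:preorder_basics}(3) applied to $1\preceq gh$ and $1\prec a$ yields $1\prec agh$. For Part~2, the sublemma gives $ag\preceq 1$ from $g\prec 1$, and then Lemma~\ref{lem:preorder_basics}(4) applied to $ag\preceq 1$ and $h\prec 1$ gives $agh\prec 1$ directly.

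\textbf{Part 3, the main obstacle.} This is the crux, precisely because left invariance does not by itself guarantee that conjugation preserves the order: almost every naive manipulation wants a right multiplication, which is unavailable, and it is discreteness that must close the gap. I would first prove $1\prec g\Rightarrow 1\prec aga^{-1}$: if instead $aga^{-1}\preceq 1$, left multiplication by $a^{-1}$ gives $ga^{-1}\preceq a^{-1}\prec 1$, so $ga^{-1}\prec 1$, and the sublemma (in the form $za\preceq 1$) yields $g=(ga^{-1})a\preceq 1$, contradicting $1\prec g$. The case $g\prec 1\Rightarrow aga^{-1}\prec 1$ then follows by applying this to $g^{-1}$ together with Lemma~\ref{lem:preorder_basics}(1), since $ag^{-1}a^{-1}=(aga^{-1})^{-1}$.

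To handle conjugation by $a^{-1}$ and the $\sim$ case without repeating the asymmetric estimates, I would pass to the reverse preordering $\preceq'$ defined by $x\preceq' y\Leftrightarrow y\preceq x$. This is again left invariant, and by the sublemma $a^{-1}$ is its least positive element, so every statement proved above applies verbatim to $(\preceq',a^{-1})$; translating back gives $1\prec g\Rightarrow 1\prec a^{-1}ga$ and $g\prec 1\Rightarrow a^{-1}ga\prec 1$. Thus conjugation by $a$ and by $a^{-1}$ each preserve the strict positive and strict negative classes; as they are mutually inverse bijections, the remaining case $g\sim 1$ follows for both, for if $aga^{-1}$ (or $a^{-1}ga$) were $\prec 1$ or $\succ 1$, conjugating back would force $g\prec 1$ or $1\prec g$. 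This establishes all of $a^{\pm1}ga^{\mp1}$.
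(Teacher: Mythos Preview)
Your proof is correct and close in spirit to the paper's: both rest on the same key fact (your sublemma) that $g\prec 1$ forces $ag\preceq 1$. The organization differs in two small but instructive ways. For Part~3 the paper observes that $g\prec 1\Rightarrow aga^{-1}\prec 1$ is already an instance of Part~2 with $h=a^{-1}$ (since $a^{-1}\prec 1$), which spares your separate contradiction argument; it then handles the $a^{-1}ga$ direction directly via $1\prec g\Rightarrow a\preceq g\Rightarrow 1\preceq a^{-1}g$, whence $1\prec a^{-1}ga$ by Lemma~\ref{lem:preorder_basics}. Conversely, your reverse-preorder trick---noting that $(\preceq',a^{-1})$ is again discrete with least positive element $a^{-1}$---is a clean symmetry the paper does not exploit, and it buys you the $a^{-1}ga$ case without repeating the one-sided estimates. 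The $g\sim 1$ case is handled identically in both proofs by the mutual-inverse bijection argument.
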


\begin{proof}
The first statement follows from Lemma~\ref{lem:preorder_basics}.

Let $g\prec 1, h\prec 1$. Since $ag\prec a$ and $a$ is a least positive element, then $ag\preceq 1$. Hence, $agh\prec 1$.

Let $g\prec 1$. Since $a^{-1}\prec 1$ then $aga^{-1}\prec 1$ by the previous statement. If $1\prec g$ then $g^{-1}\prec 1$ and $ag^{-1}a^{-1}\prec 1$. Hence $1\prec (ag^{-1}a^{-1})^{-1}=aga^{-1}$.

Let $1\prec g$. Then $a\preceq g$, hence, $1\preceq a^{-1}g$ and $1\prec a^{-1}ga$. If $g\prec 1$ then $1\prec g^{-1}$ and $1\prec a^{-1}g^{-1}a$. Hence $1\prec (a^{-1}g^{-1}a)^{-1}=a^{-1}ga\prec 1$.

Let $g\sim 1$. If $aga^{-1}\prec 1$ (resp. $1\prec aga^{-1}$) then $g=a^{-1}(aga^{-1})a\prec 1$ (resp. $1\prec g$). Hence, $aga^{-1}\sim 1$. Analogously, $a^{-1}ga\sim 1$.
\end{proof}

\begin{definition}\label{def:order_K-invariance}
Let $F$ be a surface and $K$ a knot in the thickening $F\times [0,1]$, $z\in K$. Let $IM(K)\subset Aut(\pi_1(F,z))$ be the group of automorphisms induced by the diffeomorphisms $\Phi$ of $F\times[0,1]$ such that $\Phi$ is identical on $\partial(F\times[0,1])\cup K$ and $\Phi$ is isotopic to the identity (cf. Section~\ref{subsec:traits}). A preordering $\preceq$ is called \emph{$K$-invariant} if for any $\phi\in IM(K)$ and $g\in\pi(F)$, $1\preceq g$, one has $1\preceq\phi(g)$.
\end{definition}

\begin{remark}\label{rem:inner_monodromy_structure}
Since $IM(K)$ is generated by diffeomorphisms isotopic to the identity, the automorphisms in $IM(K)$ are inner. Moreover,
\[
IM(K)\subset\{Ad_\gamma\mid \gamma\in\pi_1(F,z), \gamma\kappa\gamma^{-1}=\kappa\}.
\]
Since in the surface group $\pi_1(F)$ any two commuting elements are proportional, if $\kappa\ne 1$ then $IM(K)=\langle Ad_\gamma\rangle$ is a cyclic group where $\kappa=\gamma^k$ for some $k\in\N$.
\end{remark}

\begin{lemma}\label{lem:nontrivial_preorder_invariance}
Let $K$ a knot in the thickening $F\times [0,1]$ such that $\kappa=[K]\in\pi_1(F)$ is nontrivial. Let $\preceq$ be a left-invariant preordering on $\pi_1(F)$ such that either $\kappa\sim 1$ or $\preceq$ is discrete and $\kappa^{-1}$ is a least positive element. Then $\preceq$ is $K$-invariant.
\end{lemma}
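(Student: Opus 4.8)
The plan is to reduce the statement to a cone-preservation property for conjugations and then split along the dichotomy in the hypothesis. By Remark~\ref{rem:inner_monodromy_structure}, since $\kappa\neq 1$ every element of $IM(K)$ is inner and in fact $IM(K)=\langle Ad_\gamma\rangle$ for a single $\gamma\in\pi_1(F)$ with $\kappa=\gamma^k$, $k\in\N$. Writing $H=\{g\mid 1\preceq g\}$ for the positive cone, so that $g\preceq h\Leftrightarrow g^{-1}h\in H$ and $HH\subset H$ (Proposition~\ref{prop:preordering_subsets}), the assertion of $K$-invariance is exactly $Ad_{\gamma^n}(H)\subset H$ for all $n\in\Z$. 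Hence it suffices to show that both $Ad_\gamma$ and $Ad_{\gamma^{-1}}$ preserve $H$, and then compose.

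The basic tool I would isolate first is the observation that \emph{if $u\sim 1$ then $Ad_u$ preserves $H$.} Indeed $u\sim 1$ means $u,u^{-1}\in H$, so for any $g\in H$ one gets $ugu^{-1}\in HHH\subset H$; the same applies to $u^{-1}\sim 1$. This disposes of any conjugation by an element equivalent to $1$, and is the only closure property of $H$ I will need.

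In the first case, $\kappa\sim 1$. From $\kappa=\gamma^k$ I would deduce $\gamma\sim 1$: if $1\prec\gamma$, then repeated use of Lemma~\ref{lem:preorder_basics}(3) gives $1\prec\gamma^k=\kappa$, while $\gamma\prec 1$ gives $\gamma^k\prec 1$ by Lemma~\ref{lem:preorder_basics}(4); both contradict $\kappa\sim 1$. Thus $\gamma\sim 1$, and the observation above finishes this case at once for $\gamma$ and all its powers.

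The second case, where $\preceq$ is discrete with least positive element $a=\kappa^{-1}=\gamma^{-k}$, is the main obstacle: conjugation by the least positive element $a$ is controlled by Lemma~\ref{lem:discrete_preorder_basics}(3), but we must conjugate by its $k$-th root $\gamma$. The key step is to prove $\gamma a\sim 1$ and then factor $Ad_\gamma=Ad_{\gamma a}\circ Ad_{a^{-1}}$, where the second factor preserves $H$ by Lemma~\ref{lem:discrete_preorder_basics}(3) and the first by the observation. To obtain $\gamma a\sim 1$, I would first rule out $1\preceq\gamma$ as in the previous case (it forces $1\preceq\gamma^k=\kappa=a^{-1}$, contradicting $a^{-1}\prec 1$, or $\gamma\sim 1\Rightarrow\kappa\sim 1$), so $\gamma\prec 1$, i.e. $1\prec\gamma^{-1}$. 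Discreteness then yields $a\preceq\gamma^{-1}$, since any element $\succ 1$ must dominate the least positive $a$; on the other hand left-invariance applied to $1\preceq\gamma^{-1}$ gives the chain $\gamma^{-1}\preceq\gamma^{-2}\preceq\cdots\preceq\gamma^{-k}=a$ (for $k=1$ this is the equality $\gamma^{-1}=a$). Combining, $\gamma^{-1}\sim a$, which unwinds to $\gamma a\sim 1$. The symmetric factorization $Ad_{\gamma^{-1}}=Ad_a\circ Ad_{(\gamma a)^{-1}}$, together with $(\gamma a)^{-1}\sim 1$, shows $Ad_{\gamma^{-1}}$ preserves $H$ as well, completing the passage to all powers and hence to all of $IM(K)$.
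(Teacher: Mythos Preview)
Your argument is correct. In the first case ($\kappa\sim 1$) you and the paper proceed identically: deduce $\gamma\sim 1$ from $\kappa=\gamma^k$ and conclude that conjugation by $\gamma^{\pm 1}$ preserves the positive cone.

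In the discrete case the paper takes a shorter route. From $\kappa=\gamma^k$ and the fact that $\kappa^{-1}$ is least positive, it observes that $k>1$ would give $1\prec\gamma^{-1}\prec\gamma^{-k}=\kappa^{-1}$, contradicting minimality; hence $k=1$, $\gamma=\kappa$, and Lemma~\ref{lem:discrete_preorder_basics}(3) applies directly to $Ad_{\kappa^{\pm 1}}$. Your chain $a\preceq\gamma^{-1}\preceq\gamma^{-2}\preceq\cdots\preceq\gamma^{-k}=a$ actually proves the same thing: it forces $\gamma^{-(k-1)}\sim 1$, which together with your strict inequality $1\prec\gamma^{-1}$ rules out $k\ge 2$. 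So your factorization $Ad_\gamma=Ad_{\gamma a}\circ Ad_{a^{-1}}$ is valid but unnecessary --- once you have $\gamma^{-1}\sim a$ you already have $\gamma=\kappa$ on the nose, and the paper's one-line finish is available. Your approach has the minor advantage of not needing to name the strict inequality $\gamma^{-1}\prec\kappa^{-1}$, at the cost of the extra factorization step.
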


\begin{proof}
  If $\kappa\ne 1$ then $\kappa=\gamma^k$, $k\in\N$, for some prime element $\gamma\ne 1$. If $\kappa\sim 1$ then $\gamma\sim 1$ (if $1\prec\gamma$ then $1\prec\gamma^k=\kappa$). Let $1\preceq g$. By Lemma~\ref{lem:preorder_basics}, $1\prec\gamma g\gamma^{-1}$. Since $IM(K)\subset\langle Ad_\gamma\rangle$, the preordering $\preceq$ is $K$-invariant.

  Let $\kappa^{-1}$ be a least positive element. Then $k=1$ and $\kappa=\gamma$ (otherwise $1\prec\gamma^{-1}\prec\kappa^{-1}$). By Lemma~\ref{lem:discrete_preorder_basics}, $1\preceq g$ implies $1\preceq \kappa g\kappa^{-1}$. Thus, $\preceq$ is $K$-invariant.
\end{proof}

\begin{proposition}\label{prop:knot_order_and_preordering}
Let $F$ be a surface and $\mathscr D_K(F)$ the set of diagrams of an oriented knot $K$ in $F$. Denote $\kappa=[K]\in\pi_1(F)$ the homotopy type of the knot.

1. Let $\preceq$ be a $K$-invariant left-invariant preordering on $\pi_1(F)$ such that either $\kappa\sim 1$ or $\preceq$ is discrete and $\kappa^{-1}$ is a least positive element. Denote $H^0=\{g\in\pi_1(F)\mid 1\preceq g\}$. Then
\[
\tau(c)=\left\{\begin{array}{cl}
0, & h(c)=[D_c]\in H^0,\\
1, & h(c)=[D_c]\not\in H^0,
\end{array}\right.
\]
is an order on $\mathscr D_K(F)$.

2. Let $\tau$ be an order on $\mathscr D_K(F)$, $H^t=\{[D_c]\in\pi_1(F)\mid \tau(c)=t\}$ the set of homotopy indices of crossings with the given order value. Let $1\in H^{t_0}$. Then the subset $H^{t_0}$ defines a $K$-invariant left-invariant preordering on $\pi_1(F)$ such that $\kappa\sim 1$ or $\preceq$ is discrete and $\kappa^{-1}$ is a least positive element.
\end{proposition}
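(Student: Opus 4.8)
The plan is to read both implications as a translation dictionary between the order $\tau$ and the \emph{positive cone} $H^0=\{g\in\pi_1(F)\mid 1\preceq g\}$, using the characterization of orders on a single closed component from Proposition~\ref{prop:order_homotopy_index} on one side and the cone criterion of Proposition~\ref{prop:preordering_subsets} on the other. For a knot only conditions~1 and~2 of Proposition~\ref{prop:order_homotopy_index} survive (the others mention several or long components), and they read $H^tH^t\subset H^t$ and $\kappa^{-1}H^tH^t\subset H^t$ for $t=0,1$. The first thing to record is that $K$-invariance of $\preceq$ (Definition~\ref{def:order_K-invariance}) is literally $IM(K)$-invariance of $H^0$; since by Remark~\ref{rem:inner_monodromy_structure} the homotopy index only lives in $\hat\pi=\pi_1(F)/IM(K)$, this is exactly what lets a trait built from $H^0$ descend to the homotopy index, and conversely what an order's level sets supply.

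For part~1 I would set $\tau(c)=0$ iff $h(c)\in H^0$ and first check that $\tau$ is a genuine binary index, i.e.\ that $H^0$ is a union of involution orbits of the signed homotopy index of Theorem~\ref{thm:universal_flat_index} (the orbit of $1$ being $\{1,\kappa^{-1}\}$). This is where the hypothesis on $\kappa$ enters: when $\kappa\sim1$ one invokes Lemma~\ref{lem:nontrivial_preorder_invariance} together with Lemma~\ref{lem:preorder_basics}, and when $\preceq$ is discrete with least positive element $\kappa^{-1}$ one uses the conjugation statement Lemma~\ref{lem:discrete_preorder_basics}(3). Granting this, condition~1 is immediate from left-invariance and transitivity (Lemma~\ref{lem:preorder_basics}(3) gives $H^0H^0\subset H^0$, part~(4) gives $H^1H^1\subset H^1$), and condition~2 is exactly Lemma~\ref{lem:discrete_preorder_basics}(1),(2) with $a=\kappa^{-1}$ in the discrete case and a short computation from Lemma~\ref{lem:preorder_basics} when $\kappa\sim1$. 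Proposition~\ref{prop:order_homotopy_index} then certifies that $\tau$ is an order.

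For part~2 I would put $H=H^{t_0}$ with $1\in H$ and write $H'=\pi_1(F)\setminus H$ for the complementary level set, using that every class is realized as some $h(c)$ (as already exploited in the proof of Proposition~\ref{prop:order_homotopy_index}). Condition~1 gives $HH\subset H$ directly. For completeness $H\cup H^{-1}=\pi_1(F)$ I would use that $\tau$ is an index, so both $H$ and $H'$ are involution-closed and $\kappa^{-1}\in H$; if $g$ and $g^{-1}$ both lay in $H'$, then condition~2 for $H'$ would force $\kappa^{-1}=\kappa^{-1}\cdot g\cdot g^{-1}\in\kappa^{-1}H'H'\subset H'$, contradicting $\kappa^{-1}\in H$. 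Proposition~\ref{prop:preordering_subsets} then yields a left-invariant preordering, which is $K$-invariant because $H$ is $IM(K)$-invariant. Finally the dichotomy: if $\kappa\in H$ then $1\preceq\kappa$ and $1\preceq\kappa^{-1}$, so $\kappa\sim1$; if $\kappa\notin H$ then $1\prec\kappa^{-1}$, and any $b$ with $1\prec b\prec\kappa^{-1}$ would give $\kappa b\in H'$ and $b^{-1}\in H'$, whence $\kappa^{-1}=\kappa^{-1}(\kappa b)(b^{-1})\in H'$, again contradicting $\kappa^{-1}\in H$; hence $\kappa^{-1}$ is a least positive element and $\preceq$ is discrete.

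I expect the main obstacle to be the well-definedness check in part~1, namely that the level sets of $H^0$ really are invariant under the homotopy-index involution $x\mapsto\kappa^{-1}x^{-1}$ (equivalently, that $\tau$ is an index and not merely a trait). This is precisely the step where the twist by the knot class $\kappa$ must be absorbed, and it is the reason the two alternatives $\kappa\sim1$ and ``$\kappa^{-1}$ a least positive element'' are the only admissible ones; the remaining verifications are routine applications of Lemmas~\ref{lem:preorder_basics} and~\ref{lem:discrete_preorder_basics}. A secondary point worth making explicit is the geometric realizability that every element of $\pi_1(F)$ occurs as a homotopy index $h(c)$, so that the level sets genuinely partition $\pi_1(F)$.
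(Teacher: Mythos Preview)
Your overall strategy---translating between the conditions of Proposition~\ref{prop:order_homotopy_index} and the cone description of a preordering in Proposition~\ref{prop:preordering_subsets}---is exactly what the paper does, and your Part~2 is in fact more carefully written than the paper's (you explicitly verify $H\cup H^{-1}=\pi_1(F)$ and the dichotomy, while the paper jumps straight to ``the corresponding preordering'').

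However, you have misidentified the main obstacle in Part~1. You worry about showing that $H^0$ is closed under the involution $x\mapsto\kappa^{-1}x^{-1}$, citing Theorem~\ref{thm:universal_flat_index}. That theorem is about \emph{flat} tangles and describes the universal \emph{signed} index. Here we are in the classical setting, and the relevant result is Theorem~\ref{thm:universal_index}, which says that the homotopy index $h$ is already a genuine \emph{index} (not merely a signed index) on classical tangle diagrams. Consequently, once $K$-invariance guarantees that $\tau(c)=[h(c)\in H^0]$ is a well-defined trait, it is automatically an index: $h(c_1)=h(c_2)$ for the two crossings of any $\Omega_2$, so $\tau(c_1)=\tau(c_2)$. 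No involution check is needed, and the hypotheses on $\kappa$ are used only to verify conditions~1 and~2 of Proposition~\ref{prop:order_homotopy_index}, not for well-definedness. This also means your justification ``$\tau$ is an index, so $H$ and $H'$ are involution-closed and $\kappa^{-1}\in H$'' in Part~2 is the wrong reason; the correct (and simpler) argument for $\kappa^{-1}\in H^{t_0}$ is just $\kappa^{-1}=\kappa^{-1}\cdot 1\cdot 1\in\kappa^{-1}H^{t_0}H^{t_0}\subset H^{t_0}$, which is what the paper uses. Your contradiction for the least-positive-element claim then goes through unchanged.
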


\begin{proof}
1. Let $\preceq$ be a preordering on $\pi_1(F)$. Consider the subsets $H^0=\{g\in\pi_1(F)\mid 1\preceq g\}$ and $H^1=\pi_1(F)\setminus H^0$. Since $\preceq$ is $K$-invariant, the map $\tau$ is correct.

If $\kappa\sim 1$ then $\kappa^{-1}\sim 1$. For any $g_0,h_0\in H^0$, $g_0h_0\in H^0$ and $\kappa^{-1}g_0h_0\in H^0$. For any $g_1,h_1\in H^1$, $\kappa^{-1}g_1h_1\in H^1$. Otherwise,  $g_1h_1=\kappa(\kappa^{-1}g_1h_1)\in H^0$. Then by Proposition~\ref{prop:order_homotopy_index}, $\tau$ is an order on  $\mathscr D_K(F)$.

Let $\kappa^{-1}$ be a least positive element. By Lemma~\ref{lem:discrete_preorder_basics}, $\kappa^{-1}H^tH^t\subset H^t$, $t=0,1$. Then by Proposition~\ref{prop:order_homotopy_index} $\tau$ is an order on  $\mathscr D_K(F)$.

2) Let $\tau$ be an order on $\mathscr D_K(F)$ and $\preceq$ the corresponding preordering on $\pi_1(F)$. By definition of homotopy index, the preorder $\preceq$ is $K$-invariant. Assume that $\kappa\not\sim 1$. Since $\kappa^{-1}=\kappa^{-1}\cdot 1\cdot 1\succeq 1$ by Proposition~\ref{prop:order_homotopy_index} then $1\prec\kappa^{-1}$. Let us show that $\kappa^{-1}$ is a least positive element.

Assume that there exists $g$ such that $1\prec g\prec \kappa^{-1}$. Then $g^{-1}\prec 1$ and $1\prec g^{-1}\kappa^{-1}$, hence, $\kappa g\prec 1$. By Proposition~\ref{prop:order_homotopy_index}, $\kappa^{-1}(\kappa g)g^{-1}=1\prec 1$. This contradiction implies that $\preceq$ is discrete and $\kappa^{-1}$ is a least positive element.
\end{proof}

\begin{remark}
By Lemma~\ref{lem:nontrivial_preorder_invariance}, if the knot $K$ is homotopically nontrivial then we can omit the condition of $K$-invariance in Proposition~\ref{prop:knot_order_and_preordering}.
\end{remark}

\begin{corollary}\label{cor:homology_order}
Let $F$ be a surface and $K$ an oriented knot in $F$. Let $\phi\colon H_1(F,\Z)\to \R$ be a homomorphism such that $\phi([K])=0$. Then the map
\[
\tau(c)=\left\{\begin{array}{cl}
0, & \phi([D_c])\ge 0,\\
1, & \phi([D_c])<0,
\end{array}\right.
\]
where $D_c$ is the knot half at the crossing $c$, is an order on the diagrams of the knot.
\end{corollary}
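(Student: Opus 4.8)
The plan is to reduce the corollary to Proposition~\ref{prop:knot_order_and_preordering}(1) by manufacturing a suitable preordering on $\pi_1(F)$ out of the homomorphism $\phi$. First I would pull $\phi$ back along the abelianization map $ab\colon\pi_1(F)\to H_1(F,\Z)$ to obtain a homomorphism $\Phi=\phi\circ ab\colon\pi_1(F)\to\R$. Setting $H=\{g\in\pi_1(F)\mid \Phi(g)\ge 0\}$, I would verify the two hypotheses of Proposition~\ref{prop:preordering_subsets}: $H\cup H^{-1}=\pi_1(F)$ because $\R$ is totally ordered and $\Phi(g^{-1})=-\Phi(g)$, and $HH\subset H$ because $\Phi$ is additive. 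This produces a left-invariant (and complete) preordering $\preceq$ with $g\preceq g'\Leftrightarrow\Phi(g)\le\Phi(g')$, whose nonnegative cone $H^0=\{g\mid 1\preceq g\}$ is exactly $H$.

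Next I would check the remaining hypotheses of Proposition~\ref{prop:knot_order_and_preordering}(1). Writing $\kappa=[K]\in\pi_1(F)$, the assumption $\phi([K])=0$ gives $\Phi(\kappa)=0$, that is $\kappa\sim 1$, so we land in the first alternative of the proposition and need not address discreteness. For $K$-invariance I would invoke Remark~\ref{rem:inner_monodromy_structure}: every element of $IM(K)$ is an inner automorphism $Ad_\gamma$, and inner automorphisms act trivially on $H_1(F,\Z)$ since homology is abelian. Hence $\Phi(Ad_\gamma(g))=\Phi(g)$ for all $g$, so $1\preceq g$ implies $1\preceq Ad_\gamma(g)$, which is precisely $K$-invariance of $\preceq$.

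Finally, I would match the definitions. The order produced by Proposition~\ref{prop:knot_order_and_preordering}(1) assigns $\tau(c)=0$ when $h(c)=[D_c]\in H^0$ and $\tau(c)=1$ otherwise, and membership $[D_c]\in H^0=H$ is equivalent to $\Phi([D_c])=\phi([D_c])\ge 0$. This is exactly the rule in the statement, so the cited proposition immediately gives that $\tau$ is an order on the diagrams of $K$.

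The reduction makes the argument essentially routine; the only point requiring care is $K$-invariance. I expect the cleanest route there to be the homological one above, observing that $\Phi$ is constant on conjugacy classes, rather than appealing to Lemma~\ref{lem:nontrivial_preorder_invariance}, since the latter excludes the homotopically trivial case $\kappa=1$ whereas the inner-automorphism argument covers every knot uniformly.
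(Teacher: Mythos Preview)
Your proposal is correct and follows essentially the same route as the paper: compose $\phi$ with abelianization to get a homomorphism $\pi_1(F)\to\R$, pull back the standard order to obtain a left-invariant preordering with $\kappa\sim 1$, and invoke Proposition~\ref{prop:knot_order_and_preordering}. Your treatment is in fact more careful than the paper's, which simply asserts $K$-invariance without justification; your argument via Remark~\ref{rem:inner_monodromy_structure} (that $\Phi$ factors through homology and is therefore conjugation-invariant) is exactly the right way to close that gap and, as you note, handles the case $\kappa=1$ that Lemma~\ref{lem:nontrivial_preorder_invariance} would miss.
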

\begin{proof}
The composition of the natural projection from $\pi_1(F)$ to $H_1(F,\Z)$ and $\phi$ is a homomorphism to the real numbers $\R$. The natural ordering on $\R$ induces a preordering $\preceq$ on $\pi_1(F)$ which is left-invariant and $K$-invariant. Then the correspondent trait $\tau$ is an order.
\end{proof}

The correspondence $p\colon c\mapsto [D_c]\in H_1(F,\Z)$ is (up to sign) an example of oriented parity~\cite{Nif} (called homological parity). The composition $\phi\circ p$ is an oriented parity with coefficients in $\R$. We can generalize Corollary~\ref{cor:homology_order} to the case of any parity.

\begin{proposition}\label{prop:parity_order}
Let $p$ be an oriented parity with coefficients in $\Z$ (or $\R$) on a diagram set $\mathscr D$. Define a binary trait $\tau$ by the formula
\[
\tau(c)=\left\{\begin{array}{cl}
0, & sgn(c)p(c)\ge 0,\\
1, & sgn(c)p(c)<0.
\end{array}\right.
\]
Then $\tau$ is an order on $\mathscr D$ and defines an invariant functorial map with the scheme $(id, CC)$.
\end{proposition}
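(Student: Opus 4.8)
The plan is to verify directly that $\tau$ meets Definition~\ref{def:order}: I must show that $\tau$ is a well-defined trait, that it is an index, and that its values never realise the pattern $\tau(c_1)=\tau(c_3)\ne\tau(c_2)$ on a triangle of an $\Omega_{3b}$ move. Throughout I would write $q(c)=\sgn(c)\,p(c)$, so that $\tau(c)=0$ precisely when $q(c)\ge 0$. Since $p$ is a trait and the sign of a crossing is unchanged by any Reidemeister move through which that crossing survives, the product $q$ is again a trait; as $\tau$ is a function of $q$, it is a trait too.

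Next I would check that $\tau$ is an index. If a decreasing second Reidemeister move applies to crossings $c_1,c_2$, then $\sgn(c_2)=-\sgn(c_1)$, while the parity axiom for $\Omega_2$ gives $p(c_1)+p(c_2)=0$. Hence $q(c_2)=\sgn(c_2)p(c_2)=(-\sgn(c_1))(-p(c_1))=q(c_1)$, so $\tau(c_1)=\tau(c_2)$ and $\tau$ is an index.

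The heart of the argument is the behaviour on an $\Omega_{3b}$ triangle $c_1,c_2,c_3$, and the key geometric input is the sign pattern of this particular oriented move: reading it off from $\Omega_{3b}$ (Fig.~\ref{fig:reidemeister_moves_reg_or}), the distinguished middle crossing $c_2$ carries the sign opposite to the two outer ones, i.e. $\sgn(c_1)=\sgn(c_3)=-\sgn(c_2)$. That $\Omega_{3b}$ must contain a crossing of the opposite sign is in fact forced by the statement itself: were all three signs equal, the parity axiom would read $q(c_1)+q(c_2)+q(c_3)=0$, which fails to exclude the pattern $010$. Writing $s=\sgn(c_1)=\sgn(c_3)$ and using $p(c_1)+p(c_2)+p(c_3)=0$, I obtain
\[
q(c_1)+q(c_3)-q(c_2)=s\bigl(p(c_1)+p(c_3)\bigr)+s\,p(c_2)=s\bigl(p(c_1)+p(c_2)+p(c_3)\bigr)=0,
\]
that is, $q(c_2)=q(c_1)+q(c_3)$. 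In the ordered group $\Z$ (or $\R$) this additivity excludes both forbidden patterns: if $q(c_1),q(c_3)\ge 0$ then $q(c_2)\ge 0$, ruling out $\tau(c_1)=\tau(c_3)=0\ne\tau(c_2)=1$; and if $q(c_1),q(c_3)<0$ then $q(c_2)<0$, ruling out $\tau(c_1)=\tau(c_3)=1\ne\tau(c_2)=0$. Thus $\tau(c_1)=\tau(c_3)\ne\tau(c_2)$ never occurs, so $\tau$ is an order. This also reproves Corollary~\ref{cor:homology_order}, since for the homological parity $q(c)=\sgn(c)p(c)$ recovers (up to sign) the class $[D_c]$ whose sign defines the order there.

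Finally, once $\tau$ is known to be an order, invariance of the functorial map with scheme $(id,CC)$ is exactly the \emph{order} row of the table preceding Proposition~\ref{prop:binary_crossing_change}, so nothing further is required. The one genuinely nontrivial step is the geometric one, namely establishing the sign pattern $\sgn(c_1)=\sgn(c_3)=-\sgn(c_2)$ of $\Omega_{3b}$; everything else is a formal consequence of the parity axioms together with the order structure of $\Z$ (respectively $\R$).
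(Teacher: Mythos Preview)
Your argument reaches the right intermediate identity $q(c_2)=q(c_1)+q(c_3)$, but the derivation rests on two incorrect claims that happen to cancel. First, in this paper's convention the move $\Omega_{3b}$ has \emph{all three crossings positive} (this is stated explicitly in Appendix~\ref{app:binary_trait_types}: ``the move $\Omega_{3b}$ includes positive crossings only''); your asserted pattern $\sgn(c_1)=\sgn(c_3)=-\sgn(c_2)$ is simply false here. Second, an \emph{oriented} parity in the sense of~\cite{Nif} does not obey the unsigned axiom $p(c_1)+p(c_2)+p(c_3)=0$; the $\Omega_3$-axiom carries incidence signs, and the paper invokes it in the form $-p(u)-p(v)+p(w)=0$, that is $p(w)=p(u)+p(v)$, with $\{u,v\}\leftrightarrow\{c_1,c_3\}$ and $w\leftrightarrow c_2$. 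Your meta-argument (``were all three signs equal, the parity axiom would read $q(c_1)+q(c_2)+q(c_3)=0$, which fails to exclude $010$'') is therefore built on the wrong axiom and proves nothing about $\Omega_{3b}$.

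The paper's own proof is shorter and avoids both pitfalls: since every crossing in $\Omega_{3b}$ is positive, one has $q(c)=\sgn(c)\,p(c)=p(c)$ on the triangle, and the oriented-parity axiom gives $p(c_2)=p(c_1)+p(c_3)$ directly. Then $\tau(c_1)=\tau(c_3)=0\Rightarrow p(c_1),p(c_3)\ge 0\Rightarrow p(c_2)\ge 0\Rightarrow\tau(c_2)=0$, and similarly for the value $1$. Your verification that $\tau$ is a trait and an index (via the $\Omega_2$-axiom $p(c_1)+p(c_2)=0$, which \emph{is} correct for oriented parities) is fine, as is the final appeal to the $(id,CC)$ table; only the $\Omega_{3b}$ step needs to be redone using the correct signs and the correct axiom.
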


\begin{proof}
Let $u,v,w$ be crossings that participate in a move $\Omega_{3b}$ (Fig.~\ref{fig:R3combination1}-\ref{fig:R3combination1l}). By the oriented parity axiom for the third Reidemeister move we have $-p(u)-p(v)+p(w)=0$, hence, $p(w)=p(u)+p(v)$. Then
\begin{gather*}
\tau(u)=\tau(v)=0 \Longrightarrow p(u),p(v)\ge 0 \Longrightarrow p(w)\ge 0  \Longrightarrow \tau(w)=0,\\
\tau(u)=\tau(v)=1 \Longrightarrow p(u),p(v)< 0 \Longrightarrow p(w)< 0  \Longrightarrow \tau(w)=1.
\end{gather*}
Hence, there cannot be a forbidden configuration in third Reidemeister move, and $\tau$ is an order.
\end{proof}

\subsubsection{Sibling knots}\label{subsect:sibling_knots}

Functorial maps corresponding to orders induce a partial ordering on knots and links.

\begin{definition}\label{def:sibling_knots}
Knots (links, tangles) $L$ and $L'$ are called \emph{kindred} if there is an order $\tau$ on diagrams of $L$ such that the functorial map $f_\tau$ maps the diagrams of $L$ to diagrams of $L'$. The knot $L$ is \emph{elder} and $L'$ is \emph{junior}.

If there is another order $\tau'$ on diagrams of $L'$ such that $f_{\tau'}$ maps the diagrams of $L'$ to diagrams of $L$ then the knots $L$ and $L'$ are called \emph{siblings}.
\end{definition}

\begin{example}
\begin{enumerate}
\item Any knot (link, tangle) $L$ and its mirror $\bar L$ are siblings.
\item Any nontrivial classical long knot is elder to the long unknot (by the order index $o$) but they are not siblings.
\item Any link $L=K_1\cup\cdots\cup K_n$ is elder to the disjoint union of its components $L'=K_1\sqcup\cdots\sqcup K_n$. For example, the Hopf link is elder to the unlink.
\end{enumerate}
\end{example}

\begin{remark}
If $K$ and $K'$ are siblings then their crossing numbers coincide. For virtual knots, siblings have identical virtual genus. In particular, if $K$ is classical then $K'$ is classical.
\end{remark}

\begin{example}
Let us consider the simplest knots from Green's table. Crossing changes of crossings in the minimal diagram of the knot $2.1$ produces the unknot, the knot $2.1$ and its mirror knot. Hence, the knot $2.1$ has no siblings except the mirror knot. The same argument holds for the knots $3.2$, $3.5$, $3.6$, $3.7$.

The knots $3.1$, $3.3$ and $3.4$ are be obtained one from another by crossing change, hence, they can be siblings. Indeed, for example, the functorial map with the scheme $(\id,CC)$ induced by the index parity turns diagrams of the knot $3.3$ to diagrams of the knot $3.4$ with the inverse orientation (Fig.~\ref{fig:knot_33_to_34}).

\begin{figure}[h]
\centering\includegraphics[width=0.4\textwidth]{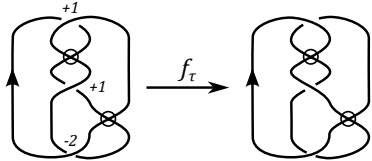}
\caption{The functorial map transforms the knot $3.3$ to the knot $\overline{3.4}$. The labels are the index values $Ind(c)=sgn(c)p(c)$ where $p$ is the index parity}\label{fig:knot_33_to_34}
\end{figure}

In Example~\ref{ex:lifting_flat_knot} below we will show that the knots $3.1$, $3.3$ and $\overline{3.4}$ are siblings.
\end{example}

\begin{remark}
The functorial map with the scheme $(id, CC)$ which corresponds to the index parity was used in the work~\cite{IKL20}.
\end{remark}


\subsection{Lifting of flat knots}\label{subsect:lifting}

\begin{definition}
Let $L$ be an oriented flat knot (link, tangle) and $\bar{\mathscr D}_{\bar L}$ the set of its diagrams. We say that $L$ \emph{can be lifted} if one can set an under-overcrossing structure at the classical crossings of all the diagrams $\bar D\in \bar{\mathscr D}_L$ so that for any flat diagrams $\bar D$ and $\bar D'$ connected by a flat Reidemeister move $\bar\Omega_1$, $\bar\Omega_3$ or $\bar\Omega_3$ the lifted diagrams $D$ and $D'$ are connected by a Reidemeister move ($\Omega_1$, $\Omega_3$ or $\Omega_3$) of the same type.

The result of a lifting of $L$ is called a \emph{flattable} knot.
\end{definition}

For classical knots any attempt of lifting fails. Any classical flat knot is the unknot. Consider the sequence of flat unknot diagrams in Fig.~\ref{fig:unknot_lifting}. Assume this sequence can be lifted. Then after three first Reidemeister moves we have a diagram with three crossings. The signs of two of these three crossing coincide. Remove the third crossing with the first Reidemeister move and get a diagram with two crossing of the same sign. The correspondent flat diagram can be unknotted with a second Reidemeister move. But this move can not be applied to the lifted diagram. Thus, the flat unknot are cannot be lifted.

\begin{figure}[h]
\centering\includegraphics[width=0.6\textwidth]{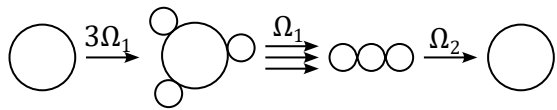}
\caption{A flat move sequence which can not be lifted}\label{fig:unknot_lifting}
\end{figure}

On the other hand, given a diagram of a flat classical long knot, one can lift it to a diagram of the long unknot by assuming that all crossings in the diagram are early undercrossings. Thus, the long unknot can be lifted.

We can describe the lifting procedure as an application of the lifting rule (Fig.~\ref{fig:lifting_rule}). Thus, we have a functorial map associated with a binary trait on flat diagrams.

\begin{figure}[h]
\centering\includegraphics[width=0.4\textwidth]{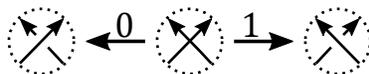}
\caption{Lifting rule}\label{fig:lifting_rule}
\end{figure}

Let $\tau$ be a trait with the scheme $(\skcrr,\skcrl)$. Let us write down the invariance conditions for the lifting rule (Fig.\ref{fig:lifting_conditions}). We will assume that the destination knot theory $\mathcal M'$ includes the Reidemeister moves.

If $\tau$ is not a signed index then we get the clasp move $Cl$. The move $Cl$ with the second Reidemeister move $\Omega_2$ generates the crossing change $CC$. Then the knot theory $\mathcal M'$ is the theory of flat knots, and $\tau$ coincides with the identity map.

Let $\tau$ be a signed index.

\begin{definition}\label{def:flat_order}
A signed index $\tau$ with values in $\{0,1\}$ on a set of flat diagrams $\bar{\mathscr D}$ is called a \emph{flat order} if for any vertices $c_1,c_2,c_3$ participating in a move $\bar\Omega_{3b}$ the combination $\tau(c_1)=\tau(c_3)\ne\tau(c_2)$ can not occur.
\end{definition}

If $\tau$ is not a flat order then the invariance for the third Reidemeister move gives the $\Delta$-move. If $\tau$ is a flat order the Reidemeister moves ensure the invariance.

The invariance for the first Reidemeister move yield requires nothing beyond the Reidemeister moves $\Omega_1$.

We can summarize the reasonings above in the following table.

\begin{center}
\begin{tabular}{|c|c|c|}

\hline
$\mathcal M$ & $\tau$ & $\mathcal M'$\\
\hline
 & trait & $CC,\Omega_1,\Omega_2,\Omega_3$\\
$\mathcal M_{flat}^{+}$  & index & $\Delta,\Omega_1,\Omega_2,\Omega_3$\\
 & order & $\Omega_1,\Omega_2,\Omega_3$\\
\hline
 & trait & $CC,\Omega_2,\Omega_3$\\
$\mathcal M_{flat}^{reg+}$  & index & $\Delta,\Omega_2,\Omega_3$\\
 & order & $\Omega_2,\Omega_3$\\
\hline
\end{tabular}
\end{center}

Thus, we come to the following statement.

\begin{proposition}\label{prop:binary_lifting}
Let $F$ be a surface and $\tau$ a local transformation rule with the scheme $(\skcrr,\skcrl)$ on the oriented flat diagram set $\bar{\mathscr D}(F)$.
\begin{itemize}
\item If $\tau$ is not a signed index then $f_\tau(D)$ is the identity map;
\item If $\tau$ is a signed index then $f_\tau(D)$ can be identified with the extended flat homotopy index polynomial of $D$;
\item If $\tau$ is a flat order then $f_\tau$ is a map in the tangles in the surface $F$.
\end{itemize}
\end{proposition}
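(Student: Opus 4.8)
The plan is to treat this as the flat analogue of Proposition~\ref{prop:binary_crossing_change} and to lean on the fact that the invariance analysis, together with the destination knot theory $\mathcal M'$ produced in each case, has already been carried out in the discussion preceding the statement and recorded in the table. With the correct $\mathcal M'$ in hand for each case, the proof reduces to identifying the value taken by the functorial map $f_\tau$ once we pass to the skein module (or the set of knots) of the destination theory. So I would organize the argument along the three cases \emph{not a signed index}, \emph{signed index but not a flat order}, and \emph{flat order}, noting that the latter two overlap and that the flat-order bullet refines the signed-index one.

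First I would dispose of the case where $\tau$ is not a signed index. Here the invariance condition for the flat second Reidemeister move forces the clasp move $Cl$ into $\mathcal M'$, and $Cl$ together with $\Omega_2$ generates the crossing change $CC$, so that $\mathcal M'$ contains all of flat knot theory. I would then observe that once $CC$ is available the over/under data introduced by the lifting is immaterial: every lifted crossing may be switched freely, so $f_\tau(\bar D)$ is $\mathcal M'$-equivalent to the shadow of the lifted diagram, which is $\bar D$ itself. Hence $f_\tau$ is the identity map on flat knots.

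Next, for $\tau$ a signed index I would use that a failure of the flat-order condition means the forbidden pattern $\tau(c_1)=\tau(c_3)\ne\tau(c_2)$ actually occurs in some move $\bar\Omega_{3b}$, and that its invariance condition then forces the $\Delta$-move into $\mathcal M'$, giving $\mathcal M'=\{\Delta,\Omega_1,\Omega_2,\Omega_3\}$ (or its regular counterpart). I would then invoke the classification of $\Delta$-equivalence of tangles in a fixed surface from Appendix~\ref{app:skein_modules}, which identifies such classes with the extended homotopy index polynomial, and argue that evaluating this invariant on $f_\tau(\bar D)$ returns the extended flat homotopy index polynomial of $\bar D$. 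Finally, when $\tau$ is a flat order the forbidden configuration never arises, the third Reidemeister move requires nothing beyond $\Omega_3$, and $\mathcal M'$ collapses to Reidemeister moves alone, so $f_\tau$ is a genuine functorial map into virtual tangles in $F$.

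The main obstacle I anticipate is the signed-index case: I must check carefully that the $\Delta$-skein invariant of the lifted virtual diagram really is read off the underlying flat diagram, i.e. that lifting induces the expected matching between the extended homotopy index polynomial of $f_\tau(\bar D)$ and the extended flat homotopy index polynomial of $\bar D$. Here the universal flat signed index $\bar\sigma^u=(\tau^f,o^f,h^f)$ of Theorem~\ref{thm:universal_flat_index} must be compared with the classical homotopy index, and I expect the delicate point to be self-crossings of \emph{closed} components, where the flat index uses the based left half whereas the classical homotopy index uses the half running from the undercrossing to the overcrossing; reconciling these two conventions under the chosen lift (and confirming that any discrepancy is absorbed by the $\Delta$-move quotient) is where the bookkeeping must be done with care.
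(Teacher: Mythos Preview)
Your proposal is correct and follows essentially the same approach as the paper. In fact, the paper gives no explicit proof of this proposition at all: it simply states the three bullets as a summary of the preceding invariance analysis and the table, and then moves on with ``We focus on the last case.'' Your write-up is therefore more detailed than the paper's, and the bookkeeping concern you flag about matching the extended homotopy index polynomial of the lifted diagram with the flat version on closed components is a point the paper leaves implicit.
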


We focus on the last case (the lifting problem).
Note that, given a tangle, we can lift its components separately. Long components can be lifted using the order signed index. Thus, one can reduce the lifting problem to lifting of a (closed) knot.

\subsubsection{Flat orders}

Let us describe flat orders on flat knot diagrams in a fixed connected oriented compact surface $F$. Let $\tau$ be a trait on diagrams of a flat oriented knot $K$ in $F$. Denote $H^{+}=\{[D_c]\mid D\in\mathscr D_K(F), c\in\mathcal C(D), \tau(c)=0\}\subset\pi_1(F)$ and $H^-=\pi_1(F)\setminus H^+$. Let $\kappa=[K]\in\pi_1(F)$ be the homotopy class of the knot.

\begin{proposition}\label{prop:flat_order}
\begin{enumerate}
\item The trait $\tau$ is a flat order if and only if for any $t=\pm$ one has $H^tH^t\subset H^t$, $\kappa^{-1}H^tH^t\subset H^t$ and $\kappa(H^t)^{-1}= H^{-t}$.
\item Let $\tau$ be a flat order. Assume that $1\in H^+$. Then the induced left-invariant preordering $\preceq$ is discrete and $\kappa^{-1}$ is a least positive element.
\item Let $\preceq$ be a discrete left-invariant preordering on $\pi_1(F)$ such that $\kappa^{-1}$ is a least positive element. Then the map
\[
\tau(c)=\left\{\begin{array}{cl}
0, & 1\preceq [D_c],\\
1, & [D_c]\prec 1,
\end{array}\right.
\]
where $D_c$ is the left half of the knot at the crossing $c$, is a flat order on the diagrams of the knot.
\end{enumerate}
\end{proposition}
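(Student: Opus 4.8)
The plan is to handle the three parts in order, leaning throughout on the $\bar\Omega_{3b}$ bookkeeping already carried out in Proposition~\ref{prop:order_homotopy_index} and on the involution attached to the flat homotopy signed index in Theorem~\ref{thm:universal_flat_index}. For part (1), I would first note that the two multiplicative conditions $H^tH^t\subset H^t$ and $\kappa^{-1}H^tH^t\subset H^t$ are exactly the closed-component conditions of Proposition~\ref{prop:order_homotopy_index} specialised to a single component: the homotopy-index computations in the two $\bar\Omega_{3b}$ triangles (left case $h(w)=h(v)h(u)$, right case $h(w)=\kappa^{-1}h(v)h(u)$) carry over verbatim to flat diagrams, with the based left half playing the role of the loop, and the forbidden pattern of Definition~\ref{def:flat_order} then forces the third label to agree with the other two. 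The genuinely new ingredient is $\kappa(H^t)^{-1}=H^{-t}$, which encodes that $\tau$ is a \emph{signed} index, not a mere index: by Theorem~\ref{thm:universal_flat_index} the relevant involution is $\bar x^{*}=\overline{\kappa x^{-1}}$, so the two crossings created by a decreasing $\bar\Omega_2$ carry homotopy indices $x$ and $\kappa x^{-1}$ with opposite $\tau$-values (well-definedness of the partner's value being guaranteed by Proposition~\ref{prop:trait_dual_crossing}). Realising every class of $H^t$ by such a move, and reading the argument backwards for the reverse inclusion, yields the equality. The converse implication of part (1) is this same argument run in reverse, as in the second half of the proof of Proposition~\ref{prop:order_homotopy_index}.

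For part (2) the aim is to feed $H^+$ into Proposition~\ref{prop:preordering_subsets}, which requires $H^+H^+\subset H^+$ (already available from part (1)) together with completeness $H^+\cup(H^+)^{-1}=\pi_1(F)$. The one delicate step is completeness. From $\kappa^{-1}H^+H^+\subset H^+$ and $1\in H^+$ I obtain $\kappa^{-1}H^+\subset H^+$; rewriting the third condition as $(H^+)^{-1}=\kappa^{-1}H^-$ and combining gives $H^-\subset\kappa^{-1}H^-$, i.e. $H^+\cup(H^+)^{-1}\supset H^+\cup H^-=\pi_1(F)$. Proposition~\ref{prop:preordering_subsets} then yields a left-invariant preordering with positive cone $H^+$. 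Since $\kappa^{-1}=\kappa^{-1}\cdot1\cdot1\in H^+$ whereas $\kappa=\kappa\cdot1^{-1}\in H^-$, we get $1\prec\kappa^{-1}$; and if some $g$ satisfied $1\prec g\prec\kappa^{-1}$, then $g^{-1}\prec1$ and $\kappa g\prec1$ by Lemma~\ref{lem:preorder_basics}(1), so that $1=\kappa^{-1}(\kappa g)g^{-1}\in\kappa^{-1}H^-H^-\subset H^-$, contradicting $1\in H^+$. This is the contradiction of Proposition~\ref{prop:knot_order_and_preordering}(2) and shows $\preceq$ is discrete with least positive element $\kappa^{-1}$.

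Part (3) is the converse and is essentially the preorder calculus of Lemmas~\ref{lem:preorder_basics} and~\ref{lem:discrete_preorder_basics}. Setting $H^+=\{g\mid 1\preceq g\}$, I verify the three conditions of part (1): $H^+H^+\subset H^+$ is left-invariance together with transitivity, $\kappa^{-1}H^+H^+\subset H^+$ is exactly Lemma~\ref{lem:discrete_preorder_basics}(1) with $a=\kappa^{-1}$, and $\kappa(H^+)^{-1}=H^-$ follows from Lemma~\ref{lem:preorder_basics}(4) in one direction and the least-positive property in the other. Part (1) then certifies that $\tau$ is a flat order. Here I would also record that $H^{\pm}$ are $IM(K)$-invariant, so that $\tau$ descends to $\hat\pi(F,z,z)$; by Remark~\ref{rem:inner_monodromy_structure} and Lemma~\ref{lem:nontrivial_preorder_invariance} this is automatic once $\kappa^{-1}$ is a least positive element, since then $IM(K)\subset\langle Ad_\kappa\rangle$ preserves $\preceq$.

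The main obstacle, and the feature that separates the flat case from the general order of Proposition~\ref{prop:knot_order_and_preordering}, is precisely completeness in part (2). A flat order automatically places $1$ and $\kappa$ on opposite sides, because the signed-index condition $\kappa(H^t)^{-1}=H^{-t}$ forces $\kappa\in H^-$ while $1\in H^+$; consequently the alternative $\kappa\sim 1$ permitted for ordinary orders is now excluded and one is always in the discrete regime. Checking that the three algebraic conditions genuinely force totality of $\preceq$, rather than merely a one-sided ``cone'', is the step I would be most careful about, and the identity $\kappa^{-1}H^+\subset H^+$ is the hinge that makes it work.
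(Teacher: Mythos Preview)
Your proposal is correct and follows essentially the same route as the paper: part (1) is reduced to the closed-component calculation of Proposition~\ref{prop:order_homotopy_index} plus the signed-index involution $x\mapsto\kappa x^{-1}$, part (2) reproduces the contradiction argument of Proposition~\ref{prop:knot_order_and_preordering}(2), and part (3) feeds Lemmas~\ref{lem:preorder_basics} and~\ref{lem:discrete_preorder_basics} back into part (1) after noting $K$-invariance. Your explicit verification of completeness $H^+\cup(H^+)^{-1}=\pi_1(F)$ via $\kappa^{-1}H^+\subset H^+\Rightarrow H^-\subset\kappa^{-1}H^-=(H^+)^{-1}$ is a useful detail the paper leaves implicit when it invokes the ``induced'' preordering.
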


\begin{proof}
  1) The proof of the first statement is analogous to the proof of Proposition~\ref{prop:order_homotopy_index}. The equality $\kappa(H^t)^{-1}= H^{-t}$ follows from the definition of signed index.

  2) Since $1\preceq 1$, $\kappa=\kappa\cdot 1^{-1}\prec 1$. The proof of the second statement of Proposition~\ref{prop:knot_order_and_preordering} shows that the preordering $\preceq$ is discrete and $\kappa^{-1}$ is a least positive element.

  3) Let $\preceq$ be a discrete preordering on $\pi_1(F)$ and $\kappa^{-1}$ a least positive element. Denote $H^+=\{g\in\pi_1(F)\mid 1\preceq g\}$ and $H^-=\{g\in\pi_1(F)\mid g\prec 1\}$. By Lemma~\ref{lem:discrete_preorder_basics}, $H^tH^t\subset H^t$ and $\kappa^{-1}H^tH^t\subset H^t$, $t=\pm$.

  Let $1\preceq g$. Then $g^{-1}\preceq 1$. Since $\kappa\prec 1$ then $\kappa g\prec 1$ by Lemma~\ref{lem:preorder_basics}. If $g\prec 1$ then $1\prec g^{-1}$. Since $\kappa^{-1}$ is a least positive element, $\kappa^{-1}\preceq g^{-1}$. Hence, $1=\kappa\kappa^{-1}\preceq \kappa g^{-1}$.

  The proof of Lemma~\ref{lem:nontrivial_preorder_invariance} shows that the preordering is $K$-invariant. Then the map $\tau$ is a well-defined trait on diagrams of $K$. By the first statement of the proposition $\tau$ is a flat order.
\end{proof}

\begin{lemma}\label{lem:lifting_criterion}
Let $G$ be the surface group of an oriented surface. Then an element $a$ is a least positive element for some discrete preordering on $G$ if and only if $a$ is not a proper power.
\end{lemma}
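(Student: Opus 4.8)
First I would dispose of necessity, which follows at once from Lemma~\ref{lem:preorder_basics}. Suppose $\preceq$ is a discrete left-invariant preordering with least positive element $a$ and, for contradiction, that $a=b^{k}$ with $k\ge 2$. Comparing $b$ with $1$ leaves three cases. If $1\prec b$, then left-invariance gives $1\prec b\prec b^{2}\prec\cdots\prec b^{k}=a$, so $b$ lies strictly between $1$ and $a$, contradicting minimality. If $b\sim 1$, then Lemma~\ref{lem:preorder_basics}(5) gives $a=b^{k}\sim 1$, contradicting $1\prec a$. If $b\prec 1$, then iterating Lemma~\ref{lem:preorder_basics}(4) yields $b^{k}\prec 1$, i.e.\ $a\prec 1$, again a contradiction. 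Hence a least positive element is never a proper power.

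For sufficiency the plan is to produce a genuine left-order on $G=\pi_{1}(F)$ in which $a$ is least positive; being a total left-invariant order with a least positive element, it is in particular a discrete preordering in the sense of Definition~\ref{def:discrete_preordering} (its positive cone is of the type in Proposition~\ref{prop:preordering_subsets}). The key reduction is an elementary observation: \emph{if $<$ is a left-order on $G$ for which the cyclic subgroup $\langle a\rangle$ is convex and $a>1$, then $a$ is the least positive element.} Indeed any $g$ with $1<g<a$ would, by convexity, equal some power $a^{j}$, and $0<a^{j}<a$ is impossible. Thus it suffices to show that $\langle a\rangle$ is \emph{relatively convex}, i.e.\ that $G$ carries a left-order making $\langle a\rangle$ convex. (Conversely, convexity of $\langle a\rangle$ forces $a$ not to be a proper power by the argument of the first paragraph, so relative convexity of $\langle a\rangle$ is precisely the property to be established.)

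It then remains to prove that $\langle a\rangle$ is relatively convex whenever $a$ is not a proper power. I would use that the surface groups in question are torsion-free and left-orderable, and that two elements sharing a power lie in a common cyclic subgroup, so that $a$ is not a proper power exactly when $\langle a\rangle$ is a maximal cyclic, isolated subgroup. The degenerate cases are immediate: for $F=S^{2}$ the group is trivial, and for the torus $G\cong\Z^{2}$ a non-proper-power is a primitive vector, which extends to a basis and is convex for the corresponding lexicographic order. In the remaining cases ($\partial F\neq\emptyset$, where $G$ is free, and $\chi(F)<0$ closed, where $G$ is a cocompact Fuchsian group) I would realize relative convexity geometrically: $G$ acts on $\mathbb{H}^{2}$ (on a tree in the free case) and $a$ is a hyperbolic element whose axis $A$ has stabilizer exactly $\langle a\rangle$, the maximality of $\langle a\rangle$ being what guarantees no larger stabilizer. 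The cosets $g\langle a\rangle$ then correspond bijectively to the translates $gA$, and one orders this $G$-invariant family using the circular order on $\partial\mathbb{H}^{2}$ cut at an endpoint of $A$, obtaining a $G$-invariant total order on $G/\langle a\rangle$ and hence a left-order of $G$ in which $\langle a\rangle$ is convex.

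The main obstacle is exactly this last step: converting the picture at infinity into a genuinely $G$-invariant linear order on $G/\langle a\rangle$ and verifying that primitivity of $a$ (so that distinct translates $gA$ are never identified, and their endpoints are compatibly separated) is what prevents the order from degenerating. This is where the hypothesis that $a$ is not a proper power enters essentially; it is the surface-group analogue of the fact that a maximal cyclic subgroup of a free group is relatively convex, which one may also quote directly. Everything else — the reduction to convexity and the necessity direction — is formal.
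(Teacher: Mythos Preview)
Your necessity argument is correct and essentially matches the paper's, though you are more careful with the case $b\sim 1$, which is appropriate since the statement concerns preorderings rather than orderings.

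For sufficiency the two arguments diverge. You reduce to showing that $\langle a\rangle$ is \emph{relatively convex} in $G$, aiming for a genuine left-order with $a$ least positive. The paper instead builds only a preordering, which is all the lemma requires: it takes the normal closure $H=\langle\langle a\rangle\rangle$, invokes Hempel's theorem that $G/H$ is locally indicable (hence left-orderable by some $\preceq_1$), produces a homomorphism $\phi'\colon H\to\Z$ with $\phi'(a)=1$, and then defines $g_1\preceq g_2$ lexicographically---first compare $g_1H$ and $g_2H$ under $\preceq_1$, and within a coset compare via $\phi'$. This is purely algebraic once Hempel's result is granted, and it sidesteps the question of whether $\langle a\rangle$ is convex in any order.

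Your route would give a stronger conclusion, and the relative-convexity statement you need is indeed a known theorem for free and surface groups. But the geometric sketch you offer in the hyperbolic case has the gap you yourself identify: cutting the circular order on $\partial\mathbb H^2$ at an endpoint of the axis $A$ yields a linear order that is only $\langle a\rangle$-invariant, not $G$-invariant, so it does not by itself produce a $G$-invariant linear order on $G/\langle a\rangle$. Promoting a faithful order-preserving circular action with point stabilizer $\langle a\rangle$ to a left-order with $\langle a\rangle$ convex requires an additional argument (e.g.\ lifting to $\widetilde{\mathrm{Homeo}}_+(S^1)$, or a direct appeal to a relative-convexity theorem). As written, the torus and free cases are fine, but the closed hyperbolic case remains a heuristic; you should either cite the relative-convexity result outright or supply that missing step. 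The paper's construction avoids this difficulty entirely at the cost of quoting Hempel and settling for a preordering rather than an order.
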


\begin{proof}

If $a\succ 1$ is a proper power, i.e. $a=b^k$, then $b\succ 1$. Hence, $b^{k-1}\succ 1$ and $1\prec b\prec b\cdot b^{k-1}=a$.

Let $a$ be not a proper power in $G$. Consider the normal closure $H=\langle\langle a\rangle\rangle$ of $a$. Hempel~\cite{Hem} proved that the quotient group $G/H$ is locally indicable, hence, there is a left-ordering $\preceq_1$ on it.

The surface group $G$ is a quotient of a free group $F$ by a relation
\[
w=[a_1,b_1]\cdots[a_n,b_n],\quad n\ge 0.
\]
Let $\tilde H$ be the preimage of $H$ in $F$. The group $\tilde H$ is free and is not generated by $w$, hence, there is an epimorphism $\tilde\phi\colon \tilde H\to\mathbb Z$, such that $\tilde\phi(w)=0$. Then $\tilde\phi$ defines an epimorphism  $\phi\colon H\to\mathbb Z$.

The subgroup $H$ is generated by elements $gag^{-1}$. Denote
\[
d=gcd\{\phi(gag^{-1})\mid g\in G\}.
\]
Since $\phi$ is an epimorphism, $d=1$. Then there exist $g_1,\dots, g_p\in G$ and $l_1,\dots, l_p\in\mathbb Z$ such that $\sum_{i=1}^p l_i\phi(g_iag_i^{-1})=1$. Consider the homomorphism $\phi'=\sum_{i=1}^p l_i\cdot\phi^{g_i}$ from $H$ to $\mathbb Z$ where $\phi^g(h)=\phi(ghg^{-1})$. Then $\phi'(a)=1$.

Now, define a preordering on $G$: $g_1\preceq g_2$ if and only if $g_1H\prec_1 g_2H$ or $g_1H=g_2H$ and $\phi'(g_1^{-1}g_2)\ge 0$. Then $a$ is a least positive element in the preordering $\preceq$.
\end{proof}

Proposition~\ref{prop:flat_order} and Lemma~\ref{lem:lifting_criterion} imply the following statement.
\begin{theorem}\label{thm:liftable_knots_surface}
 A flat knot $K$ in the surface $F$ can be lifted if and only if its homotopy class is not a proper power in $\pi_1(F)$. In particular, the flat unknot cannot be lifted.
\end{theorem}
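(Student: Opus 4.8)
The plan is to read the theorem off from the three facts already established for flat orders, so that all the genuine work sits inside Lemma~\ref{lem:lifting_criterion} and Proposition~\ref{prop:flat_order}. First I would make precise the claim that \emph{lifting $K$ is the same as choosing a flat order on $\bar{\mathscr D}_K(F)$}. A lifting is, by definition, a coherent assignment of over/undercrossing data to the crossings of every diagram of $K$ that is compatible with the flat Reidemeister moves. By the case analysis preceding Proposition~\ref{prop:binary_lifting}, this coherence is exactly the flat-order condition: if $\tau$ fails to be a signed index one is forced to adjoin the clasp move and hence the crossing change $CC$ to $\mathcal M'$, and if $\tau$ is a signed index but not a flat order one is forced to adjoin the $\Delta$-move; in either case the crossing information is destroyed and no honest lift survives. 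The last bullet of Proposition~\ref{prop:binary_lifting} then states precisely that when $\tau$ \emph{is} a flat order the functorial map $f_\tau$ takes values in genuine tangles, i.e. produces a flattable knot. Hence $K$ is liftable if and only if $\bar{\mathscr D}_K(F)$ admits a flat order.

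Second, I would convert the existence of a flat order into a purely group-theoretic condition on $\kappa=[K]\in\pi_1(F)$. By parts (2) and (3) of Proposition~\ref{prop:flat_order}, normalizing so that $1\in H^+$, flat orders on $\mathscr D_K(F)$ correspond exactly to discrete left-invariant preorderings $\preceq$ on $\pi_1(F)$ for which $\kappa^{-1}$ is a least positive element. The point worth emphasizing is that the signed-index (involution) axiom $\kappa(H^t)^{-1}=H^{-t}$ from part (1) excludes the degenerate alternative $\kappa\sim 1$ that was permitted for ordinary orders in Proposition~\ref{prop:knot_order_and_preordering}: a flat order always yields a genuinely discrete preordering with $\kappa^{-1}$ strictly positive. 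So a flat order exists if and only if $\pi_1(F)$ carries a discrete left-invariant preordering whose least positive element is $\kappa^{-1}$.

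Finally I would invoke Lemma~\ref{lem:lifting_criterion}: an element of the surface group is the least positive element of some discrete preordering if and only if it is not a proper power. Applying this with the element $\kappa^{-1}$, and noting that $\kappa^{-1}$ is a proper power precisely when $\kappa$ is, the required preordering exists if and only if $\kappa$ is not a proper power. Composing the three equivalences gives the theorem. For the final assertion, the flat unknot has $\kappa=1$, and since $1=1^{k}$ is a proper power, it cannot be lifted.

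Among these steps the substantive input is Lemma~\ref{lem:lifting_criterion} (which rests on Hempel's local indicability result) together with Proposition~\ref{prop:flat_order}, both of which I may assume; the main thing to get right in the present argument is therefore the bookkeeping in the first step --- verifying that every lifting really does arise from a flat order and that the flat-order functorial map carries no residual multivaluedness --- along with the observation that the involution condition kills the $\kappa\sim 1$ case. I expect the $\kappa\sim 1$ (and $\kappa=1$) degeneracy to be the one place where a naive transcription of the ordinary-order argument of Proposition~\ref{prop:knot_order_and_preordering} would go wrong.
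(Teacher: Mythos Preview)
Your proposal is correct and follows essentially the same route as the paper: the paper's proof consists of the single sentence ``Proposition~\ref{prop:flat_order} and Lemma~\ref{lem:lifting_criterion} imply the following statement,'' and your write-up simply unpacks this, making explicit the identification of liftings with flat orders (via the table preceding Proposition~\ref{prop:binary_lifting}) and the exclusion of the $\kappa\sim 1$ branch. One small cosmetic point: for the unknot, rather than arguing that $1=1^k$ is a proper power (which depends on convention), it is cleaner to observe directly that no discrete preordering can have $1$ as a least positive element, since $1\prec 1$ contradicts reflexivity.
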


\begin{remark}
In contrast to Theorem~\ref{thm:liftable_knots_surface}, N. Smythe used orderings of surface groups to show that any diagram of the flat unknot in a surface can be lifted to a diagram of the unknot~\cite{Smythe}.
\end{remark}

\begin{example}\label{ex:lifting_in_torus}
Let $F=T^2$ be the torus. By Theorem~\ref{thm:liftable_knots_surface}, a flat knot $K$ in the torus is liftable if and only if the homology class $\kappa=[K]\in H_1(T^2,\Z)$ is not a multiple of another class. If $\kappa$ is not multiple then there exists a class $\alpha\in H_1(T^2,\Z)$ such that the intersection number $\kappa\cdot\alpha=1$. Then the intersection map $\phi(x)=\alpha\cdot x$, $x\in H_1(T^2,\Z)$, defines a homomorphism from $H_1(T^2,\Z)$ to $\Z$ such that $\phi(\kappa)=-1$. The natural ordering on $\Z$ induces a discrete preordering on $H_1(T^2,\Z)$ such that $-\kappa$ is a least positive element. Thus, the functorial map $f_\tau$ of the binary trait
\[
\tau(c)=\left\{\begin{array}{cl}
0, & \phi(c)\ge 0,\\
1, & \phi(c)<0,
\end{array}\right.
\]
lifts the knot $K$.
\end{example}

\begin{remark}\label{rem:lifting_properties}
1. If a flat knot $\bar K$ lifts to knots $K_1$ and $K_2$ then the knots $K_1$ and $K_2$ are siblings. Indeed, there is a functorial map from diagrams of one knot to the diagrams of the other, given by the composition of the natural projection map from $K_i$ to $\bar K$ and the lifting $\bar K$ to $K_{3-i}$.

2. If a flat knot $\bar K$ lifts to a knot $K$ then the (flat) crossing number $c(\bar K)$ coincides with the crossing number $c(K)$ because there is a bijection between the diagrams of $\bar K$ and the diagrams of $K$.
\end{remark}

We can extend some results for knots in a fixed surface to flat virtual knots.

\begin{example}\label{ex:lifting_flat_knot}
Consider the flat knot with $3$ crossings in Fig.~\ref{fig:flat_knot32_lifting}. Its reduced based matrix $M=(D_c\cdot D_{c'})_{c,c'\in\mathcal C(D)\sqcup *}$~\cite{Nbm,Tvs} is equal to
\[
\left(
\begin{array}{cccc}
0 & 1 & 1 & -2\\
-1 & 0 & 0 & -1\\
-1 & 0 & 0 & -2\\
2 & 1 & 2 & 0
\end{array}
\right).
\]
 The vertices of the diagram form three nonzero primitive tribes~\cite{Nbm}, hence, they determine three invariant homology classes that we denote $D_1$, $D_2$ and $D_3$. Then the functions
 \[
 \phi_1(c)=(D_1+D_3-D)\cdot D_c,\  \phi_2(c)=(3D_1-2D_2+D_3)\cdot D_c, \  \phi_3(c)=(D_1+D_3)\cdot D_c,
 \]
 where $D$ is the homology class of the knot diagram, define traits with values in $\Z$ such that $\phi_i([D])=-1$, $i=1,2,3$. Like in Example~\ref{ex:lifting_in_torus}, the maps
\[
\tau_i(c)=\left\{\begin{array}{cl}
0, & \phi_i(c)\ge 0,\\
1, & \phi_i(c)<0,
\end{array}\right.
\]
$i=1,2,3$, are flat orders on the diagram of the flat knot. Then $f_{\tau_1}$ lifts the flat knot to the knot $3.1$, $f_{\tau_2}$ lifts the knot to $3.3$, and $f_{\tau_3}$ lifts to the knot to $\overline{3.4}$.

\begin{figure}[h]
\centering\includegraphics[width=0.4\textwidth]{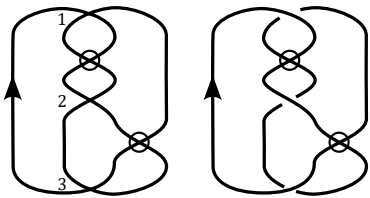}
\caption{A flat knot and its lifting}\label{fig:flat_knot32_lifting}
\end{figure}

Hence, the knots $3.1$, $3.3$ and $3.4$ (and their mirrors) are flattable. Moreover, the knots $3.1$, $3.3$ and $\overline{3.4}$ are siblings because they are liftings of the same flat knot.

On the other hand, by Remark~\ref{rem:lifting_properties}, the virtual knots $2.1, 3.2, 3.5, 3.6, 3.7$ are not flattable.
\end{example}

\section{Open questions}

We conclude the paper with some open questions.

\begin{enumerate}
\item In the paper we have considered a very limited class of functorial map -- binary functorial map. It would be interesting to find meaningful examples of ternary (quaternary, quinary) functorial maps. In most general form a functorial map $f_\tau$ would correspond to a trait
\[
\tau(v)=\sum_{T\in\mathcal T_2}\tau_T(v) T\in A[\mathcal T_2]
\]
where $A$ is a commutative ring. So, nothing prevent us to consider trait with values in series of tangles.

\item The approach exploited in the paper starts with a functorial map scheme and then finds the destination knot theory. We can look from another side and pose the following question: given two knot theories $\mathcal M$ and $\mathcal M'$, describe the functorial maps between them. The case $\mathcal M=\mathcal M'=\mathcal M_{class}$ is of the greatest interest.

\item Papers like~\cite{NOR,NOSY,IM} provide another type of bracket invariants which rely on biquandle colourings. Then one can try to unify these two approaches and define a bracket that would depend both on a crossing trait and an edge colouring.

\item  Find more examples of sibling (more general, kindred) knots. Which invariants do not recognize siblings? For example, must a sibling of a slice knot be slice too?

\item Are there any restriction on invariant values for flattable knots? Find necessary/sufficient condition for a knot to be flattable.
\end{enumerate}

The author is grateful to Yves de Cornulier a.k.a. YCor for drawing attention to ordering of groups.

\bibliographystyle{amsplain}

\appendix

\section{Types of binary traits}\label{app:binary_trait_types}

Given a binary functorial map, the invariance conditions and the required destination knot theory depend much on the possible combinations of trait values of the crossings in Reidemeister moves. The goal of this section is to classify binary traits by the set of those combinations.

Let $\tau$ be a binary trait on a set $\mathscr D$ of oriented diagrams. The knot theory we consider on $\mathscr D$ is either $\mathcal M_{class}^+=\{\Omega_{1a},\Omega_{1b},\Omega_{2c}, \Omega_{2d}, \Omega_{3b}\}$ or $\mathcal M_{class}^{reg+}=\{\Omega_{2a},\Omega_{2b},\Omega_{2c}, \Omega_{2d}, \Omega_{3b}\}$. 

\begin{figure}[h]
\centering\includegraphics[width=0.8\textwidth]{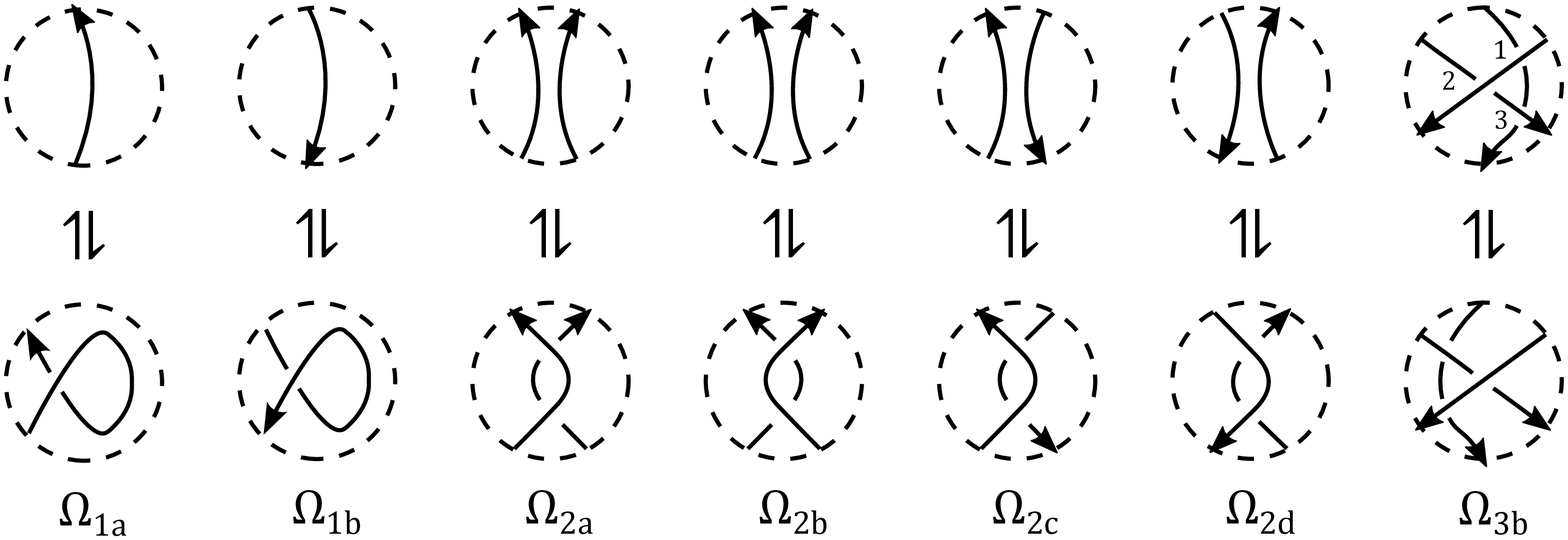}
\caption{Oriented Reidemeister moves}\label{fig:reidemeister_moves_or3}
\end{figure}

\begin{definition}
For a first Reidemeister move on a crossing $v$, its \emph{$\tau$-value} is the number $\tau(v)\in\{0,1\}$. Let $\Pi_{1a}(\tau)\subset\{0,1\}$ be the set of $\tau$-values of all moves $\Omega_{1a}$ on diagrams in $\mathscr D$. Analogously, one defines $\Pi_{1b}(\tau)$.

The \emph{$\tau$-value} of a second Reidemeister move on crossings $v$ and $w$ where $sgn(v)=1$ and $sgn(w)=-1$, is the pair $(\tau(v),\tau(w))$. Let $\Pi_{2a}(\tau)\subset \{0,1\}^2$ be the set of $\tau$-values of $\Omega_{2a}$ moves in $\mathscr D$. Analogously, $\Pi_{2b}(\tau)$, $\Pi_{2c}(\tau)$ and $\Pi_{2d}(\tau)$ are defined.

Finally, for a third Reidemeister move $\Omega_{3b}$ on crossings $v_1$, $v_2$, $v_3$ numbered as in Fig.~\ref{fig:reidemeister_moves_or3}, define its \emph{$\tau$-value} as the triple $(\tau(v_1),\tau(v_2),\tau(v_3))\in\{0,1\}^3$. The set of $\tau$-values of all move $\Omega_{3b}$ in $\mathscr D$ is denoted by $\Pi_{3b}(\tau)\subset \{0,1\}^3$.
\end{definition}

Let $D\in\mathscr D$ be a tangle diagram and $B$ be a disk such that $D$ and $\partial B$ intersect transversely and $B$ does not contain closed components of $D$. Consider deformations of the diagram $D$ with the support in $B$. For a Reidemeister move $\Omega_m$, $m=1a,1c,\dots$, denote the set of $\tau$-values of moves $\Omega_m$ which appear during those deformations by $\Pi_{m}(\tau\mid D,B)$.

The intersection $D\cap B$ is a diagram of an oriented tangle $T$ in $B$. Let $\mathscr D(T)$ be the set of diagrams of the tangle $T$.  The restriction $\tau|_{D\cap B}$ of the trait $\tau$ to the crossings of diagrams of $T$ is a trait on $\mathscr D(T)$.

\begin{proposition}\label{prop:tau_values}
For any Reidemeister move $\Omega_m$, $m=1a,\dots$,
\begin{enumerate}
\item $\Pi_{m}(\tau\mid D,B)=\Pi_{m}(\tau|_{D\cap B})$,
\item $\Pi_{m}(\tau)=\bigcup_{D,B}\Pi_{m}(\tau\mid D,B)$.
\end{enumerate}
\end{proposition}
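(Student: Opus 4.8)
The plan is to reduce both parts to a single observation: a deformation of $D$ supported in $B$ is the same datum as a deformation of the tangle $T=D\cap B$, and this correspondence preserves $\tau$-values crossing by crossing. For statement (1), I would fix an admissible pair $(D,B)$ and set up the correspondence explicitly. Any diagram $D'$ obtained from $D$ by a sequence of moves supported in $B$ decomposes as $D'=(D\setminus B)\cup T'$ with $T'=D'\cap B\in\mathscr D(T)$, the exterior $D\setminus B$ being held fixed throughout. Conversely, since any two diagrams of the same tangle are connected by moves that may be isotoped to lie inside $B$, every $T'\in\mathscr D(T)$ arises this way; here the hypothesis that $B$ contains no closed component is what guarantees that $T$ is a genuine tangle all of whose diagrams are realised by such interior deformations. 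Under $D'\leftrightarrow T'$ a move $\Omega_m$ supported in $B$ corresponds bijectively to a move $\Omega_m$ on $T'$, and the crossings participating in it lie in $B$, hence are crossings of $T'$. By the definition of the restricted trait, $(\tau|_{D\cap B})_{T'}(c)=\tau_{D'}(c)$ on every such crossing, so the move carries the same $\tau$-value whether read in $D'$ or in $T'$. Before invoking this I would verify that $\tau|_{D\cap B}$ really is a trait on $\mathscr D(T)$: this is exactly the trait axiom for $\tau$ (Definition~\ref{def:trait}) applied to the ambient move $D'\to D''$ corresponding to a given tangle move $T'\to T''$, using that $\mathscr D$ is closed under $\mathcal M$. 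Statement (1) then follows, since the two defining sets of $\tau$-values are images of one another under the bijection of moves.

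For statement (2), the inclusion $\bigcup_{D,B}\Pi_m(\tau\mid D,B)\subseteq\Pi_m(\tau)$ is immediate, as each move counted on the left occurs on a genuine diagram of $\mathscr D$ and so contributes its $\tau$-value to $\Pi_m(\tau)$. For the reverse inclusion, given any move $\Omega_m$ on some $\tilde D\in\mathscr D$, I would take $D=\tilde D$ and choose $B$ to be a small disk surrounding exactly the region where the move takes place, slightly enlarged so that $\partial B$ meets $\tilde D$ transversely. Such a $B$ intersects $\tilde D$ only in the arcs participating in the move and therefore contains no closed component, so $(\tilde D,B)$ is admissible and the move's $\tau$-value lies in $\Pi_m(\tau\mid\tilde D,B)$.

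The only real work is in statement (1): making the move correspondence precise enough that the identification of the participating crossings is unambiguous, checking that the restricted trait is well defined (independent of the path from $T$ to $T'$, which holds because $D'$ depends only on $T'$ and the fixed exterior) and satisfies the trait axiom, and confirming that the no-closed-component hypothesis indeed makes every tangle diagram of $T$ reachable by interior deformations. Statement (2) is then a short formal argument that I expect to present in a couple of lines.
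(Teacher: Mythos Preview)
Your proposal is correct and follows essentially the same approach as the paper. The paper's own proof is extremely terse---statement (1) is dispatched with ``follows from the definition of the trait $\tau|_{D\cap B}$'' and statement (2) with the same small-disk argument you give---so your write-up is really an expansion of that sketch, spelling out the bijection between interior deformations and tangle moves and checking that the restricted trait is well defined.
One minor remark: your discussion of the no-closed-component hypothesis as guaranteeing that every $T'\in\mathscr D(T)$ is reachable is more scrupulous than the paper, which simply treats this as part of what ``restricted trait'' means; you could trim that if you want to match the paper's level of detail.
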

\begin{proof}
The first statement follows from the definition of the trait $\tau|_{D\cap B}$.

By definition for any diagram $D$ and disk $B$,  $\Pi_{m}(\tau\mid D,B)\subset \Pi_{m}(\tau)$.  On the other hand, any $\tau$-value $\nu\in\Pi_{m}(\tau)$ is realised by a Reidemeister move $D\to D'$. This move occurs in a small disk $B$. Then $\nu\in\Pi_{m}(\tau\mid D,B)$. Thus, $\Pi_{m}(\tau)\subset \Pi_{m}(\tau\mid D,B)$.
\end{proof}

By~\cite{Nct} any trait $\tau$ on the diagrams of a tangle in the disk is a composition $\tau=\psi\circ\theta^u$ where $\theta^u$ is the universal trait which consists of the component index, order index and the sign, and $\psi$ is a map from the universal coefficient set $\Theta^u$ to $\Z_2$. The map $\psi$ (hence, the trait $\tau$) is uniquely determined by two matrices $C^\pm=(c_{ij}^\pm)\in M_k(\Z_2)$ and two vectors $U^\pm=(u^\pm_i)\in\Z_2^k$ where $k$ is the number of tangle components. Here the number $c_{ij}^\epsilon$, $i\ne j$, is the value of $\tau(v)$ on crossings $v$ with the component index $(i,j)$ and the sign $\epsilon$. The value $\tau(v)$ of a self-crossing $v$ of the $i$-th component of the tangle with the sign $sgn(v)=\epsilon$ is equal to $c^\epsilon_{ii}$ when $v$ is an early overcrossing, and  $c^\epsilon_{ii}+u^\epsilon_i$ when $v$ is an early undercrossing. For a trait $\tau$ determined by $C^\pm$ and $U^\pm$, we will write sometimes $\Pi_{m}(C^\pm,U^\pm)$ instead of $\Pi_{m}(\tau)$.

\subsection{First Reidemeister move}

Let $\tau$ be a binary trait. Denote $\Pi_1(\tau)=\Pi_{1a}(\tau)\cup \Pi_{1b}(\tau)$.

The set $\Pi_{1a}(\tau)$ is the set of values of the trait $\tau$ on the loops of type $r_+$ (Fig.~\ref{fig:loop_types}), and  $\Pi_{1b}(\tau)$ is the set of values on the loops of type $l_+$. Since the loop values can be defined independently, we can realize any possible $\tau$-value for the first Reidemeister move.


\begin{proposition}\label{prop:R1_tau_values}
For any nonempty sets $X,Y\subset\{0,1\}$ there exists a binary trait $\tau$ such that $(\Pi_{1a}(\tau), \Pi_{1b}(\tau))=(X,Y)$.
\end{proposition}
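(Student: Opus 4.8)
The plan is to realise every pair of nonempty subsets $X,Y\subseteq\{0,1\}$ by a single explicit trait, built on the diagram set of oriented tangles with two \emph{long} components in the disk $\mathbb{D}^2$. The reduction I start from is the one already recorded before the statement: $\Pi_{1a}(\tau)$ is the set of $\tau$-values taken on loops of type $r_+$ and $\Pi_{1b}(\tau)$ the set of values on loops of type $l_+$. Thus it suffices to prescribe, independently, the values of $\tau$ on positive loops of each of the two chiralities.

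First I would pass to tangles in the disk, where $\pi_1(\mathbb{D}^2)$ is trivial, so that the universal signed index of a self-crossing degenerates to its component index, its sign and its order index. By \cite{Nct} a trait on such diagrams is then encoded by matrices $C^\pm=(c^\pm_{ij})\in M_2(\Z_2)$ and vectors $U^\pm=(u^\pm_i)\in\Z_2^2$, a positive self-crossing of the $i$-th component receiving the value $c^+_{ii}$ when it is an early overcrossing and $c^+_{ii}+u^+_i$ when it is an early undercrossing. Since on a long component the two chiralities $r_+$ and $l_+$ of a positive kink are exactly the two order-index values, this yields
\[
\Pi_{1a}(\tau)=\{c^+_{11},\,c^+_{22}\},\qquad \Pi_{1b}(\tau)=\{c^+_{11}+u^+_1,\,c^+_{22}+u^+_2\}.
\]

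The realisation is then immediate: given a nonempty $X$, choose $(c^+_{11},c^+_{22})\in\{0,1\}^2$ whose value-set is exactly $X$ (equal entries for a singleton, distinct entries for $\{0,1\}$); given a nonempty $Y$, choose the $u^+_i$ so that the two sums $c^+_{ii}+u^+_i$ have value-set exactly $Y$, which is possible because each sum can be set to $0$ or $1$ independently by the corresponding $u^+_i$. The data $C^-,U^-$ are irrelevant here and may be taken to be zero.

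I expect the only genuine point to be the choice of diagram set together with the identification of the loop types with the order index. On a diagram with a single long component all $r_+$ loops share one universal index, so $\Pi_{1a}$ is forced to be a singleton; hence two components are needed to produce $\{0,1\}$. More importantly, one must work with long (rather than closed) components: for a self-crossing of a closed component in the disk there is no order index, and the homotopy index of a contractible kink is trivial, so $r_+$ and $l_+$ loops would carry the same universal index and $\Pi_{1a}=\Pi_{1b}$ would be unavoidable. Verifying this matching of $r_\pm$, $l_\pm$ with the (sign, order-index) pairs is the main, if small, obstacle; everything else is a direct check.
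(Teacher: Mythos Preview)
Your proof is correct and follows essentially the same approach as the paper: both work with a two-component tangle in the disk, encode the trait via the matrices $C^\pm$ and vectors $U^\pm$, and set the diagonal entries of $C^+$ to realise $X$ and the entries of $U^+$ to realise $Y$. Your added discussion of why two \emph{long} components are necessary (so that the order index distinguishes $r_+$ from $l_+$) makes explicit a point the paper leaves implicit, but the construction itself is the same.
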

\begin{proof}
Let $X=\{x_1\}\cup\{x_2\}$, $Y=\{y_1\}\cup\{y_2\}$, $x_i,y_i\in\{0,1\}$. Consider the matrices
\[
C^+=\left(\begin{array}{cc}
                x_1 & 0 \\
                0 & x_2
              \end{array}\right),\quad
U^+=\left(\begin{array}{cc}
                x_1+y_1 & x_2+y_2
              \end{array}\right)
\]
and choose an arbitrary matrix $C^-$ and vector $U^-$. Let $\tau$ be the trait determined by $C^+,C^-,U^+,U^-$.
Then $(\Pi_{1a}(\tau), \Pi_{1b}(\tau))=(X,Y)$.
\end{proof}

\subsection{Second Reidemeister move}

\begin{proposition}
Let $\tau$ be a binary trait. Then $\Pi_{2a}(\tau)=\Pi_{2b}(\tau)=\Pi_{2c}(\tau)=\Pi_{2d}(\tau)$.
\end{proposition}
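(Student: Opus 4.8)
The plan is to reduce the statement to traits on tangles in a disk and then to read the four sets off from the matrix data $(C^\pm,U^\pm)$ parametrising such a trait. By Proposition~\ref{prop:tau_values} it suffices to compute $\Pi_{2x}(\tau|_{D\cap B})$ as $D$ and $B$ range over all diagrams and disks, so I may assume $\tau$ is a trait on the diagrams of a tangle $T$ in $\mathbb{D}^2$, determined by the matrices $C^\pm=(c^\pm_{ij})$ and vectors $U^\pm=(u^\pm_i)$ recalled above. The organising principle is that, by Theorem~\ref{thm:universal_trait_index}, the two crossings $v,w$ of any second Reidemeister move (of whatever orientation type) are related by the involution on the universal signed index, $\theta^u(w)=\theta^u(v)^\ast$; writing $\tau=\psi\circ\theta^u$, the ordered $\tau$-value $(\tau(v),\tau(w))$ is a function of the single datum $\theta^u(v)$ alone. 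The task is then to verify that the collection of data that can occur is the same for each pattern $x\in\{a,b,c,d\}$.

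The key observation is that in any $\Omega_2$ bigon one strand runs over the other at \emph{both} crossings, so the two crossings share the same unoriented component type and differ only by an opposite sign. I would treat the mixed case first: if the strands belong to distinct components $i\neq j$ with $i$ over $j$, then both crossings have component index $(i,j)$ and the ordered value is $(c^+_{ij},c^-_{ij})$, since the component index is insensitive to the orientations of the strands. Reversing the orientation of one strand switches the move between the parallel and antiparallel patterns and interchanges which crossing is positive, yet it leaves the ordered pair $(\text{value at }+,\ \text{value at }-)$ equal to $(c^+_{ij},c^-_{ij})$; the over-left versus over-right alternative is a mere planar repositioning. Hence every type realises exactly $\{(c^+_{ij},c^-_{ij}):i\neq j\}$, and these mixed contributions coincide for $a,b,c,d$.

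The remaining and genuinely delicate case is a self-crossing bigon on a single component $i$, where the order-index correction $u^\pm_i$ enters, a self-crossing contributing $c^\epsilon_{ii}$ if it is an early over-crossing and $c^\epsilon_{ii}+u^\epsilon_i$ if early under. Here I would use that the over-arc and under-arc of the bigon are each traversed as a single sub-path, so the two crossings are \emph{both} early over or \emph{both} early under, according to which arc is met first, and never mixed; the two achievable pairs are therefore $(c^+_{ii},c^-_{ii})$ and $(c^+_{ii}+u^+_i,\ c^-_{ii}+u^-_i)$. The main obstacle will be the bookkeeping here: unlike the component index, the order index is orientation-sensitive, for reversing the component simultaneously flips both signs and exchanges early-over with early-under. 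I expect to dispatch it by noting that, for each fixed orientation pattern, re-routing the exterior arcs lets one realise either the early-over or the early-under configuration, so each type produces \emph{both} self-crossing pairs, while reversing an orientation confirms the compatibility predicted by the relation $\theta^u(w)=\theta^u(v)^\ast$ of Theorem~\ref{thm:universal_trait_index} (equivalently Proposition~\ref{prop:trait_dual_crossing}), which is itself independent of the orientation pattern. Assembling the mixed and self contributions then yields $\Pi_{2a}(\tau)=\Pi_{2b}(\tau)=\Pi_{2c}(\tau)=\Pi_{2d}(\tau)$.
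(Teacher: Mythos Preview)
Your proof is correct but takes a more computational route than the paper's. You correctly isolate the key principle --- that the ordered pair $(\tau(v),\tau(w))$ depends only on $\theta^u(v)$ --- and then verify, via the reduction to disk tangles and case analysis on the $(C^\pm,U^\pm)$ data, that each of the four move types realises the same set of universal-index values. The paper bypasses this enumeration entirely: given any crossing $v$, one deforms the diagram locally (by Reidemeister moves not destroying $v$) so that $v$ participates in an $\Omega_2$ move of each type $a,b,c,d$, with respective partners $w_a,w_b,w_c,w_d$; Proposition~\ref{prop:trait_dual_crossing} then forces $\tau(w_a)=\tau(w_b)=\tau(w_c)=\tau(w_d)$, so the $\tau$-value $(\tau(v),\tau(w))$ lies in all four sets $\Pi_{2x}$ at once. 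This avoids both the reduction to disks (Proposition~\ref{prop:tau_values}) and the matrix parametrisation. Your approach has the compensating advantage of making $\Pi_2(\tau)$ explicit in terms of $(C^\pm,U^\pm)$, which is exactly what the subsequent enumeration in the appendix needs anyway.
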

\begin{proof}
  Given a crossing, we can include it in a second Reidemeister move of any type (Fig.~\ref{fig:R2_moves_types}). By Proposition~\ref{prop:trait_dual_crossing} the trait values on the crossings $w_a, w_b, w_c, w_d$ coincide. Hence, a $\tau$-values appears in second Reidemeister moves of all types simultaneously.
\end{proof}
We will use notation $\Pi_{2}(\tau)$ for the set of $\tau$-values of second Reidemeister moves.

\begin{figure}[h]
\centering\includegraphics[width=0.4\textwidth]{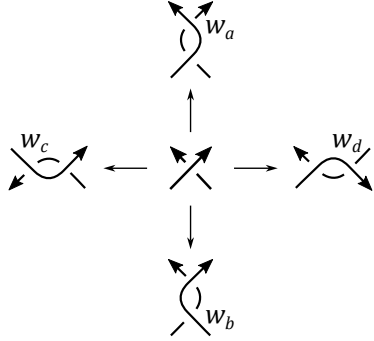}
\caption{Second Reidemeister moves on a crossing}\label{fig:R2_moves_types}
\end{figure}

Like in the case of first Reidemeister moves, one can realize any nonempty subset of $\{0,1\}^2$ as the set $\Pi_{2}(\tau)$. The trait $\tau$ here can be defined on diagrams of a tangle with two components (if the subset contains at most $2$ elements, it is enough to consider a tangle with one component, i.e. a long knot).

Thus, the list of possible cases of the values set $\Pi_{2}(\tau)$ is:
\begin{enumerate}
\item unary trait: $|\Pi_2(\tau)|=1$, i.e. $\Pi_2(\tau)=\{00\}$, or $\{01\}$, or $\{10\}$, or $\{11\}$. In this case, the trait value is determined by the sign of the crossing.
\item semi-binary trait: $\Pi_2(\tau)=\{00,01\}$ or $\{10,11\}$, or $\{00,10\}$, or $\{01,11\}$. In this case, the trait values of all positive crossings or all negative crossings coincide.
\item index: $\Pi_2(\tau)=\{00,11\}$,
\item signed index: $\Pi_2(\tau)=\{01,10\}$,
\item $|\Pi_2(\tau)|=3$, i.e. $\Pi_2(\tau)=\{00,01,10\}$, or $\{00,01,11\}$, or $\{00,10,11\}$, or $\{01,10,11\}$.
\item $\Pi_2(\tau)=\{00,01,10,11\}$.
\end{enumerate}

\subsection{Move $\Omega_{3b}$}

1. Let us assume first that a binary trait $\tau$ is defined on diagrams of a tangle $T$ in the disk $B$. Since the move $\Omega_{3b}$ includes positive crossings only, the trait $\tau$ depends on the matrix $C^+$ and the vector $U^+$. Moreover, let us suppose that $\tau$ does not depend on the order index of crossings, i.e. $U^+=0$. The last assumption is that the tangle $T$ has $k=3$ components.

The possible cases of the set $\Pi_{3b}(\tau)$ are listed in the table below. In the table we consider $C^+$ as the adjacency matrix of a component digraph. An unoriented edge denotes a pair of oppositely oriented edges, a white vertex corresponds to a vertex without loops, and a black vertex denotes a vertex with a loop. A half-black vertex means that the vertex can be of any color. Then for example, the diagram in the case $l$ denotes two digraphs, and the diagram of the case $n$ stands for eight digraphs.

\begin{center}
\renewcommand{\arraystretch}{2}
\begin{longtable}{|c|c|c|}
 \hline
Type & $\Pi_{3b}$ & Component digraph \\
 \hline
 \endfirsthead

 \hline
Type & $\Pi_{3b}$ & Component digraph \\
 \hline
 \endhead

 \hline
 \endfoot

 \hline
 \endlastfoot

a & $000$ & \ptr{a} \\ \hline
b & $111$ & \ptr{b} \\ \hline
c & $000, 001, 110, 111$ & \ptr{c1},  \ptr{c2}\\ \hline
d & $000, 100, 011, 111$ & \ptr{d1},  \ptr{d2}\\ \hline
e & $000, 011, 101, 110$ & \ptr{e}\\ \hline
f & $001, 010, 100, 111$ & \ptr{f}\\ \hline
g & $000, 001, 010, 100, 111$ & \ptr{g1}, \ptr{g2}\\ \hline
h & $000, 001, 100, 011, 110$ & \ptr{h1}, \ptr{h2}\\ \hline
i & $000, 011, 101, 110, 111$ & \ptr{i1}, \ptr{i2}\\ \hline
j & $001, 100, 011, 110, 111$ & \ptr{j1}, \ptr{j2}\\ \hline
k & $000, 001, 010, 100, 011, 110$ & \ptr{k}\\ \hline
l & $000, 001, 010, 100, 011, 111$ & \ptr{l}\\ \hline
m & $000, 001, 010, 100, 110, 111$ & \ptr{m}\\ \hline
n & $000, 001, 100, 011, 110, 111$ & \ptr{n}\\ \hline
o & $000, 001, 011, 101, 110, 111$ & \ptr{o}\\ \hline
p & $000, 100, 011, 101, 110, 111$ & \ptr{p}\\ \hline
q & $001, 100, 011, 101, 110, 111$ & \ptr{q}\\ \hline
r & $000, 001, 010, 100, 011, 101, 110$ & \ptr{r1}, \ptr{r2}, \ptr{r3}, \ptr{r4}, \ptr{r5}\\ \hline
s & $000, 001, 010, 100, 011, 101, 111$ & \ptr{s}\\ \hline
t & $000, 001, 010, 100, 011, 110, 111$ & \ptr{t1}, \ptr{t2}, \ptr{t3}, \ptr{t4}\\ \hline
u & $000, 001, 010, 100, 101, 110, 111$ & \ptr{u}\\ \hline
v & $000, 001, 010, 011, 101, 110, 111$ & \ptr{v}\\ \hline
w & $000, 001, 100, 011, 101, 110, 111$ & \ptr{w1}, \ptr{w2}, \ptr{w3}, \ptr{w4}\\ \hline
x & $000, 010, 100, 011, 101, 110, 111$ & \ptr{x}\\ \hline
y & $001, 010, 100, 011, 101, 110, 111$ & \ptr{y1}, \ptr{y2}, \ptr{y3}, \ptr{y4}, \ptr{y5}\\ \hline
 &  & \ptr{z1}, \ptr{z2}, \ptr{z3} except \ptr{r2}, \\
z & $000, 001, 010, 100, 011, 101, 110, 111$ & \ptr{z4}, \ptr{z5}, \ptr{z6}, \ptr{z7}, \ptr{z8}, \ptr{z9}, \\
 &  & \ptr{z10}, \ptr{z11}, \ptr{z12} except \ptr{y5}\\
\hline
\end{longtable}
\end{center}

\begin{remark}
1. The traits of types $a$, $b$ can be realized on the diagrams of a $1$-tangle. The traits of types $a$--$j$ can be realized on the diagrams of a $2$-tangle.

2. There are two involutions on the traits. The first one is induced by switching of the trait values $0\leftrightarrow 1$, i.e. a trait $\tau$ becomes the trait $\bar\tau=1-\tau$. The other involution is induced by changing orientation of the disk $B$. By this involution a $\tau$-value $\alpha\beta\gamma\in\{0,1\}^3$ goes to $\gamma\beta\alpha$.

3. There is a partial order on the cases induced by inclusion relation of the sets $\Pi_{3b}$.

4. The type $e$ and its subsets (type $a$) are called \emph{parity}. The type $i$ and its subsets (types $a$, $b$ and $e$) are called \emph{weak parity}. The type $n$ and its subsets (types $a$, $b$, $c$, $d$, $h$ and $j$) are called \emph{order}.
\end{remark}

2. Next, consider a binary trait $\tau$ on diagrams of a tangle $T$ in the disk $B$ which depends on the order index (i.e. $U^+\ne 0$).

Take a component $i$ such that $u^+_i=1$, and consider moves $\Omega_{3b}$ on self-crossings of the component. Different cases of these moves (see Fig.~\ref{fig:R3combination1l}) produces six $\tau$-values, and $\Pi_{3b}(\tau)\supset\{000,001,100,011,110,111\}$. Then $\Pi_{3b}(\tau)$ is of type $n$, $t$, $w$ or $z$. All these type are realizable by the following choice of $C^+$ and $U^+$ correspondingly:
\begin{gather*}
(0), (1);\quad
\left(\begin{array}{cc}
                0 & 0 \\
                0 & 1
              \end{array}\right),
\left(\begin{array}{cc}
                1 & 0
              \end{array}\right);\quad
\left(\begin{array}{cc}
                0 & 1 \\
                1 & 0
              \end{array}\right),
\left(\begin{array}{cc}
                1 & 0
              \end{array}\right);\\
\left(\begin{array}{ccc}
                0 & 0 & 1\\
                0 & 0 & 0\\
                1 & 0 & 0
              \end{array}\right),
\left(\begin{array}{ccc}
                1 & 0 & 0
              \end{array}\right).
\end{gather*}
Thus, the order index does not add new cases of the set $\Pi_{3b}$.

3. Consider the general case. Given a binary trait $\tau$, by Proposition~\ref{prop:tau_values}, the set $\Pi_{3b}(\tau)$ is a union of some sets of types $a$--$z$. Considering of all possible unions leads to three new types
\begin{gather*}
a\cup b=\{000,111\},\qquad e\cup h=\{000,001,100,011,101,110\},\\
f\cup j=\{001,010,100,011,110,111\},
\end{gather*}
which we call \emph{discontinuous types} by the reasons explained below.

Assume that triangles of moves $\Omega_{3b}$ for different $\tau$-values can be gathered by diagram isotopies and Reidemeister moves to one disk $B$. This assumption holds for diagrams of tangles in a connected oriented surface $F$. Then we can realize the $\tau$-values set $\Pi_{3b}(\tau)$ on diagrams of a tangle with $k$ components ($k$ may be grater than $3$) in the disk $B$. Let us show $\Pi_{3b}(\tau)$ can not have a discontinuous type.

Assume that $\Pi_{3b}(\tau)=e\cup h$. Consider the component index graph $G_\tau$ of $\tau$. Since subsets of $\Pi_{3b}(\tau)$ can be of types $a$, $e$ or $b$ only, the full subgraph of $G_\tau$ on any three vertices coincides with the component index subgraph of type $a$, $e$ or $h$. Assume that $G_\tau$ has an oriented edge $\alpha_1\alpha_2$. Then for any component $\beta$, the subgraph on the vertices $\alpha_1, \alpha_2, \beta$ has type $h$. Hence, $G_\tau$ contains is either the oriented edge $\alpha_1\beta$ or $\beta\alpha_2$. Then for any vertices $\beta$, $\gamma$ the full subgraph of $G_\tau$ on the vertices $\alpha_1, \beta, \gamma$ or $\alpha_2, \beta, \gamma$ has type $h$. In any case, there is no unoriented edge $\beta\gamma$ in $G_\tau$. Hence, $G_\tau$ does not contain subgraphs of type $e$. Thus,  $\Pi_{3b}(\tau)$ has type $h$.

Similarly, if $G_\tau$ contains an unoriented edge then $\Pi_{3b}(\tau)$ has type $e$. If $G_\tau$ has no edges then $\Pi_{3b}(\tau)$ has type $a$. Thus, the type $e\cup h$ is not realizable on tangle diagrams in the disk. The proof for the other discontinuous types is analogous.

Note that the discontinuous types can be realized on tangle diagrams in a disconnected surface (for example, a union of two disks).

\begin{remark}
The type of $\Pi_{3b}(\tau)$ imposes some restrictions on the sets $\Pi_1(\tau)$ and $\Pi_2(\tau)$:
\begin{enumerate}
\item The type $a$ is compatible with $\Pi_2(\tau)=\{00\}$, $\{01\}$ or $\{00,01\}$. The type $b$ is compatible with $\Pi_2(\tau)=\{10\}$, $\{11\}$ or $\{10,11\}$. The other types of $\Pi_{3b}(\tau)$ are compatible with any $\Pi_2(\tau)$ which is not constant on positive crossings.
\item The set $\Pi_1(\tau)=\{0\}$ is compatible with the types $a,e,h,i,k,n,r,w,z$; $\Pi_1(\tau)=\{1\}$ is compatible with the types $b,f,g,j,n,q,t,y,z$; $\Pi_1(\tau)=\{0,1\}$ is compatible with the types $c,d,g,i,l,m,n,o,p,s,t,u,v,w,x,z$.
\end{enumerate}
\end{remark}

\section{Skein modules of knots in a fixed surface}\label{app:skein_modules}

This section is devoted to description of skein modules for some knot theories. More precisely, we will focus on sets of knots, i.e. sets of the equivalence classes of diagrams module the moves. Since the local moves we consider are $1$-term moves (like the Reidemeister moves), the skein modules of the knot theories over a ring $A$ will be free modules $A[\mathscr K]$ where $\mathscr K$ is the correspondent set of knots.

\subsection{Smoothing skein modules}

\subsubsection{Rotation number and based index polynomial}\label{subsec:diagram_invariants}

Let $F$ be a connected oriented compact surface. Consider a vector field $\vec v$ on the surface $F$ which has no singular points when $\partial F\ne\emptyset$ or $\chi(F)=0$, and has one singular point $z\in F$ otherwise. Denote $\bar\chi(F)=\chi(F)$ if $\partial F=\emptyset$ and $\bar\chi(F)=0$ otherwise.

\begin{definition}\label{def:rotation_number}
Let $\gamma_t, t\in[a,b]$, be an oriented regular closed curve in $F$ or a curve such that $\gamma\cap\partial F=\partial\gamma$. In the latter case we assume that the tangent vectors $\dot\gamma$ in the boundary points are collinear to the field $\vec v$. For any $t$ denote the angle between $\vec v(\gamma_t)$ and $\dot\gamma_t$ by $\phi(t)$. We can suppose that the function $\phi\colon (a,b)\to\R$ is continuous. Denote $\widetilde{rot}(\gamma)=\frac{\phi(b)-\phi(a)}{2\pi}\in\Z$. The residue $rot(\gamma)=\widetilde{rot}(\gamma)\bmod \bar\chi(F) \in\Z_{\bar\chi(F)}$ is called the \emph{rotation number} of the curve $\gamma$.

For a multicurve $\gamma=\gamma_1\cup\cdots\cup\gamma_k$ we define $rot(\gamma)=\sum_{i=1}^k rot(\gamma_k)$.
\end{definition}

\begin{proposition}\label{prop:rot_properties}
\begin{enumerate}
  \item $rot(\gamma)$ is invariant under regular homotopy;
  \item $rot(-\gamma)=-rot(\gamma)$;
  \item $rot(\bigcirc^\epsilon)=\epsilon$, $\epsilon=\pm$, where $\bigcirc^+$ is a positively (counterclockwise) oriented small circle and $\bigcirc^-$ is a negatively oriented small circle with does not include the singular point $z$;
  \item $rot(\gamma)$ is invariant under $Sm^+$ and $H(2)_+^o$ (Fig.~\ref{fig:unary_or_smoothing}).
\end{enumerate}
\end{proposition}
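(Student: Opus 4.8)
The plan is to treat the four items separately, disposing of the two elementary ones first. For item (2), reversing the orientation of $\gamma$ replaces the tangent $\dot\gamma_t$ by its negative and reverses the direction of traversal: if $(-\gamma)(t)=\gamma(a+b-t)$, then the angle function of $-\gamma$ is $\psi(t)=\phi(a+b-t)+\pi$, so $\psi(b)-\psi(a)=-(\phi(b)-\phi(a))$ and hence $\widetilde{rot}(-\gamma)=-\widetilde{rot}(\gamma)$; reducing modulo $\bar\chi(F)$ gives $rot(-\gamma)=-rot(\gamma)$. For item (3), I would use that a small circle $\bigcirc^\epsilon$ bounds a disk $B$ avoiding the singular point $z$; on $B$ the field $\vec v$ is non-vanishing and $B$ is contractible, so $\vec v|_B$ is homotopic to a constant field. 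The tangent of a counterclockwise small circle turns by $+2\pi$ relative to a fixed direction, so $\widetilde{rot}(\bigcirc^+)=1$ and $\widetilde{rot}(\bigcirc^-)=-1$, giving $rot(\bigcirc^\epsilon)=\epsilon$.

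Item (1) is the heart. First I would note that $\widetilde{rot}(\gamma)=\frac{1}{2\pi}\int_\gamma d\phi$ is integer-valued and varies continuously along any regular homotopy that keeps $\gamma$ disjoint from the zero $z$ of $\vec v$; being integer-valued, it is therefore constant along such homotopies. This already settles the case $\partial F\neq\emptyset$ or $\chi(F)=0$, where $\vec v$ has no zeros and $\bar\chi(F)=0$. In the remaining case $\vec v$ has a single zero $z$ of index $\chi(F)$ by Poincar\'e--Hopf. A generic regular homotopy crosses $z$ transversally at finitely many instants, and away from these $\widetilde{rot}$ is locally constant by the previous remark. The plan is to show each transverse passage across $z$ changes $\widetilde{rot}$ by exactly $\pm\chi(F)$: writing $d\phi=d\arg(\dot\gamma)-d\arg(\vec v)$ and comparing the curves just before and just after the passage (which agree outside a small disk $B\ni z$), the $\arg(\dot\gamma)$ contribution changes negligibly while the $\arg(\vec v)$ contribution changes by the winding of $\vec v$ around $z$, namely $\chi(F)$. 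Hence $\widetilde{rot}$ is well defined only modulo $\chi(F)=\bar\chi(F)$, and $rot=\widetilde{rot}\bmod\bar\chi(F)$ is a genuine regular homotopy invariant.

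For item (4) I would reduce each move to a local computation of total tangent turning. Both $Sm^+$ and $H(2)_+^o$ are supported in a small disk $B$ on which $\vec v$ is essentially constant, so the $d\arg(\vec v)$ contribution is negligible and it suffices to compare $\int_B d\arg(\dot\gamma)$ before and after. For the oriented smoothing $Sm^+$ of a positive crossing the four boundary points carry fixed tangent directions; replacing the two straight transverse strands (each of turning $0$) by the two oriented-resolution arcs contributes $+\pi/2$ along one arc and $-\pi/2$ along the other, a net change of $0$. The move $H(2)_+^o$ is handled by the same bookkeeping of boundary tangents, and again the turning contributions cancel, leaving $\widetilde{rot}$, and therefore $rot$, unchanged.

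The main obstacle will be the singularity-crossing step in item (1): making precise that a transverse passage of $\gamma$ across the zero $z$ changes $\widetilde{rot}$ by exactly the index $\chi(F)$. This requires a careful local winding-number argument near $z$, identifying the index of the zero with the monodromy of the trivialization that $\vec v$ determines on $F\setminus z$, together with the observation that the tangent-direction contribution is unaffected to leading order. Everything else reduces to direct angle computations.
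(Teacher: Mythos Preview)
Your proof is correct and follows essentially the same approach as the paper for all four items. For item~(4) the paper is slightly slicker: for $Sm^+$ it first isotopes the two arcs to be tangent at the crossing point so that the smoothing becomes a mere re-gluing of arcs with no change in turning, and for $H(2)_+^o$ it gives the explicit count---the move adds two clockwise half-turns and one $\bigcirc^+$, contributing $2\cdot(-\tfrac12)+1=0$---which makes precise the cancellation you asserted but did not compute.
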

\begin{proof}
A regular homotopy of $\gamma$ which does not concern the singular point $z$, does not change the difference $\frac{\phi(b)-\phi(a)}{2\pi}$, hence, keeps the rotation number. When the curve $\gamma$ passes over $z$ the difference $\frac{\phi(b)-\phi(a)}{2\pi}$ changes by the index of the singular point which is equal to $\chi(F)$. Hence, the rotation number does not change.

The second and the third statements follow from the definition.

Given a crossing, isotope the curve to make the arcs of the curve tangent in the crossing point. Then the smoothing $Sm^+$ just rearranges the arcs of the curve and does not change the rotation number.

A move $H(2)_+^o$ adds two clockwise half-turns and $\bigcirc^+$. Then the rotation number changes by $2\cdot(-\frac 12)+1=0$.
\end{proof}

\begin{remark}\label{rem:nO}
Below we will use notation $k\bigcirc^+$, $k\in\Z$, for a disjoint union of $k$ positively oriented trivial circles when $k\ge 0$, or a union of $-k$ negatively oriented trivial circles when $k<0$.
\end{remark}

\begin{definition}\label{def:tangle_multiplicity}
For a class $\alpha\in H_1(F,\Z)$, its \emph{multiplicity} is the number
\[
\mu(\alpha)=gcd\{\alpha\cdot\beta\mid\beta\in H_1(F,\Z)\}\in\Z.
\]
Let $D$ be an oriented tangle diagram in $F$. The \emph{multiplicity} of the diagram $D$ is $\mu(D)=\mu([D])$.
\end{definition}
Note that the multiplicity depends on the homology class of the tangle, hence, it is a tangle invariant.

Fix a point $z$ on the surface $F$. We assume that $z$ is the singular point of $\vec v$ (if it exists), and $z\in\partial F$ if $\partial F\ne\emptyset$.
\begin{definition}\label{def:based_index_polynomial}
Let $D$ be an oriented tangle diagram in $F$ such that $z\not\in D$. For any crossing $v\in\mathcal C(D)$ consider a path which connects $z$ with $v$ and approaches to the crossing in the sector incident to the two incoming edges (Fig.~\ref{fig:based_index}). Define the \emph{based index} of the crossing $v$ as the intersection number $ind_z(v)=\gamma_{z,v}\cdot D\in\Z_{\mu(D)}$. The index does not depend on the choice of $\gamma_{z,v}$.

\begin{figure}[h]
\centering\includegraphics[width=0.3\textwidth]{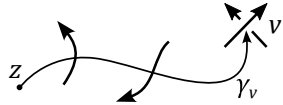}
\caption{The path $\gamma_{z,v}$}\label{fig:based_index}
\end{figure}

The \emph{based index polynomial} is the sum
\[
P_D(t)=\sum_{v\in\mathcal C(D)}sgn(v)\cdot t^{ind_z(v)}\in\Z[t,t^{-1}]/(t^{\mu(D)}-1).
\]
\end{definition}

\begin{proposition}\label{prop:based_index_polynomial}
\begin{enumerate}
\item $P_D(1)=wr(D)$ is the writhe number of the diagram;
\item $P_D(t)$ is invariant under isotopies of $D$ in $F\setminus\{z\}$;
\item if the diagram $D'$ is obtained by passing an arc of $D$ over $z$ then $P_{D'}(t)=t^{\pm 1}\cdot P_D(t)$;
\item $P_D(t)$ is invariant under $\Omega_2$, $H(2)^+$ and $O_1^+$.
\end{enumerate}
\end{proposition}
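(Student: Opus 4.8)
The plan is to handle the four assertions in order, in each case reducing the claim to the behaviour of the based index $ind_z(v)$ under the relevant operation, while exploiting throughout that the polynomial lives in $\Z[t,t^{-1}]/(t^{\mu(D)}-1)$. The first statement is immediate: substituting $t=1$ gives $P_D(1)=\sum_{v\in\mathcal C(D)}sgn(v)$, which is the definition of $wr(D)$. Before treating the rest I would record the well-definedness already asserted in Definition~\ref{def:based_index_polynomial}: if $\gamma_{z,v},\gamma'_{z,v}$ are two admissible paths, their difference is a closed loop $\ell$ and $(\gamma_{z,v}-\gamma'_{z,v})\cdot D=[\ell]\cdot[D]$, which is divisible by $\mu(D)=\gcd\{[D]\cdot\beta\mid\beta\in H_1(F,\Z)\}$. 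Hence $ind_z(v)$ is well defined in $\Z_{\mu(D)}$, and this same divisibility is exactly what legitimises every congruence used below.

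For statement (2), an isotopy of $D$ inside $F\setminus\{z\}$ carries the crossings along in a sign-preserving bijection, deforms each connecting path $\gamma_{z,v}$ together with the diagram, and leaves the homology class $[D]$ (hence $\mu(D)$) unchanged. Since $\gamma_{z,v}\cdot D$ is a homotopy invariant of the configuration relative to $z$, each $ind_z(v)$ is preserved and so is every monomial $sgn(v)\,t^{ind_z(v)}$. For statement (3), pushing an arc of $D$ across $z$ is equivalent to moving the basepoint to the other side of that arc, so every connecting path $\gamma_{z,v}$ acquires exactly one extra transverse intersection with this arc, of a common sign $\pm1$ determined by the arc's orientation. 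Thus $ind_z(v)\mapsto ind_z(v)\pm 1$ for all $v$ simultaneously, giving $P_{D'}(t)=t^{\pm 1}P_D(t)$.

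For statement (4) I would treat the three moves separately. The move $O_1^+$ removes a trivial, hence null-homologous, circle: it introduces no crossing and leaves $[D]$ and therefore $\mu(D)$ unchanged, so $P_D$ is untouched. For $\Omega_2$, the two crossings $v_1,v_2$ created by the move have opposite signs and bound a bigon whose interior is disjoint from $D$; comparing connecting paths that differ only by an arc running through this bigon, one gets $ind_z(v_1)=ind_z(v_2)$, so $sgn(v_1)t^{ind_z(v_1)}+sgn(v_2)t^{ind_z(v_2)}=0$ and the new contributions cancel. For $H(2)^+$ (Fig.~\ref{fig:unary_or_smoothing}) I would identify the pair of crossings it inserts and verify, again by comparing the two connecting paths, that they share a common based index and carry opposite signs, so that their contributions likewise cancel; here too the local nature of the move keeps $[D]$ and $\mu(D)$ fixed.

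The main obstacle is the delicate bookkeeping of signs and of the ``incoming-edges sector'' convention when matching the connecting paths of the paired crossings in the $\Omega_2$ and especially the $H(2)^+$ moves. One must check that the local arc relating the two paths is genuinely disjoint from $D$ (or meets it in a canceling pair), so that the two based indices agree \emph{exactly} in $\Z_{\mu(D)}$ rather than merely up to a local correction, and that each local move preserves the homology class so that the modulus is stable throughout. Everything else is routine continuity or substitution.
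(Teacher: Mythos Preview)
Your treatment of parts (1)--(3) and of $\Omega_2$ and $O_1^+$ in part (4) is essentially the same as the paper's and is fine. The gap is in your handling of $H(2)^+$: you describe it as a move that ``inserts a pair of crossings'' with opposite signs and equal based index, and then argue these contributions cancel. But $H(2)^+$ (i.e.\ $H(2)_+^o$ in Fig.~\ref{fig:unary_or_smoothing}) is a band-type move on a crossingless $2$-tangle---it reconnects two parallel oriented arcs and spawns a trivial circle, introducing \emph{no} crossings whatsoever. So there is nothing to cancel, and your argument as written does not address the actual content of the invariance.

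The correct argument, and the one the paper gives, is much simpler: since $H(2)^+$ (and likewise $O_1^+$) neither creates nor destroys crossings, one only has to check that the based index of each \emph{existing} crossing $v$ is unchanged. This follows because the path $\gamma_{z,v}$ may be chosen to avoid the small disk where the move takes place, so $\gamma_{z,v}\cdot D$ is unaffected by the local modification of $D$ inside that disk. You implicitly use this ``choose paths outside the move region'' idea for $O_1^+$, but you should apply it to $H(2)^+$ as well instead of inventing phantom crossings.
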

\begin{proof}
The first three statements follow from the definition and properties of intersection number.

Let $D\to D'$ be a second Reidemeister move on crossings $v$ and $w$. Then $ind_z(v)=ind_z(w)$ and $sgn(v)=-sgn(w)$. Hence
\[
P_{D'}(t)=P_D(t)+sgn(v)t^{ind_z(v)}+sgn(w)t^{ind_z(w)}=P_D(t).
\]

For a move $H(2)^+$ or $O_1^+$, we can choose paths $\gamma_{z,v}$ outside the area of the move. Then the move does not affect $P_D(t)$.
\end{proof}

\begin{corollary}\label{cor:or_invariant_after_smoothing}
Let $D\in\mathscr D_+(F)$, $V\subset \mathcal C(D)$, and $D^V_{or}$ obtained from $D$ by oriented smoothing of all the crossings $v\in V$. Then
$\widetilde{rot}(D^V_{or})=\widetilde{rot}(D)$ and $P_{D^V_{or}}(t)=\sum_{v\in\mathcal C(D)\setminus V}sgn(v)t^{ind_z(v)}$.
\end{corollary}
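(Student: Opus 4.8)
The plan is to prove the two assertions separately, in each case smoothing the crossings of $V$ one at a time and inducting on $|V|$; it then suffices to understand how a single oriented smoothing affects $\widetilde{rot}$ and $P_D$. Note that an oriented smoothing of an oriented diagram is again oriented, so all intermediate diagrams are legitimate inputs for both invariants.

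For the rotation number I would reuse the local argument behind Proposition~\ref{prop:rot_properties}(4). Near the crossing to be smoothed one isotopes the two arcs to be tangent at the crossing point; the oriented smoothing then merely re-routes the two tangent arcs into two disjoint arcs carrying the same tangent directions, introducing no net turning. Since $\widetilde{rot}$ is, by Definition~\ref{def:rotation_number}, the total turning $\frac{\phi(b)-\phi(a)}{2\pi}$ of the tangent relative to $\vec v$, and this purely local modification changes $\phi(b)-\phi(a)$ by $0$, the \emph{integer} $\widetilde{rot}$ is preserved by one oriented smoothing; induction over $V$ gives $\widetilde{rot}(D^V_{or})=\widetilde{rot}(D)$. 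I would stress that this is the sharp integer statement, not merely the congruence mod $\bar\chi(F)$ recorded in Proposition~\ref{prop:rot_properties}, and that the argument is insensitive to whether components merge or split, because $\widetilde{rot}$ is the total turning summed over components and only that total enters.

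For the based index polynomial the first observation is that an oriented smoothing preserves the homology class: a crossing and its oriented resolution are homologous rel the four boundary points of the supporting disk, so $[D^V_{or}]=[D]\in H_1(F,\Z)$ and hence $\mu(D^V_{or})=\mu(D)$ (Definition~\ref{def:tangle_multiplicity}); both polynomials therefore live in the same ring $\Z[t,t^{-1}]/(t^{\mu(D)}-1)$. Next, the smoothed crossings $v\in V$ simply disappear from $\mathcal C(D^V_{or})$, contributing nothing. For a surviving crossing $v\in\mathcal C(D)\setminus V$ the sign $sgn(v)$ is unchanged, and I must check its based index is the same in $D$ and in $D^V_{or}$. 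Fixing a path $\gamma_{z,v}$ as in Definition~\ref{def:based_index_polynomial}, the oriented resolution inside each smoothing disk is homologous rel boundary to the original crossing, so the algebraic count $\gamma_{z,v}\cdot(-)$ is unaffected by the smoothings; hence $ind_z(v)=\gamma_{z,v}\cdot D=\gamma_{z,v}\cdot D^V_{or}$. Summing over the surviving crossings yields $P_{D^V_{or}}(t)=\sum_{v\in\mathcal C(D)\setminus V}sgn(v)t^{ind_z(v)}$.

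The step requiring the most care is the invariance of $ind_z(v)$ under smoothing the other crossings. Unlike intersection with a closed cycle, intersection of the fixed path $\gamma_{z,v}$ with $D$ is not a priori a homological invariant; what rescues us is the strictly local nature of the oriented smoothing together with the fact that, rel the four boundary points of the supporting disk, a crossing and its oriented smoothing carry the same relative $1$-homology class. I would make this precise by noting that the two local pictures have the same boundary, namely (outgoing points)$-$(incoming points), so their difference is a cycle supported in a disk and hence a boundary, forcing equal algebraic intersection with any transverse arc. Everything else — preservation of signs, disappearance of the smoothed crossings, and consistency of the modulus $\mu$ — is bookkeeping resting on Propositions~\ref{prop:rot_properties} and~\ref{prop:based_index_polynomial}.
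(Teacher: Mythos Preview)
Your proof is correct and follows essentially the same approach as the paper. The paper's own proof is two sentences: it simply invokes the invariance of the rotation number under $Sm^{or}$ (established in the proof of Proposition~\ref{prop:rot_properties}(4) via the tangent-arc argument you reproduce) and the invariance of the based index under $Sm^{or}$ (the same ``choose $\gamma_{z,v}$ outside the move'' mechanism behind Proposition~\ref{prop:based_index_polynomial}(4)); you have unpacked both of these, and your extra care about the integer $\widetilde{rot}$ versus the residue $rot$ and about matching the modulus $\mu$ is warranted and correct.
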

\begin{proof}
The equality $\widetilde{rot}(D^V_{or})=\widetilde{rot}(D)$ follows from invariance of the rotation number under $Sm^{or}$. The formula for $P_{D^V_{or}}(t)$ follows from invariance of the based index under $Sm^{or}$.
\end{proof}

\subsubsection{Skein modules of oriented diagrams}


\begin{lemma}\label{lem:H2_move}
\begin{enumerate}
\item $H(2)_+\sim \{H(2)_+^o,O_1^+\}$,
\item the move $H(2)^o_+$ can create/annihilate a pair of trivial circles with opposite orientation,
\item the move $H(2)^o_+$ can shift a trivial circle over an arc.
\end{enumerate}
\end{lemma}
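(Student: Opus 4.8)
The three claims are statements about the band-type moves $H(2)_+$ and $H(2)^o_+$ acting on crossingless oriented multicurves, so the plan is to argue entirely at the level of the smoothed diagrams, using the rotation number of Definition~\ref{def:rotation_number} as the main bookkeeping device. Recall from the proof of Proposition~\ref{prop:rot_properties} that a single $H(2)^o_+$ move replaces a pair of coherently oriented parallel arcs by the same arcs carrying two clockwise half-turns together with one extra circle $\bigcirc^+$, and hence preserves $\widetilde{rot}$; by contrast $O_1^+$ creates or deletes a single $\bigcirc^+$ and therefore changes $\widetilde{rot}$ by $+1$. The first claim is an equivalence of move sets, which I would prove by establishing finer-ness in both directions (Definition~\ref{def:knot_theory}); the second and third claims are then read off as derived effects of $H(2)^o_+$.

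For claim (1), the easy direction is that $O_1^+$ and $H(2)^o_+$ are each realized by $H(2)_+$: the move $H(2)^o_+$ is an $H(2)_+$ move followed by re-inserting the circle $\bigcirc^+$ by which the two moves differ, and $O_1^+$ is recovered as the degenerate instance of $H(2)_+$ in which the band is attached inside the disk bounded by a trivial circle so that the surgery merely splits off (or absorbs) a $\bigcirc^+$. For the converse, I would realize one $H(2)_+$ move as the composition: first perform the band surgery by $H(2)^o_+$, which produces the desired reconnection but leaves behind a spurious $\bigcirc^+$ together with a pair of clockwise half-turns; then delete the $\bigcirc^+$ with $O_1^+$; and finally remove the two half-turns by an ambient isotopy (a Whitney-type cancellation of the two oppositely running cusps), which is legitimate because they are coherently oriented turnbacks bounding an embedded bigon. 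The upshot is $H(2)_+\sim\{H(2)^o_+,O_1^+\}$.

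For claims (2) and (3) I would exhibit explicit configurations. For (2), the key observation is that two clockwise half-turns, when joined up, constitute exactly one clockwise trivial circle $\bigcirc^-$ (rotation $-1$); thus applying $H(2)^o_+$ to a configuration in which its two half-turns close into such a circle yields $\bigcirc^-$ together with the $\bigcirc^+$ that the move always deposits, so that $H(2)^o_+$ creates the canceling pair $\bigcirc^+\sqcup\bigcirc^-$ and, read backwards, annihilates it. For (3), to slide a trivial circle across an arc $\alpha$ I would use (2) to create a canceling pair straddling $\alpha$, with one circle on each side, and then cancel the original circle against the new oppositely oriented one lying on the same side; the surviving circle sits on the far side of $\alpha$, so the net effect is precisely the shift of the circle over the arc, the auxiliary half-turns introduced and removed cancelling throughout.

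The main obstacle is the orientation and rotation bookkeeping in claim (1): one must verify that the two clockwise half-turns left over after the $H(2)^o_+$ surgery are genuinely removable by an isotopy within the allowed moves, rather than secretly requiring a forbidden $\Omega_1$-type move on the multicurve, and that \emph{every} trivial circle produced along the way is positively oriented, so that $O_1^+$ — and not some unavailable $\bigcirc^-$-removal — suffices. Pinning down these sign conventions against the defining figures of the moves is the delicate point; once the conventions are fixed, the remaining arguments are routine picture manipulations.
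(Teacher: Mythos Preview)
Your overall strategy matches the paper's: claim (1) is proved by composing the moves in both directions, and claims (2) and (3) are exhibited by explicit local pictures (the paper literally says ``see Fig.~\ref{fig:H2o_two_circles} and~\ref{fig:H2o_circle_move}''). However, you have misread what the ``two clockwise half-turns'' in the proof of Proposition~\ref{prop:rot_properties} are, and this misreading is precisely what you identify as your ``main obstacle''.

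The move $H(2)^o_+$ is \emph{by definition} the move $H(2)_+$ together with the insertion of one $\bigcirc^+$; the sentence ``adds two clockwise half-turns and $\bigcirc^+$'' in Proposition~\ref{prop:rot_properties} is a rotation-number bookkeeping remark, not a description of three separate outputs. The two half-turns are the intrinsic turning of the reconnected arcs in the $H(2)_+$ surgery itself---they are part of the target of $H(2)_+$, not leftovers to be removed. So your converse direction in (1) should read simply: perform $H(2)^o_+$, then delete the extra $\bigcirc^+$ with $O_1^+$; done. There is no third ``Whitney cancellation'' step, and the worry about whether that step secretly needs an $\Omega_1$ evaporates. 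The paper states exactly this: ``The move $H(2)_+$ is a composition of the moves $H(2)_+^o$ and $O_1^+$.''

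For the other direction of (1), the paper's device is slightly cleaner than yours: apply $H(2)_+$ to a bend of an \emph{arc} (a U-shaped portion of a strand), which splits off a single $\bigcirc^+$ and returns the arc to its original isotopy class---this is $O_1^+$ on the nose. Your ``degenerate $H(2)_+$ inside a trivial circle'' produces two circles from one rather than one from none, so it does not directly realise $O_1^+$; you would still need an arc somewhere to absorb against. Once $O_1^+$ is in hand, $H(2)^o_+$ follows immediately as $H(2)_+$ composed with $(O_1^+)^{-1}$.

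For (2), the same misreading reappears: the pair $\bigcirc^+\sqcup\bigcirc^-$ is produced by applying $H(2)^o_+$ to a bend of an arc so that the reconnection pinches off the cap of the bend as a $\bigcirc^-$ while the move deposits its $\bigcirc^+$; the arc returns to a straight segment. There is no separate ``joining up of half-turns''. Your argument for (3), deriving the shift from (2) by create-then-cancel, is fine and is one natural way to reconstruct the picture in Fig.~\ref{fig:H2o_circle_move}.
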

\begin{proof}
The move $H(2)_+$ is a composition of the moves $H(2)_+^o$ and $O_1^+$. On the other hand, the move $H(2)_+$ applied to a bend of an arc generates $O_1^+$, and $H(2)_+$ with $O_1^+$ generates $H(2)_+^o$.

For the other statements, see Fig.~\ref{fig:H2o_two_circles} and~\ref{fig:H2o_circle_move}.
\end{proof}
\begin{figure}[h]
\centering\includegraphics[width=0.3\textwidth]{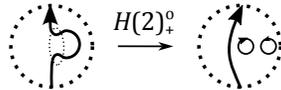}
\caption{The move $H(2)_+^o$ creates a pair of circles}\label{fig:H2o_two_circles}
\end{figure}

\begin{figure}[h]
\centering\includegraphics[width=0.3\textwidth]{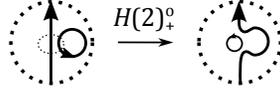}
\caption{Circle shift}\label{fig:H2o_circle_move}
\end{figure}

\begin{proposition}\label{prop:crossingless_skein_modules}
\begin{enumerate}
\item The inclusion $\mathscr D_+^0(F)\hookrightarrow \mathscr D_+(F)$ induces bijections
\begin{gather*}
\mathscr K^0_+(F|H(2)_+)\simeq \mathscr K_+(F\mid\Omega_1,\Omega_2,\Omega_3, Sm^+),\\
\mathscr K^0_+(F|H(2)_+^o)\simeq \mathscr K_+(F\mid\Omega_2,\Omega_3, Sm^+).
\end{gather*}
\item The maps $h\colon \mathscr D_+(F)\to H_1(F,\Z)$, $h(D)=[D]$, and $rot\colon \mathscr D_+(F)\to\Z_{\bar\chi(F)}$ induce bijections
\begin{gather*}
\mathscr K^0_+(F|H(2)_+^o)\simeq H_1(F,\Z)\oplus\Z_{\bar\chi(F)},\\
\mathscr K^0_+(F|H(2)_+)\simeq H_1(F,\Z).
\end{gather*}
\end{enumerate}
\end{proposition}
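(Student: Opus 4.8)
The plan is to treat the two parts separately: Part 1 will follow formally from the invariance of the oriented smoothing functorial map, while Part 2 requires a normal-form reduction of crossingless oriented multicurves. Throughout, write $f\colon\mathscr D_+(F)\to\mathscr D^0_+(F)$ for the map that smooths every crossing by $Sm^+$, and $\iota$ for the inclusion. On a crossingless diagram $f\circ\iota=\id$, and for any $D$ one has $\iota\circ f(D)=f(D)\sim D$ in the right-hand theory, since $f(D)$ is reached from $D$ by a sequence of $Sm^+$ moves. It therefore suffices to check that both maps descend to equivalence classes. The map $f$ descends because the oriented smoothing functorial map is invariant with destination theory $\{H(2)_+\}$ in the classical case and $\{H(2)^o_+\}$ in the regular case (Section~\ref{subsect:unary_or_smoothing}): the images under $f$ of the two sides of an $\Omega$-move differ by the stated $H(2)$-move, while an $Sm^+$ move leaves $f$ literally unchanged. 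Conversely, $\iota$ descends because the invariance conditions of Fig.~\ref{fig:unary_or_smoothing} were obtained precisely by smoothing the two sides of each Reidemeister move; reading them backwards, each generator $O_1^+$ and $H(2)^o_+$ between crossingless diagrams $D_0,D_0'$ is realized as $D_0=f(E)\sim_{Sm^+}E\sim_{\Omega}E'\sim_{Sm^+}f(E')=D_0'$, so $H(2)$-equivalent crossingless diagrams are $\{\Omega,Sm^+\}$-equivalent. Here $O_1^+$ comes from $\Omega_1$ (present only in the classical case) and $H(2)^o_+$ from $\Omega_2,\Omega_3$, which yields both bijections simultaneously.

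For Part 2, I would first note that $h(D)=[D]$ and $rot(D)$ are invariants of the relevant moves: $[D]$ because the local moves are homologically trivial, and $rot(D)$ under $H(2)^o_+$ by Proposition~\ref{prop:rot_properties}(4). Surjectivity of $(h,rot)$ is easy: realize $\alpha\in H_1(F,\Z)$ by an embedded oriented multicurve and then adjust the rotation residue by adjoining copies of $\bigcirc^+$, each contributing $+1$ to $\widetilde{rot}$ and $0$ to homology by Proposition~\ref{prop:rot_properties}(3).

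The heart of the matter is injectivity. I would bring an arbitrary crossingless diagram $D$ into the normal form $C_\alpha\sqcup k\bigcirc^+$, where $C_\alpha$ is a fixed standard multicurve representing $\alpha=[D]$. Using oriented band surgeries, which are instances of $H(2)^o_+$, I merge components and trade off homologically trivial pieces, then cancel oppositely oriented trivial circles in pairs by Lemma~\ref{lem:H2_move}(2), leaving a net count $k\bigcirc^+$. Since isotoping a circle across the singular point $z$ of $\vec v$ changes $\widetilde{rot}$ by $\chi(F)=\bar\chi(F)$, the integer $k$ is well defined only modulo $\bar\chi(F)$ and satisfies $k\equiv rot(D)-rot(C_\alpha)\bmod\bar\chi(F)$; when $\partial F\neq\emptyset$ there is no singular point, $\bar\chi(F)=0$, and $\widetilde{rot}$ is a genuine invariant, consistent with $\Z_{\bar\chi(F)}=\Z$. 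Two diagrams with equal $h$ and equal $rot$ then reduce to the same normal form, giving $\mathscr K^0_+(F|H(2)^o_+)\simeq H_1(F,\Z)\oplus\Z_{\bar\chi(F)}$.

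Finally, for the $H(2)_+$ bijection I would invoke $H(2)_+\sim\{H(2)^o_+,O_1^+\}$ (Lemma~\ref{lem:H2_move}(1)): the extra move $O_1^+$ deletes a single $\bigcirc^+$, and combined with the pair-creation direction of $H(2)^o_+$ it removes trivial circles of either orientation, so the normal form collapses to $C_\alpha$ alone and only the class $\alpha$ survives, yielding $\mathscr K^0_+(F|H(2)_+)\simeq H_1(F,\Z)$. I expect the main obstacle to be the reduction to normal form in the injectivity step, specifically proving that any two embedded oriented multicurves representing the same homology class become $H(2)^o_+$-equivalent after adjoining trivial circles. This needs a careful cut-and-paste argument, organized along a handle decomposition of $F$, showing that once oriented saddles are allowed the isotopy type of the homology representative is immaterial, with all rotation-number bookkeeping absorbed into the single $k\bigcirc^+$ summand.
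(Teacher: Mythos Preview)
Your Part~1 matches the paper's proof: both use that the smoothing map $f_{Sm^+}$ and the inclusion are mutually inverse on equivalence classes, the only content being that smoothing a Reidemeister move gives an $H(2)_+^o$ (or $O_1^+$ for $\Omega_1$) and that conversely each such move is realized by smoothing some $\Omega$-move.

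For Part~2 your normal-form strategy $C_\alpha\sqcup k\bigcirc^+$ is close in spirit to the paper's argument, but the paper organizes injectivity more cleanly and fills exactly the gap you anticipate. Instead of fixing a base representative $C_\alpha$ for each homology class, the paper observes that disjoint union makes $\mathscr K^0_+(F\mid H(2)_+^o)$ into a group, with inverse given by orientation reversal (since $D_i\cup(-D_i)$ is $H(2)_+^o$-equivalent to a cancelling pair of trivial circles). Injectivity then reduces to a single lemma: if $[D]=0$ then $D\sim_{H(2)_+^o}k\bigcirc^+$. This is proved concretely by writing $D=\bigcup_i\partial\sigma_i$ with each $\sigma_i$ a compact subsurface, viewing $\sigma_i$ as a polygon with identifications and holes, and sliding $\partial\sigma_i$ inward by $H(2)_+^o$ moves until only $\chi(\sigma_i)\bigcirc^+$ remains. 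This replaces your proposed handle-decomposition cut-and-paste.

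There is however one genuine slip in your sketch. You argue that ``isotoping a circle across the singular point $z$ changes $\widetilde{rot}$ by $\chi(F)$'' and conclude that the integer $k$ in the normal form is well-defined only modulo $\bar\chi(F)$. But an isotopy across $z$ is still an isotopy: it does not change the diagram, and in particular does not change the number of trivial circles. What changes is the \emph{value} of the auxiliary lift $\widetilde{rot}$, which only tells you that the invariant cannot distinguish $k$ from $k+\chi(F)$. For injectivity you need the converse: an actual sequence of $H(2)_+^o$ moves taking $k\bigcirc^+$ to $(k+\chi(F))\bigcirc^+$. The paper supplies this explicitly: when $\partial F=\emptyset$, take a $\bigcirc^-$ (creating one first if necessary), regard it as bounding all of $F$ minus a disk, and apply the boundary-reduction procedure above; this converts one $\bigcirc^-$ into $(\chi(F)-1)$ copies of $\bigcirc^+$, changing the algebraic circle count by exactly $\chi(F)$. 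Without this move your normal-form argument does not close in the closed-surface case.
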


\begin{proof}
Consider the projection $f_{Sm^+}\colon \mathscr D_+(F)\to\mathscr D_+^0(F)$ which smoothes all crossings according to the orientation of the diagram. If two diagrams $D$ and $D'$ in $\mathscr D_+(F)$ are connected by a second or a third Reidemeister move then $f_{Sm^+}(D)$ and $f_{Sm^+}(D')$ are connected by an isotopy or a move $H(2)_+^o$ (see Section~\ref{subsec:unary_functorial_maps}). On the other hand, $H(2)_+^o$ is a composition of moves $\Omega_2$ and $Sm^+$.
A first Reidemeister move becomes the move $O_1^\pm$ after the projection. 

Thus, the projection $f_{Sm^+}$ induces bijections
\begin{gather*}
\mathscr K^0_+(F|H(2)_+^o)\simeq \mathscr K_+(F\mid\Omega_2,\Omega_3, Sm^+),\\
\mathscr K^0_+(F|H(2)_+)\simeq \mathscr K^0_+(F|H(2)_+^o,O_1^+)\simeq\mathscr K_+(F\mid\Omega_1,\Omega_2,\Omega_3, Sm^+).
\end{gather*}

Consider the map $\Phi=h\oplus rot\colon \mathscr D_+(F)\to H_1(F,\Z)\oplus\Z_{\bar\chi(F)}$. By Proposition~\ref{prop:rot_properties}, $\Phi$ is invariant under $\Omega_2$, $\Omega_3$ and $Sm^+$. Hence, it induces a map
$\Phi\colon\mathscr K_+(F\mid\Omega_2,\Omega_3, Sm^+)\to H_1(F,\Z)\oplus\Z_{\bar\chi(F)}$. The map $\Phi$ is a homomorphism where the group operation on the diagrams is the union: $D_1+D_2=D_1\cup D_2$. Note that $Sm^+$ generates the crossing change $CC$, hence, the union is well defined. The inverse to a diagram $D$ is the diagram with the reversed orientation denoted by $-D$. For any component $D_i\subset D$, the union $D_i\cup -D_i$ can be transformed with on $H(2)_+^o$ move into a pair of trivial circles with opposite orientations. These circles can be eliminated by Lemma~\ref{lem:H2_move}.

We will show that $\Phi$ is an isomorphism. Let us demonstrate first that $\Phi$ is an epimorphism. Let $(\alpha,\rho)\in H_1(F,\Z)\oplus\Z_{\bar\chi(F)}$. Consider a diagram $D$ such that $[D]=\alpha$. Then $\Phi\left(D+(\rho-rot(D))\bigcirc^+\right)=(\alpha,\rho)$.

\begin{lemma}\label{lem:H20_reduction}
Let $D$ be an oriented link diagram in $F$ such that $[D]=0\in H_1(F,\Z)$. Then $D$ is $H(2)_+^o$-equivalent to a union of trivial circles.
\end{lemma}
\begin{proof}
Since $[D]=0$, $D=\bigcup_i\partial\sigma_i$ where $\sigma_i\subset F$ is a compact connected two-dimensional surface. Then $\sigma_i$ is a sphere with handles and holes. We can look at $\sigma_i$ as a polygon with glued edges and holes. The transformation of  $\partial\sigma_i$ into a union of trivial circles is shown in Fig.~\ref{fig:H2o_boundary_reduction}. Note that $\partial\sigma_i$ turns into $\chi(\sigma_i)\bigcirc^+$.

\begin{figure}[h]
\centering\includegraphics[width=0.8\textwidth]{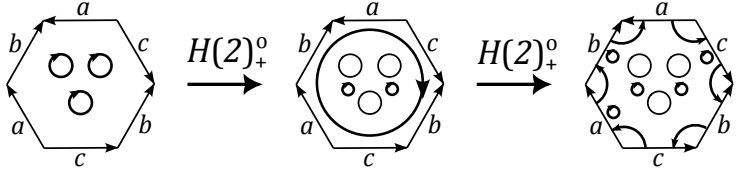}
\caption{Transformation of $\partial\sigma_i$}\label{fig:H2o_boundary_reduction}
\end{figure}
Then $D$ can be transformed into a union of trivial circles.
\end{proof}

Let us show that $\Phi$ is a monomorphism. Let $D_1,D_2\in \mathscr D_+(F)$, $[D_1]=[D_2]$ and $rot(D_1)=rot(D_2)$. Then $D_2=D_1+(D_2-D_1)\sim_{H(2)_+^o}D_1\cup k\bigcirc^+$. Since the move $H(2)_+^o$ does not change the rotation number, $k= rot(D_2-D_1)=rot(D_2)-rot(D_1)=0\pmod{\bar\chi(F)}$. Assume $k\ne 0$. Then $\partial F=\emptyset$ and $k=l\cdot\chi(F)$. Take a negatively oriented trivial circle $\bigcirc^-$ (if there is no such circles then create it with moves $H(2)_+^o$) and transform it like in Fig.~\ref{fig:H2o_boundary_reduction}. Then we get $(\chi(F)-1)$ positive circles. Hence, the algebraic number of trivial circles changes by $(\chi(F)-1)-(-1)=\chi(F)$. Repeating this operation several times we can eliminate all $k=l\chi(F)$ trivial circles.
Hence, $D_2\sim_{H(2)_+^o}D_1$.

Thus, $\Phi$ is an isomorphism.

The move $\Omega_1$ is equivalent to the move $O_1^+$ modulo $Sm^+$. The move $O_1^+$ does not change the homology class but changes the rotation number by $1$. Hence, factorization by $\Omega_1$ eliminates the rotation number that gives the last statement of the proposition.
\end{proof}

Let us pass to skein modules of diagrams with crossings. 

For a given class $\alpha\in H_1(F,\Z)$, consider the ring $R_\alpha=\Z[t,t^{-1}]/(t^{\mu(\alpha)}-1)$ and the set $S_\alpha=\Z\times R_\alpha$. The group $\Z$ acts on $S_\alpha$ by the map $\lambda$ given by the formula
$\lambda(x,f(t))=(x+\chi(F),t^{-1}f(t))$ if $\partial F=\emptyset$, and $\lambda=id$ if $\partial F\ne\emptyset$. Denote $\bar S_\alpha=S_\alpha/\lambda$.

If $\partial F=\emptyset$ and $\chi(F)\ne 0$ then $\bar S_\alpha\simeq\Z_{\chi(F)}\times R_\alpha$. If $\partial F=\emptyset$ and $\chi(F)=0$ then $\bar S_\alpha\simeq\Z\times (R_\alpha)_{cycl}$ where $(R_\alpha)_{cycl}$ is the quotient of $R_\alpha$ by the action $\lambda\colon f(t)\mapsto t^{-1}f(t)$.

Denote $S=\coprod_{\alpha\in H_1(F,\Z)}S_\alpha$ and $\bar S=\coprod_{\alpha\in H_1(F,\Z)}\bar S_\alpha= S/\lambda$.

\begin{proposition}\label{prop:skein_base_module}
The map $\Phi\colon\mathscr D_+(F)\to S$, $\Phi(D)=(\widetilde{rot}(D),P_D(t))\in S_{[D]}$, induces a bijection $\bar\Phi\colon\mathscr K_+(F|H(2)_+^o, \Omega_2)\simeq \bar S$.
\end{proposition}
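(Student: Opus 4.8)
The plan is to verify that $\Phi$ descends to $\bar\Phi$ and then to check that $\bar\Phi$ is a bijection by proving surjectivity and injectivity separately. For well-definedness, I would first observe that both generators of the equivalence relation preserve the pair $(\widetilde{rot},P_D)$: the move $\Omega_2$ is a regular homotopy supported away from $z$, so it fixes $\widetilde{rot}$, and it fixes $P_D$ by Proposition~\ref{prop:based_index_polynomial}(4); the move $H(2)^o_+$ fixes $\widetilde{rot}$ by Proposition~\ref{prop:rot_properties}(4), and since $H(2)^o_+$ is generated by $H(2)_+$ and $O_1^+$ (Lemma~\ref{lem:H2_move}), Proposition~\ref{prop:based_index_polynomial}(4) shows it fixes $P_D$ as well. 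The only remaining way to alter $\Phi$ is an isotopy of $F$ that drags an arc of the diagram across the basepoint $z$; by Proposition~\ref{prop:based_index_polynomial}(3) this multiplies $P_D$ by $t^{\pm1}$, while the rotation number jumps by the index $\chi(F)$ of the singular point of $\vec v$ (as in the proof of Proposition~\ref{prop:rot_properties}). These two effects are exactly one application of $\lambda^{\pm1}$, so $\Phi$ is constant on classes modulo $\lambda$, and $\bar\Phi$ is well defined.

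For surjectivity, given $(\alpha,(x,f))\in S$ with $f=\sum_k a_k t^k\in R_\alpha$, I would construct a representative directly. Starting from a crossingless diagram $C$ with $[C]=\alpha$, I attach, for each $k$ and each of the $|a_k|$ units of $a_k$, a small Reidemeister~I curl placed in a region whose based index is $k$ and whose sign is $\mathrm{sgn}(a_k)$ (enlarging $C$ if necessary so that a region of each required based index is available). This produces a diagram $D$ with $[D]=\alpha$ and $P_D=f$. The curls and $C$ contribute some rotation number $r_0$, and disjoint-union with $(x-r_0)$ copies of $\bigcirc^+$ (or with negatively oriented circles) adjusts $\widetilde{rot}(D)$ to $x$ without changing $[D]$ or $P_D$. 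Thus $\Phi(D)=(x,f)\in S_\alpha$, and $\bar\Phi$ is surjective.

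Injectivity is the heart of the argument. Suppose $\bar\Phi$ identifies the classes of $D_1$ and $D_2$. After dragging arcs of $D_2$ across $z$ finitely many times---an isotopy, hence harmless in $\mathscr K_+(F|H(2)^o_+,\Omega_2)$---I may arrange $\Phi(D_1)=\Phi(D_2)$ in $S$, so that $[D_1]=[D_2]=\alpha$, $\widetilde{rot}(D_1)=\widetilde{rot}(D_2)$ and $P_{D_1}=P_{D_2}$. The goal is then to reduce each $D_i$ by $\Omega_2$ and $H(2)^o_+$ to a common normal form depending only on $(\alpha,\widetilde{rot},P_{D_i})$: a fixed crossingless core representing $\alpha$, a standard family of curls realizing the monomials of $P_{D_i}$, and a controlled number of trivial circles fixing the rotation number. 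The key local fact is that each of $\Omega_2$ and $H(2)^o_+$ removes or inserts a pair of crossings of opposite sign and equal based index, contributing $t^k-t^k=0$ to $P_D$; since $P_{D_1}=P_{D_2}$, the multisets of $(\mathrm{sign},\,\text{based index})$ of the two diagrams agree up to such cancelling pairs.

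The hard part will be carrying out this reduction \emph{without} the third Reidemeister move: ordinarily one would use $\Omega_3$ to slide strands and bring a cancelling pair into a bigon, but here $\Omega_3$ is unavailable and its role must be played by $H(2)^o_+$, whose effects (creating or annihilating $\bigcirc^+\cup\bigcirc^-$ and sliding a trivial circle across an arc, Lemma~\ref{lem:H2_move}) supply precisely the extra freedom needed. Concretely, I would first clear the crossingless core using the reduction already established in Lemma~\ref{lem:H20_reduction} and Proposition~\ref{prop:crossingless_skein_modules}, which shows that two crossingless diagrams with the same $(\alpha,\widetilde{rot})$ are $H(2)^o_+$-equivalent; the crossing data, recorded faithfully by $P_{D_i}$, is then matched by the pair cancellations above. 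Once both $D_1$ and $D_2$ are brought to the same normal form they are $\{\Omega_2,H(2)^o_+\}$-equivalent, proving injectivity and hence that $\bar\Phi$ is a bijection.
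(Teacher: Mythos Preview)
Your well-definedness and surjectivity arguments are essentially the paper's, though the paper packages the ``curls of prescribed based index'' as explicit diagrams $\infty_k^\epsilon$ (a figure-eight surrounded by $|k|$ concentric circles together with $|k|$ compensating trivial circles), which makes the bookkeeping cleaner. One small correction: $H(2)^o_+$ does \emph{not} insert or remove a pair of crossings; it is a move on crossingless portions of the diagram. Invariance of $P_D$ under $H(2)^o_+$ holds simply because the crossing set is untouched.

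The injectivity sketch, however, has a real gap. You propose to ``clear the crossingless core'' via Lemma~\ref{lem:H20_reduction} and Proposition~\ref{prop:crossingless_skein_modules}, but those results apply only to diagrams \emph{without} crossings; you have not explained how to decouple the crossings from the rest of $D_i$ using only $\Omega_2$ and $H(2)^o_+$. Without $\Omega_3$ there is no direct way to ``bring a cancelling pair into a bigon'', and the vague appeal to $H(2)^o_+$ playing the role of $\Omega_3$ does not supply a mechanism. The paper's key device is precisely this decoupling: near any crossing one applies $H(2)^o_+$ to split off a separate $\infty$-component carrying that crossing, leaving behind a crossingless diagram $D^0$ to which the earlier classification applies. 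One then needs the further lemma that $\infty_k^\epsilon$ can be slid across an arc at the cost of shifting $k\mapsto k\pm 1$, and that $\infty_k^+\sqcup\infty_k^-\sim\emptyset$; these facts allow all $\infty$-components to be gathered near $z$ and reduced to a normal form $f(\infty)$.

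There is a second gap you have not addressed at all: equality $P_{D_1}=P_{D_2}$ holds only in $R_\alpha=\Z[t,t^{-1}]/(t^{\mu}-1)$, so after normalising both sides you may obtain $f_1(\infty)$ and $f_2(\infty)$ with $f_1-f_2$ a nonzero multiple of $t^\mu-1$ in $\Z[t,t^{-1}]$. The paper resolves this by dragging an $\infty$-component once around a closed curve $\gamma$ with $D\cdot\gamma=\mu$, which turns $\infty_k^\epsilon$ into $\infty_{k+\mu}^\epsilon$ and hence realises the relation $t^\mu\equiv 1$ geometrically. Your sketch, which only accounts for the $\lambda$-action via arc-passes over $z$, does not cover this.
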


\begin{proof}
By Propositions~\ref{prop:rot_properties} and~\ref{prop:based_index_polynomial}, the map $\Phi$ is invariant under diagram isotopies in $F\setminus\{z\}$, moves $\Omega_2$ and $H(2)_+^o$. If a diagram $D'$ is obtained from a diagram $D$ by the passing of an upwards oriented arc over the base point $z$ from right to left, then $\widetilde{rot}(D')=\widetilde{rot}(D)+\chi(F)$ and $P_{D'}(t)=t^{-1}P_D(t)$, i.e. $\Phi(D')=\lambda(\Phi(D))$. Hence, $\Phi$ defines a correct map $\bar\Phi\colon\mathscr K_+(F|H(2)_+^o, \Omega_2)\to \bar S$.

For any $\epsilon=\pm 1$ and any $k\in\Z$ consider a diagram $\infty_k^\epsilon$ that consists of a component with one crossing of the sign $\epsilon$ which lies inside $|k|$ concentric circles positively oriented if $k\le 0$ and negatively oriented if $k>0$, and $|k|$ trivial circles which has the opposite orientation to that of the concentric circles (Fig.~\ref{fig:infinity_dressed}).

\begin{figure}[h]
\centering\includegraphics[width=0.2\textwidth]{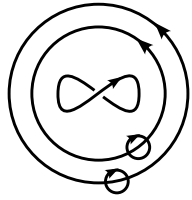}
\caption{The diagram $\infty^{+}_{-2}$}\label{fig:infinity_dressed}
\end{figure}

\begin{lemma}\label{lem:infinity_properties}
\begin{enumerate}
\item $rot(\infty^\epsilon_k)=0$;
\item if the diagram $\infty_k^\epsilon$ is located near the base point $z$, i.e. it is not separated from $z$ by an arc, then $P_{\infty_k^\epsilon}(t)=\epsilon t^k$.
\item $\infty_k^+ \sqcup\infty_k^-\sim_{H(2)_+^o,\Omega_2} \emptyset$;
\item the diagram $\infty_k^\epsilon$ can be shifted over an upwards oriented arc from left to right by moves $H(2)_+^o,\Omega_2$ to the diagram $\infty_{k+1}^\epsilon$.
\end{enumerate}
\end{lemma}
\begin{proof}
The first two statements follows from the definitions of rotation number and based index polynomial.

The proof of the third statement is given in Fig.~\ref{fig:infinity_annihilation} and~\ref{fig:infinity_dressed_annihilate}.
\begin{figure}[h]
\centering\includegraphics[width=0.6\textwidth]{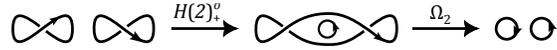}
\caption{Annihilation of the diagrams $\infty^+$ and $\infty^-$}\label{fig:infinity_annihilation}
\end{figure}

\begin{figure}[h]
\centering\includegraphics[width=0.8\textwidth]{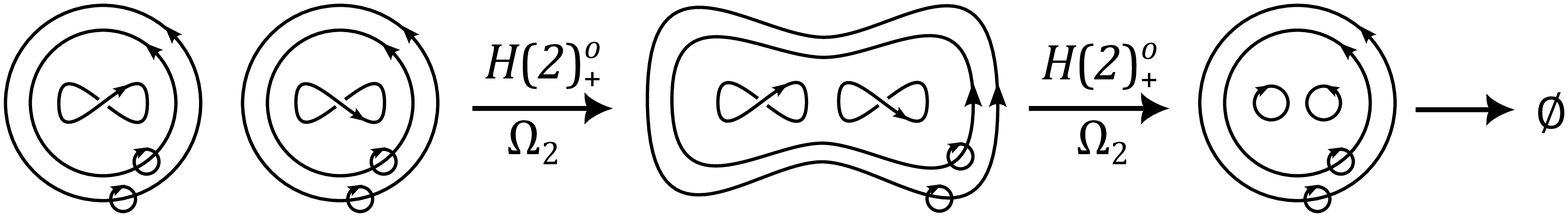}
\caption{Annihilation of the diagrams $\infty_k^+$ and $\infty_k^-$}\label{fig:infinity_dressed_annihilate}
\end{figure}

The proof of the last statement is given in Fig.~\ref{fig:infinity_pass}.
\begin{figure}[h]
\centering\includegraphics[width=0.6\textwidth]{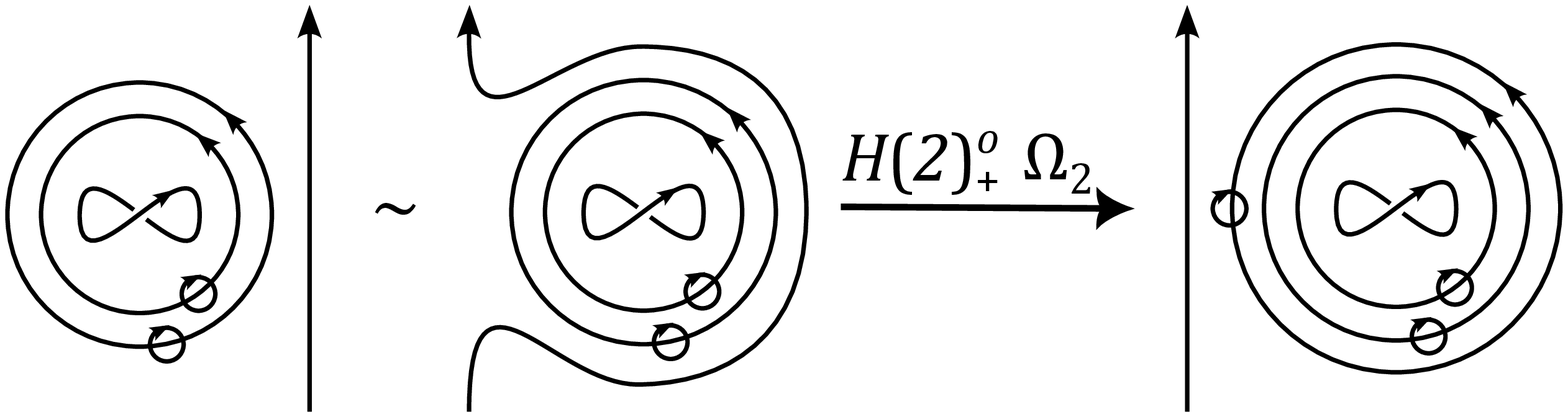}
\caption{Shift of the diagram $\infty^\epsilon_k$}\label{fig:infinity_pass}
\end{figure}
\end{proof}

\begin{remark}
We will use notation $\infty_k$ for $\infty_k^+$ and $-\infty_k$ for $\infty_k^-$.
For a polynomial $f(t)=\sum_l a_lt^l\in\Z[t,t^{-1}]$, the disjoint union $\bigsqcup_{l\in\Z}|a_l|\infty_l^{sgn(a_l)}$ of diagrams $\infty^\epsilon_k$ located near the base point $z$ will be denoted by $\sum_{l\in\Z}a_l\infty_l$ or $f(\infty)$. By Lemma~\ref{lem:infinity_properties}, $P_{f(\infty)}(t)=f(t)$.
\end{remark}

We return to the proof of the proposition. Let us show that $\bar\Phi$ is an epimorphism. Let $\alpha\in H_1(F,\Z)$  and $(x,f(t))\in S_\alpha$. Consider a diagram $D\in\mathscr D^0_+(F)$ such that $[D]=\alpha$. Denote $k=x-\widetilde{rot}(D)$ and $g(t)=f(t)-P_D(t)$. Add $k$ trivial circles $\bigcirc^+$ and the diagram $g(\infty)$ to $D$. Denote the obtained diagram by
\[
D'=D + k\bigcirc^+ + g(\infty).
\]
Then $\widetilde{rot}(D')=\widetilde{rot}(D)+k=x$ and $P_{D'}(t)=P_D(t)+g(t)=f(t)$. Thus, $\Phi(D')=(x,f(t))\in S_\alpha$.

Let us show that $\bar\Phi$ is a monomorphism. Let $D_1$ and $D_2$ be diagrams such that $[D_1]=[D_2]$ and $\bar\Phi(D_1)=\bar\Phi(D_2)\in\bar S_{[D_1]}$.
Apply moves $H(2)_+^o$ near the crossings of $D_1$ in order to replace the crossings with $\infty$-components (Fig.~\ref{fig:crossing_to_infinity}). Then shift $\infty$-components and trivial components to the base point.

\begin{figure}[h]
\centering\includegraphics[width=0.6\textwidth]{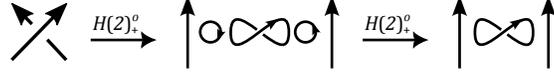}
\caption{Creation of $\infty$-components}\label{fig:crossing_to_infinity}
\end{figure}

Thus, we transform the diagram $D_1$ to a diagram
\[
D_1'=D_1^0+ k_1\bigcirc^+ +f_1(\infty),
\]
where $D_1^0$ is a diagram without crossings, $k_1\in\Z$ and $f_1(t)=\sum\in\Z[t,t^{-1}]$. Since the transformation does not touch the base point, $\widetilde{rot}(D_1)=\widetilde{rot}(D_1')=\widetilde{rot}(D_1^0)+k_1$ and $P_{D_1}(t)=P_{D_1'}(t)=f_1(t)\bmod{(t^{\mu}-1)}$ where $\mu=\mu(D_1)=\mu(D_2)$. Analogously, the diagram $D_2$ is transformed to a diagram
\[
D_2'=D_2^0+ k_2\bigcirc^+ +f_2(\infty),
\]
where $k_2=\widetilde{rot}(D_2)-\widetilde{rot}(D_2^0)$ and $P_{D_2}(t)=f_2(t)\bmod{(t^{\mu}-1)}$.

By Proposition~\ref{prop:crossingless_skein_modules}, $D_1^0$ is $H(2)_+^o$-equivalent to the diagram $D_2^0+k\bigcirc^+$ where  $k=\widetilde{rot}(D_1^0)-\widetilde{rot}(D_2^0)+r\cdot\bar\chi(F)$ and $r$ is the number of times the diagram $D_1^0$ passes over the base point during the transformation. Note that $r=0$ is $\partial F\ne\emptyset$.

Then $D_1'$ is $\{H(2)_+^o,\Omega_2\}$-equivalent to the diagram
\[
D=D_2^0+(k+k_1)\bigcirc^+ +\lambda^r(f_1)(\infty).
\]
Since $\bar\Phi(D_1)=\bar\Phi(D_2)$, there exists $s\in\Z$ such that $\Phi(D_1)=\lambda^s(\Phi(D_2))$, i.e. $\widetilde{rot}(D_1)=\widetilde{rot}(D_2)+s\cdot\bar\chi(F)$ and $P_{D_1}(t)=t^{-s}P_{D_2}(t)$. Then
\[
k+k_1=\widetilde{rot}(D_1)-\widetilde{rot}(D_2)+k_2+r\cdot\bar\chi(F)=k_2+(r+s)\bar\chi(F)
\]
and
\[
t^{-r}f_1(t)\equiv t^{-r}P_{D_1}(t)=t^{-r-s}P_{D_2}(t)=t^{-(r+s)}f_2(t)\bmod(t^{\mu}-1).
\]
Using moves $\Omega_2$ and $H(2)_+^o$, modify the diagram $D$ as follows. Take a trivial circle $\bigcirc^+$, move it so that the base point becomes its center and then apply the transformation as in Fig.~\ref{fig:H2o_boundary_reduction}. Then the number of trivial circles decrease by $\chi(F)$ and the subdiagram $\lambda^r(f_1)(\infty)$ turns into $\lambda^{r-1}(f_1)(\infty)$. Repeat this operation $r+s$ times. Then we get the diagram
\[
D'=D_2^0+k'\bigcirc^+ +g(\infty),
\]
where $k'=(k+k_1)-(r+s)\chi(F)=k_2$ and $g(t)=t^{r+s}(t^{-r}f_1(t))\equiv f_2(t)\bmod{t^{\mu}-1}$.

If $g(t)-f_2(t)\ne 0\in\Z[t,t^{-1}]$ then $g(t)-f_2(t)=(t^\mu-1)\sum_l a_l t^l$. Let $\gamma\subset F$ be a closed path which starts near the base point $z$ such that $D'\cdot\gamma=\mu$. Consider the following transformation of the diagram $D'$. Take the subdiagram $\infty^\epsilon_k$ in $D'$ (if there is no such a subdiagram create a pair $\infty^\epsilon_k$ and $\infty^{-\epsilon}_k$ using moves $\Omega_2$ and $H(2)_+^o$). Pull $\infty^\epsilon_k$ along the path $\gamma$. By Lemma~\ref{lem:infinity_properties}, $\infty^\epsilon_k$ becomes $\infty^\epsilon_{k+\mu}$. Then the part $g(\infty)$ of $D'$ becomes $g_1(\infty)$ where $g_1(t)=g(t)+\epsilon t^k(t^\mu-1)$. Applying this transformation $|a_l|$ times to $\infty_l^{-sgn(a_l)}$, $l\in\Z$, we get the diagram
\[
D''=D_2^0+k'\bigcirc^+ +h(\infty),
\]
where $h(t)=g(t)-(t^\mu-1)\sum_l a_l t^l=f_2(t)$. Hence, the diagram $D''$ is isotopic to $D'_2$. Thus, we have a sequence of $\{\Omega_2, H(2)_+^o\}$-equivalences
\[
D_1\sim D_1'\sim D\sim D'\sim D''=D_2'\sim D_2.
\]
The proposition is proved.
\end{proof}

For a class $\alpha\in H_1(F,\Z)$ denote $R_\alpha^{\Z_2}=\Z_2\otimes R_\alpha=\Z_2[t,t^{-1}]/(t^{\mu(\alpha)}-1)$ and $(R_\alpha^{\Z_2})_{cycl}=\Z_2\otimes (R_\alpha)_{cycl}$.
Let $S^{\Z_2}_\alpha=\Z\times R_\alpha^{\Z_2}$, $\bar S^{\Z_2}_\alpha=S^{\Z_2}_\alpha/\lambda$, $S^{\Z_2}=\coprod_{\alpha\in H_1(F,\Z)}S_\alpha^{\Z_2}$, and $\bar S^{\Z_2}=\coprod_{\alpha\in H_1(F,\Z)}\bar S_\alpha^{\Z_2}$.

\begin{corollary}\label{cor:smoothing_skein modules_or}
There are bijections:
\begin{enumerate}
\item $\mathscr K_+(F | H(2)_+^o,\Omega_2,\Omega_3)\simeq H_1(F,\Z)\times\Z_{\bar\chi(F)}\times\Z$ induced by the map $D\mapsto ([D],rot(D),wr(D))$,
\item $\mathscr K_+(F | H(2)_+,\Omega_2,\Omega_3)\simeq H_1(F,\Z)\times\Z$ induced by the map $D\mapsto ([D],wr(D))$,
\item $\mathscr K_+(F | H(2)_+^o,\Omega_1,\Omega_2,\Omega_3)\simeq H_1(F,\Z)\times\Z_2$ induced by the map $D\mapsto ([D],rot(D)+wr(D)\bmod 2)$,
\item $\mathscr K_+(F | H(2)_+,\Omega_1,\Omega_2,\Omega_3)\simeq H_1(F,\Z)$ induced by the map $D\mapsto [D]$,
\item $\mathscr K_+(F | H(2)_+^o,\Omega_2,2\Omega_\infty)\simeq \bar S^{\Z_2}\times\Z$ induced by the map
\[
D\mapsto (\widetilde{rot}(D),P_D(t)\bmod 2, \lfloor \frac{wr(D)}2\rfloor)\in S_{[D]}^{\Z_2}\times\Z,
\]
\item $\mathscr K_+(F | H(2)_+,\Omega_2,2\Omega_\infty)\simeq \coprod_{\alpha\in H_1(F,\Z)}(R_\alpha^{\Z_2})_{cycl}\times\Z$ induced by the map
\[
D\mapsto (P_D(t)\bmod 2, \lfloor \frac{wr(D)}2\rfloor)\in (R_\alpha^{\Z_2})_{cycl}\times\Z.
\]
\end{enumerate}
\end{corollary}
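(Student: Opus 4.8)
The plan is to derive all six bijections from Proposition~\ref{prop:skein_base_module}, which already identifies $\mathscr K_+(F|H(2)_+^o,\Omega_2)$ with $\bar S$ via $D\mapsto(\widetilde{rot}(D),P_D(t))$. Each of the six knot theories in the corollary is obtained from $\{H(2)_+^o,\Omega_2\}$ by adjoining a subset of the moves $\Omega_3$, $\Omega_1$, $O_1^+$ (recall $H(2)_+\sim\{H(2)_+^o,O_1^+\}$ by Lemma~\ref{lem:H2_move}) and $2\Omega_\infty$. Consequently $\mathscr K_+(F|\mathcal M'')$ is the quotient of $\bar S$ by the equivalence relation generated by the extra moves, and the whole problem reduces to computing, move by move, the relation each one induces on the pair $(\widetilde{rot},P_D)$ and then taking the corresponding quotient of $\bar S$.

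First I would record the effect of each generator. For $\Omega_3$: the writhe $wr(D)=P_D(1)$ and the rotation class are unchanged (writhe is $\Omega_2,\Omega_3$-invariant and $rot$ is a regular-homotopy invariant, cf.\ Proposition~\ref{prop:rot_properties}), while a single third move slides a crossing across a transverse strand and hence shifts its based index by $\pm1$. Thus $\Omega_3$ changes $P_D(t)$ by a term $sgn(v)(t^{k+1}-t^{k})$, i.e.\ by an element of the augmentation ideal, and iterating collapses $P_D$ to its augmentation $P_D(1)=wr$; concretely it identifies $\infty_k^\epsilon$ with $\infty_{k\pm1}^\epsilon$, so polynomial information survives only through $wr$. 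For $O_1^+$: it changes $\widetilde{rot}$ by $\pm1$ and leaves $P_D$ (hence $wr$) fixed (Propositions~\ref{prop:based_index_polynomial} and~\ref{prop:rot_properties}). For $\Omega_1$: a curl changes $(rot,wr)$ simultaneously by $(\pm1,\pm1)$, so $rot+wr\bmod2$ is preserved and, by creating curls of both signs and orientations, is a complete invariant of the resulting quotient. For $2\Omega_\infty$: I would check from Figure~\ref{fig:2R_infinity_move} that, modulo $H(2)_+^o$ and $\Omega_2$, it changes $P_D(t)$ by $2(t^{k}-t^{k'})$, i.e.\ by twice an augmentation-ideal element; this preserves both $wr$ and the reduction $P_D\bmod2$, and conversely any two integral polynomials with equal value at $1$ and equal mod-$2$ reduction differ by such a combination (using Lemma~\ref{lem:infinity_properties}).

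With these four computations in hand the six statements assemble mechanically. Adjoining $\Omega_3$ to the base theory collapses $R_\alpha$ to its augmentation $\Z$, turning $\bar S_\alpha=(\Z\times R_\alpha)/\lambda$ into $(\Z\times\Z)/\lambda\simeq\Z_{\bar\chi(F)}\times\Z$ with coordinates $(rot,wr)$; together with $[D]=\alpha$ this gives part~(1). Adjoining $O_1^+$ (i.e.\ replacing $H(2)_+^o$ by $H(2)_+$) further kills the rotation coordinate, giving part~(2); adjoining $\Omega_1$ instead collapses $(rot,wr)$ to $rot+wr\bmod2$, giving part~(3); adjoining both kills everything but $[D]$, giving part~(4) (equivalently Proposition~\ref{prop:crossingless_skein_modules}). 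For parts~(5) and~(6) one replaces $\Omega_3$ by $2\Omega_\infty$: the base class in $\bar S_\alpha$ then descends to $(\widetilde{rot},P_D\bmod2)\in S^{\Z_2}_\alpha$ modulo $\lambda$ together with the leftover integer $\lfloor wr/2\rfloor$ (which, with $wr\bmod2$ read off from $P_D\bmod2$, records $wr$ exactly), yielding part~(5); adjoining $O_1^+$ then frees $\widetilde{rot}$ and, through the coupling in $\lambda$, produces the cyclic quotient $(R_\alpha^{\Z_2})_{cycl}$, yielding part~(6). In each case well-definedness is exactly the move-invariance recorded above, surjectivity follows by dressing a crossingless representative with trivial circles and $\infty$-components as in the proof of Proposition~\ref{prop:skein_base_module}, and injectivity is the assertion that the listed relations generate the full kernel.

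The main obstacle I expect is the precise determination of the relations induced by $\Omega_3$ and by $2\Omega_\infty$ on the based index polynomial — verifying that a third Reidemeister move shifts a based index by exactly $\pm1$ and that $2\Omega_\infty$ contributes exactly $2(t^k-t^{k'})$ — since everything downstream is bookkeeping. A secondary subtlety is tracking the $\lambda$-action when $\partial F=\emptyset$ and $\chi(F)=0$ (the torus), where the coupling of $\widetilde{rot}$ with the $t^{-1}$-shift is what manufactures the cyclic quotients $(R_\alpha)_{cycl}$ and $(R_\alpha^{\Z_2})_{cycl}$; and confirming that the $\Omega_1$-quotient is genuinely detected by the single parity $rot+wr\bmod2$, which rests on the fact that $\bar\chi(F)$ is even (or zero) for an oriented surface, so that $rot+wr\bmod2$ is well defined on $\Z_{\bar\chi(F)}\times\Z$.
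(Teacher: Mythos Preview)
Your proposal is correct and follows essentially the same route as the paper: both start from Proposition~\ref{prop:skein_base_module} and compute, move by move, the quotient induced on the invariants $(\widetilde{rot},P_D)$ (equivalently, on the building blocks $\bigcirc^\pm$ and $\infty_k^\epsilon$) by each of $O_1^+$, $\Omega_1$, $\Omega_3$, $2\Omega_\infty$. The only cosmetic difference is that the paper handles $\Omega_3$ by first noting that modulo $H(2)_+^o,\Omega_2$ it is equivalent to the single-crossing move $\Omega_\infty$ (Fig.~\ref{fig:R_infinity_move}), which transparently yields the relation $\infty_k^\epsilon=\infty_{k+1}^\epsilon$, whereas you describe the same index-shift directly; this is exactly the verification you flag as the ``main obstacle'', and the $\Omega_\infty$ reformulation is the clean way to carry it out.
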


\begin{proof}
  We need to describe how additional moves $O_1^+$, $\Omega_1$, $\Omega_3$ and $2\Omega_\infty$ affect the rotational number, the based index  polynomial and the subdiagrams $\bigcirc^\pm$ and $\infty_k^\pm$.

  The move $O_1^+$ adds relations $\bigcirc^\pm=\emptyset$ and changes the rotation number by $1$. The move does not affect $\infty_k^\pm$ and the based index polynomial.

  The move $\Omega_1$ adds relations $\infty_0^\epsilon=\bigcirc^{\epsilon'}$, $\epsilon,\epsilon'=\pm$. With moves $H(2)_+^o$ and $\Omega_2$ we get relations $\infty_k^\pm=\bigcirc^+$. Then we can reduce any diagram $k\bigcirc^+ +f(\infty)$ to $a\bigcirc^+$, $a=k+f(1)\bmod 2$. The map $(k,f(t))\mapsto k+f(1)\bmod 2$ is invariant under the map $\lambda$, $\lambda(k,f(t))=(k+\bar\chi(F),t^{-1}f(t))$ because $\bar\chi(F)$ is even.

  The move $\Omega_3$ is equivalent to the move $\Omega_\infty$ (Fig.~\ref{fig:R_infinity_move}) modulo the moves $H(2)_+^o$ and $\Omega_2$. Hence, $\Omega_3$ generates the relations $\infty_k^\epsilon=\infty_{k+1}^\epsilon$. Then any diagram $f(\infty)$, $f(t)\in\Z[t,t^{-1}]$, can be transformed to $f(1)\infty_0^+$. The based index polynomial $P_D(t)$ reduces to the number $P_D(1)=wr(D)$. The move $\Omega_3$ does not affect the rotation number.

  \begin{figure}[h]
\centering\includegraphics[width=0.25\textwidth]{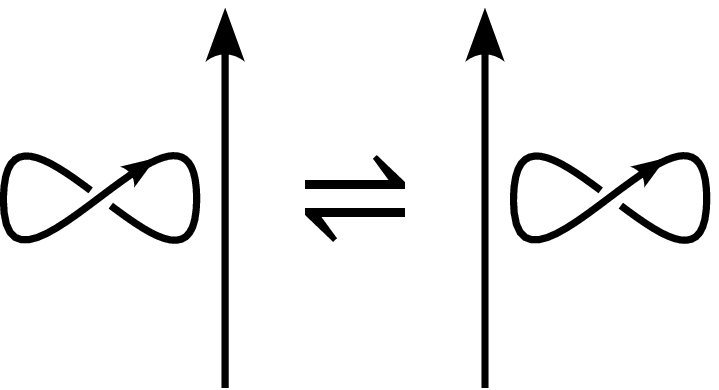}
\caption{The move $\Omega_\infty$}\label{fig:R_infinity_move}
\end{figure}

  The move $2\Omega_\infty$ (with moves $H(2)_+^o$ and $\Omega_2$) generates relations $2\infty_k^\epsilon=2\infty_{k+1}^\epsilon$. Let $f(t)\in\Z[t,t^{-1}]$, and $g(t)\equiv f(t)\in\Z_2[t,t^{-1}]$ is its $\Z_2$-reduction. We look at $g(t)$ as a polynomial with coefficients in $\{0,1\}$. Then $f(\infty)$ is $\{H(2)_+^o,\Omega_2, 2\Omega_\infty\}$-equivalent to $g(\infty)+k\infty_0^+$ where $k=f(1)-g(1)=2\lfloor\frac{f(1)}2\rfloor-2\lfloor\frac{g(1)}2\rfloor$ since $k$ is even. Thus, the equivalence class of $f(\infty)$ can be restored from $g(t)$ and $\lfloor\frac{f(1)}2\rfloor$. The move $2\Omega_\infty$ does not affect the rotation number.

  Combining these relations with the result of Proposition~\ref{prop:skein_base_module}, we get the statements of the theorem.
\end{proof}

\subsubsection{Skein modules of unoriented diagrams}

\begin{lemma}\label{lem:H20=H3}
$H(2)^o\sim H(3)$.
\end{lemma}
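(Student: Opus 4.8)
The plan is to establish the equivalence of the two single-move theories $\{H(2)^o\}$ and $\{H(3)\}$ by proving each ``finer than'' relation separately, through explicit local diagram manipulations, in the same spirit as the proof of Lemma~\ref{lem:H2_move}. By Definition~\ref{def:knot_theory} it suffices to realize each generator of one theory as a finite sequence of moves of the other together with the ambient isotopies, which are always available. Note that for the reduction $\{H(2)^o,H(3)\}\sim\{H(2)^o\}$ quoted in the non-oriented smoothing discussion only the direction ``$H(3)$ is expressible by $H(2)^o$'' is actually used; the lemma asserts the full equivalence, so both directions must be checked.

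For the direction that $\{H(3)\}$ is finer than $\{H(2)^o\}$, I would realize a single $H(3)$-move by two successive $H(2)^o$-moves performed on overlapping pairs among the three strands of the $H(3)$ clasp: the first $H(2)^o$ clasps strands $1,2$ and the second clasps strands $2,3$, and the crossings and the auxiliary trivial circle produced by the two applications recombine into exactly the $H(3)$ tangle. The bookkeeping is carried out with the circle-creation/annihilation and circle-sliding manipulations of $H(2)^o$ recorded in Lemma~\ref{lem:H2_move}: a pair of oppositely oriented trivial circles can be created or cancelled and a trivial circle can be slid across an arc, so any byproduct circle is moved aside and removed. For the reverse direction, that $\{H(2)^o\}$ is finer than $\{H(3)\}$, I would run a single $H(3)$-move in the degenerate configuration where the third strand is a short trivial arc placed by isotopy next to the two strands on which $H(2)^o$ is to act. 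The $H(3)$ clasp then acts as an $H(2)^o$ clasp on the two genuine strands, while the trivial third strand contributes only a residual curl-and-circle piece of the kind appearing in Proposition~\ref{prop:rot_properties}; this residue is precisely the difference between the two tangles of $H(2)^o$, so after removing it via Lemma~\ref{lem:H2_move} the two $H(2)^o$ tangles are connected. Combining the two directions yields $H(2)^o\sim H(3)$.

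The main obstacle I expect is the coherent-band (orientation) bookkeeping: one must verify that the parallel-strand condition encoded by the superscript ``$o$'' is respected at every intermediate stage, and in particular that all byproduct circles appear with the correct, mutually opposite orientations, so that they are cancellable by $H(2)^o$ itself rather than only by the stronger move $H(2)$ (equivalently $\{H(2)_+^o,O_1^+\}$ of Lemma~\ref{lem:H2_move}). This is exactly what distinguishes the oriented statement $H(2)^o\sim H(3)$ from the weaker unoriented one, and it is the step I would present with an explicit sequence of figures.
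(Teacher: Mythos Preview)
Your overall plan---two directions, each realized by an explicit local picture---is exactly what the paper does (its proof is literally two figures). The first direction, expressing $H(3)$ by two overlapping $H(2)^o$ moves with circle bookkeeping, is the right idea and matches the paper's Fig.~\ref{fig:H2o_to_H3}.

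The second direction, however, has a circularity problem. You are trying to show that a single $H(2)^o$ move can be expressed using only $H(3)$ moves and isotopy; in that direction $H(2)^o$ is \emph{not} available to you. Yet you propose to clean up the ``residual curl-and-circle piece'' via Lemma~\ref{lem:H2_move}, whose circle-cancellation and circle-sliding statements are themselves consequences of $H(2)^o$ (oriented or not). Invoking them here assumes what you are proving. Relatedly, ``the third strand is a short trivial arc placed by isotopy'' is not a legal step as stated: isotopy cannot create a new strand. What actually works---and what the paper's Fig.~\ref{fig:H3_to_H2o} shows---is to bend one of the two given strands back on itself so that a short piece of it serves as the third input to $H(3)$; after the $H(3)$ move, pure isotopy (no $H(2)^o$) straightens everything out to yield exactly the target tangle of $H(2)^o$, including its built-in extra circle. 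No residual cleanup by $H(2)^o$ is needed or permitted.

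So the fix is: in the $H(3)\Rightarrow H(2)^o$ direction, replace the appeal to Lemma~\ref{lem:H2_move} by an explicit isotopy, and make clear that the third strand is a folded-back piece of an existing strand rather than something conjured by isotopy.
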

\begin{proof}
See Fig.~\ref{fig:H2o_to_H3} and Fig.~\ref{fig:H3_to_H2o}.
\end{proof}
  \begin{figure}[h]
\centering\includegraphics[width=0.6\textwidth]{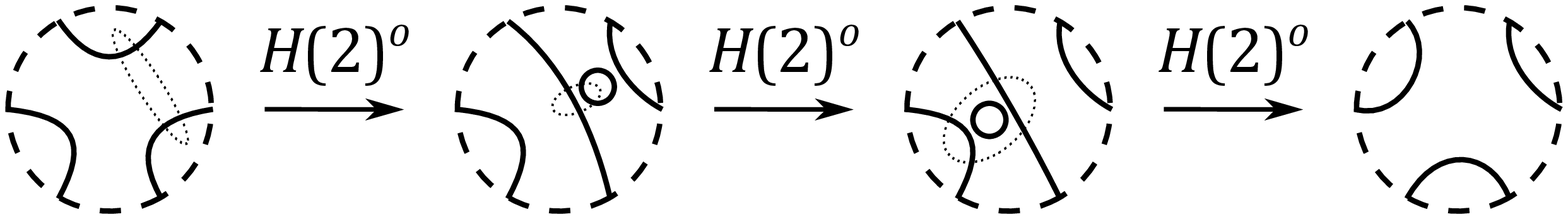}
\caption{$H(2)^o$ generates $H(3)$}\label{fig:H2o_to_H3}
\end{figure}

\begin{figure}[h]
\centering\includegraphics[width=0.3\textwidth]{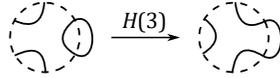}
\caption{$H(3)$ generates $H(2)^o$}\label{fig:H3_to_H2o}
\end{figure}


Fix a section $s\colon H_1(F,\Z_2)\to H_1(F,\Z)$ to the natural projection $H_1(F,\Z)\to H_1(F,\Z_2)$.

\begin{definition}\label{def:rho0_invariant}
Let $D\in\mathscr D^0(F)$. Consider any orientation $\tilde D\in\mathscr D_+^0(F)$ of the diagram $D$. Define the \emph{offset} of the diagram by the formula
\[
\rho_0(D)=\frac{s([D])\cdot\tilde D}2+m(D)\bmod 2\in\Z_2,
\]
where $m(D)$ is the number of components in $D$.
\end{definition}

\begin{proposition}\label{prop:rho0_properties}
\begin{enumerate}
\item The map $\rho_0\colon\mathscr D^0(F)\to\Z_2$ is well-defined;
\item $\rho_0$ is invariant under $H(2)^o$;
\item the moves $O_1$ and $H(2)$ change $\rho_0$ by $1$.
\end{enumerate}
\end{proposition}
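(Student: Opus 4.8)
The plan is to reduce the whole proposition to tracking three features of a crossingless diagram $D=D_1\sqcup\cdots\sqcup D_{m(D)}$, which is just a disjoint union of embedded circles: the class $[D]\in H_1(F,\Z_2)$, the component number $m(D)$, and the integer $I(D)=\tfrac12\, s([D])\cdot\tilde D$. With this notation $\rho_0(D)=I(D)+m(D)\bmod 2$, and each claim becomes a computation of how $I$ and $m$ change.

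\textbf{Well-definedness (1).} First I would observe that $[D]\in H_1(F,\Z_2)$ is independent of the orientation $\tilde D$, so the integral class $s([D])$ is fixed by $D$ alone. Next, $s([D])\cdot\tilde D$ is even: its reduction mod $2$ equals $[D]\cdot[D]$, which vanishes because the intersection form on an oriented surface is alternating (equivalently, an embedded curve has trivial normal bundle and hence zero $\Z_2$-self-intersection), so $I(D)\in\Z$ makes sense. To see independence of the orientation, reverse the orientation of a single component $D_i$; then $\tilde D$ is replaced by $\tilde D-2\tilde D_i$, so $I(D)$ changes by $-\,s([D])\cdot\tilde D_i$, whose parity is $[D]\cdot[D_i]=\sum_j[D_j]\cdot[D_i]\bmod 2$. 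The $j=i$ term vanishes as above, and the terms $j\ne i$ vanish because distinct components are disjoint. Since any two orientations of $D$ differ by a sequence of single-component reversals, $\rho_0$ is well-defined.

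\textbf{Invariance under $H(2)^o$ (2).} The key preliminary is that \emph{every} local move preserves $[D]\in H_1(F,\Z_2)$: if $D'$ arises by replacing a tangle $T_1$ inside a disk $B$ by $T_2$ with $\partial T_1=\partial T_2$, then $T_1+T_2$ is a $\Z_2$-cycle supported in $B$, hence null-homologous, so $[D]=[D']$ and $s([D])=s([D'])$. Because $H(2)^o$ is a coherently oriented move, I can orient $D$ and $D'$ to agree outside $B$ and to induce the same orientation on $\partial B\cap D$; then $\tilde D-\tilde D'$ is an integral cycle supported in $B$, so $[\tilde D]=[\tilde D']\in H_1(F,\Z)$ and therefore $I(D)=I(D')$. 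Consequently $\rho_0(D')-\rho_0(D)\equiv m(D')-m(D)\bmod 2$, and it remains to read off from the move (cf. Lemma~\ref{lem:H2_move} and Figs.~\ref{fig:H2o_two_circles},~\ref{fig:H2o_circle_move}) that $H(2)^o$ changes the number of components by an even number: in its basic forms it either creates or annihilates a pair of trivial circles ($\Delta m=\pm2$) or shifts a circle across an arc ($\Delta m=0$). Hence $\rho_0(D')=\rho_0(D)$.

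\textbf{The moves $O_1$ and $H(2)$ (3).} For $O_1$, adjoining a trivial circle $c_0$ leaves $[D]$ (and thus $s([D])$) unchanged, and $c_0$ is null-homologous so contributes $0$ to the pairing; thus $I$ is unchanged while $m$ increases by $1$, giving $\Delta\rho_0=1$. For $H(2)$ I would invoke the unoriented analogue of the relation $H(2)\sim\{H(2)^o,O_1\}$ of Lemma~\ref{lem:H2_move}: since $H(2)$ is one $H(2)^o$ followed by one $O_1$, part~(2) together with the $O_1$ computation gives $\Delta\rho_0=0+1=1$. The main obstacle is step~(2), namely extracting from the figure that $H(2)^o$ is genuinely coherent (so that boundary-matching orientations exist, which is exactly what forces $I(D)=I(D')$) and that it always alters the component count by an even number; once these two geometric features are fixed, the homological bookkeeping is routine.
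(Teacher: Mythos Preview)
Your arguments for parts~(1) and~(3) are correct and essentially match the paper's. The gap is in part~(2): your claim that $H(2)^o$ is ``coherently oriented'' and that one can always choose orientations of $D$ and $D'$ agreeing outside the disk $B$ is false. The move $H(2)^o$ is an \emph{unoriented} move, and when the two arcs inside $B$ belong to the same component $D_i$ of $D$, the reconnection performed by $H(2)^o$ may force the two outside arcs of $D_i$ to be traversed in opposite senses in $D'$; no choice of orientation of that single component of $D'$ can then match a fixed orientation of $D$ outside $B$. This is exactly the situation of Fig.~\ref{fig:H2o_unoriented}. In that case $m(D')-m(D)=1$, not an even number, and your conclusion $I(D')=I(D)$ fails as well. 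Your appeal to Lemma~\ref{lem:H2_move} and Figs.~\ref{fig:H2o_two_circles},~\ref{fig:H2o_circle_move} is misplaced: those figures exhibit particular applications of $H(2)^o$ (all of which happen to be orientable), not an exhaustive case analysis of the move.

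The paper's proof of~(2) splits into two cases. In the orientable case your argument goes through: $[\tilde D]=[\tilde D']$ in $H_1(F,\Z)$ and $m$ changes by $0$ or $2$. In the unorientable case one writes the new component as $D_i'$ with $[\tilde D_i']=[\tilde D_i]+2\delta$, where $\delta$ is the core of the band along which $D_i$ is glued to itself. Since $D\cdot\delta=D_i\cdot\delta=1$, one has $s([D])\cdot\delta$ odd, so $I(D')-I(D)=s([D])\cdot\delta\equiv 1\bmod 2$; this exactly cancels the change $m(D')-m(D)=1$, and $\rho_0$ is again preserved. The missing idea in your proof is recognising and handling this second case.
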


\begin{proof}
  Let $D\in\mathscr D^0(F)$. Consider orientations $\tilde D$ and $\tilde D'$ of $D$ which differ by the orientation of a component $D_i$ in $\tilde D$. Then $[\tilde D']=[\tilde D]-2[D_i]\in H_1(F,\Z)$. Since $D_i$ does not intersect other components, $\tilde D\cdot D_i=0$. Then $s([D])\cdot D_i\equiv \tilde D\cdot D_i\equiv 0\bmod 2$. Hence, $s([D])\cdot D_i$ is even and $\frac{s([D])\cdot\tilde D'}2=\frac{s([D])\cdot\tilde D}2-s([D])\cdot D_i\equiv \frac{s([D])\cdot\tilde D}2\bmod 2$. Thus, the number $\rho_0(D)$ does not depend on the choice of orientation, therefore, it is well-defined.

Let $D\to D'$ be a $H(2)^o$-move. If the move can be lifted to the oriented move $H(2)_+^o\colon\tilde D\to\tilde D'$ then $m(D')=m(D)$ or $m(D')=m(D)+2$ and $[\tilde D]=[\tilde D']\in H_1(F,\Z)$. Since $[D]=[D']\in H_1(F,\Z_2)$, $s([D])=s([D'])$ and
\[
 \rho_0(D)=\frac{s([D])\cdot\tilde D}2+m(D)\equiv\frac{s([D'])\cdot\tilde D'}2+m(D')=\rho_0(D').
\]

If the move can not be oriented then it merge a component $D_i$ with itself (Fig.~\ref{fig:H2o_unoriented}). Let $[D_i]=\gamma$ and $\delta$ be the loop which the component $D_i$ is glued along. Then $D\cdot\delta=D_i\cdot\delta=1$ and $s([D])\cdot\delta\equiv 1$. The $D_i$ transforms to a component $D_i'$ whose homology class is $\gamma+2\delta$. Then $\frac{s([D])\cdot\tilde D'}2-\frac{s([D])\cdot\tilde D}2=s([D])\cdot\delta\equiv 1$. On the other hand, $m(D')=m(D)+1$. Hence, $\rho_0(D)=\rho_0(D')$.

\begin{figure}[h]
\centering\includegraphics[width=0.8\textwidth]{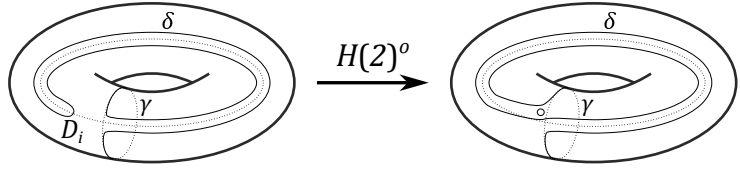}
\caption{Unorientable move $H(2)^o$}\label{fig:H2o_unoriented}
\end{figure}

Let $D\to D'$ be a move $O_1$. The move adds a trivial component. Then $\rho_0(D')=\rho_0(D)+1$.

The move $H(2)$ is a composition of moves $H(2)^o$ and $O_1$ then it changes $\rho_0$ by $1$.
\end{proof}

\begin{proposition}\label{prop:crossingless_skein_unor_modules}
\begin{enumerate}
\item The map $h\oplus\rho_0\colon \mathscr D^0(F)\to H_1(F,\Z_2)\oplus\Z_2$, $D\mapsto ([D],\rho_0(D))$, induces bijections
\begin{gather*}
\mathscr K^0(F|H(2)^o)\simeq H_1(F,\Z_2)\oplus\Z_2,\\
\mathscr K^0(F|H(2))\simeq H_1(F,\Z_2).
\end{gather*}
\end{enumerate}
\end{proposition}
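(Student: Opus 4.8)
The plan is to follow the template of the oriented statement, Proposition~\ref{prop:crossingless_skein_modules}, replacing $\Z$-homology and the rotation number by their unoriented counterparts $[D]\in H_1(F,\Z_2)$ and the offset $\rho_0$. First I would record that $\Phi=h\oplus\rho_0$ descends to $\mathscr K^0(F|H(2)^o)$: a band ($H(2)^o$) move attaches a pair of parallel strands and hence does not change the mod-$2$ homology class, while $\rho_0$ is $H(2)^o$-invariant by Proposition~\ref{prop:rho0_properties}(2). The second bijection is then deduced from the first: since $H(2)$ is a composition of $H(2)^o$ and $O_1$ (Proposition~\ref{prop:rho0_properties}), and $O_1$ changes $\rho_0$ by $1$ while preserving $[D]$, quotienting $\mathscr K^0(F|H(2)^o)\simeq H_1(F,\Z_2)\oplus\Z_2$ further by $H(2)$ identifies $(\alpha,0)$ with $(\alpha,1)$ and leaves exactly $H_1(F,\Z_2)$.

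Surjectivity of $\Phi$ is immediate: given $(\alpha,\varepsilon)$, choose any embedded multicurve $C$ with $[C]=\alpha$, and if $\rho_0(C)\ne\varepsilon$ replace $C$ by $C\sqcup\bigcirc$, which leaves the homology class unchanged and flips $\rho_0$. The substance is injectivity, which I would reduce to a normal-form statement: \emph{every} $D$ with $[D]=\alpha$ is $H(2)^o$-equivalent to $C_\alpha\sqcup m\bigcirc$ for a fixed standard representative $C_\alpha$ of $\alpha$ and some $m\ge 0$. Granting this, if $\Phi(D_1)=\Phi(D_2)$ then $D_i\sim_{H(2)^o}C_\alpha\sqcup m_i\bigcirc$, and the $H(2)^o$-invariance of $\rho_0$ together with the fact that each trivial circle contributes $1$ to $\rho_0$ forces $m_1\equiv m_2\pmod 2$; since $H(2)^o$ creates and annihilates trivial circles in pairs (the unoriented counterpart of Lemma~\ref{lem:H2_move}(2)), we get $C_\alpha\sqcup m_1\bigcirc\sim C_\alpha\sqcup m_2\bigcirc$, and hence $D_1\sim_{H(2)^o}D_2$. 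Routing everything through the common $C_\alpha$ sidesteps any circle-cancellation difficulty, since the two circle counts need only be matched modulo $2$.

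To prove the normal-form statement I would adapt the compression argument of Lemma~\ref{lem:H20_reduction} and Figure~\ref{fig:H2o_boundary_reduction} to a fixed handle decomposition of $F$. Isotoping $D$ into general position with respect to the $1$-skeleton and then compressing along bands (each compression being an $H(2)^o$ move) reduces the number of intersections of $D$ with the co-cores to the minimum prescribed by $[D]\bmod 2$; the byproduct of each compression is a trivial circle, so the outcome is $C_\alpha$ together with finitely many trivial circles. The one genuinely new phenomenon compared with the oriented proof is that a band may have to be attached so as to merge a component with itself reversing the local co-orientation; this is precisely the non-orientable $H(2)^o$ move of Figure~\ref{fig:H2o_unoriented}, which is available here but has no lift to $H(2)_+^o$.

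The hard part will be this normal-form reduction: unlike the null-homologous oriented case (Lemma~\ref{lem:H20_reduction}), where the group structure on $\mathscr K_+^0(F)$ supplied honest inverses, here there are no orientations to cancel against, so I must argue geometrically that compressions alone bring an arbitrary representative of $\alpha$ to the standard one, and I must keep careful track of the parity of the leftover trivial circles. Concretely, the bookkeeping to verify is that (i) the compressions realizing $D\rightsquigarrow C_\alpha$ can all be performed by $H(2)^o$ moves in the oriented surface $F$, invoking the non-orientable move only where an orientable compression is impossible, and (ii) the resulting circle count is pinned modulo $2$ exactly by $\rho_0$. The latter is guaranteed once one knows that $\rho_0$ is a complete invariant of null-homologous multicurves up to $H(2)^o$ — itself the unoriented analogue of Lemma~\ref{lem:H20_reduction} that the reduction establishes.
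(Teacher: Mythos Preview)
Your proposal is correct and follows essentially the same approach as the paper: well-definedness from Proposition~\ref{prop:rho0_properties}, surjectivity by adjoining trivial circles, injectivity via a normal-form reduction plus the pair-annihilation of trivial circles, and the second bijection from $H(2)\sim\{H(2)^o,O_1\}$. The only cosmetic difference is that the paper routes directly from $D$ to $D'\sqcup k\bigcirc$ rather than through a common standard representative $C_\alpha$, and it simply asserts the normal-form step (``Since $[D]=[D']$, the diagram $D$ can be transformed by moves $H(2)^o$ to $D'\sqcup k\bigcirc$'') without the handle-decomposition details you supply.
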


\begin{proof}
By Proposition~\ref{prop:rho0_properties}, the map $\mathscr K^0(F|H(2)^o)\simeq H_1(F,\Z_2)\oplus\Z_2$ is well-defined.

For any $\alpha\in H_1(F,\Z_2)$ there exists $D\in\mathscr D^0(F)$ such that $[D]=\alpha$. Then for any $x\in\Z_2$
\[
(h\oplus\rho_0)(D\sqcup (x-\rho_0(D))\bigcirc)=(\alpha,x).
\]
Thus, the induced map is an epimorphism.

Let $D,D'\in\mathscr D^0(F)$ be diagrams such that $[D]=[D']$ and $\rho_0(D)=\rho_0(D)$. Since $[D]=[D']$, the diagram $D$ can be transformed by moves $H(2)^o$ to a diagram $D'\sqcup k\bigcirc$. Since $\rho_0(D)=\rho_0(D')$, the number $k$ is even. The unoriented analogue of the second statement of Lemma~\ref{lem:H2_move} allows on to contract a pair of trivial circles. Hence, $k$ circles can be annihilated by moves $H(2)^o$. Hence, $D\sim_{H(2)^o} D'\sqcup k\bigcirc \sim_{H(2)^o} D'$. Thus, $\mathscr K^0(F|H(2)^o)\simeq H_1(F,\Z_2)\oplus\Z_2$.

Since $H(2)\sim\{H(2)^o,O_1\}$ and $O_1$ changes $\rho_0$, $\mathscr K^0(F|H(2))\simeq H_1(F,\Z_2)$.
\end{proof}

Let us consider unoriented diagrams with crossings.
Fix a point $z\in F$ such that $z\in\partial F$ if $\partial F\ne\emptyset$. While considering tangle diagrams in the surface $F$, we will assume that they don't contain $z$.

\begin{definition}\label{def:based_index_unor}
Let $D\in\mathscr D(F)$ and $v\in\mathcal C(D)$. Choose a path $\gamma_{z,v}$ connecting $z$ with $v$ which approaches to $v$ in a sector from an overcrossing to an undercrossing (in the counterclockwise order).  The \emph{unoriented based index} of the crossing $v$ is the number $ind^{un}_z(v)=D\cdot\gamma_{z,v}\in\Z_2$.

Let $n_{odd}(D)$ be the number of the odd crossings $v$ in $D$ (such that $ind^{un}_z(v)=1$), and $n_{even}(D)$ the number of the even crossings $v$ in $D$ (such that $ind^{un}_z(v)=0$). Let $wr_{odd}(D)=n_{even}(D)-n_{odd}(D)\in\Z$.

Let $D\in\mathscr D(F)$. Apply smoothings $Sm^A$ to all crossings of $D$ to get a diagram $D_A\in\mathscr D^0(F)$. Define the \emph{offset} $\rho(D)\in\Z_2$ of the diagram by the formula
\[
\rho(D)=\rho_0(D_A)+n_{odd}(D).
\]
\end{definition}
Note that for any $D\in\mathscr D_0(F)$ $\rho(D)=\rho_0(D)$.

\begin{proposition}\label{prop:sm_unoriented_invariants}
\begin{enumerate}
\item Let $D\in\mathscr D(F)$ and $v\in\mathcal C(D)$. Let $\tilde D\in \mathscr D_+(F)$ be an orientation of $D$. Then $ind^{un}_z(v)=ind_z(v)+\frac{sgn(v)-1}2\bmod 2$ where $ind_z(v)$ is the index of $v$ in the diagram $\tilde D$.
\item $wr_{odd}$ is invariant under the moves $\Omega_2$ and $H(2)$.
\item $\rho$ is invariant under the moves $\Omega_2$ and $H(2)^o$.
\item If a diagram $D'$ is obtained from a diagram $D$ by an arc passing over the base point $z$ then
\begin{gather*}
n_{odd}(D')=n_{even}(D),\quad n_{even}(D')=n_{odd}(D),\\
 wr_{odd}(D')=-wr_{odd}(D),\quad \rho(D')=\rho(D)+wr_{odd}(D).
\end{gather*}
\item Let $D=D_1\sqcup D_2$, $D_2$ contractible and any subdiagram $D_i$ do not separate the other subdiagram from the base point. Then $\rho(D)=\rho(D_1)+\rho(D_2)$ and $wr_{odd}(D)=wr_{odd}(D_1)+wr_{odd}(D_2)$.
\item For any $k\in\Z$ and $\epsilon=\pm$, $\rho(\infty_k^\epsilon)=k\bmod 2$ and $wr_{odd}(\infty_k^\epsilon)=(-1)^k\epsilon$.
\item For any $f(t)\in\Z[t,t^{-1}]$, $\rho(f(\infty))=f'(1)$  and $wr_{odd}(f(\infty))=f(-1)$.
\end{enumerate}
\end{proposition}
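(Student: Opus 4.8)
The whole proposition rests on statement (1), which is a purely local computation; once it is in hand, every other item follows by propagating it together with the invariance properties already established for the oriented based index polynomial $P_D$ (Proposition~\ref{prop:based_index_polynomial}) and for the crossingless offset $\rho_0$ (Proposition~\ref{prop:rho0_properties}). So the plan is to prove (1) first, extract a master identity, and then feed it into (2)--(7).

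First I would establish (1) by comparing, at a fixed crossing $v$, the two admissible terminal sectors of the connecting path $\gamma_{z,v}$: the oriented index uses the sector between the two \emph{incoming} edges, while the unoriented index uses the sector running counterclockwise from the overcrossing to the undercrossing. At a $4$-valent vertex these two sectors either coincide or are separated by exactly one local arc, and which case occurs depends only on $sgn(v)$. Since sliding the endpoint of $\gamma_{z,v}$ across one arc of $D$ changes the mod-$2$ intersection number by one, a case check on the two crossing pictures yields $ind^{un}_z(v)=ind_z(v)+\tfrac{sgn(v)-1}2 \bmod 2$. From this I would immediately record the master identity $(-1)^{ind^{un}_z(v)}=sgn(v)\,(-1)^{ind_z(v)}$ (using $(-1)^{(sgn(v)-1)/2}=sgn(v)$), whence, for any orientation $\tilde D$ of $D$,
\[
wr_{odd}(D)=n_{even}(D)-n_{odd}(D)=\sum_{v}(-1)^{ind^{un}_z(v)}=\sum_v sgn(v)(-1)^{ind_z(v)}=P_{\tilde D}(-1).
\]
This turns the unoriented quantity into an evaluation of the already-controlled polynomial $P_{\tilde D}$.

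For (2) and (3) I would argue move by move. Under $\Omega_2$ the two new crossings have opposite sign and equal oriented index, so by the master identity their contributions to $wr_{odd}=P_{\tilde D}(-1)$ cancel (this also re-derives it from Proposition~\ref{prop:based_index_polynomial}); moreover exactly one of them is odd, so $n_{odd}$ jumps by one, while the $A$-smoothing ($\skcrv$ at the positive, $\skcrh$ at the negative crossing) inserts exactly one new circle, changing $\rho_0(D_A)$ by one, so $\rho=\rho_0(D_A)+n_{odd}$ is unchanged. For the $H$-moves I would reduce via the unoriented analogue of Lemma~\ref{lem:H2_move} ($H(2)\sim\{H(2)^o,O_1\}$): invariance of $wr_{odd}$ under $H(2)$ follows from invariance of $P_{\tilde D}$ under $H(2)^+$, and invariance of $\rho$ under $H(2)^o$ follows from $\rho_0$-invariance under $H(2)^o$ (Proposition~\ref{prop:rho0_properties}) after checking that the $A$-smoothing intertwines the move with an $H(2)^o$-move on crossingless diagrams and leaves the parity of $n_{odd}$ unaffected. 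This bookkeeping, reconciling the effect of $H$-moves on $n_{odd}$ with their effect on $\rho_0(D_A)$ through the $A$-smoothing, is the step I expect to be the main obstacle, since it is where orientability subtleties and the precise shape of the moves genuinely enter.

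Statements (4) and (5) are then geometric. Passing an arc of $D$ over $z$ is the same as moving the base point across one strand, so every $\gamma_{z,v}$ gains one intersection and each $ind^{un}_z(v)$ flips; this swaps $n_{odd}\leftrightarrow n_{even}$, negates $wr_{odd}$, and since $\rho_0$ does not reference $z$ and $D_A$ is unchanged, $\rho$ changes exactly by the change in $n_{odd}$, i.e.\ by $wr_{odd}(D)$. For (5), the non-separation hypothesis lets me keep each $\gamma_{z,v}$ for a crossing of $D_i$ inside $D_i$, so $ind^{un}_z$ is computed componentwise; additivity of $wr_{odd}$ and $n_{odd}$ is then immediate, and additivity of $\rho_0(D_A)$ holds because the contractible factor is null-homologous, killing the cross intersection term $s([D_1])\cdot\tilde D_{2,A}$. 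Finally (6) is a direct calculation on the single-crossing diagram $\infty_k^\epsilon$: Lemma~\ref{lem:infinity_properties} gives $P_{\infty_k^\epsilon}(t)=\epsilon t^k$, so its crossing has oriented index $k$ and sign $\epsilon$, and the master identity yields $wr_{odd}(\infty_k^\epsilon)=(-1)^k\epsilon$, while evaluating $\rho_0$ on the $A$-smoothed diagram gives $\rho(\infty_k^\epsilon)=k\bmod 2$. Writing $f(\infty)=\bigsqcup_l|a_l|\,\infty_l^{sgn(a_l)}$ near $z$ and iterating (5), I would then obtain $wr_{odd}(f(\infty))=\sum_l(-1)^l a_l=f(-1)$ and $\rho(f(\infty))=\sum_l l\,a_l=f'(1)\bmod 2$, which is (7).
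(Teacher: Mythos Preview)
Your proposal is correct and follows essentially the same route as the paper, with one organizational difference worth noting. You package statement~(1) into the ``master identity'' $wr_{odd}(D)=P_{\tilde D}(-1)$ and then route the invariance of $wr_{odd}$ through the known invariance of $P_{\tilde D}$; the paper instead argues directly that $H(2)$ (and $H(2)^o$) are moves on crossingless pieces of the diagram and therefore do not touch any crossing or any $ind^{un}_z$, so $n_{odd}$, $n_{even}$, $wr_{odd}$ are trivially preserved. Your detour through $P_{\tilde D}$ under $H(2)^+$ is slightly delicate because an unoriented $H(2)$ need not lift to an oriented $H(2)_+$, but since the move avoids all crossings the conclusion is immediate anyway. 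For the same reason, the case you single out as ``the main obstacle''---the $H(2)^o$-invariance of $\rho$---is in fact the easiest: $H(2)^o$ leaves $n_{odd}$ unchanged, and after $A$-smoothing it becomes an $H(2)^o$ on $D_A$, under which $\rho_0$ is invariant by Proposition~\ref{prop:rho0_properties}. The remaining items (4)--(7) match the paper's arguments; for~(6) the paper computes directly after stripping the concentric circles with $\Omega_2$, while you read it off from $P_{\infty_k^\epsilon}(t)=\epsilon t^k$, which is an equally clean shortcut.
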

\begin{proof}
The first statement follow from the definition of (un)oriented based index.

The move $H(2)$ does not change the set of crossings, hence, $n_{odd}$, $n_{even}$ and their difference are invariant. The move $\Omega$ adds one odd and one even crossings, hence, the difference $n_{even}-n_{odd}$ is invariant.

Let $D\to D'$ be a move $\Omega_2$. Then $D_A$ and $D'_A$ are connected by a move $H(2)$ (see Section~\ref{subsect:A-smoothing}), so $\rho_0(D'_A)=\rho_0(D_A)+1$. On the other hand $n_{odd}(D')\equiv n_{odd}(D)+1\bmod 2$. Thus, $\rho(D')=\rho(D)$.

Let $D\to D'$ be a move $H(2)^o$. Then $D_A$ and $D'_A$ are connected by a move $H(2)^o$. Hence, $\rho_0(D_A)=\rho_0(D'_A)$. On the other hand, $n_{odd}(D)=n_{odd}(D')$. Thus, $\rho(D)=\rho(D')$.

Let $D'$ is obtained by moving an arc of a diagram $D$ over $z$. By the definition of unoriented index, the indices of all vertices in the diagram change. Then $n_{odd}(D')=n_{even}(D),\quad n_{even}(D')=n_{odd}(D)$. The diagram $D_A$ and $D'_A$ are isotopic, hence, $\rho_0(D')=\rho_0(D)$. Thus,
\[
\rho(D')=\rho_0(D')+n_{odd}(D')=\rho_0(D)+n_{even}(D)=\rho(D)+wr_{odd}(D).
\]

Let $D=D_1\sqcup D_2$. Since $D_2$ is contractible, $s([D])=s([D_1])$ and $[\widetilde{D_A}]=[\widetilde{(D_1)_A}]\in H(F,\Z)$, hence, $\rho_0(D_A)=\rho_0((D_1)_A)$. Then $\rho(D)=\rho(D_1)+\rho(D_2)$ because $n_{odd}(D)=n_{odd}(D_1)+n_{odd}(D_2)=n_{odd}(D_1)+\rho(D_2)$. The other equality is proved analogously.

Consider the diagram $\infty^\epsilon_k$. Eliminate all crossings except the central one with moves $\Omega_2$ and get a diagram $D$. The unoriented based index of the central crossing is $k+\frac{1-\epsilon}2$. Then $wr_{odd}(\infty_k^\epsilon)=wr_{odd}(D)=(-1)^k\epsilon$. The diagram $D_A$ contains $2k+1+\frac{1+\epsilon}2$ components. Then
\[
\rho(\infty_k^\epsilon)=\rho(D)=2k+1+\frac{1+\epsilon}2+\frac{1-(-1)^k\epsilon}2\equiv 1+\frac{1+\epsilon}2+k+\frac{1-\epsilon}2\equiv k\bmod 2.
\]

Let $f(t)=\epsilon t^k$. Then $f(\infty)=\infty^\epsilon_k$. By the previous statement, $\rho(f(\infty))\equiv k\equiv f'(1)\bmod 2$ and $wr_{odd}(f(\infty))=(-1)^k\epsilon=f(-1)$. The general case follows from the fifth statement of the proposition.
\end{proof}

\begin{corollary}\label{cor:unor_invariants_after_smoothing}
Let $D\in\mathscr D_+(F)$ and $V\subset\mathcal C(D)$. Denote $P_D^V(t)=\sum_{v\in V}sgn(v)t^{ind_z(v)}$ and $V_-=\{v\in V\mid sgn(v)=-1\}$.
\begin{enumerate}
\item Let $D^V_{or}$ be obtained from $D$ by oriented smoothings of all crossings $v\in V$. Then $\rho(D^V_{or})=\rho(D)-(P_D^V)'(1)$, $wr_{odd}(D^V_{or})=wr_{odd}(D)-P_D^V(-1)$.
\item Let $D^V_{unor}$ be obtained from $D$ by unoriented smoothings of all crossings $v\in V$. Then $\rho(D^V_{unor})=\rho(D)-(P_D^V)'(1)+|V|$, $wr_{odd}(D^V_{unor})=wr_{odd}(D)-P_D^V(-1)$.
\item Let $D^V_{A}$ be obtained from $D$ by $A$-smoothings of all crossings $v\in V$. Then $\rho(D^V_{A})=\rho(D)-(P_D^V)'(1)+|V_-|$, $wr_{odd}(D^V_{A})=wr_{odd}(D)-P_D^V(-1)$.
\end{enumerate}
\end{corollary}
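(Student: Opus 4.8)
The plan is to treat the two quantities $wr_{odd}$ and $\rho$ separately, reducing everything to a sum of per-crossing contributions over $V$ together with the index translation formula of Proposition~\ref{prop:sm_unoriented_invariants}(1). The template is Corollary~\ref{cor:or_invariant_after_smoothing}: each smoothing removes exactly the crossings of $V$, and the unoriented based indices of the surviving crossings are unchanged, since for a surviving crossing $w$ one may route the defining path $\gamma_{z,w}$ around the small disks in which the smoothings are performed, leaving the intersection number with the diagram unaffected.

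First I would dispose of $wr_{odd}$, which is uniform across the three cases. Since $wr_{odd}(D)=n_{even}(D)-n_{odd}(D)=\sum_{v\in\mathcal C(D)}(-1)^{ind^{un}_z(v)}$ and all three smoothed diagrams have crossing set $\mathcal C(D)\setminus V$ with unchanged indices, one gets $wr_{odd}(D^V_\ast)=wr_{odd}(D)-\sum_{v\in V}(-1)^{ind^{un}_z(v)}$. By Proposition~\ref{prop:sm_unoriented_invariants}(1), $ind^{un}_z(v)\equiv ind_z(v)+\tfrac{sgn(v)-1}{2}\pmod 2$, whence $(-1)^{ind^{un}_z(v)}=sgn(v)(-1)^{ind_z(v)}$ and therefore $\sum_{v\in V}(-1)^{ind^{un}_z(v)}=P^V_D(-1)$. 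This gives $wr_{odd}(D^V_\ast)=wr_{odd}(D)-P^V_D(-1)$ in all three cases. The same identity also yields, for the count $n^V_{odd}:=\#\{v\in V: ind^{un}_z(v)=1\}$, the congruence $n^V_{odd}\equiv (P^V_D)'(1)+|V_-|\pmod 2$ (here $ind_z(v)$ is read as an integer representative), which is the bridge I will need for $\rho$.

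For the offset I would write $\rho(D^V_\ast)=\rho_0((D^V_\ast)_A)+n_{odd}(D^V_\ast)$, where $n_{odd}(D^V_\ast)=n_{odd}(D)-n^V_{odd}$ as above, and then compare $(D^V_\ast)_A$ with $D_A$. One has $(D^V_A)_A=D_A$ on the nose; $(D^V_{or})_A$ differs from $D_A$ only by switching the smoothing type at the negative crossings of $V$ (there the $A$-smoothing is the disoriented one, while oriented smoothing is not), i.e.\ at $|V_-|$ sites; and $(D^V_{unor})_A$ differs at the positive crossings of $V$, i.e.\ at $|V_+|=|V|-|V_-|$ sites. Granting the single-switch statement below—switching the resolution at one site changes $\rho_0$ by $1$ in $\Z_2$—one obtains $\rho_0((D^V_A)_A)=\rho_0(D_A)$, $\rho_0((D^V_{or})_A)=\rho_0(D_A)+|V_-|$ and $\rho_0((D^V_{unor})_A)=\rho_0(D_A)+|V_+|$. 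Substituting $n^V_{odd}\equiv(P^V_D)'(1)+|V_-|$ and simplifying modulo $2$ then produces the three stated formulas (for instance $\rho(D^V_{or})=\rho(D)+|V_-|-n^V_{odd}=\rho(D)-(P^V_D)'(1)$).

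The main obstacle is exactly this single-switch behaviour of $\rho_0$: switching between the two resolutions at one site changes the number of components $m$ by $\pm1$, which flips the $m(D)$-term of $\rho_0$ (Definition~\ref{def:rho0_invariant}); the delicate point is the homology term $\tfrac{s([D])\cdot\tilde D}{2}$, since the disoriented resolution may destroy orientation-compatibility and force reversing orientation on a merged component, altering $[\tilde D]$ by twice that component's class. I would pin this down through the explicit model of Proposition~\ref{prop:skein_base_module}: using $\{\Omega_2,H(2)^o\}$-moves, which preserve both $\rho$ and $wr_{odd}$ by Proposition~\ref{prop:sm_unoriented_invariants}(2,3), one isolates each crossing $v$ as a standard component $\infty^{sgn(v)}_{ind_z(v)}$ near the base point (Fig.~\ref{fig:crossing_to_infinity}), for which $\rho(\infty^\epsilon_k)=k$ and $wr_{odd}(\infty^\epsilon_k)=\epsilon(-1)^k$ are already known (Proposition~\ref{prop:sm_unoriented_invariants}(6)). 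Smoothing then becomes a local operation on an explicit configuration of nested circles, so $\rho_0$ of the result can be computed by direct counting, bypassing the abstract orientation bookkeeping, while additivity (Proposition~\ref{prop:sm_unoriented_invariants}(5)) assembles the per-crossing computations into the sums over $V$ used above.
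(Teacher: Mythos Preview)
Your proposal is correct, and your direct per-crossing approach is a genuinely different route from the paper's. The paper does not unfold the definition $\rho=\rho_0(\,\cdot_A)+n_{odd}$ and track the two pieces separately; instead it uses the $H(2)_+^o$-move of Fig.~\ref{fig:crossing_to_infinity} to peel each crossing $v\in V$ off as an $\infty$-component, obtaining $D\sim_{H(2)^o}D^V_{or}\sqcup f(\infty)$ with $f(t)=P^V_D(t)$, and then reads off part~(1) from invariance of $\rho,wr_{odd}$ under $H(2)^o$, additivity (Proposition~\ref{prop:sm_unoriented_invariants}(5)), and the known values $\rho(f(\infty))=f'(1)$, $wr_{odd}(f(\infty))=f(-1)$ (Proposition~\ref{prop:sm_unoriented_invariants}(7)). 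Parts~(2) and~(3) are then deduced from part~(1) by noting that $D^V_{unor}$ and $D^V_A$ differ from $D^V_{or}$ by $|V|$ and $|V_-|$ moves $H(2)$ respectively, each of which shifts $\rho$ by $1$ and fixes $wr_{odd}$. Your approach avoids the $\infty$-factorization for $\rho$, replacing it by the congruence $n^V_{odd}\equiv (P^V_D)'(1)+|V_-|$, which is a nice shortcut. One remark: the ``main obstacle'' you flag is not an obstacle at all. Switching the resolution at one site of a crossingless diagram is precisely an $H(2)$ move, and Proposition~\ref{prop:rho0_properties}(3) already establishes that $\rho_0$ changes by $1$ under $H(2)$ (its proof handles exactly the homology/orientation bookkeeping you worry about, via $H(2)=H(2)^o+O_1$). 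So your fallback to the $\infty$-model---which is essentially the paper's own method---is unnecessary; your direct argument goes through cleanly once you cite that proposition.
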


\begin{proof}
The diagram $D$ can be transformed to a diagram $D''=D^V_{or}\sqcup f(\infty)$ by moves $H(2)_+^o$. Then $\rho(D)=\rho(D'')=\rho(D^V_{or})+\rho(f(\infty))$ and $wr_{odd}(D)=wr_{odd}(D^V_{or})+wr_{odd}(f(\infty))$. The based indices of the crossings $v\in V$ coincide with the indices of the corresponding crossings in $f(\infty)$. Hence by Proposition~\ref{prop:sm_unoriented_invariants}, $\rho(f(\infty))=(P_D^V)'(1)$ and $wr_{odd}(f(\infty)=P_D^V(-1)$. Thus, we get the first statement of the corollary.

The diagram $D^V_{unor}$ differs from $D^V_{or}$ by $|V|$ moves $H(2)$. By Proposition~\ref{prop:sm_unoriented_invariants}, $\rho(D^V_{unor})=\rho(D^V_{or})+|V|$ and $wr_{odd}(D^V_{unor})=wr_{odd}(D^V_{or})$. Analogously, $D^V_{A}$ differs from $D^V_{or}$ by $|V_-|$ moves $H(2)$ that gives the last statement.
\end{proof}

Consider the set $Q=\Z_2\times\Z$ with the involution $\lambda\colon (x,y)\mapsto (x+y,-y)$. Denote $\bar Q=Q/\lambda$. There is a bijection $\psi\colon \Z_2\times\Z_{\ge 0}\sqcup\Z\to \bar Q$ given by the formulas $\psi(a,b)=(a,2b), (a,b)\in\Z_2\times\Z$, and $\psi(a)=(0,2a+1), a\in\Z$.

For $\alpha\in H_1(F,\Z_2)$ denote $\bar S^{un}_\alpha=\bar Q$ if $\alpha=0$ and $\partial F=\emptyset$, $\bar S^{un}_\alpha=Q$ if $\alpha=0$ and $\partial F\ne\emptyset$, and $\bar S^{un}_\alpha=\Z_4$ if $\alpha\ne 0$. Let $\bar S^{un}=\bigsqcup_{\alpha\in H_1(F,\Z_2)}\bar S^{un}_\alpha$.
Consider the map $\Phi^{un}\colon\mathscr D(F)\to\bar S^{un}$ defined by the formula
\[
\Phi^{un}(D)=(\rho(D),wr_{odd}(D))\in \bar Q
\]
if $[D]=0$, and
\[
\Phi^{un}(D)=2\rho(D)+wr_{odd}(D)\in\Z_4=\bar S^{un}_{[D]}
\]
if $[D]\ne 0$.

\begin{proposition}\label{prop:skein_base_module_unor}
The map $\Phi^{un}$ induces a bijection $\mathscr K(F | H(2)^o,\Omega_2)\simeq\bar S^{un}$.
\end{proposition}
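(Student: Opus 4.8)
The plan is to follow the proof of Proposition~\ref{prop:skein_base_module} verbatim in its structure, replacing the oriented data $(\widetilde{rot},P_D)$ by the unoriented pair $(\rho,wr_{odd})$ and the oriented auxiliary results by their unoriented counterparts, which are already assembled in Proposition~\ref{prop:sm_unoriented_invariants}.

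First I would establish that $\bar\Phi^{un}$ is well defined. The homology class $[D]\in H_1(F,\Z_2)$ is preserved by $\Omega_2$ and by both the orientable and unorientable instances of $H(2)^o$, which yields the grading over $\alpha$. By Proposition~\ref{prop:sm_unoriented_invariants}(2)--(3) the pair $(\rho,wr_{odd})$ is constant on each $\{H(2)^o,\Omega_2\}$-class represented by diagrams missing $z$, and by Proposition~\ref{prop:sm_unoriented_invariants}(4) moving an arc across the base point sends $(\rho,wr_{odd})$ to $(\rho+wr_{odd},-wr_{odd})$, i.e.\ acts by $\lambda$. Hence in the sector $\alpha=0$ the pair descends to $Q$ when $\partial F\neq\emptyset$ (no arc can cross $z\in\partial F$) and to $\bar Q=Q/\lambda$ when $\partial F=\emptyset$. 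Since $2\rho+wr_{odd}$ is fixed by $\lambda$, in the sectors $\alpha\neq0$ the reduced quantity $2\rho+wr_{odd}\in\Z_4$ is automatically well defined; checking that this is the right target is the content of the case split in the definition of $\bar S^{un}$.

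Next I would put every diagram into the normal form used at the end of Proposition~\ref{prop:skein_base_module}. Applying $H(2)^o$ near each crossing turns it into an $\infty$-component (Fig.~\ref{fig:crossing_to_infinity}), and shifting all $\infty$-components and trivial circles toward $z$ makes $D$ equivalent to $D^0\sqcup g(\infty)$ with $D^0$ crossingless. The crossingless factor is classified by Proposition~\ref{prop:crossingless_skein_unor_modules} through $([D^0],\rho_0(D^0))$, and the pieces are recorded by Proposition~\ref{prop:sm_unoriented_invariants}(5)--(7): $\rho(\infty_k^\epsilon)=k\bmod 2$, $wr_{odd}(\infty_k^\epsilon)=(-1)^k\epsilon$, additivity under disjoint union, and $\rho(f(\infty))=f'(1)$, $wr_{odd}(f(\infty))=f(-1)$. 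I would also record the unoriented analogues of Lemma~\ref{lem:infinity_properties} (the annihilation $\infty_k^+\sqcup\infty_k^-\sim\emptyset$, the shift of an $\infty$-component across an arc turning $\infty_k^\epsilon$ into $\infty_{k+1}^\epsilon$) and the contraction of a pair of trivial circles from Lemma~\ref{lem:H2_move} and Lemma~\ref{lem:H20_reduction}. Surjectivity is then immediate: fix $D^0$ with $[D^0]=\alpha$, toggle $\rho$ with a single trivial circle and build $wr_{odd}$ out of $\infty_0^\pm$; in the sector $\alpha\neq0$ the residues $0,1,2,3$ of $\Z_4$ are realized by $\emptyset$, $\infty_0^+$, a single circle, and $\infty_0^-$.

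The core of the argument, and the step I expect to be the main obstacle, is injectivity in the sector $\alpha\neq0$, where the target collapses from a pair to $\Z_4$. Exactly as the oriented invariant $P_D$ is taken modulo $t^{\mu(D)}-1$, the unoriented invariant must absorb the relation produced by dragging an $\infty$-component once around a loop (or, for $\partial F\neq\emptyset$, a band following a relative cycle) meeting $D$ an odd number of times; this is available precisely when $\mu([D])$ is odd, i.e.\ when $[D]\neq0$ in $H_1(F,\Z_2)$, and it replaces a piece $\infty_k^\epsilon$ by $\infty_{k+1}^\epsilon$, changing $(\rho,wr_{odd})$ by $(1,\mp2)$ while leaving $2\rho+wr_{odd}\bmod 4$ fixed. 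I must verify that this drag is realized by $\{H(2)^o,\Omega_2\}$-moves (via the unorientable $H(2)^o$ of Fig.~\ref{fig:H2o_unoriented}, the analogue of the last paragraph of Proposition~\ref{prop:skein_base_module}), and then run the purely combinatorial reduction: using the drags to bring all $\infty_k^\epsilon$ to even index, annihilating opposite pairs, and converting the leftover trivial circle coming from a mismatch of $\rho_0(D^0)$ (allowed since $[D^0_1]=[D^0_2]=\alpha$) into two $\infty_0^+$'s, one reduces $g(\infty)$ to $\lvert w\rvert$ copies of $\infty_0^{\pm}$ with the sign and parity of $w$ determined by $2\rho+wr_{odd}\bmod 4$ and nothing finer. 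Checking that these relations generate exactly the identification $(\rho,w)\sim(\rho+1,w\mp2)$ — so that the equivalence classes biject with $\Z_4$ via $2\rho+wr_{odd}$ and no finer information survives — and that the odd-meeting band genuinely persists on surfaces with boundary, is where the real work lies; the sectors $\alpha=0$ follow the oriented template directly, with $Q$ (resp.\ $\bar Q$) appearing exactly because no odd-meeting loop exists.
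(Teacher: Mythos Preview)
Your outline tracks the paper's proof closely in structure---well-definedness via Proposition~\ref{prop:sm_unoriented_invariants}, reduction to a normal form $D^0\sqcup k\bigcirc\sqcup f(\infty)$, and the drag-around-a-loop in the $\alpha\neq0$ sector---but there is a genuine gap in the $\alpha=0$ injectivity step.

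You claim that ``the sectors $\alpha=0$ follow the oriented template directly,'' but in the oriented Proposition~\ref{prop:skein_base_module} the target for $\alpha=0$ is (essentially) $\Z\times\Z[t,t^{-1}]$, whereas here it is $Q=\Z_2\times\Z$. That collapse is not automatic: it comes from two relations that have no oriented analogue, namely
\[
\infty_k^\epsilon\sim_{\{H(2)^o,\Omega_2\}}\infty_{k\pm 2}^\epsilon
\qquad\text{and}\qquad
\infty_k^\epsilon\sim_{\{H(2)^o,\Omega_2\}}\infty_{k+1}^{-\epsilon}+\bigcirc,
\]
which the paper isolates as Lemma~\ref{lem:basic_skein_unor_relations}. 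The first lets you merge two adjacent concentric circles of an $\infty$-diagram regardless of their orientations (impossible for $H(2)^o_+$), and the second is the unoriented trick in Fig.~\ref{fig:infinity_unor_relation}. Together they reduce any $f(\infty)$ to $k\bigcirc+b\infty_0$ with $k\in\{0,1\}$ and $b\in\Z$, and one checks $k\equiv f'(1)$, $b=f(-1)$, matching $(\rho,wr_{odd})$. Your list of ``unoriented analogues of Lemma~\ref{lem:infinity_properties}'' contains only the annihilation $\infty_k^+\sqcup\infty_k^-\sim\emptyset$ and the shift-across-an-arc, neither of which suffices to kill the polynomial degrees of freedom when $\alpha=0$ (there need be no arc to shift across). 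Without these two extra relations your normal form still carries a full Laurent polynomial's worth of data and you cannot conclude that two diagrams with equal $(\rho,wr_{odd})$ are $\{H(2)^o,\Omega_2\}$-equivalent.

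In the $\alpha\neq0$ sector your plan is essentially the paper's: once the drag gives $\infty_k^\epsilon\sim\infty_{k+1}^\epsilon$, combining it with the second relation above yields $2\infty_0^\epsilon\sim\bigcirc$ and hence $4\infty_0^\epsilon\sim\emptyset$, so everything reduces to $b\infty_0$ with $b\in\Z_4$ and $b\equiv 2\rho+wr_{odd}$. Your remark about needing a relative cycle when $\partial F\neq\emptyset$ is well taken (the paper simply asserts a loop $\gamma$ with $D\cdot\gamma=1$); dragging an $\infty$-piece along an embedded arc to $\partial F$ and back accomplishes the same index shift.
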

\begin{proof}
1. By Proposition~\ref{prop:sm_unoriented_invariants}, the induced map is well-defined. We will show it is a bijection.

2. 
Let $a\in\Z_4=S^{un}_0$. Consider the diagram $D=a\infty^+_0$. Then $[D]=0$ and $\Phi^{un}(D)=a$.

Let $\alpha\in H_1(F,\Z_2)$, $\alpha\ne0$, and $x\in\bar S^{un}_\alpha$. The element $x$ is determined by a pair $(a,b)\in Q$. Consider a diagram $D$ such that $[D]=\alpha$. Let $D'=D+(a-\rho(D))\bigcirc + (b-wr_{odd})\infty_0$. Then $[D']=\alpha$ and $\Phi^{un}(D')=x$.

\begin{lemma}\label{lem:basic_skein_unor_relations}
\begin{enumerate}
  \item $2\bigcirc\sim_{H(2)^o}\emptyset$,
  \item $\infty_k^\epsilon\sim_{\{H(2)^o,\Omega_2\}}\infty^\epsilon_{k\pm 2}$,
  \item $\infty_k^\epsilon\sim_{\{H(2)^o,\Omega_2\}}\infty^{-\epsilon}_{k+1}+\bigcirc$.
\end{enumerate}
\end{lemma}
\begin{proof}
The first statement is the unoriented version of a statement in Lemma~\ref{lem:H2_move}.

Given the diagram $\infty^\epsilon_k$, we can apply the move $H(2)^o$ to two neighbouring concentric circles and then annihilate the appearing trivial circles. Then we get the diagram $\infty^\epsilon_{k-2}$.

The last relation is given in Fig.~\ref{fig:infinity_unor_relation}.
\end{proof}
\begin{figure}[h]
\centering\includegraphics[width=0.5\textwidth]{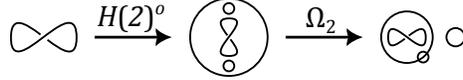}
\caption{Relation $\infty_k^\epsilon\sim\infty^{-\epsilon}_{k+1}+\bigcirc$}\label{fig:infinity_unor_relation}
\end{figure}

Let $D,D'$ be diagrams such that $[D]=[D']\in H_1(F,\Z_2)$ and $\Phi^{un}(D)=\Phi^{un}(D)$. As in Fig.~\ref{fig:crossing_to_infinity}, apply moves $H(2)^o$ to $D$ to get the diagram $D_A$ with $\infty$-components. Then pull the $\infty$-components to the base point $z$. By moves $H(2)^o$ and $\Omega_2$ the diagram $D_A$ can be transformed to $D'_A$. Then $D$ can be transformed to a diagram $D_1=D'_A+k\bigcirc+f(\infty)$.
By Lemma~\ref{lem:basic_skein_unor_relations}, we can suppose that $k\in\{0,1\}$ and $f(\infty)=b\infty$, $b\in\Z$.

At first assume that $[D]=[D']=0$. By Proposition~\ref{prop:sm_unoriented_invariants} $wr_{odd}(D_1)=(-1)^r wr_{odd}(D)$ and $\rho(D_1)=\rho(D)+r\cdot wr_{odd}(D)$ where $r$ is the number of times a diagram arc passed over $z$ while transforming $D_A$ to $D'_A$.

On the other hand, $D'\sim D_1'=D'_A+k'\bigcirc+b'\infty$ where $k'\in\{0,1\}$ and $b'\in\Z$. We can suppose that $wr_{odd}(D'_1)=wr_{odd}(D)$ and $\rho(D'_1)=\rho(D')$.

Since $\Phi^{un}(D)=\Phi^{un}(D)$, $wr_{odd}(D)=(-1)^s wr_{odd}(D')$ and $\rho(D)=\rho(D')+s\cdot wr_{odd}(D)$ for some $s=0,1$. If $s+r$ is odd then apply the transformation in Fig.~\ref{fig:H2o_boundary_reduction} to a trivial circle in $D_1$ and get a diagram
\[
D_2=D'_A+k\bigcirc+b\infty_1\sim D'_A+(k+b)\bigcirc-b\infty_0.
\]
such that $wr_{odd}(D_2)=(-1)^s wr_{odd}(D)$ and $\rho(D_2)=\rho(D)+s\cdot wr_{odd}(D)$. Then we can reassign $D_1$ to $D_2$ and assume that $s+r$ is even.

Thus, we have $\rho(D_1)=\rho(D'_1)$ and $wr_{odd}(D_1)=wr_{odd}(D'_1)$. But $wr_{odd}(D_1)=b$ and $\rho(D_1)=\rho(D'_A)+k$. There are analogous formulas for $D'_1$. Hence, $b=b'$ and $k=k'$. Thus, the diagrams $D_1$ and $D'_1$ are isotopic and $D\sim D_1=D'_1\sim D'$.

Now, let $[D]=[D']\ne 0$. Then $[D_A]\ne 0$ and there is a loop $\gamma\subset F$ such that $[D_A]\cdot\gamma=1$. Take a subdiagram $\infty_k^\epsilon$ and pull it along $\gamma$. Then the subdiagram turns into $\infty_{k+1}^\epsilon$. Hence, we have the relation $\infty_k^\epsilon\sim \infty_{k+1}^\epsilon$. By Lemma~\ref{lem:basic_skein_unor_relations}, $\infty_k^\epsilon\sim \infty_{k}^{-\epsilon}+\bigcirc$, then $2\infty_k^\epsilon\sim\bigcirc$ and $4\infty_k^\epsilon\sim\emptyset$.

Then $D_1\sim D'_A+b_1\infty$ where the number $b_1=b+2k$ is considered $\bmod 4$. Since $wr_{odd}(D_1)=(-1)^r wr_{odd}(D)$ and $\rho(D_1)=\rho(D)+r\cdot wr_{odd}(D)$ for some $r$, $2\rho(D)+wr_{odd}(D)\equiv 2\rho(D_1)+wr_{odd}(D_1)\bmod 4$. On the other hand,
$2\rho(D_1)+wr_{odd}(D_1)=2\rho(D'_A)-b_1$. Hence, $b_1=2\rho(D'_A)-2\rho(D)+wr_{odd}(D)$.

Analogously, $D'_1\sim D'_A+b'_1\infty$ where $b'_1=2\rho(D'_A)-2\rho(D')+wr_{odd}(D')$. Then $b_1=b'_1$ and
\[
D\sim D_1\sim D'_A+b_1\bigcirc\sim D'_1\sim D'.
\]
The proposition is proved.
\end{proof}

Let $Q^{\Z_4}=\Z_2\times\Z_4$ and $\lambda\colon (a,b)\mapsto (a+b,-b)$ be an involution on $Q^{\Z_4}$. Denote $\bar Q^{\Z_4}=Q^{\Z_4}/\lambda$ and $\bar S^{un,\Z_4}=\bigsqcup_{\alpha\in H_1(F\Z_2)} \bar S^{un,\Z_4}_\alpha$ where $\bar S^{un,\Z_4}_0=\bar Q^{\Z_4}$ if $\partial F=\emptyset$, $\bar S^{un,\Z_4}_0=Q^{\Z_4}$ if $\partial F\ne\emptyset$ and $\bar S^{un,\Z_4}_\alpha=\Z_4$ if $\alpha\ne 0$.

\begin{corollary}\label{cor:smoothing_skein modules_unor}
There are bijections:
\begin{enumerate}
\item $\mathscr K(F | H(2)^o,\Omega_2,\Omega_3)\simeq H_1(F,\Z_2)\times\Z_4$ induced by the map $D\mapsto ([D],2\rho(D)+wr_{odd}(D))$,
\item $\mathscr K(F | H(2),\Omega_2,\Omega_3)\simeq H_1(F,\Z_2)\times\Z_2$ induced by the map $D\mapsto ([D],n(D)\bmod 2)$ where $n(D)$ is the number of crossings in $D$,
\item $\mathscr K(F | H(2)^o,\Omega_1,\Omega_2,\Omega_3)\simeq \mathscr K(F | H(2),\Omega_1,\Omega_2,\Omega_3)\simeq H_1(F,\Z_2)$ induced by the map $D\mapsto [D]$,
\item $\mathscr K(F | H(2)^o,\Omega_2,2\Omega_\infty)\simeq \bar S^{un,\Z_4}$ induced by the map
\[
D\mapsto \left\{\begin{array}{cc}
                  (\rho(D),wr_{odd}(D)), & [D]=0, \\
                  2\rho(D)+wr_{odd}(D), & [D]\ne 0,
                \end{array}\right.
\]
\item $\mathscr K(F | H(2),\Omega_2,2\Omega_\infty)\simeq \bar\Z_4\sqcup (H_1(F,\Z_2)\setminus\{0\}\times\Z_2)$ induced by the map
\[
D\mapsto \left\{\begin{array}{cc}
                  \pm wr_{odd}(D)\bmod 4, & [D]=0, \\
                  n(D)\bmod 2, & [D]\ne 0,
                \end{array}\right.
\]
Here $\bar\Z_4=\Z_4/(x=-x)$ and $n(D)$ is the number of crossings in $D$.
\end{enumerate}
\end{corollary}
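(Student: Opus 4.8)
The plan is to deduce all five bijections from Proposition~\ref{prop:skein_base_module_unor}, which already identifies $\mathscr K(F | H(2)^o,\Omega_2)\simeq\bar S^{un}$ through the invariant $\Phi^{un}=(\rho,wr_{odd})$, by determining how each additional generator of the target knot theory acts on the model diagrams $\bigcirc$ and $\infty_k^\epsilon$ and hence on the pair $(\rho,wr_{odd})$. The relevant extra moves are $\Omega_3$, $2\Omega_\infty$, the move $O_1$ distinguishing $H(2)$ from $H(2)^o$, and $\Omega_1$; the whole argument runs parallel to the proof of Corollary~\ref{cor:smoothing_skein modules_or} in the oriented setting, with $rot$ and $P_D(t)$ replaced by $\rho$ and $wr_{odd}$.

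First I would assemble a dictionary translating each move into a relation on the generators. Using the unoriented analogue of the reduction $\Omega_3\sim\Omega_\infty$ modulo $\{H(2)^o,\Omega_2\}$ from Corollary~\ref{cor:smoothing_skein modules_or}, the move $\Omega_3$ imposes $\infty_k^\epsilon\sim\infty_{k+1}^\epsilon$, whereas $2\Omega_\infty$ imposes only $2\infty_k^\epsilon\sim2\infty_{k+1}^\epsilon$; the move $O_1$ (present in $H(2)$ but not $H(2)^o$, by the unoriented version of Lemma~\ref{lem:H2_move}) gives $\bigcirc\sim\emptyset$; and $\Omega_1$ gives $\infty_0^\epsilon\sim\bigcirc$. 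Feeding the values from Proposition~\ref{prop:sm_unoriented_invariants}(6) (namely $\rho(\infty_k^\epsilon)=k\bmod 2$, $wr_{odd}(\infty_k^\epsilon)=(-1)^k\epsilon$, and $\rho(\bigcirc)=1$, $wr_{odd}(\bigcirc)=0$) into the additivity of Proposition~\ref{prop:sm_unoriented_invariants}(5), each relation becomes an explicit identification on $(\rho,wr_{odd})$: $\Omega_3$ yields $(\rho,wr_{odd})\sim(\rho+1,wr_{odd}\mp2)$, $2\Omega_\infty$ yields $wr_{odd}\sim wr_{odd}\pm4$ with $\rho$ fixed, $O_1$ yields $\rho\sim\rho+1$ with $wr_{odd}$ fixed, and $\Omega_1$ shifts the invariant by $1$. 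In particular each relation is easily checked against the class $2\rho+wr_{odd}\in\Z_4$ that governs the components $\alpha=[D]\ne0$.

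Then I would compute each quotient fiberwise over $\alpha\in H_1(F,\Z_2)$. For item~1 one verifies that $2\rho+wr_{odd}\bmod 4$ is constant along $\infty_k^\epsilon\sim\infty_{k+1}^\epsilon$, and that this relation collapses the infinite fiber $\bar Q$ (over $\alpha=0$) and fixes the fiber $\Z_4$ (over $\alpha\ne0$) to a single uniform copy of $\Z_4$, giving $H_1(F,\Z_2)\times\Z_4$. Item~2 follows by further adjoining $O_1$, which adds $x\sim x+2$ on $\Z_4$ and reduces the invariant to $2\rho+wr_{odd}\equiv wr_{odd}\equiv n\pmod 2$. Item~3 follows by instead adjoining $\Omega_1$, whose relation $\infty_0^\epsilon\sim\bigcirc$ moves the invariant by $1$ and so collapses the fiber to a point, leaving only $[D]$. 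For item~4 the weaker relation $wr_{odd}\sim wr_{odd}\pm4$ reduces $wr_{odd}$ modulo $4$ without touching $\rho$, producing exactly the refined set $\bar S^{un,\Z_4}$; and item~5 is item~4 with $O_1$ adjoined, which kills $\rho$ and, over $\alpha=0$, turns the residual base-point-passage action $\lambda$ into the identification $x\sim-x$ (yielding $\bar\Z_4$), while over each $\alpha\ne0$ it sends the fiber $\Z_4$ to $\Z_2$ recorded by $n(D)\bmod 2$.

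The main obstacle I expect is the bookkeeping over the exceptional component $\alpha=0$, where the invariant takes values in the $\lambda$-quotients $\bar Q$ and $\bar Q^{\Z_4}$ rather than in a plain cyclic group. One must track precisely how the passage-over-$z$ relation $\lambda$ interacts with the collapse of $\rho$ under $O_1$ to produce exactly the $x=-x$ identification defining $\bar\Z_4$, and keep the $\partial F=\emptyset$ and $\partial F\ne\emptyset$ subcases (which enter solely through the presence or absence of $\lambda$) straight throughout; the computations over $\alpha\ne0$, by contrast, are routine since there the single cyclic invariant $2\rho+wr_{odd}$ already controls everything.
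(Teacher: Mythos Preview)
Your proposal is correct and follows essentially the same approach as the paper's own proof: start from Proposition~\ref{prop:skein_base_module_unor}, tabulate how each of the additional moves $O_1$, $\Omega_1$, $\Omega_3$, $2\Omega_\infty$ acts on $\rho$, $wr_{odd}$, and on the model subdiagrams $\bigcirc$, $\infty_k^\epsilon$ (using $\Omega_3\sim\Omega_\infty$ modulo $H(2)^o,\Omega_2$), and then read off each quotient. The paper compresses this into a single table and one sentence, while you spell out the fiberwise computation over $\alpha\in H_1(F,\Z_2)$ and flag the $\lambda$-bookkeeping over $\alpha=0$, but the content is the same.
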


\begin{proof}
We need to describe how additional moves $O_1$, $\Omega_1$, $\Omega_3$ and $2\Omega_\infty$ affect the invariants $\rho$ and $wr_{odd}(D)$ and the subdiagrams $\bigcirc$ and $\infty_k^\pm$. We have the following table.
\begin{center}
\begin{tabular}{|c|c|c|c|c|}
\hline
move & $\rho$ & $wr_{odd}$ & relation & consequences\\
\hline
$O_1$               & $1$   & $0$       & $\bigcirc =\emptyset$ & $\infty^\epsilon_k=(-1)^k\infty^\epsilon_0$\\
$\Omega_1$          & $0,1$ & $\pm 1$   & $\infty^\epsilon_0 =\bigcirc$ & $\infty^\epsilon_k=\bigcirc=\emptyset$\\
$\Omega_3$          & $1$   & $\pm 2$   & $\infty^\epsilon_k=\infty^\epsilon_{k+1}$ & $\bigcirc=2\infty_k^\epsilon$, $4\infty^\epsilon_k=\emptyset$\\
$2\Omega_\infty$    & $0$   & $\pm 4$   & $2\infty^\epsilon_k=2\infty^\epsilon_{k+1}$ & $4\infty^\epsilon_k=\emptyset$\\
\hline
\end{tabular}
\end{center}
Note that $\Omega_3\sim\Omega_\infty$ modulo the moves $H(2)^o$ and $\Omega_2$.

The corollary follows from Proposition~\ref{prop:skein_base_module_unor} and the table above.
\end{proof}


\subsection{$\Delta$-skein module}

The aim of this subsection is to describe the skein module (more precisely, the set of equivalence classes $\mathscr K(F|\Omega_1,\Omega_2,\Omega_3,\Delta)$) generated by the $\Delta$-move on links in a fixed surface.

\subsubsection{Extended homotopy index polynomial for tangles in a fixed surface}

Let $F$ be a compact connected oriented surface.
In this section $\tau$ denotes the component index, $o$ denotes the order index and $h$ denotes the homotopy index of crossings of tangle diagrams in $F$ (see Section~\ref{subsec:traits}).

Let $D=D_1\cup\cdots\cup D_n$ be a tangle diagram in $F$ and $z_i\in D_i$.
Let $\kappa_i\in\pi_1(F,z_i)$ be the homotopy classes of the components (if the $i$-th component is long we set $\kappa_i=1$). Consider the quotient sets $\hat\pi_{ii}(F,z)=\pi_1(F,z_i)/Ad_{\kappa_i}$ and $\hat\pi_{ij}(F,z)=\kappa_i\backslash\pi_1(F,z_i,z_j)/\kappa_j$, $i\ne j$.

For $1\le i,j\le n$, $i\ne j$ denote
\[
\widetilde{LK}_{ij}=\sum_{v:\tau(v)=(i,j)} sgn(v)\cdot h(v)\in\Z[\hat\pi_{ij}(F,z)].
\]
For a closed component $D_i$ denote
\[
\widetilde{LK}_{ii}=\sum_{v:\tau(v)=(i,i), h(v)\ne 1,\kappa_i} sgn(v)\cdot h(v)\in\Z[\hat\pi_{ii}(F,z)].
\]
For a long component $D_i$ denote
\[
\widetilde{LK}^{\epsilon}_{i}=\sum_{v:\tau(v)=(i,i), o(v)=\epsilon, h(v)\ne 1} sgn(v)\cdot h(v)\in\Z[\hat\pi_{ii}(F,z)],\quad \epsilon=\pm 1.
\]

 Consider the tuple
\begin{multline*}
\widetilde{LK}(D)=\left[(\kappa_i)_{i=1}^n, (\widetilde{LK}_{ij})_{i,j}, (\widetilde{LK}^\epsilon_{i})_{i,\epsilon}\right]\in \prod_{i}\pi(F,z_i)\times\prod_{i,j}\Z[\hat\pi_{ij}(F,z)]\\ \times\prod_{i: D_i\mbox{\scriptsize\ is long}}\Z[\hat\pi_{ii}(F,z)]^{\times 2}.
\end{multline*}

\begin{definition}
The diffeomorphism group $Diff_0(F,z)$ acts on the product of fundamental group. The orbit
$LK(D)=\widetilde{LK}(D)\cdot Diff_0(F,z)$ is called the \emph{extended homotopy index polynomial} of the link $L$.
\end{definition}

\begin{theorem}\label{thm:Delta_skein_module}
Let $L,L'$ be two tangles in the surface $F$. Then the following conditions are equivalent:
\begin{enumerate}
\item $L$ can be transformed to $L'$ by Reidemeister moves and $\Delta$-moves;
\item $LK(L)=LK(L')$.
\end{enumerate}
\end{theorem}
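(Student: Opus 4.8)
The plan is to establish the two implications separately. Throughout I fix the basepoints $z_i$ and read $\widetilde{LK}(D)$ as a tuple of formal signed sums of homotopy classes, one summand per crossing, taken modulo the $Diff_0(F,z)$-action; the quotients $\hat\pi_{ij}=\kappa_i\backslash\pi_1/\kappa_j$ and $\hat\pi_{ii}=\pi_1/Ad_{\kappa_i}$ are designed to absorb exactly the indeterminacy produced by sliding a basepoint along its component. The implication $(1)\Rightarrow(2)$ asserts invariance of $LK$ and is the routine direction; $(2)\Rightarrow(1)$ asserts completeness and carries the real content.

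For $(1)\Rightarrow(2)$ I would check each generating move. Because the component, order and homotopy indices are traits (Theorem~\ref{thm:universal_index}), a move $\Omega_3$ fixes every summand of $\widetilde{LK}$. A move $\Omega_2$ inserts or deletes two crossings with the same component index and homotopy class but opposite signs, so their summands cancel. A move $\Omega_1$ inserts a self-crossing whose loop has class $1$ or $\kappa_i$; both are excluded from $\widetilde{LK}_{ii}$, and the order-index sums $\widetilde{LK}^\epsilon_i$ also omit $h=1$, so $\Omega_1$ changes nothing. The only substantive point is the $\Delta$-move: I would write down the homotopy classes and signs of the three crossings of the $\Delta$-triangle and verify that their signed total is unchanged, up to a conjugation by some $\kappa$ that is invisible in the relevant quotient. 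This is precisely the surface refinement of the classical fact that a $\Delta$-move preserves all linking numbers~\cite{MN}.

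For $(2)\Rightarrow(1)$ the strategy is a normal-form reduction driven by a clasp calculus. After applying a diffeomorphism isotopic to the identity I may assume $\widetilde{LK}(L)=\widetilde{LK}(L')$; in particular the homotopy classes $\kappa_i$ agree. First I would isotope each component into a fixed standard curve representing $\kappa_i$, so that the two underlying multicurves coincide and every remaining crossing becomes a localized ``clasp'' carrying a signed homotopy label in $\hat\pi_{ij}$ (respectively $\hat\pi_{ii}$, or an order-index sign $\epsilon$ for a self-clasp of a long component). The engine is then to prove that Reidemeister together with $\Delta$-moves realize exactly three operations: (i) cancellation of a clasp of label $g$ against one of label $g$ with opposite sign; (ii) deletion of a self-clasp with $h=1$ or $h=\kappa_i$; and (iii) the coset and conjugation moves $g\mapsto\kappa_i g$, $g\mapsto g\kappa_j$, $g\mapsto\kappa_i g\kappa_i^{-1}$ defining the quotients. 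Granting this, each diagram reduces to a canonical configuration depending only on the tuple $\widetilde{LK}$, whence equal $LK$ forces the two reductions to coincide.

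The main obstacle is the clasp calculus of step $(2)\Rightarrow(1)$: one must show geometrically that a single $\Delta$-move, applied to three strands placed in prescribed homotopy position, produces precisely the pair-cancellation and relabeling in (i)--(iii) and nothing that $\widetilde{LK}$ fails to record. The delicate case is a self-crossing of a closed component, where the $Ad_{\kappa_i}$-ambiguity and the exclusion of the labels $1,\kappa_i$ must be matched move-for-move, together with the bookkeeping of the $Diff_0(F,z)$-orbit needed to make the normal form genuinely canonical. I expect the long-component summands $\widetilde{LK}^\epsilon_i$ to be easier, since there $\kappa_i=1$ and the order index $\epsilon$ is itself a trait.
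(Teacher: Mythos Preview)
Your outline is essentially the paper's approach---both directions rest on a clasp calculus---but the paper organises $(2)\Rightarrow(1)$ asymmetrically and isolates one technical lemma that you gesture at without formulating.

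On $(1)\Rightarrow(2)$: you are working harder than necessary on the $\Delta$-move. A $\Delta$-move sets up a natural bijection between the three crossings before and after, and each crossing retains its component index, order index, and homotopy index individually; there is no ``up to conjugation by $\kappa$'' and no need to check a signed total. The paper simply remarks that $\Omega_1,\Omega_2,\Omega_3,\Delta$ do not change $\tau,o,h$ on any surviving crossing, so $\widetilde{LK}$ is preserved termwise.

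On $(2)\Rightarrow(1)$: rather than reducing both $L$ and $L'$ to a symmetric normal form, the paper first proves a \emph{Clasp Lemma}: if $L$ and $L'$ are componentwise homotopic, then $L'$ is isotopic to $L$ with finitely many clasps attached (replace each crossing change in a homotopy by the creation of a clasp). This immediately makes the comparison asymmetric and avoids your bookkeeping about a canonical normal form and the $Diff_0(F,z)$-orbit. The core technical step---your operation (i)---is then isolated as a \emph{Switching Lemma}: if two crossings $a,b$ satisfy $\tau(a)=\tau(b)$, $o(a)=o(b)$, $h(a)=h(b)$ and $sgn(a)=-sgn(b)$, then one can switch both simultaneously using $\Omega_2,\Omega_3,\Delta$. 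The proof drags $b$ next to $a$, contracts the (now homotopically trivial) connecting arc to a small loop decorated with clasps, and then passes the arc over each clasp by a local $\Omega_2+\Omega_3+\Delta$ manoeuvre. This lemma is exactly the mechanism you flag as the ``main obstacle''; once it is in hand, clasp heads with nonzero contribution cancel in pairs, and a clasp head with $h=1$ or $\kappa_i$ is killed by creating a matching loop with $\Omega_1$, switching, and undoing. Your operation (iii) is not a $\Delta$-phenomenon at all: the coset and conjugation ambiguities are realised by isotopies (sliding a clasp foot once around a component), which is why the quotients $\hat\pi_{ij}$ were chosen as they were.
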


\begin{corollary}[H. Murakami, Y. Nakanishi~\cite{MN}]
Two oriented classical links $L$ and $L'$ are $\Delta$-equivalent if and only if their linking numbers coincide.
\end{corollary}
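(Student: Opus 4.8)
The plan is to obtain the corollary as a direct specialization of Theorem~\ref{thm:Delta_skein_module} to the case when the surface $F$ is simply connected, i.e. $F=S^2$ (or $\mathbb{R}^2$), so that the links in $F\times[0,1]$ are exactly the oriented classical links. By the theorem, two such links $L$ and $L'$ are related by Reidemeister and $\Delta$-moves if and only if $LK(L)=LK(L')$. Thus it suffices to show that, in the simply-connected setting, the extended homotopy index polynomial $LK$ records precisely the pairwise linking numbers, after which the equivalence becomes a tautology.

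First I would compute the ingredients of $\widetilde{LK}$ when $\pi_1(F)=1$. Because $F$ is simply connected, every homotopy class of a loop or path is trivial: one has $\kappa_i=1$ for all $i$, each quotient $\hat\pi_{ij}(F,z)$ is a single point, and consequently each group ring $\Z[\hat\pi_{ij}(F,z)]$ collapses to $\Z$. Moreover a link has only closed components, so the long-component terms $\widetilde{LK}^\epsilon_i$ do not occur, and the $Diff_0(F,z)$-action acts only through the (now trivial) fundamental-group data.

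Next I would identify the surviving data. For $i\ne j$, since $h(v)=1$ for every crossing, the coefficient
\[
\widetilde{LK}_{ij}=\sum_{v:\tau(v)=(i,j)}\sgn(v)
\]
is the sum of signs over the crossings at which $D_i$ passes over $D_j$, which is exactly the linking number $\lk(K_i,K_j)$. For the diagonal terms, $\widetilde{LK}_{ii}$ is by definition a sum over self-crossings $v$ with $h(v)\ne 1,\kappa_i$; as $h(v)=1=\kappa_i$ for all such $v$, this sum is empty and $\widetilde{LK}_{ii}=0$. Hence $LK(L)$ carries the same information as the symmetric linking matrix $(\lk(K_i,K_j))_{1\le i\ne j\le n}$, and $LK(L)=LK(L')$ holds if and only if all linking numbers agree.

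The only place where care is needed is the bookkeeping: matching the orientation/over-crossing convention built into the definition of $\widetilde{LK}_{ij}$ with the classical linking number, and verifying that the self-crossing (writhe-type) contributions genuinely drop out of $LK$ rather than surviving as extra invariants, together with the standard symmetry $\lk(K_i,K_j)=\lk(K_j,K_i)$. Once the simply-connected collapse of all homotopy data is established, the corollary follows immediately from Theorem~\ref{thm:Delta_skein_module}.
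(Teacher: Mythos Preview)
Your proposal is correct and follows essentially the same approach as the paper's own proof: specialize Theorem~\ref{thm:Delta_skein_module} to the simply connected case, observe that $\kappa_i=1$ forces all homotopy data to collapse so that $\widetilde{LK}_{ij}=\lk(K_i,K_j)$ for $i\ne j$ and $\widetilde{LK}_{ii}=0$, and conclude. The paper's argument is the same computation stated in two lines; your version is simply more expansive about why the diagonal terms vanish and why the off-diagonal terms recover the linking numbers.
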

\begin{proof}
Indeed, in the classical case $\kappa_i=1$ and $\widetilde{LK}_{ij}=lk(D_i,D_j)\cdot 1$ for $i\ne j$ and  $\widetilde{LK}_{ii}=0$. Thus, $\Delta$-equivalence classes of links are determined by the linking numbers.
\end{proof}

\begin{definition}
Let $T$ be a tangle in $F\times [0,1]$ and $\gamma$ an oriented curve such that $\gamma\cap T =\partial\gamma$. Transform the tangle $T$ in a small neighbourhood of $\gamma$ as shown in Fig.~\ref{fig:clasp}. The part of the new tangle $T$ lying in the neighbourhood of $\gamma$ is called a \emph{clasp}.
\begin{figure}[h]
\centering\includegraphics[width=0.4\textwidth]{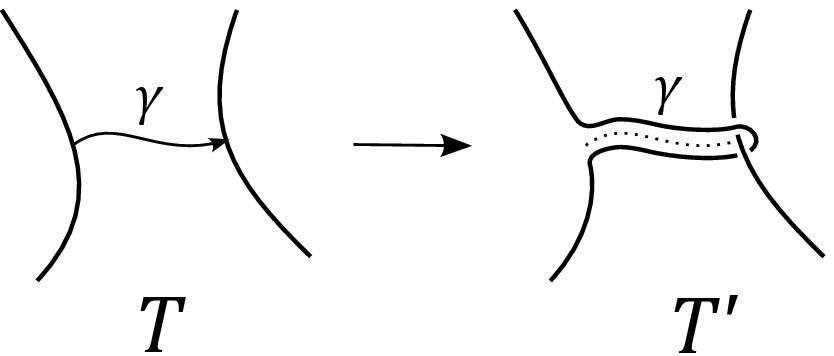}
\caption{A clasp}\label{fig:clasp}
\end{figure}
\end{definition}

\begin{lemma}[Clasp lemma]\label{lem:clasp_lemma}
Let $T_1$ and $T_2$ be homotopy equivalent tangles in $F\times [0,1]$. Then there exists an isotopy $T_1\simeq T_1'$ such that $T_1'$ is obtained from $T_2$ by adding several clasps.
\end{lemma}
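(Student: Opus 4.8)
The plan is to prove the Clasp Lemma by reducing a homotopy between two tangles to a sequence of elementary moves, each of which can be realized (up to isotopy) by the insertion of clasps. The key point is that two tangles in $F\times[0,1]$ are homotopic (as $1$-manifolds, allowing self-intersections and mutual intersections of the strands to be pushed through each other) precisely when they agree on the invariants recording free homotopy classes and path classes of their components. Since $T_1$ and $T_2$ are assumed homotopy equivalent, their components can be matched up so that corresponding components are freely homotopic (closed case) or path-homotopic rel endpoints (long case). A generic homotopy between two embedded tangles decomposes into a finite sequence of isotopies and finitely many transverse \emph{finger moves} (crossing changes), each of which passes one strand through another at a single double point.

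First I would set up the generic-homotopy framework: realize the homotopy $H\colon T_1\times[0,1]\to F\times[0,1]$ by a map whose track is generic, so that the only non-isotopy events are isolated transverse double points where one strand crosses another (or crosses itself). Each such event is exactly a crossing change. The central observation is then the \emph{local} statement, which I expect to be the technical heart of the argument: a single crossing change on a tangle $T$, supported near a short arc $\gamma$ joining the two strands at the crossing, can be undone up to isotopy by inserting a clasp along $\gamma$. This is the content of Fig.~\ref{fig:clasp} read in reverse — a clasp is precisely the local tangle whose resolution realizes a crossing change, and sliding the clasp along the connecting curve $\gamma$ converts the crossing change into an isotopy of the claspled tangle. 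Thus every finger move in the homotopy can be traded for the addition of one clasp, at the cost of an ambient isotopy elsewhere.

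Carrying this out in order: (1) isotope $T_1$ so that it agrees with $T_2$ outside a union of small balls, one for each double-point event in the generic homotopy, using that the homotopy is an isotopy away from those events; (2) at each event-ball, compare $T_1$ and $T_2$ — they differ by exactly one crossing change along an arc $\gamma_k$; (3) insert a clasp along each $\gamma_k$ into $T_2$, producing a tangle $T_2'$ which, by the local crossing-change/clasp identity, is isotopic to $T_1$; (4) assemble these local isotopies into a global isotopy $T_1\simeq T_1'=T_2 + (\text{clasps along }\gamma_1,\dots,\gamma_m)$. The finiteness of the number of clasps follows from genericity of $H$, which guarantees finitely many double-point crossings.

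\textbf{The main obstacle} I anticipate is the bookkeeping needed to guarantee that the clasps can be inserted into $T_2$ in disjoint neighborhoods of disjoint connecting curves $\gamma_k$, and that sliding each clasp to realize its crossing change does not interfere with the other clasps or disturb the already-matched portions of the tangle. Concretely, one must choose the arcs $\gamma_k$ (the cores of the clasp neighborhoods) so that their regular neighborhoods are pairwise disjoint and disjoint from the region where $T_1$ and $T_2$ already coincide; this is possible by general position in the $3$-manifold $F\times[0,1]$, but it requires care because the strands being clasped together may be long, and their relative positions must be controlled. A secondary subtlety is orientation: one must check that the clasp, as drawn in Fig.~\ref{fig:clasp}, produces a crossing change of the correct sign, so that inserting a clasp (rather than its mirror) genuinely cancels the given finger move; if the signs do not match one simply uses the oppositely-oriented clasp, but the correspondence must be made explicit. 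Once these positional and sign issues are settled, the remaining steps are routine applications of general position and local isotopy.
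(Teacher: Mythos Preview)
Your core idea matches the paper's exactly: decompose a generic homotopy into isotopies and crossing changes, then trade each crossing change for the insertion of a clasp. The paper's proof is three sentences long and uses precisely this.

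Where you diverge is in the assembly. You attempt to do everything in parallel: first isotope $T_1$ to agree with $T_2$ outside a disjoint union of small balls, then insert all clasps at once. This is what generates your ``main obstacle'' about disjointness of the $\gamma_k$ and non-interference of the clasp neighborhoods. The paper sidesteps this entirely by processing the homotopy \emph{sequentially}: run the homotopy forward, and each time a crossing-change event occurs, replace it by the local isotopy that sprouts a clasp (your Fig.~\ref{fig:cross_change1} observation). The clasp then simply rides along under all subsequent isotopies in the sequence --- ``clasps survive isotopies'' --- and further clasps are added one at a time as later crossing-change events arrive. At the end of the sequence you have performed only isotopies on $T_1$, landing on a tangle $T_1'$ that equals $T_2$ with finitely many clasps attached. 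No general-position argument for simultaneous disjointness is needed, and the sign issue you flag is handled automatically because each clasp is inserted at the moment and location of its crossing change, with the orientation dictated by that event.

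So your proposal is correct but unnecessarily heavy; the obstacle you anticipate dissolves once you switch from a parallel to a sequential organization.
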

\begin{proof}
A homotopy from $T_1$ to $T_2$ can be presented by a sequence of spatial isotopies and crossing changes. Any crossing change can be replaced by an isotopy which adds a clasp to the tangle (Fig.~\ref{fig:cross_change1}). Since clasps survive isotopies, in the end we come to a diagram $T_1'$ which differs from $T_2$ by several clasps.
\begin{figure}[h]
\centering\includegraphics[width=0.5\textwidth]{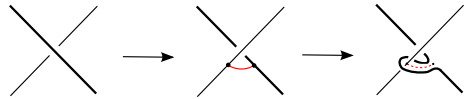}
\caption{Emulating a crossing change by adding a clasp}\label{fig:cross_change1}
\end{figure}
\end{proof}

\begin{lemma}[Switching lemma]\label{lem:switching_lemma}
Let $L$ be a tangle diagram in the surface $F$ and $a,b$ two crossings such that $\tau(a)=\tau(b)$, $o(a)=o(b)$, $h(a)=h(b)$ and $sgn(a)=-sgn(b)$. Let $L'$ be the diagram obtained from $L$ by crossing switching at $a$ and $b$.
Then $L$ and $L'$ are $\Delta$-equivalent.

\begin{figure}[h]
\centering\includegraphics[width=0.6\textwidth]{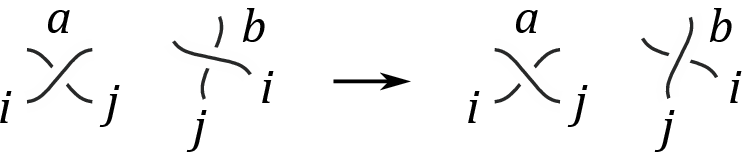}
\end{figure}
\end{lemma}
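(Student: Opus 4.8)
The plan is to read the four hypotheses as a single statement about the universal signed index and then to trade the simultaneous switch for a cancelling pair of clasps. By Theorems~\ref{thm:universal_trait_index} and~\ref{thm:universal_index} the universal signed index is $\sigma^{u}=(sgn,\tau,o,h)$, with involution negating $sgn$ and fixing the index coordinates $(\tau,o,h)$. Thus the conditions $\tau(a)=\tau(b)$, $o(a)=o(b)$, $h(a)=h(b)$ and $sgn(a)=-sgn(b)$ say exactly that $\sigma^{u}(a)=\sigma^{u}(b)^{\ast}$, i.e. that $a$ and $b$ carry the signed-index data of a pair of crossings cobounding a second Reidemeister bigon. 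As a consistency check, switching both crossings changes neither the free homotopy type of the tangle nor the polynomial $\widetilde{LK}$, since the two contributions $sgn(a)h(a)$ and $sgn(b)h(b)=-sgn(a)h(a)$ cancel; of course this cannot serve as the proof, since it is precisely the content of Theorem~\ref{thm:Delta_skein_module} that $\widetilde{LK}$ is a complete $\Delta$-invariant, but it tells us what to aim for.

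First I would realise each of the two switches as a clasp. By the crossing-change-by-clasp construction of Fig.~\ref{fig:cross_change1} (the mechanism underlying Lemma~\ref{lem:clasp_lemma}), changing the crossing $a$ is, up to an isotopy of the tangle in $F\times[0,1]$, the same as inserting a clasp $c_{a}$ along a short arc $\gamma_{a}$ based at the two strands meeting at $a$; likewise for $b$. Hence $L'$ is isotopic, and therefore Reidemeister-equivalent as a diagram, to $L$ with two clasps $c_{a}$ and $c_{b}$ inserted, and because $sgn(a)=-sgn(b)$ these two clasps have opposite handedness. The matched index data is what brings them into cancelling position: the equality $h(a)=h(b)$ in $\hat\pi(F,z_{i},z_{j})$ means the loop built from the two strand-arcs joining $a$ to $b$ is null-homotopic modulo the inner monodromy group $IM$, and $o(a)=o(b)$ fixes the relevant half and arc orientation in the self-crossing cases of Figs.~\ref{fig:R3combination1}--\ref{fig:R3combination1l}. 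After applying a tangle symmetry from $IM$ (an ambient diffeomorphism isotopic to the identity, hence an allowed move) to kill the monodromy ambiguity, the supporting arcs $\gamma_{a}$ and $\gamma_{b}$ become honestly homotopic, so I can homotope $\gamma_{b}$ onto the reverse of $\gamma_{a}$.

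The last step, which I expect to be the main obstacle, is to realise this homotopy of the clasp by $\Delta$-moves and to see the aligned opposite clasps cancel. Since the clasp is carried by a band, not merely by its core arc, the generic two-parameter sweep of the band meets the one-dimensional tangle $T$ in isolated points of $F\times[0,1]$; each such passage of the band across a strand is precisely a $\Delta$-move (together with Reidemeister moves), which is the local identity I would verify pictorially using the local configuration pictured after the statement. Once $\gamma_{b}$ has been slid onto the reverse of $\gamma_{a}$ in this way, the two opposite-handed clasps lie on the same arc and annihilate by an isotopy, leaving $L$; thus $L\sim_{\Delta}L'$. The genuinely delicate points are the bookkeeping of signs through the band passages, so that the two clasps remain inverse and really cancel rather than combine, and the verification that one band-across-strand passage equals a single $\Delta$-move in the surface-tangle setting; the homotopy alignment of the preceding paragraph is routine once the inner-monodromy correction is made. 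The remaining cases --- self-crossings of a closed component and of a long component --- are handled by the same scheme, the hypotheses $\tau(a)=\tau(b)$, $o(a)=o(b)$, $h(a)=h(b)$ guaranteeing in each case that the two clasp arcs admit compatible endpoints and are homotopic.
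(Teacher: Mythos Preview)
Your strategy---trade both crossing switches for clasps and then cancel the clasps---is a legitimate alternative to the paper's argument and is essentially the clasper-calculus viewpoint. The paper proceeds differently: it first drags the crossing $b$ next to $a$ by $\Omega_2,\Omega_3$; the hypothesis $h(a)=h(b)$ then makes the arc between them contractible, so by the Clasp Lemma this arc can be shrunk to a small loop decorated with clasps; the small arc is then passed over each clasp using $\Omega_2,\Omega_3,\Delta$, which performs the simultaneous switch locally; finally $b$ is dragged back to its original position, using $\Delta$ to pass over any crossings encountered on the way (since $b$ is now switched). So the paper never converts $a$ and $b$ themselves into clasps; the clasps in its argument arise only from the contraction of the connecting arc.

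There is, however, a genuine gap in your outline. The clasps $c_a$ and $c_b$ have their feet on the strands at \emph{different} points (near $a$ and near $b$), so ``homotope $\gamma_b$ onto the reverse of $\gamma_a$'' is not a homotopy rel endpoints, and two opposite-handed clasps cancel by isotopy only once their feet are adjacent on each strand and their bands run parallel. You therefore need a further move that you do not list: sliding a clasp foot along its strand past a crossing of $L$. This is \emph{not} the ``band across a strand'' passage you discuss (which concerns the interior of the band meeting $T$ transversely); it is the leaf-slide of clasper calculus, and it too costs a $\Delta$-move. Once this is added, the feet can be brought together, the hypothesis $h(a)=h(b)$ (with the $IM$-correction you already noted) aligns the bands, and the cancellation goes through. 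The paper's analogue of this missing step is its ``passing a crossing'' move, used when pulling the switched $b$ back to its original place.
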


\begin{proof}

Since $a$ and $b$ lie on the same component $i$, we can pull the crossing $b$ to $a$ using second and third Reidemeister moves. Since $h(a)=h(b)$ the long arc connecting $a$ and $b$ is contractible. By Lemma~\ref{lem:clasp_lemma} contract the arc to a small loop with clasps (Fig.~\ref{fig:contraction} left). Then split the clasps on the loop by adding crossings with second Reidemeister move (Fig.~\ref{fig:contraction} right).

\begin{figure}[h]
\centering\includegraphics[width=0.8\textwidth]{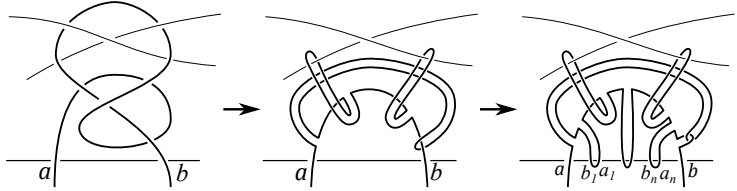}
\caption{Contraction of an arc to a small loop with clasps}\label{fig:contraction}
\end{figure}

Then move the small arcs $ab_1, a_1b_2,\dots, a_nb$ over the correspondent clasps as shown in Fig.~\ref{fig:move_over_clasp} using moves $\Omega_2$, $\Omega_3$ and $\Delta$.
\begin{figure}[h]
\centering\includegraphics[width=0.8\textwidth]{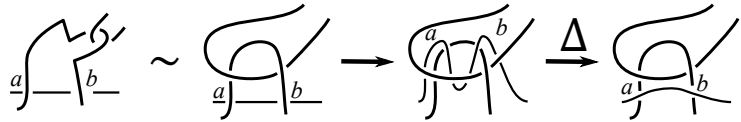}
\caption{Moving the arc $ab$ over a clasp}\label{fig:move_over_clasp}
\end{figure}

Remove the intermediate crossings $a_1,b_1,\cdots a_n,b_n$ with second Reidemeister moves. Return the loop with clasps to the long arc by the inverse isotopy. Thus, we switched the crossings of the small arc $ab$.

Finally, pull the crossing $b$ back to the original position. Since the crossing $b$ was switched, we may need $\Delta$-moves to pass over crossings (Fig.~\ref{fig:passing_crossing}).
\end{proof}

\begin{figure}[h]
\centering\includegraphics[width=0.6\textwidth]{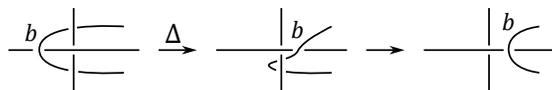}
\caption{Passing a crossing}\label{fig:passing_crossing}
\end{figure}

\begin{proof}[Proof of Theorem~\ref{thm:Delta_skein_module}]
The moves $\Omega_1, \Omega_2, \Omega_3$ and $\Delta$ do not change the indices $\tau, o, h$ of crossings, hence, do not change $\widetilde{LK}$. Isotopies which moves points $z_i$ can turn $\widetilde{LK}$ to another representative of extended homotopy index polynomial.

Let $LK(L)=LK(L')$. Apply an isotopy to $L'$ so that $\widetilde{LK}(L)=\widetilde{LK}(L')$. Since $[L_i]=[L'_i]$, by Lemma~\ref{lem:clasp_lemma} we can isotope the tangle $L'$ to a tangle $L''$ which is $L$ with some clasps added. Since $\widetilde{LK}(L)=\widetilde{LK}(L'')$, the contribution of the clasp crossings is zero. The crossings in the clasp stalks appear in pairs with opposite signs, so their contribution to $\widetilde{LK}_{ij}$ annihilates. Hence, the total contribution of the crossings in the clasp heads vanishes.

Let $v_1$ and $v_2$ be the crossings of a clasp head. Assume that the contribution term $sgn(v_1)h(v_1)$ of $v_1$ is not zero. Since $sgn(v_1)=sgn(v_2)$, the terms of $v_1$ and $v_2$ cannot contract. Then there exists another clasp head crossing $w$ which gives the opposite contribution to $v_1$. Hence, $\tau(v_1)=\tau(w)$, $o(v_1)=o(w)$, $h(v_1)=h(w)$ and $sgn(v_1)=-sgn(w)$. By Lemma~\ref{lem:switching_lemma} we can switch the crossing $v_1$ and $w$. After switching we can remove these two clasps with second Reidemeister moves. Thus, we can eliminate all clasps with nontrivial heads.

Let $v$ be a crossing of a clasp head which contributes zero to $\widetilde{LK}$. Then $\tau(v)=(i,i)$ and $h(v)=1$ (or $h(v)=\kappa_i$ if the component is closed). Create with a first Reidemeister move a crossing $w$ on the component $i$ such that $o(w)=o(v)$, $h(w)=h(v)$ and $sgn(w)=-\sgn(v)$. Then switch the crossings $v$ and $w$ by Lemma~\ref{lem:switching_lemma}. Now we can remove the clasp with second Reidemeister moves and the crossing $w$ with a first Reidemeister move.

Thus, we can remove all clasps of $L''$, hence, $L\sim_\Delta L''\sim_\Delta L'$.
\end{proof}

\begin{remark}
An analog of Theorem~\ref{thm:Delta_skein_module} holds for regular tangles. We should add the writhes $wr_i(D)=\sum_{v\colon \tau(v)=(i,i)}sgn(v)$ to the extended homotopy index polynomial (or, equivalently, allow the homotopy values $1$ and $\kappa_i$ in the definition of $\widetilde{LK}_{ii}$ and $\widetilde{LK}_{i}^\epsilon$). Then the new invariant classifies $\Delta$-equivalence classes of regular tangles.

In the proof the only difference appears at the last step. Given a clasp head crossing $v$ with zero contribution, we can create a pair of loop crossings $w$ and $w'$ with moves $\Omega_2$, $\Omega_3$. Then we switch the crossings $v$ and $w$ and contract the clasp. Thus, in the end we get a diagram which coincides with $L$ up to some loops. Since the writhes of the tangle diagrams coincide, the additional loops can be removed by moves $\Omega_2$, $\Omega_3$.
\end{remark}

\section{Table of invariant binary functorial maps}\label{app:functorial_map_table}

\begin{center}
\begin{longtable}{|c|c|c|c|}
 \hline
Scheme & $\mathcal M$ & $\tau$ & $\mathcal M'$\\
 \hline
 \endfirsthead

 \hline
Scheme & $\mathcal M$ & $\tau$ & $\mathcal M'$\\
 \hline
 \endhead

 \hline
 \endfoot

 \hline
 \endlastfoot

& & trait & $H(2)$\\
$(Sm^{or}, Sm^{unor})$ & $\mathcal M_{class}^+$ & index, $\Pi_1=\{1\}$ & $H(2)^o$\\
& & index, $\Pi_1\ni 0$ & $H(2)$\\
\hline
$(Sm^{or}, Sm^{unor})$ & $\mathcal M_{class}^{reg+}$ & trait & $H(2)$\\
& & index & $H(2)^o$\\
\hline
& & trait & $Sm^{or}_\pm, H(2)_+$\\
& & index, $\Pi_1=\{0\}$  & $H(2)_+,\Omega_2,\Omega_3$\\
$(Sm^{or}, id)$ & $\mathcal M_{class}^+$ & index, $\Pi_1=\{1\}$  & $H(2)^o_+,\Omega_1,\Omega_2,\Omega_3$\\
& & index, $\Pi_1=\{0,1\}$  & $H(2)_+,\Omega_1,\Omega_2,\Omega_3$\\
& & parity & $H(2)_+,\Omega_2,2\Omega_\infty$\\
\hline
&  & trait & $Sm^{or}_\pm, H(2)_+$\\
$(Sm^{or}, id)$ & $\mathcal M_{class}^{reg+}$  & index & $H(2)_+^o,\Omega_2,\Omega_3$\\
& & parity  & $H(2)^o_+,\Omega_2,2\Omega_\infty$\\
\hline

& & trait & $Sm^{A}$ or $Sm^B$, $H(2)^o$\\
$(Sm^{unor}, id)$  & $\mathcal M_{class}^{+}$ & index, $\Pi_1\ni 1$  & $H(2)^o,\Omega_1,\Omega_2,\Omega_3$\\
& & index, $\Pi_1=\{0\}$  & $H(2)^o,\Omega_2,\Omega_3$\\
& & parity & $H(2)^o,\Omega_2,2\Omega_\infty$\\
\hline
& & trait & $Sm^{A}$ or $Sm^B$, $H(2)^o$\\
$(Sm^{unor}, id)$ & $\mathcal M_{class}^{reg+}$ & index & $H(2)^o,\Omega_2,\Omega_3$\\
& & parity & $H(2)^o,\Omega_2,2\Omega_\infty$\\
\hline

& & trait & $Sm^{A}$ or $Sm^B$, $H(2)^o$\\
$(Sm^A, id)$ & $\mathcal M_{class}^{+}$ & index, $\Pi_1\ni 1$ & $H(2),\Omega_1,\Omega_2,\Omega_3$\\
& & index, $\Pi_1=\{0\}$ & $H(2),\Omega_2,\Omega_3$\\
& & parity & $H(2),\Omega_2,2\Omega_\infty$\\
\hline
& & trait & $Sm^{A}$ or $Sm^B$, $H(2)^o$\\
$(Sm^A, id)$ & $\mathcal M_{class}^{reg+}$ & index & $H(2),\Omega_2,\Omega_3$\\
& & parity & $H(2),\Omega_2,2\Omega_\infty$\\
\hline

$(Sm^{Kauffman}, id)$  & $\mathcal M_{class}^{reg+}$ & weak parity & $O_\delta, CC, \Omega_2, \Omega_3$\\
& & parity & $O_\delta, CC, \Omega_2$\\
\hline

& & trait & $V,V\Omega_1, V\Omega_2, V\Omega_3$\\
& & index, $\Pi_1\ni 0$ & $\mathcal M_{fused}^{+}$\\
& & index, $\Pi_1=\{1\}$ & $\mathcal M_{fused}^{reg+}$\\
$(id, V)$ & $\mathcal M_{virt}^{+}$ & weak parity & $\mathcal M_{virt}^{+}$\\
& & index of types $c$, $d$, $o$, $p$ & $\mathcal M_{welded}^{+}$\\
& & index of types $j$, $q$ & $\mathcal M_{fd}^{reg+}$ \\
\hline
& & trait & $V,V\Omega_1, V\Omega_2, V\Omega_3$\\
& & index & $\mathcal M_{fused}^{reg+}$\\
$(id, V)$ & $\mathcal M_{virt}^{reg+}$ & weak parity & $\mathcal M_{virt}^{reg+}$\\
& & index of types $c$, $d$, $o$, $p$ & $\mathcal M_{welded}^{reg+}$\\
& & index of types $j$, $q$ & $\mathcal M_{fd}^{reg+}$ \\
\hline

& & trait & $CC,\Omega_1,\Omega_2,\Omega_3$\\
$(id, CC)$ &$\mathcal M_{class}^{+}$  & index & $\Delta,\Omega_1,\Omega_2,\Omega_3$\\
& & order & $\Omega_1,\Omega_2,\Omega_3$\\
\hline
& & trait & $CC,\Omega_2,\Omega_3$\\
$(id, CC)$ &$\mathcal M_{class}^{reg+}$  & index & $\Delta,\Omega_2,\Omega_3$\\
& & order & $\Omega_2,\Omega_3$\\
\hline

& & trait & $CC,\Omega_1,\Omega_2,\Omega_3$\\
lifting & $\mathcal M_{flat}^{+}$  & signed index & $\Delta,\Omega_1,\Omega_2,\Omega_3$\\
& & flat order & $\Omega_1,\Omega_2,\Omega_3$\\
\hline
& & trait & $CC,\Omega_2,\Omega_3$\\
lifting & $\mathcal M_{flat}^{reg+}$  & signed index & $\Delta,\Omega_2,\Omega_3$\\
& & flat order & $\Omega_2,\Omega_3$\\
\hline
\end{longtable}
\end{center}

\end{document}